\newtheorem{theorem}{Theorem}[section]
\newtheorem{lemma}[theorem]{Lemma}
\newtheorem{proposition}[theorem]{Proposition}
\newtheorem{corollary}[theorem]{Corollary}
\theoremstyle{definition}
\newtheorem{definition}[theorem]{Definition}
\newtheorem{example}[theorem]{Example}
\theoremstyle{remark}
\newtheorem{remark}[theorem]{Remark}
\DeclareMathOperator{\supp}{supp}
\DeclareMathOperator{\Ran}{Ran}
\DeclareMathOperator{\rank}{rank}
\title{Defect Cocycles and the Structure of Finite Process Monoids}
\author{Paolo Vella}
\date{}
\begin{document}

\maketitle

\begin{abstract}
We study positive subunital maps on ordered effect spaces and introduce the defect $d(T) = u - T(u)$, which satisfies a cocycle identity under composition. Using only this identity and elementary order-theoretic arguments---requiring no spectral decomposition or dimension-dependent techniques---we prove that in any finite composition-closed family of positive subunital maps, defects are eventually annihilated under iteration (Theorem~4.1), with an explicit bound linear in the family size. Under a persistence hypothesis (nonzero positive elements map to nonzero positive elements), we establish that all maps in such families must be unital. For completely positive maps on finite-dimensional matrix algebras, we then prove a sharp dimension-dependent bound: the stabilization index satisfies $n_T \le d$ where $d$ is the Hilbert space dimension, independent of the family size. This bound is achieved by a shift channel construction. These results provide a structural explanation for why finite operational repertoires in process theories cannot sustain systematic information loss, with applications to quantum foundations and categorical probability.
\end{abstract}

\section{Introduction}

In operational approaches to quantum theory and categorical probability, physical processes are modeled as positive maps preserving certain algebraic and order structures. A central question is: what constraints does the finiteness of an operational repertoire impose on the processes it can contain?

This paper introduces a simple but powerful tool---the defect cocycle---and uses it to prove that finite collections of processes closed under composition cannot sustain systematic ``leakage.'' More precisely, if $A$ is a finite set of positive subunital maps closed under composition, then for every $T \in A$, the defect $d(T) = u - T(u)$ is eventually annihilated under iteration: there exists $n$ with $T^n(d(T)) = 0$. Operationally, this means that after $n$ iterations of $T$, additional iterations no longer reduce the success probability---the process has ``absorbed all the loss it will ever absorb.''

The proof is entirely elementary, relying only on:
\begin{enumerate}[(i)]
\item the cocycle identity $d(T \circ S) = d(T) + T(d(S))$,
\item positivity and the pointed cone condition,
\item finiteness of $A$.
\end{enumerate}
No spectral theory, no dimension bounds, no continuity assumptions. This ``spectral-free'' character makes the result applicable across diverse settings: finite-dimensional quantum operations (including channels), stochastic matrices, and abstract effectus theory.

Under an additional persistence hypothesis (nonzero positive elements stay nonzero), we upgrade the conclusion to unitality: every $T \in A$ must satisfy $d(T) = 0$, meaning $T(u) = u$. This connects to the physical intuition that processes which cannot ``lose'' probability mass in a detectable way must preserve normalization.

For completely positive maps on matrix algebras $M_d$, we prove a sharp dimension bound: $n_T \le d$, independent of the family size $|A|$. The proof proceeds by showing that the defect orbit generates a corner on which the dynamics is genuinely nilpotent, then applying an elementary linear-algebra bound for nilpotent CP maps (via Kraus words and a strict kernel filtration). A shift channel construction shows this bound is optimal.

\paragraph{Clarification: finite composition-closed families.} A key hypothesis throughout is that maps belong to a \emph{finite composition-closed} family $A$. This means $A$ is finite and $S, T \in A$ implies $S \circ T \in A$. A crucial consequence: if $T \in A$, then $\{T, T^2, T^3, \ldots\} \subseteq A$, so this set is finite. The orbit $\{T^k(u)\}$ must therefore eventually repeat, which is the engine driving stabilization. Readers should not confuse this with the trivial observation that a single map forms a finite set; the composition-closure forces \emph{orbital finiteness}.

\paragraph{Related work.} The defect cocycle is implicit in sequential effect algebras \cite{gudder-greechie} and categorical treatments of kernels in effectus theory \cite{cho-jacobs}. Our contribution is to isolate the cocycle identity as the key mechanism and to derive strong structural consequences from finiteness alone.

Theorem~\ref{thm:defect-annihilation} resembles the classical result that every element of a finite semigroup has an idempotent power \cite{howie}. However, the mechanisms differ: in abstract semigroups, finiteness forces eventual periodicity, while here the interplay of positivity and order structure forces \emph{stabilization} rather than mere periodicity. The defect cocycle converts the algebraic pigeonhole argument into a statement about annihilation in the ordered cone.

\paragraph{Outline.} Section~\ref{sec:prelim} establishes notation and the basic properties of ordered effect spaces. Section~\ref{sec:cocycle} introduces the defect and proves the cocycle identity. Section~\ref{sec:main} contains the main theorem and its corollaries. Section~\ref{sec:examples} provides examples and counterexamples. Section~\ref{sec:parallel} develops the theory of defects under parallel composition. Section~\ref{sec:asymptotic} extends the theory to infinite dimensions via asymptotic defects. Section~\ref{sec:quantum} establishes dimension-dependent bounds for quantum operations, including the sharp bound $n_T \le d$ for CP maps. Section~\ref{sec:discussion} discusses applications and open problems.

\section{Preliminaries}\label{sec:prelim}

\begin{definition}[Ordered effect space]
An \emph{ordered effect space} is a triple $(E, \le, u)$ where:
\begin{enumerate}[(i)]
\item $E$ is a real vector space,
\item $\le$ is a preorder on $E$ (reflexive and transitive) compatible with the vector space structure: $x \le y$ implies $x + z \le y + z$ and $\lambda x \le \lambda y$ for all $z \in E$ and $\lambda \ge 0$,
\item $u \in E_+$ is a distinguished positive element called the \emph{unit}.
\end{enumerate}
The \emph{positive cone} is $E_+ = \{x \in E : x \ge 0\}$, and the \emph{effect interval} is $[0, u] = \{x \in E : 0 \le x \le u\}$.
\end{definition}

\begin{definition}[Pointed cone]
The positive cone $E_+$ is \emph{pointed} if $E_+ \cap (-E_+) = \{0\}$, i.e., $x \ge 0$ and $-x \ge 0$ together imply $x = 0$.
\end{definition}

\begin{remark}[Relationship between pointedness and antisymmetry]\label{rem:pointed-antisymmetry}
If $\le$ is a \emph{partial order} (antisymmetric), then pointedness is automatic: $x \ge 0$ and $-x \ge 0$ imply $0 \le x \le 0$, hence $x = 0$ by antisymmetry. Conversely, if $E_+$ is pointed, then $\le$ is antisymmetric. Thus pointedness is a nontrivial hypothesis only when $\le$ is a proper preorder (not antisymmetric). We state it explicitly because some categorical or effectus-theoretic settings work with preorders, and our results apply whenever the cancellation property (Lemma~\ref{lem:cancellation}) holds.
\end{remark}

\begin{remark}
We do not require $u$ to be an order unit in the Archimedean sense; we only need a fixed $u \in E_+$ to define subunitality. Many readers associate ``effect space'' with an order-unit assumption, but our results hold in this more general setting.
\end{remark}

\begin{remark}[Why pointedness is necessary]\label{rem:pointed-necessary}
The pointed cone condition is essential for our main results. Without it, the cancellation lemma below fails. For a concrete counterexample, consider $E = \mathbb{R}^2$ with the non-pointed cone $E_+ = \{(x,y) : x \ge 0\}$ (a half-plane), which induces a preorder that is not antisymmetric. Then $(1, -1)$ and $(-1, 1)$ are both in $E_+$, and their sum is zero without either being zero.
\end{remark}

\begin{lemma}[Pointed cone cancellation]\label{lem:cancellation}
If $E_+$ is pointed and $x_1, \ldots, x_m \in E_+$ satisfy $x_1 + \cdots + x_m = 0$, then $x_i = 0$ for all $i$.

Equivalently: $E_+$ is pointed if and only if for all $x, y \ge 0$, $x + y = 0$ implies $x = y = 0$.
\end{lemma}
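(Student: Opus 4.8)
The plan is to prove the $m$-term statement directly (no induction needed) and then derive the stated ``if and only if'' as an easy consequence. First I would record the one non-immediate preliminary fact: the cone $E_+$ is closed under addition. This is not an axiom but follows from compatibility~(ii): if $a, b \ge 0$, then adding $b$ to $0 \le a$ gives $b \le a + b$, and combining with $0 \le b$ by transitivity of the preorder yields $0 \le a + b$. By iteration, any finite sum of elements of $E_+$ lies in $E_+$.

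Now fix an index $i \in \{1, \ldots, m\}$. From $x_1 + \cdots + x_m = 0$ we get $x_i = -\sum_{j \neq i} x_j$. The sum $\sum_{j \neq i} x_j$ lies in $E_+$ by the closure fact just established, so $x_i = -(\text{element of } E_+) \in -E_+$. Since also $x_i \in E_+$ by hypothesis, we conclude $x_i \in E_+ \cap (-E_+) = \{0\}$ by pointedness, hence $x_i = 0$. As $i$ was arbitrary, all $x_i$ vanish.

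For the equivalence, the ($\Rightarrow$) direction is the case $m = 2$ of what was just proved. For ($\Leftarrow$), suppose the two-element cancellation holds and let $x$ satisfy $x \ge 0$ and $-x \ge 0$. Setting $y = -x$, we have $x, y \in E_+$ with $x + y = 0$, so the hypothesis forces $x = y = 0$; in particular $x = 0$, which is exactly $E_+ \cap (-E_+) = \{0\}$.

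There is essentially no hard step here; the only point requiring care is not to take additive closure of the cone for granted but to extract it from the order-compatibility axiom, since the definition of ordered effect space states compatibility in terms of $\le$ rather than positing $E_+ + E_+ \subseteq E_+$ outright. Everything else is a one-line application of the definition of pointedness.
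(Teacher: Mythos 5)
Your proof is correct and follows essentially the same argument as the paper: isolate one summand as the negative of a sum of positive elements and invoke pointedness, then handle the equivalence by specializing to $m=2$ and taking $y=-x$. The only differences are cosmetic --- you apply the argument symmetrically to each index rather than inducting on $m$, and you helpfully make explicit the additive closure of $E_+$ that the paper leaves implicit.
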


\begin{proof}
We have $x_1 = -(x_2 + \cdots + x_m)$. Since each $x_j \ge 0$, the sum $x_2 + \cdots + x_m \in E_+$, so $x_1 \in -E_+$. But $x_1 \in E_+$ by assumption, hence $x_1 \in E_+ \cap (-E_+) = \{0\}$. Induction on $m$ completes the proof. (The equivalence is immediate: $(\Rightarrow)$ is the $m = 2$ case; $(\Leftarrow)$ if $x \in E_+ \cap (-E_+)$ then $x + (-x) = 0$ with $x, -x \ge 0$, so $x = 0$.)
\end{proof}

\begin{definition}[Positive and subunital maps]
Let $(E, \le, u)$ be an ordered effect space. A linear map $T : E \to E$ is:
\begin{enumerate}[(i)]
\item \emph{positive} if $T(E_+) \subseteq E_+$, i.e., $x \ge 0$ implies $T(x) \ge 0$;
\item \emph{subunital} if $T(u) \le u$;
\item \emph{unital} if $T(u) = u$.
\end{enumerate}
\end{definition}

\begin{lemma}[Monotonicity]
If $T$ is positive, then $T$ is order-preserving: $x \le y$ implies $T(x) \le T(y)$.
\end{lemma}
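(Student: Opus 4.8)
The plan is to reduce the statement to a single application of positivity, using the vector-space compatibility of the preorder (item (ii) in the definition of an ordered effect space) to translate freely between the relations $x \le y$ and $y - x \in E_+$. First I would record this translation: if $x \le y$, then adding $z = -x$ to both sides gives $0 = x + (-x) \le y + (-x) = y - x$; conversely, from $0 \le y - x$, adding $z = x$ recovers $x \le y$. Hence $x \le y$ holds if and only if $y - x \in E_+$.

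Given $x \le y$, I would then set $w = y - x \in E_+$ and invoke positivity of $T$ to get $T(w) \in E_+$. Linearity gives $T(w) = T(y) - T(x)$, so $T(y) - T(x) \ge 0$; adding $z = T(x)$ to both sides (again by item (ii)) yields $T(x) \le T(y)$, which is the claim.

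I do not anticipate any real obstacle here: the argument uses only order-compatibility, linearity of $T$, and the defining property of positive maps---no pointedness, antisymmetry, Archimedean condition, or finiteness is needed. The one point worth flagging is that the equivalence $x \le y \iff y - x \in E_+$ is valid for any preorder compatible with the vector-space structure and does not require $\le$ to be a partial order, so the monotonicity lemma holds in the full generality assumed in the preliminaries.
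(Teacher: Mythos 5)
Your proof is correct and follows essentially the same route as the paper's one-line argument: pass from $x \le y$ to $y - x \ge 0$ via translation-invariance of the preorder, apply positivity and linearity of $T$, and translate back. The extra care you take in justifying the equivalence $x \le y \iff y - x \in E_+$ from axiom (ii) is a fair expansion of a step the paper leaves implicit, and your observation that no pointedness or antisymmetry is needed is accurate.
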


\begin{proof}
If $x \le y$, then $y - x \ge 0$, so $T(y - x) = T(y) - T(x) \ge 0$ by positivity.
\end{proof}

\section{The Defect Cocycle}\label{sec:cocycle}

\begin{definition}[Defect]
For a subunital map $T$ on $(E, \le, u)$, the \emph{defect} of $T$ is
\[
d(T) = u - T(u).
\]
\end{definition}

By subunitality, $T(u) \le u$, so $d(T) = u - T(u) \ge 0$. The defect measures the ``shortfall'' of $T(u)$ from the unit---physically, the probability of termination, the no-click outcome, or the information lost to the environment.

\begin{lemma}[Cocycle identity]\label{lem:cocycle}
For positive subunital maps $T, S$ on $(E, \le, u)$,
\[
d(T \circ S) = d(T) + T(d(S)).
\]
\end{lemma}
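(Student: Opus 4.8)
The plan is to prove the identity by direct computation, unwinding the definition of the defect and using linearity of the maps involved. First I would write $d(T \circ S) = u - (T \circ S)(u) = u - T(S(u))$. The key move is to introduce the defect of $S$ into the argument of $T$: since $S(u) = u - d(S)$ by definition, linearity of $T$ gives $T(S(u)) = T(u) - T(d(S))$. Substituting back yields $d(T \circ S) = u - T(u) + T(d(S)) = d(T) + T(d(S))$, which is exactly the claimed identity.

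The only subtlety worth flagging is well-definedness: for the expression $d(T \circ S)$ to make sense as a defect, we need $T \circ S$ to be subunital, i.e. $(T \circ S)(u) \le u$. This follows from the hypotheses: $S$ subunital gives $S(u) \le u$, then $T$ positive (hence monotone, by the Monotonicity Lemma) gives $T(S(u)) \le T(u)$, and $T$ subunital gives $T(u) \le u$; transitivity of $\le$ closes the chain. It is also worth noting in passing that $T \circ S$ is positive as a composition of positive maps, so $d(T \circ S) \ge 0$ is consistent with the earlier observation, and likewise $d(T) \ge 0$ and $T(d(S)) \ge 0$ (the latter because $d(S) \ge 0$ and $T$ is positive), so the right-hand side is manifestly a sum of positive elements.

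There is essentially no main obstacle here: the computation is a one-line manipulation once linearity is invoked, and the verification that $T \circ S$ is subunital is the routine monotonicity-plus-transitivity argument just sketched. The entire proof fits in a few lines, and I would present it exactly in the order above: state that $T \circ S$ is subunital (so the defect is defined), then compute.
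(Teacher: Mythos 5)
Your proof is correct and is essentially the same direct computation as the paper's: you substitute $S(u) = u - d(S)$ and use linearity of $T$, whereas the paper adds and subtracts $T(u)$, but these are the same one-line manipulation. Your extra remark that $T \circ S$ is subunital (so its defect is well-defined and nonnegative) is a sensible sanity check that the paper leaves implicit.
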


\begin{proof}
Direct computation:
\begin{align*}
d(T \circ S) &= u - T(S(u)) \\
&= u - T(u) + T(u) - T(S(u)) \\
&= d(T) + T(u - S(u)) \\
&= d(T) + T(d(S)). \qedhere
\end{align*}
\end{proof}

\begin{lemma}[Iterated cocycle]\label{lem:iterated-cocycle}
For a positive subunital map $T$ and $n \ge 1$,
\[
d(T^n) = \sum_{k=0}^{n-1} T^k(d(T)).
\]
Equivalently,
\[
u - T^n(u) = \sum_{k=0}^{n-1} T^k(d(T)).
\]
\end{lemma}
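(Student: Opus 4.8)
The plan is to prove this by a direct telescoping argument, which avoids even needing induction, though I will also indicate the inductive route since it is the more "expected" proof in this context. The telescoping observation is that $u - T^n(u)$ can be written as a collapsing sum of consecutive differences:
\[
u - T^n(u) = \sum_{k=0}^{n-1}\bigl(T^k(u) - T^{k+1}(u)\bigr).
\]
Each summand factors through $T^k$ by linearity: $T^k(u) - T^{k+1}(u) = T^k\bigl(u - T(u)\bigr) = T^k(d(T))$. Substituting gives the claimed identity immediately. The only things being used here are linearity of each $T^k$ and the definition of the defect; positivity and subunitality are not needed for the identity itself (they only guarantee, via Lemma~\ref{lem:cancellation} and monotonicity, that each term $T^k(d(T))$ lies in $E_+$, which will matter later but not here).

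Alternatively, I would carry out induction on $n$. The base case $n = 1$ is the tautology $d(T) = T^0(d(T))$. For the inductive step, I would write $T^{n+1} = T \circ T^n$, note that $T^n$ is again positive and subunital (composition preserves both properties, so that Lemma~\ref{lem:cocycle} applies), and compute
\[
d(T^{n+1}) = d(T) + T\bigl(d(T^n)\bigr) = d(T) + T\!\left(\sum_{k=0}^{n-1} T^k(d(T))\right) = d(T) + \sum_{k=0}^{n-1} T^{k+1}(d(T)),
\]
using linearity of $T$ in the last step; reindexing the sum and absorbing the $d(T)$ term as the $k=0$ contribution yields $\sum_{k=0}^{n} T^k(d(T))$, completing the induction.

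The "equivalently" form of the statement is just the definition $d(T^n) = u - T^n(u)$ unpacked, so nothing further is required there. I do not anticipate any genuine obstacle: this lemma is a routine consequence of Lemma~\ref{lem:cocycle} (or of linearity alone, via telescoping). The only point deserving a word of care is the implicit claim that $T^n$ is positive and subunital whenever $T$ is, which is needed if one routes through the cocycle identity rather than through telescoping; this follows because $T(u) \le u$ and $T$ order-preserving give $T^2(u) \le T(u) \le u$, and so on by induction, while positivity is closed under composition by definition.
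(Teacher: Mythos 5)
Your proposal is correct. Your secondary (inductive) argument is exactly the paper's proof: base case $n=1$, then apply Lemma~\ref{lem:cocycle} to $T \circ T^n$ and reindex. Your primary argument, the telescoping decomposition
\[
u - T^n(u) = \sum_{k=0}^{n-1}\bigl(T^k(u) - T^{k+1}(u)\bigr) = \sum_{k=0}^{n-1} T^k(d(T)),
\]
is a genuinely different and slightly more economical route: it bypasses the cocycle identity entirely and uses nothing but linearity of $T$, as you correctly note. The paper in fact deploys this same telescoping computation later (in the proof of Theorem~\ref{thm:abstract-stabilization}), so your argument is fully consistent with the paper's own toolkit; the paper's choice to derive the lemma from Lemma~\ref{lem:cocycle} instead is a matter of exposition, making the iterated formula visibly a consequence of the cocycle structure that the paper is advertising. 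Your observation that positivity and subunitality play no role in the identity itself (only in guaranteeing each summand is in $E_+$) is accurate and a worthwhile sharpening. Your closing remark that $T^n$ is again positive and subunital, needed for the inductive route, is also correctly justified.
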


\begin{proof}
By induction on $n$. The case $n = 1$ is immediate: $d(T^1) = d(T) = T^0(d(T))$.

For the inductive step, assume the formula holds for $n$. Then by Lemma~\ref{lem:cocycle},
\begin{align*}
d(T^{n+1}) &= d(T \circ T^n) = d(T) + T(d(T^n)) \\
&= d(T) + T\left(\sum_{k=0}^{n-1} T^k(d(T))\right) \\
&= d(T) + \sum_{k=0}^{n-1} T^{k+1}(d(T)) \\
&= T^0(d(T)) + \sum_{k=1}^{n} T^k(d(T)) \\
&= \sum_{k=0}^{n} T^k(d(T)). \qedhere
\end{align*}
\end{proof}

\begin{remark}[Monotonicity of iterates]\label{rem:monotone}
Subunitality and positivity together imply that the sequence $\{T^k(u)\}_{k \ge 0}$ is monotone decreasing. Indeed, $T(u) \le u$ by subunitality, and applying the positive (hence order-preserving) map $T$ to both sides yields $T^2(u) \le T(u)$. By induction,
\[
u \ge T(u) \ge T^2(u) \ge T^3(u) \ge \cdots
\]
In any setting where the orbit is finite, such monotone sequences must stabilize. This observation provides immediate intuition for Theorem~\ref{thm:defect-annihilation}.
\end{remark}

\section{Main Results}\label{sec:main}

\begin{theorem}[Defect annihilation]\label{thm:defect-annihilation}
Let $(E, \le, u)$ be an ordered effect space with pointed positive cone. Let $A$ be a finite set of positive subunital maps on $E$ that is closed under composition. Then for each $T \in A$, there exists $n \ge 1$ such that
\[
T^n(d(T)) = 0.
\]
Moreover, the stabilization index satisfies $n \le |A|$.
\end{theorem}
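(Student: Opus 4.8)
The plan is to combine orbital finiteness (a pigeonhole argument) with the iterated cocycle identity of Lemma~\ref{lem:iterated-cocycle} and the pointed-cone cancellation of Lemma~\ref{lem:cancellation}. First I would note that, because $A$ is closed under composition, every power $T^k$ with $k \ge 1$ lies in $A$; hence the list $T^1, T^2, \dots, T^{|A|+1}$ consists of $|A|+1$ elements of a set of cardinality $|A|$, so two of them must coincide \emph{as maps}. Thus there are integers $1 \le p < q \le |A|+1$ with $T^p = T^q$, and in particular $T^p(u) = T^q(u)$.

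Next I would apply Lemma~\ref{lem:iterated-cocycle} at both exponents, obtaining $u - T^p(u) = \sum_{k=0}^{p-1} T^k(d(T))$ and $u - T^q(u) = \sum_{k=0}^{q-1} T^k(d(T))$. Subtracting the first from the second and using $T^p(u) = T^q(u)$ gives $\sum_{k=p}^{q-1} T^k(d(T)) = 0$. Now positivity enters: subunitality gives $d(T) = u - T(u) \ge 0$, and each $T^k$ is positive, so every term $T^k(d(T))$ lies in $E_+$. Since $E_+$ is pointed, Lemma~\ref{lem:cancellation} forces $T^k(d(T)) = 0$ for every $k$ with $p \le k \le q-1$; in particular $T^p(d(T)) = 0$. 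Because $p \le q-1 \le |A|$, setting $n := p$ yields $T^n(d(T)) = 0$ with $n \le |A|$, which is the claim. (Equivalently, one can run the argument through Remark~\ref{rem:monotone}: monotonicity squeezes $T^p(u) = T^{p+1}(u) = \cdots = T^q(u)$, whence $d(T^{p+1}) = d(T^p)$, and the iterated cocycle identity again forces $T^p(d(T)) = 0$.)

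I do not anticipate a substantive obstacle; the entire content is in assembling the cocycle identity, positivity, and finiteness correctly, and the only delicate point is bookkeeping: one must apply the pigeonhole to the list $T^1, \dots, T^{|A|+1}$ (rather than $T^0, \dots, T^{|A|}$) so that the resulting index $p$ is bounded by $|A|$ and not merely by $|A|+1$. It is also worth a sanity check in the degenerate case $|A| = 1$: then $T^2 = T$, so the cocycle identity gives $d(T) = d(T) + T(d(T))$ and hence $T(d(T)) = 0$, consistent with $n = 1 = |A|$.
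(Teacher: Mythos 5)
Your proof is correct and follows essentially the same route as the paper: pigeonhole on the powers of $T$ inside the finite set $A$, the iterated cocycle identity, and pointed-cone cancellation to kill each nonnegative summand. The only cosmetic differences are that you apply pigeonhole to the maps $T^k$ themselves rather than to the orbit $\{T^k(u)\}$, and you obtain $\sum_{k=p}^{q-1} T^k(d(T)) = 0$ by subtracting two cocycle expansions rather than by applying $T^n$ to the expansion of $u - T^p(u)$ — these yield the identical sum, and your index bookkeeping for $n \le |A|$ is right.
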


\begin{proof}
Since $A$ is closed under composition and $T \in A$, we have $T^k \in A$ for all $k \ge 1$. Therefore, the set
\[
\{T^k(u) : k \ge 1\} \subseteq \{S(u) : S \in A\}
\]
is finite, with cardinality at most $|A|$.

By the pigeonhole principle, there exist integers $m > n \ge 1$ with $T^m(u) = T^n(u)$. Set $p = m - n \ge 1$. Then
\[
T^n(u) = T^m(u) = T^{n+p}(u) = T^n(T^p(u)),
\]
which gives $T^n(u - T^p(u)) = 0$.

By the iterated cocycle identity (Lemma~\ref{lem:iterated-cocycle}),
\[
u - T^p(u) = \sum_{k=0}^{p-1} T^k(d(T)).
\]
Each term $T^k(d(T)) \ge 0$ by positivity of $T$ and $d(T) \ge 0$. Applying the positive map $T^n$ to the sum, we have
\[
T^n(u - T^p(u)) = \sum_{k=0}^{p-1} T^{n+k}(d(T)) = 0.
\]
By Lemma~\ref{lem:cancellation} (pointed cone cancellation), since each $T^{n+k}(d(T)) \ge 0$, we obtain
\[
T^{n+k}(d(T)) = 0 \quad \text{for all } k \in \{0, 1, \ldots, p-1\}.
\]
In particular, taking $k = 0$ yields $T^n(d(T)) = 0$.

For the bound: the orbit $\{T^k(u) : k \ge 1\}$ has at most $|A|$ elements. Among the $|A| + 1$ iterates $T^1(u), T^2(u), \ldots, T^{|A|+1}(u)$, there must be a repeat by pigeonhole. If $T^m(u) = T^n(u)$ with $m > n \ge 1$, then $n \le |A|$.
\end{proof}

\begin{remark}[Stabilization]\label{rem:stabilization}
The conclusion $T^n(d(T)) = 0$ is equivalent to $T^{n+1}(u) = T^n(u)$. Indeed, by the cocycle identity, $T^{n+1}(u) = T^n(T(u)) = T^n(u - d(T)) = T^n(u) - T^n(d(T))$. Hence $T^n(d(T)) = 0$ iff $T^{n+1}(u) = T^n(u)$, which then implies $T^{n+k}(u) = T^n(u)$ for all $k \ge 0$.
\end{remark}

\begin{definition}[Stabilization index]\label{def:stabilization-index}
For a positive subunital map $T$ in a finite composition-closed family, the \emph{stabilization index} is
\[
n_T := \min\{n \ge 1 : T^n(d(T)) = 0\}.
\]
By Theorem~\ref{thm:defect-annihilation}, $n_T$ is well-defined and satisfies $n_T \le |A|$.
\end{definition}

\begin{remark}[Operational interpretation of stabilization]\label{rem:operational}
The condition $T^n(d(T)) = 0$ admits a clear process-theoretic interpretation. The following are equivalent:
\begin{enumerate}[(i)]
\item $T^n(d(T)) = 0$;
\item $T^{n+1}(u) = T^n(u)$ (unit orbit stabilizes);
\item $T^{n+k}(u) = T^n(u)$ for all $k \ge 0$ (unit orbit becomes constant);
\item the ``loss per additional iteration'' $T^k(d(T))$ vanishes for $k \ge n$.
\end{enumerate}
Operationally: after $n$ iterations, further iterations do not reduce the success probability. The process has ``absorbed all the loss it will ever absorb'' into a stable corner.
\end{remark}

\begin{remark}[Comparison with finite semigroup theory]\label{rem:semigroup}
In any finite semigroup, powers eventually become \emph{periodic}: there exist $n, p \ge 1$ with $T^{n+p} = T^n$. In contrast, our setting (positive subunital maps on an ordered space) yields \emph{stabilization}: the unit orbit becomes \emph{constant}, not merely periodic. The key is that subunitality makes $\{T^k(u)\}$ monotone decreasing (Remark~\ref{rem:monotone}), so periodicity of the orbit forces constancy. The defect cocycle makes this bookkeeping explicit and provides the annihilation bound.
\end{remark}

\begin{remark}[Alternative proof via monotonicity]\label{rem:alt-proof}
Theorem~\ref{thm:defect-annihilation} can also be proved without the cocycle expansion, using only monotonicity and finiteness. Since $\{T^k(u)\}$ is monotone decreasing (Remark~\ref{rem:monotone}) and $\{T^k(u) : k \ge 1\} \subseteq \{S(u) : S \in A\}$ is finite, the sequence must stabilize: there exists $n$ with $T^n(u) = T^{n+1}(u)$, hence $T^n(d(T)) = T^n(u) - T^{n+1}(u) = 0$. The cocycle proof given above has the advantage of explicitly exhibiting the defect decomposition and motivating the corner/nilpotency analysis in Section~\ref{sec:quantum}.

\textbf{Warning (preorder subtlety):} This ``monotonicity-only'' argument implicitly uses \emph{antisymmetry} (equivalently pointedness). In a proper preorder (not antisymmetric), monotone orbits can be eventually periodic without becoming constant---e.g., cycling between distinct preorder-equivalent elements. The cocycle proof via Lemma~\ref{lem:cancellation} is valid for preorders precisely because it uses pointedness directly to force the summands to vanish.
\end{remark}

\begin{remark}[On the pointedness hypothesis]\label{rem:pointed}
In the proof of Theorem~\ref{thm:defect-annihilation}, pointedness is used only in Lemma~\ref{lem:cancellation} to conclude that a sum of positive elements equaling zero implies each summand is zero. As noted in Remark~\ref{rem:pointed-antisymmetry}, pointedness is equivalent to antisymmetry of $\le$, so the hypothesis is nontrivial only when $\le$ is a proper preorder. For most applications (matrix algebras, stochastic maps, etc.), $\le$ is a partial order and pointedness is automatic.
\end{remark}

\begin{remark}[Where pointedness is used]\label{rem:pointedness-map}
For the reader's convenience, we summarize where the pointedness hypothesis appears in the paper:
\begin{itemize}
\item \textbf{Lemma~\ref{lem:cancellation}} (pointed cone cancellation): the fundamental engine.
\item \textbf{Theorem~\ref{thm:defect-annihilation}}: via Lemma~\ref{lem:cancellation} to conclude summands vanish.
\item \textbf{Remark~\ref{rem:alt-proof}}: the ``monotonicity-only'' alternative proof implicitly uses antisymmetry (equivalent to pointedness); without it, monotone orbits can cycle.
\item \textbf{Theorem~\ref{thm:abstract-stabilization}} (Section~\ref{sec:discussion}): the categorical axiomatization explicitly assumes pointed cone.
\item \textbf{Section~\ref{sec:quantum}}: in finite-dimensional matrix algebras, the PSD cone is pointed (automatic), so pointedness is implicitly satisfied.
\item \textbf{Section~\ref{sec:asymptotic}}: von Neumann algebra settings use faithful traces/states, which force pointedness (if $\omega(x) = 0$ for positive $x$, then $x = 0$).
\end{itemize}
In short: pointedness is only a nontrivial hypothesis for exotic preordered vector spaces; all standard operator-algebraic settings satisfy it automatically.
\end{remark}

\begin{corollary}[Explicit stabilization bound]\label{cor:bound}
Under the hypotheses of Theorem~\ref{thm:defect-annihilation}, the minimal $n$ such that $T^n(d(T)) = 0$ satisfies $n \le |A|$. If $A$ is a submonoid (i.e., contains the identity), then $n \le |A| - 1$.
\end{corollary}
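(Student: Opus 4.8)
The first assertion, $n \le |A|$, is nothing more than the bound already extracted at the end of the proof of Theorem~\ref{thm:defect-annihilation}: among $T^1(u), \ldots, T^{|A|+1}(u)$ two iterates coincide, and the smaller repeated index is at most $|A|$. So for that half I would simply recall the pigeonhole step. The substantive content is the improvement to $n \le |A| - 1$ when $\mathrm{id} \in A$, and the plan is to exploit the single extra orbit point that the identity makes available.

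The key observation is that if $A$ is a submonoid then $T^0 = \mathrm{id} \in A$, hence $T^k \in A$ for \emph{all} $k \ge 0$ rather than only $k \ge 1$, so the orbit including its seed still lands in a set of at most $|A|$ elements:
\[
\{T^k(u) : k \ge 0\} \subseteq \{S(u) : S \in A\}.
\]
I would then apply pigeonhole to the $|A| + 1$ iterates $T^0(u), T^1(u), \ldots, T^{|A|}(u)$, obtaining indices $0 \le n < m \le |A|$ with $T^m(u) = T^n(u)$. Since the chain $T^n(u) \ge T^{n+1}(u) \ge \cdots \ge T^m(u)$ is monotone decreasing (Remark~\ref{rem:monotone}) and its two endpoints agree, every link collapses; in particular $T^{n+1}(u) = T^n(u)$, which by Remark~\ref{rem:stabilization} is precisely $T^n(d(T)) = 0$. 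As $m \le |A|$ we have $n \le |A| - 1$, so $n_T \le |A| - 1$.

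The only point requiring care is the off-by-one when the repeated pair is forced to have $n = 0$: then $T^m(u) = u$ with $m \ge 1$, monotonicity collapses this to $T(u) = u$, i.e. $d(T) = 0$, whence $T^1(d(T)) = 0$ and $n_T = 1$. This is still $\le |A| - 1$ provided $|A| \ge 2$, which holds automatically whenever $A$ contains a map other than $\mathrm{id}$; the sole genuinely degenerate instance is $A = \{\mathrm{id}\}$, where $T = \mathrm{id}$, $d(T) = 0$, and $n_T = 1$. I would state this case distinction explicitly so the counting is airtight. I do not anticipate any real obstacle here — the whole argument is the remark that the identity contributes a ``free'' orbit point $u$ that tightens the pigeonhole count by one — so rather than gesture at it I would simply spell out the index range $\{0, 1, \ldots, |A|\}$ and the monotonicity collapse in full.
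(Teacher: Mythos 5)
Your proof is correct and follows essentially the same route as the paper: the identity contributes the extra orbit point $T^0(u)=u$, so pigeonhole over $\{0,1,\ldots,|A|\}$ produces a collision with smaller index $n\le|A|-1$. Two small remarks. First, your step ``the endpoints of the monotone chain agree, hence every link collapses'' is the monotonicity-only argument of Remark~\ref{rem:alt-proof}; it silently uses antisymmetry of $\le$, which is legitimate here because pointedness is a hypothesis of Theorem~\ref{thm:defect-annihilation} and is equivalent to antisymmetry (Remark~\ref{rem:pointed-antisymmetry}), but in a bare preorder you would instead run the cocycle-plus-cancellation step from the theorem's proof on the indices $0\le n<m\le|A|$ (which also handles $n=0$: pointed cancellation then gives $d(T)=0$ directly). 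Second, your edge-case analysis is sharper than the paper's own one-line proof: under the convention of Definition~\ref{def:stabilization-index} that $n_T\ge 1$, the case $A=\{\mathrm{id}\}$ has $n_T=1>|A|-1=0$, so the stated bound $n\le|A|-1$ genuinely requires either $|A|\ge 2$ or the convention that $n=0$ is admissible when $d(T)=0$; flagging this is a real (if minor) correction to the corollary as written.
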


\begin{proof}
The first bound follows from the proof of Theorem~\ref{thm:defect-annihilation}. If $A$ contains the identity, then the orbit $\{T^k(u) : k \ge 0\}$ includes $T^0(u) = u$, so the orbit has at most $|A|$ elements starting from $k = 0$, and collision occurs by step $|A|$, yielding $n \le |A| - 1$.
\end{proof}

\begin{remark}
The bound $n \le |A|$ is sharp in general. Consider $A = \{T, T^2, \ldots, T^{m-1}, 0\}$ where $T$ is nilpotent with $T^m = 0$ but $T^{m-1} \ne 0$ (composition-closed since $T^m = 0$). Then $|A| = m$ and the minimal $n$ with $T^n(d(T)) = 0$ is exactly $m$.
\end{remark}

\subsection*{Persistence and unitality}

\begin{definition}[Persistence]\label{def:persistence}
A positive map $T : E \to E$ is \emph{persistent} if
\[
x > 0 \implies T(x) > 0,
\]
where $x > 0$ means $x \in E_+ \setminus \{0\}$ (nonzero positive). Equivalently: $T$ does not annihilate any nonzero positive element.
\end{definition}

\begin{remark}[Persistence in standard settings]\label{rem:persistence-standard}
\emph{Caution}: Our ``persistent'' (nonzero positive $\mapsto$ nonzero positive) is weaker than the operator-algebraic notion of ``strictly positive'' or ``positivity-improving'' ($\mathrm{int}(E_+) \to \mathrm{int}(E_+)$). We use ``persistent'' to avoid confusion.

For substochastic matrices on $\mathbb{R}^n$ with componentwise order, persistence means all entries are strictly positive: $P_{ij} > 0$ for all $i, j$. This is stronger than irreducibility. For quantum channels in the Heisenberg picture $\Phi^*(Y) = \sum_i V_i^* Y V_i$, persistence holds whenever $\mathrm{span}\{V_i\} = M_d$ (a sufficient condition is that some $V_j$ is invertible); see \cite{wolf} for related positivity-improving conditions.
\end{remark}

\begin{theorem}[Persistence implies unitality]\label{thm:persistence-unitality}
Let $(E, \le, u)$ be an ordered effect space with pointed positive cone. Let $A$ be a finite set of positive subunital maps on $E$ that is closed under composition. If every $T \in A$ is persistent, then every $T \in A$ is unital.
\end{theorem}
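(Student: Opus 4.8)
The plan is to combine the annihilation conclusion of Theorem~\ref{thm:defect-annihilation} with persistence via a contradiction argument, working one map at a time. Note first that although the hypothesis asserts persistence of \emph{every} member of $A$, the statement is really pointwise in disguise: for each fixed $T \in A$ we will show that persistence of $T$ alone (together with $A$ being finite and composition-closed, so that Theorem~\ref{thm:defect-annihilation} applies to $T$) already forces $d(T) = 0$. So it suffices to fix an arbitrary $T \in A$ and prove it is unital.

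The steps, in order: First, by subunitality $d(T) = u - T(u) \ge 0$, so if $d(T) \ne 0$ then in fact $d(T) > 0$ in the sense of Definition~\ref{def:persistence}. Second, assume for contradiction that $d(T) \ne 0$, hence $d(T) > 0$. Third, apply persistence of $T$ and induct on $k$: the base case gives $T(d(T)) > 0$, and if $T^k(d(T)) > 0$ then persistence yields $T^{k+1}(d(T)) = T\bigl(T^k(d(T))\bigr) > 0$; therefore $T^k(d(T)) > 0$ for all $k \ge 1$, in particular $T^k(d(T)) \ne 0$ for all $k \ge 1$. Fourth, invoke Theorem~\ref{thm:defect-annihilation}: since $T \in A$ and $A$ is finite and composition-closed, there is some $n \ge 1$ with $T^n(d(T)) = 0$. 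This contradicts the previous step. Hence $d(T) = 0$, i.e. $T(u) = u$, so $T$ is unital. Since $T \in A$ was arbitrary, every member of $A$ is unital.

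I do not expect a real obstacle here; the argument is a short deduction from two results already in hand. The only points requiring any care are (i) that $d(T) \ge 0$ (from subunitality) is what lets us upgrade ``$d(T) \ne 0$'' to ``$d(T) > 0$'', so that persistence can be applied, and (ii) that persistence propagates along iterates of $T$, which is the trivial induction above using $T^{k+1} = T \circ T^k$ and the fact that each such iterate lies in $A$. It is perhaps worth remarking that this is exactly the expected dichotomy: a persistent map never kills a nonzero positive element, so the only way the defect orbit $\{T^k(d(T))\}$ can hit zero (as Theorem~\ref{thm:defect-annihilation} guarantees it must) is if the defect was zero to begin with.
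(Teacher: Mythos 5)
Your proposal is correct and is essentially identical to the paper's own proof: both assume $d(T) \ne 0$, use $d(T) \ge 0$ to get $d(T) > 0$, propagate persistence along iterates by induction, and contradict the annihilation $T^n(d(T)) = 0$ supplied by Theorem~\ref{thm:defect-annihilation}. Your added observation that only persistence of the fixed $T$ is needed (not of the whole family) is accurate but does not change the argument.
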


\begin{proof}
By Theorem~\ref{thm:defect-annihilation}, for each $T \in A$ there exists $n \ge 1$ with $T^n(d(T)) = 0$.

If $d(T) > 0$, then by persistence of $T$, we have $T(d(T)) > 0$, hence $T^2(d(T)) > 0$, and by induction $T^n(d(T)) > 0$ for all $n \ge 0$. This contradicts $T^n(d(T)) = 0$.

Therefore $d(T) = 0$, i.e., $T(u) = u$.
\end{proof}

\begin{corollary}
A finite submonoid of persistent positive subunital maps consists entirely of unital maps.
\end{corollary}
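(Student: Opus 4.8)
The plan is to recognize the statement as an immediate specialization of Theorem~\ref{thm:persistence-unitality}, so the only real work is definitional bookkeeping. First I would record that the positive subunital maps on $(E, \le, u)$ genuinely form a monoid under composition: the identity map is positive and unital (hence subunital), and if $S, T$ are positive subunital, then $T \circ S$ is positive (a composition of positive maps), and subunital because $(T \circ S)(u) = T(S(u)) \le T(u) \le u$, where the first inequality applies monotonicity of the positive map $T$ to $S(u) \le u$ and the second is subunitality of $T$. This confirms that the phrase ``submonoid of persistent positive subunital maps'' refers to a genuine substructure.

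Next, let $M$ be such a submonoid, assumed finite. By definition of submonoid, $M$ is closed under composition. Hence $M$ is a finite set of positive subunital maps closed under composition, i.e., it satisfies the hypotheses of Theorem~\ref{thm:persistence-unitality} with $A = M$. Since by hypothesis every $T \in M$ is persistent, Theorem~\ref{thm:persistence-unitality} yields that every $T \in M$ is unital, which is exactly the claim. (One could additionally invoke Corollary~\ref{cor:bound} to note that the stabilization indices are bounded by $|M|$, but this is not needed for the conclusion.)

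The only point requiring care---the ``obstacle,'' such as it is---is purely a matter of interpretation: one must fix that ``submonoid'' is taken inside the monoid of positive subunital maps, so that both closure under composition and the subunitality of every element are automatic. Once that is made explicit, as in the first paragraph above, the corollary is a direct instance of Theorem~\ref{thm:persistence-unitality} and no further argument is required.
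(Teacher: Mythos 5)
Your proof is correct and matches the paper's intent: the corollary is stated without proof precisely because it is the immediate specialization of Theorem~\ref{thm:persistence-unitality} to a finite submonoid, which is in particular a finite composition-closed family of persistent positive subunital maps. Your preliminary check that such maps form a monoid is sound bookkeeping and adds nothing beyond what the paper implicitly assumes.
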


\subsection*{Corner-faithfulness: a weaker condition for unitality}

The persistence condition is quite strong. We now introduce a weaker condition that still implies unitality in finite composition-closed families.

\begin{definition}[Corner-faithfulness]\label{def:corner-faithful}
Let $T$ be a positive subunital map on $M_d$ with finite stabilization index $n_T < \infty$ and orbit-support projection $Q_T := \bigvee_{k < n_T} \supp(T^k(d(T)))$. We say $T$ is \emph{corner-faithful} if
\[
x \ge 0, \quad x \ne 0, \quad \supp(x) \le Q_T \implies T(x) \ne 0.
\]
\end{definition}

\begin{theorem}[Corner-faithfulness implies unitality]\label{thm:corner-faithful-unital}
Let $T$ be a CP subunital map on $M_d$ with finite stabilization index $n_T < \infty$. If $T$ is corner-faithful, then $T$ is unital.
\end{theorem}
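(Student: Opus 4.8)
The plan is to leverage the defect decomposition from Lemma~\ref{lem:iterated-cocycle} together with the stabilization guaranteed by Theorem~\ref{thm:defect-annihilation}, and then run the same ``persistence kills the defect'' argument as in Theorem~\ref{thm:persistence-unitality}, but localized to the corner $Q_T$. First I would recall that since $n_T < \infty$, we have $T^{n_T}(d(T)) = 0$, and by the monotonicity of the unit orbit (Remark~\ref{rem:monotone}) combined with Remark~\ref{rem:stabilization}, the orbit $\{T^k(u)\}$ is constant for $k \ge n_T$; in particular $u - T^{n_T}(u) = \sum_{k=0}^{n_T - 1} T^k(d(T))$, and all the ``loss'' is carried by the finitely many positive elements $T^k(d(T))$ with $k < n_T$, whose supports are exactly what $Q_T$ is the join of. The key observation is that each of these summands $T^k(d(T))$ has support dominated by $Q_T$ by construction.

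The main step is to show $d(T) = 0$. Suppose not, so $d(T) > 0$. Since $\supp(d(T)) = \supp(T^0(d(T))) \le Q_T$, corner-faithfulness gives $T(d(T)) \ne 0$; and $T(d(T)) \ge 0$, so $T(d(T)) > 0$. Now I need $\supp(T(d(T))) \le Q_T$ in order to reapply corner-faithfulness and continue the induction to conclude $T^k(d(T)) > 0$ for all $k$, contradicting $T^{n_T}(d(T)) = 0$. But $\supp(T(d(T))) = \supp(T^1(d(T)))$, and if $1 < n_T$ this is one of the defining terms of $Q_T$, hence $\le Q_T$; inductively, for each $k < n_T$ we have $\supp(T^k(d(T))) \le Q_T$ directly from the definition of $Q_T$. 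So the induction $T^k(d(T)) > 0$ proceeds cleanly for $k = 0, 1, \ldots, n_T - 1$: at each stage $x := T^k(d(T))$ satisfies $x > 0$ and $\supp(x) \le Q_T$, so $T(x) = T^{k+1}(d(T)) \ne 0$ and is positive, hence $> 0$. Taking $k = n_T - 1$ yields $T^{n_T}(d(T)) > 0$, contradicting the definition of $n_T$. Therefore $d(T) = 0$, i.e.\ $T(u) = u$.

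The one subtlety I want to handle carefully is the degenerate case $n_T = 1$: then $Q_T = \supp(d(T))$ (an empty join beyond $k=0$ is just the $k=0$ term, or if $d(T)=0$ then $Q_T = 0$ and there is nothing to prove since $T$ is already unital). If $d(T) > 0$, corner-faithfulness applied once to $x = d(T)$ gives $T(d(T)) \ne 0$, contradicting $T^1(d(T)) = T^{n_T}(d(T)) = 0$ immediately. So the argument is actually shortest in that case. I expect the main obstacle to be purely expository: making sure the reader sees that ``$\supp(T^k(d(T))) \le Q_T$ for all $k < n_T$'' is immediate from the definition of $Q_T$ as a join over exactly those indices, so that corner-faithfulness is applicable at every step of the induction — there is no hidden requirement that $T$ preserve the corner $Q_T$ as a whole, only that it not annihilate the specific orbit elements, and those are manifestly supported in $Q_T$.
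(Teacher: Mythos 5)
Your proposal is correct and uses essentially the same mechanism as the paper: corner-faithfulness applied to an orbit element $T^k(d(T))$ with $k < n_T$, whose support lies in $Q_T$ by definition, contradicts $T^{n_T}(d(T)) = 0$. The paper's proof is just a more economical version of yours---it invokes minimality of $n_T$ to know $T^{n_T-1}(d(T)) \ne 0$ directly and applies corner-faithfulness once to that element, rather than running your induction up from $k = 0$.
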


\begin{proof}
Suppose $d(T) \ne 0$. By minimality of $n_T$, we have $T^{n_T - 1}(d(T)) \ne 0$ while $T^{n_T}(d(T)) = 0$. Since $\supp(T^{n_T - 1}(d(T))) \le Q_T$, corner-faithfulness implies $T(T^{n_T - 1}(d(T))) \ne 0$, contradicting $T^{n_T}(d(T)) = 0$.
\end{proof}

\begin{remark}
The argument only needs existence of $n$ with $T^n(d(T)) = 0$; membership in a finite composition-closed family is one sufficient hypothesis (Theorem~\ref{thm:defect-annihilation}).
\end{remark}

\begin{proposition}[Kraus criterion for corner-faithfulness]\label{prop:corner-faithful-kraus}
Let $T(X) = \sum_i V_i^* X V_i$ be CP on $M_d$ with orbit-support projection $Q$. Then $T$ is corner-faithful iff
\[
QH \cap \bigcap_i \ker(V_i^*) = \{0\}.
\]
Equivalently: for every $0 \ne v \in QH$, there exists $i$ with $V_i^* v \ne 0$.
\end{proposition}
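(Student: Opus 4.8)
The plan is to pin down, for a PSD matrix $x$ with $\supp(x)\le Q$, exactly when $T(x)=0$, and then observe that a nonzero such $x$ exists precisely when $QH$ meets $\bigcap_i\ker(V_i^*)$ nontrivially; it is cleanest to prove the ``equivalently'' reformulation (for every $0\ne v\in QH$ some $V_i^* v\ne 0$) in contrapositive form in both directions. The key reduction: for $x\ge 0$ each Kraus term $V_i^* x V_i$ is PSD, so by pointed-cone cancellation (Lemma~\ref{lem:cancellation} applied to the PSD cone of $M_d$), $T(x)=0$ iff $V_i^* x V_i=0$ for every $i$; and for PSD $x$, the inequality $|\langle x\xi,\eta\rangle|^2\le\langle x\xi,\xi\rangle\langle x\eta,\eta\rangle$ gives $\langle x\xi,\xi\rangle=0\Rightarrow x\xi=0$, so taking $\xi$ over $\Ran(V_i)$ shows $V_i^* x V_i=0$ iff $xV_i=0$ iff $V_i^* x=0$ iff $\Ran(x)\subseteq\ker(V_i^*)$. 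Intersecting over $i$: for $x\ge 0$,
\[
T(x)=0\iff\Ran(x)\subseteq\bigcap_i\ker(V_i^*).
\]

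With this in hand both directions are immediate. If $T$ is not corner-faithful, choose $x\ge 0$, $x\ne 0$, $\supp(x)\le Q$, $T(x)=0$; then $\Ran(x)=\supp(x)H\subseteq QH$ and, by the reduction, $\Ran(x)\subseteq\bigcap_i\ker(V_i^*)$, so any $0\ne v\in\Ran(x)$ witnesses $QH\cap\bigcap_i\ker(V_i^*)\ne\{0\}$. Conversely, given $0\ne v\in QH$ with $V_i^* v=0$ for all $i$, the rank-one operator $x=vv^*$ is PSD and nonzero with $\supp(x)$ the projection onto $\mathbb{C}v\subseteq QH$ (hence $\supp(x)\le Q$), and $T(x)=\sum_i(V_i^* v)(V_i^* v)^*=0$, so $T$ is not corner-faithful. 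The two displayed forms of the criterion are trivially equivalent, since $v\in\bigcap_i\ker(V_i^*)$ says exactly that $V_i^* v=0$ for all $i$.

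I do not expect a serious obstacle; the only points needing care are elementary finite-dimensional facts --- that for PSD $x$ one has $\Ran(x)=(\ker x)^\perp=\supp(x)H$, that $\langle x\xi,\xi\rangle=0\Rightarrow x\xi=0$ for PSD $x$, and that a nonzero PSD operator supported inside a subspace $W$ exists iff $W\ne\{0\}$. One could instead run the whole argument at the level of support projections, using $\bigcap_i\ker(V_i^*)=\bigl(\sum_i\Ran V_i\bigr)^\perp$ and meets of projections, but the vector-level version above is the most economical.
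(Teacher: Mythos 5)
Your proof is correct and follows essentially the same route as the paper's: both reduce to the observation that, by pointedness of the PSD cone, $T(x)=0$ for $x\ge 0$ forces each Kraus term to vanish, which happens exactly when $\Ran(x)\subseteq\bigcap_i\ker(V_i^*)$, and both use the rank-one witness $x=vv^*$ for the converse. The only cosmetic difference is that you treat general PSD $x$ directly via the range characterization, whereas the paper decomposes $x$ into rank-ones first; the content is identical.
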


\begin{proof}
For rank-one $x = vv^*$ with $v \in QH$:
\[
T(vv^*) = \sum_i V_i^* vv^* V_i = \sum_i |V_i^* v\rangle\langle V_i^* v|,
\]
which vanishes iff $V_i^* v = 0$ for all $i$. Since every positive $x$ with $\supp(x) \le Q$ is a sum of such rank-ones, corner-faithfulness holds iff no nonzero $v \in QH$ lies in $\bigcap_i \ker(V_i^*)$.
\end{proof}

\begin{remark}[Hierarchy of unitality conditions]\label{rem:faithfulness-hierarchy}
The hierarchy of conditions ensuring unitality in finite composition-closed families is:
\[
\text{persistence (Thm~\ref{thm:persistence-unitality})} \implies \text{corner-faithfulness} \implies \text{unitality}.
\]
Persistence requires $T$ to map nonzero positive elements to nonzero positive elements; corner-faithfulness only requires this for elements supported on the orbit corner $Q_T$. The Kraus criterion shows corner-faithfulness is equivalent to $\{V_i^*\}$ having trivial common kernel on $QH$.
\end{remark}

\begin{proposition}[Projection-faithfulness implies unitality]\label{prop:projection-faithful}
Let $T : M_d \to M_d$ be CP subunital with finite stabilization index $n_T < \infty$. Suppose:
\[
\text{for every nonzero projection } 0 \ne p \le Q_T, \text{ we have } T(p) \ne 0. \quad \text{(projection-faithfulness)}
\]
Then $T$ is corner-faithful, hence unital.
\end{proposition}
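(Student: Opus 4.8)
The plan is to show that projection-faithfulness upgrades to corner-faithfulness by an elementary spectral minorization, and then invoke Theorem~\ref{thm:corner-faithful-unital}. So suppose $x \ge 0$, $x \ne 0$, and $\supp(x) \le Q_T$; the goal is to show $T(x) \ne 0$. First I would set $p := \supp(x)$, the support projection of $x$. Then $p$ is a nonzero projection with $p \le Q_T$, so the projection-faithfulness hypothesis immediately gives $T(p) \ne 0$ (and $T(p) \ge 0$ by positivity of $T$).

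Next I would produce a scalar lower bound $x \ge \lambda p$ with $\lambda > 0$. Restricted to its range $pH$ the operator $x$ is positive definite, so $\lambda := \lambda_{\min}\!\left(x|_{pH}\right) > 0$, and $x - \lambda p \ge 0$ since this difference is zero on $(1-p)H$ and positive semidefinite on $pH$. Applying the positive (hence order-preserving) map $T$ to $x \ge \lambda p$ yields $T(x) \ge \lambda\, T(p) \ge 0$.

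Finally I would conclude using pointedness of the PSD cone in $M_d$ (automatic, as recorded in Remark~\ref{rem:pointedness-map}, and already the engine of Lemma~\ref{lem:cancellation}). If $T(x) = 0$, then $0 \ge \lambda\, T(p) \ge 0$, so $\lambda\, T(p) = 0$ by pointedness, whence $T(p) = 0$ since $\lambda > 0$---contradicting projection-faithfulness. Therefore $T(x) \ne 0$ for every nonzero positive $x$ with $\supp(x) \le Q_T$, which is precisely corner-faithfulness; Theorem~\ref{thm:corner-faithful-unital} then gives unitality.

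I do not anticipate a genuine obstacle here. The only step requiring a little care is the passage from ``$\supp(x) \le Q_T$'' to ``$x \ge \lambda\,\supp(x)$ with $\supp(x)$ a nonzero projection dominated by $Q_T$'', which is just the standard spectral minorization of a positive matrix by a multiple of its support projection, restricted to the corner $Q_T M_d Q_T$. Everything else is monotonicity of $T$ together with pointedness of the matrix cone.
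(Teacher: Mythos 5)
Your proof is correct. The paper's own argument is the same in spirit---reduce to projections below $Q_T$ via spectral theory and invoke Theorem~\ref{thm:corner-faithful-unital}---but executes the reduction slightly differently: it writes the full spectral decomposition $x = \sum_j \lambda_j p_j$ with $\lambda_j > 0$ and $0 \ne p_j \le Q_T$, applies projection-faithfulness to \emph{each} spectral projection $p_j$, and concludes $T(x) = \sum_j \lambda_j T(p_j) \ne 0$ because a sum of positive terms, at least one of which is nonzero, cannot vanish (Lemma~\ref{lem:cancellation}). You instead invoke the hypothesis exactly once, on the single projection $p = \supp(x)$, and use the minorization $x \ge \lambda p$ with $\lambda = \lambda_{\min}(x|_{pH}) > 0$ together with monotonicity of $T$ and antisymmetry of the order. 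Both routes are valid and equally elementary; yours is marginally more economical in its use of the hypothesis and makes the role of pointedness explicit, while the paper's version displays the additive structure of $T(x)$ over the spectral projections. In either case the substantive content lives in Theorem~\ref{thm:corner-faithful-unital}, which you correctly cite to finish.
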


\begin{proof}
Let $0 \ne x \ge 0$ with $\supp(x) \le Q_T$. Write the spectral decomposition $x = \sum_j \lambda_j p_j$ with $\lambda_j > 0$ and $0 \ne p_j \le Q_T$. By projection-faithfulness, $T(p_j) \ne 0$ and $T(p_j) \ge 0$ for each $j$. Hence $T(x) = \sum_j \lambda_j T(p_j) \ne 0$. Thus $T$ is corner-faithful, and unitality follows from Theorem~\ref{thm:corner-faithful-unital}.
\end{proof}

\begin{remark}[Projection-faithfulness $\Leftrightarrow$ Kraus common-kernel condition]
By Proposition~\ref{prop:corner-faithful-kraus}, projection-faithfulness is equivalent to
\[
Q_T H \cap \bigcap_i \ker(V_i^*) = \{0\}.
\]
Indeed, if a nonzero $v \in Q_T H$ lies in the common kernel, then $p = |v\rangle\langle v| \le Q_T$ satisfies $T(p) = 0$. Conversely, if $T(p) = 0$ for some nonzero projection $p \le Q_T$, the range of $p$ contains such a $v$. Thus projection-faithfulness and corner-faithfulness are equivalent for CP maps.
\end{remark}

\section{Examples and Counterexamples}\label{sec:examples}

\begin{example}[Scalar case]\label{ex:scalar}
Let $E = \mathbb{R}$ with cone $E_+ = [0, \infty)$ and unit $u = 1$. A positive subunital map is multiplication by some $\lambda \in [0, 1]$, with defect $d(T_\lambda) = 1 - \lambda$.

For $\lambda \in (0, 1)$, we have $T_\lambda^n(d(T_\lambda)) = \lambda^n(1 - \lambda)$, which is never zero for finite $n$. This shows the finiteness hypothesis in Theorem~\ref{thm:defect-annihilation} is essential.

However, if we take $A = \{T_0, T_1\}$ (the only finite composition-closed sets of such maps are $\{T_0\}$, $\{T_1\}$, and $\{T_0, T_1\}$), then indeed $T_0(d(T_0)) = 0$ and $d(T_1) = 0$.
\end{example}

\begin{example}[Defect-annihilating but nonunital]\label{ex:projection}
Let $E = \mathbb{R}^2$ with standard cone $E_+ = \{(x, y) : x, y \ge 0\}$ and unit $u = (1, 1)$. Consider the projection
\[
T(x, y) = (x, 0).
\]
Then $T$ is positive, subunital (since $T(u) = (1, 0) \le (1, 1) = u$), and $d(T) = (0, 1)$. We have
\[
T(d(T)) = T(0, 1) = (0, 0) = 0,
\]
so $T^1(d(T)) = 0$, satisfying the conclusion of Theorem~\ref{thm:defect-annihilation} with $n = 1$.

However, $T$ is not persistent: $(0, 1) > 0$ but $T(0, 1) = 0$. And $T$ is not unital. This shows:
\begin{enumerate}[(i)]
\item The conclusion of Theorem~\ref{thm:defect-annihilation} (defect annihilation) does not imply unitality without persistence.
\item The persistence hypothesis in Theorem~\ref{thm:persistence-unitality} is necessary.
\end{enumerate}
\end{example}

\begin{example}[Stochastic matrices]\label{ex:stochastic}
Let $E = \mathbb{R}^n$ with componentwise order and unit $u = (1, \ldots, 1)$. Row-stochastic matrices act as positive maps. A matrix $P$ is substochastic if each row sums to at most 1, and stochastic if each row sums to exactly 1.

A finite composition-closed set of substochastic matrices must, by Theorem~\ref{thm:defect-annihilation}, have the property that each matrix's defect is eventually annihilated.

For the persistence condition: with respect to the standard cone (componentwise nonnegativity), a substochastic matrix $P$ is persistent (strictly positive on the cone) if and only if it has all entries strictly positive, i.e., $P_{ij} > 0$ for all $i, j$. Indeed, if some entry $P_{ij} = 0$, then choosing $x = e_j$ (the $j$-th standard basis vector) gives $(Px)_i = 0$, violating strict positivity.

This condition is stronger than irreducibility. If all matrices in $A$ satisfy this condition, then by Theorem~\ref{thm:persistence-unitality}, all must be stochastic.
\end{example}

\begin{example}[Quantum operations]\label{ex:quantum}
Let $H$ be a finite-dimensional Hilbert space and $E = B(H)_{\mathrm{sa}}$ the real vector space of self-adjoint operators, with cone $E_+$ the positive semidefinite operators and unit $u = I$ the identity.

A quantum operation (trace-nonincreasing CP map) $\Phi : B(H) \to B(H)$ induces a dual map $\Phi^*$ on effects (the Heisenberg picture). The map $\Phi^*$ is positive and subunital; it is unital iff $\Phi$ is trace-preserving (i.e., $\Phi$ is a quantum channel in the strict sense).

The defect $d(\Phi^*) = I - \Phi^*(I)$ represents the ``loss'' operator. Theorem~\ref{thm:defect-annihilation} says that in any finite composition-closed family of such operations, the loss is eventually annihilated under iteration.
\end{example}

\section{Parallel Composition}\label{sec:parallel}

We now investigate how defects behave under tensor products, which model parallel composition of independent processes.

\subsection*{Setup}

Let $(E_1, \le, u_1)$ and $(E_2, \le, u_2)$ be ordered effect spaces. Assume we have a tensor product $E_1 \otimes E_2$ that is again an ordered effect space with unit $u_1 \otimes u_2$, satisfying: $x_1 \ge 0$ and $x_2 \ge 0$ implies $x_1 \otimes x_2 \ge 0$. For positive subunital maps $T_1 : E_1 \to E_1$ and $T_2 : E_2 \to E_2$, write $T_1 \otimes T_2$ for the induced map on $E_1 \otimes E_2$. Note that $T_1 \otimes T_2$ is subunital since $(T_1 \otimes T_2)(u_1 \otimes u_2) = T_1(u_1) \otimes T_2(u_2) \le u_1 \otimes u_2$.

\begin{remark}[Tensor cone assumption]
The assumption that $x_i \ge 0$ implies $x_1 \otimes x_2 \ge 0$ specifies which tensor cone we use. In general ordered vector spaces, multiple inequivalent tensor cones exist. Our results hold for any tensor product preserving positivity in this sense.
\end{remark}

\begin{definition}[Success]
For a subunital map $T$ on $(E, \le, u)$, the \emph{success} of $T$ is
\[
s(T) = T(u).
\]
Thus $d(T) = u - s(T)$.
\end{definition}

\begin{proposition}[Multiplicativity of success]
Success is multiplicative under tensor products:
\[
s(T_1 \otimes T_2) = s(T_1) \otimes s(T_2).
\]
\end{proposition}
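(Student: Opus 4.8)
The plan is to compute $s(T_1 \otimes T_2)$ directly from the definition and use the defining property of the tensor product map. By definition, $s(T_1 \otimes T_2) = (T_1 \otimes T_2)(u_1 \otimes u_2)$, where $u_1 \otimes u_2$ is the unit of $E_1 \otimes E_2$. The induced tensor map acts on simple tensors by $(T_1 \otimes T_2)(x_1 \otimes x_2) = T_1(x_1) \otimes T_2(x_2)$; applying this with $x_i = u_i$ yields $(T_1 \otimes T_2)(u_1 \otimes u_2) = T_1(u_1) \otimes T_2(u_2) = s(T_1) \otimes s(T_2)$. This is essentially the same one-line computation already recorded in the subunitality remark preceding the statement, so the proof is immediate.

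The only point requiring care is to confirm that the action of $T_1 \otimes T_2$ on the unit $u_1 \otimes u_2$ is legitimately described by the simple-tensor rule. Since $u_1 \otimes u_2$ is by hypothesis a simple tensor (it is the designated unit of the tensor effect space, constructed from the units of the factors), there is no need to expand it as a sum of simple tensors, and no multilinearity bookkeeping is needed. Thus I expect no real obstacle; the statement is a definitional consequence of how $s$ and $\otimes$ of maps are set up, and it is stated separately mainly to contrast with the additive-plus-correction behavior of the defect under parallel composition.

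\begin{proof}
By definition of success and of the unit of the tensor product,
\[
s(T_1 \otimes T_2) = (T_1 \otimes T_2)(u_1 \otimes u_2).
\]
The induced map $T_1 \otimes T_2$ acts on simple tensors by $(T_1 \otimes T_2)(x_1 \otimes x_2) = T_1(x_1) \otimes T_2(x_2)$. Applying this to the simple tensor $u_1 \otimes u_2$ gives
\[
(T_1 \otimes T_2)(u_1 \otimes u_2) = T_1(u_1) \otimes T_2(u_2) = s(T_1) \otimes s(T_2),
\]
which is the claimed identity.
\end{proof}
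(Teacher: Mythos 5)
Your proof is correct and is essentially the same one-line direct computation the paper gives: evaluate $T_1 \otimes T_2$ on the simple tensor $u_1 \otimes u_2$ and identify the result as $s(T_1) \otimes s(T_2)$. No further comment is needed.
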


\begin{proof}
Direct computation:
\[
s(T_1 \otimes T_2) = (T_1 \otimes T_2)(u_1 \otimes u_2) = T_1(u_1) \otimes T_2(u_2) = s(T_1) \otimes s(T_2). \qedhere
\]
\end{proof}

\begin{proposition}[Defect under parallel composition]\label{prop:parallel-defect}
The defect of a tensor product satisfies:
\[
d(T_1 \otimes T_2) = d(T_1) \otimes u_2 + u_1 \otimes d(T_2) - d(T_1) \otimes d(T_2).
\]
Equivalently,
\[
d(T_1 \otimes T_2) = u_1 \otimes d(T_2) + d(T_1) \otimes s(T_2).
\]
\end{proposition}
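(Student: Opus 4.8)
The plan is to reduce the identity to the multiplicativity of success established above, after which only the bilinearity of $\otimes$ is needed. First I would write, by definition of the defect and then by the preceding proposition,
\[
d(T_1 \otimes T_2) = u_1 \otimes u_2 - s(T_1 \otimes T_2) = u_1 \otimes u_2 - s(T_1) \otimes s(T_2).
\]
This isolates the whole problem as expressing $u_1 \otimes u_2 - s(T_1) \otimes s(T_2)$ in terms of the defects $d(T_i)$.

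Next I would substitute $s(T_i) = u_i - d(T_i)$ and expand $(u_1 - d(T_1)) \otimes (u_2 - d(T_2))$ using bilinearity of the tensor product in each argument. The four resulting terms are $u_1 \otimes u_2$, $-\,u_1 \otimes d(T_2)$, $-\,d(T_1) \otimes u_2$, and $+\,d(T_1) \otimes d(T_2)$; subtracting this expansion from $u_1 \otimes u_2$ cancels the leading term and leaves $d(T_1) \otimes u_2 + u_1 \otimes d(T_2) - d(T_1) \otimes d(T_2)$, which is the first displayed identity. For the second (asymmetric) form I would group the two terms containing $d(T_1)$ and factor, $d(T_1) \otimes u_2 - d(T_1) \otimes d(T_2) = d(T_1) \otimes (u_2 - d(T_2)) = d(T_1) \otimes s(T_2)$, obtaining $d(T_1 \otimes T_2) = u_1 \otimes d(T_2) + d(T_1) \otimes s(T_2)$.

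I do not anticipate any genuine obstacle: the argument is a two-line computation once multiplicativity of success is in hand, and it invokes only bilinearity of $\otimes$---no order-theoretic input is required, so the positivity of $d(T_i)$, $s(T_i)$, and $u_i$ plays no role in the identity itself (it merely guarantees the summands land in the relevant cones). The one point worth recording is the inclusion--exclusion shape of the first form: the parallel composite leaks precisely when at least one factor leaks, with the $-\,d(T_1) \otimes d(T_2)$ term correcting for the doubly counted event in which both factors leak simultaneously.
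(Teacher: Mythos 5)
Your proposal is correct and follows essentially the same route as the paper's proof: both invoke multiplicativity of success, substitute $s(T_i) = u_i - d(T_i)$, expand by bilinearity for the first form, and factor $d(T_1) \otimes (u_2 - d(T_2)) = d(T_1) \otimes s(T_2)$ for the second. No gaps.
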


\begin{proof}
For the first form, write $s_i = u_i - d_i$:
\begin{align*}
d(T_1 \otimes T_2) &= u_1 \otimes u_2 - s(T_1) \otimes s(T_2) \\
&= u_1 \otimes u_2 - (u_1 - d_1) \otimes (u_2 - d_2) \\
&= u_1 \otimes u_2 - u_1 \otimes u_2 + u_1 \otimes d_2 + d_1 \otimes u_2 - d_1 \otimes d_2 \\
&= d_1 \otimes u_2 + u_1 \otimes d_2 - d_1 \otimes d_2.
\end{align*}
For the second form, note $u_2 - d_2 = s(T_2)$, so
\[
d_1 \otimes u_2 - d_1 \otimes d_2 = d_1 \otimes (u_2 - d_2) = d_1 \otimes s(T_2). \qedhere
\]
\end{proof}

\begin{remark}[Inclusion--exclusion]
The first form in Proposition~\ref{prop:parallel-defect} has an inclusion--exclusion interpretation: the total ``failure'' of the parallel system is the failure of the first component (with second running), plus the failure of the second (with first running), minus the double-counted case where both fail.
\end{remark}

\subsection*{Stabilization under parallel composition}

For $T$ in a finite composition-closed family, let $n_T$ denote the minimal $n \ge 1$ such that $T^n(d(T)) = 0$ (the \emph{stabilization index}).

\begin{theorem}[Parallel composition bound]\label{thm:parallel-bound}
If $T$ and $S$ have stabilization indices $n_T$ and $n_S$ respectively, then
\[
n_{T \otimes S} \le \max(n_T, n_S).
\]
\end{theorem}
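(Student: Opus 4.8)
The plan is to compute $(T \otimes S)^n(d(T \otimes S))$ explicitly using the product structure and the expansion of the tensor defect from Proposition~\ref{prop:parallel-defect}. First I would observe that $(T \otimes S)^n = T^n \otimes S^n$, since tensor products of linear maps compose componentwise. Next I would use the second form of Proposition~\ref{prop:parallel-defect}, namely $d(T \otimes S) = u_1 \otimes d(S) + d(T) \otimes s(S)$, although the symmetric first form $d(T \otimes S) = d(T) \otimes u_2 + u_1 \otimes d(S) - d(T) \otimes d(S)$ may be cleaner for the computation. Applying $T^n \otimes S^n$ to the first form gives three terms: $T^n(d(T)) \otimes S^n(u_2)$, $T^n(u_1) \otimes S^n(d(S))$, and $-\,T^n(d(T)) \otimes S^n(d(S))$.

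Now set $n = \max(n_T, n_S)$. Then $n \ge n_T$, so $T^n(d(T)) = 0$ (using Remark~\ref{rem:stabilization}/Remark~\ref{rem:operational}: once $T^{n_T}(d(T)) = 0$, all later iterates also vanish since $T^{k}(d(T)) = T^{k - n_T}(T^{n_T}(d(T))) = 0$ for $k \ge n_T$). Similarly $n \ge n_S$ forces $S^n(d(S)) = 0$. Hence the first and third terms vanish because their left tensor factor is zero, and the second term vanishes because its right tensor factor is zero. Therefore $(T \otimes S)^n(d(T \otimes S)) = 0$, which by definition of the stabilization index of $T \otimes S$ (assuming it lies in a finite composition-closed family, or simply invoking that this $n$ witnesses annihilation) gives $n_{T \otimes S} \le n = \max(n_T, n_S)$.

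The main subtlety to handle carefully is the bilinearity bookkeeping: one must know that $T^n \otimes S^n$ applied to a sum of simple tensors distributes term by term and that $(\alpha \otimes \beta)$ with $\alpha = 0$ forces the simple tensor to be $0$ in $E_1 \otimes E_2$ — this is immediate from bilinearity of the tensor map, so no order-theoretic input (not even pointedness) is needed here. A second minor point is to confirm that $T \otimes S$ is itself subunital so that $d(T \otimes S)$ and its stabilization index make sense; this was already noted in the setup ($(T \otimes S)(u_1 \otimes u_2) = T(u_1) \otimes S(u_2) \le u_1 \otimes u_2$). I expect the only ``obstacle'' is purely expository: deciding whether to phrase the conclusion as ``$n = \max(n_T, n_S)$ witnesses annihilation'' directly, or to additionally assume $T \otimes S$ belongs to a finite composition-closed family so that $n_{T \otimes S}$ is defined via Definition~\ref{def:stabilization-index} — the cleanest route is to note that $\{(T\otimes S)^k\} = \{T^k \otimes S^k\}$ is finite whenever the factor orbits are, so $n_{T \otimes S}$ is well-defined and the displayed bound follows.
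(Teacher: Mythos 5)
Your proposal is correct and follows essentially the same route as the paper: both use $(T \otimes S)^k = T^k \otimes S^k$ together with the decomposition of $d(T \otimes S)$ from Proposition~\ref{prop:parallel-defect}, then observe that every term contains a factor $T^k(d(T))$ or $S^k(d(S))$ that vanishes once $k \ge \max(n_T, n_S)$. The only cosmetic difference is that you expand via the three-term inclusion--exclusion form while the paper uses the two-term form $d(T \otimes S) = u_1 \otimes d(S) + d(T) \otimes s(S)$; both work identically.
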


\begin{proof}
The identity $(T \otimes S)^k = T^k \otimes S^k$ holds by functoriality of the tensor product (both sides are the unique bilinear extension of $(x_1, x_2) \mapsto T^k(x_1) \otimes S^k(x_2)$). Using this and the second form of Proposition~\ref{prop:parallel-defect}:
\[
(T \otimes S)^k(d(T \otimes S)) = T^k(u_1) \otimes S^k(d(S)) + T^k(d(T)) \otimes S^{k+1}(u_2).
\]
For $k \ge \max(n_T, n_S)$:
\begin{itemize}
\item $S^k(d(S)) = 0$ since $k \ge n_S$,
\item $T^k(d(T)) = 0$ since $k \ge n_T$.
\end{itemize}
Hence $(T \otimes S)^k(d(T \otimes S)) = 0$.
\end{proof}

\begin{remark}[Operational interpretation]
Theorem~\ref{thm:parallel-bound} says that parallel composition does not increase the stabilization time beyond the slower component. A subsystem that has stopped leaking cannot reintroduce leakage by running in parallel with another system.
\end{remark}

\begin{corollary}
If $A_1$ and $A_2$ are finite composition-closed families on $E_1$ and $E_2$ respectively, and $A_1 \otimes A_2 = \{T_1 \otimes T_2 : T_i \in A_i\}$ is composition-closed, then
\[
\max_{T \in A_1 \otimes A_2} n_T \le \max\left(\max_{T_1 \in A_1} n_{T_1}, \max_{T_2 \in A_2} n_{T_2}\right).
\]
\end{corollary}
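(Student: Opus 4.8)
The plan is to reduce the corollary to Theorem~\ref{thm:parallel-bound} by a straightforward induction on the ``width'' of the tensor product, combined with the bound $n_T \le |A|$ from Theorem~\ref{thm:defect-annihilation} as a safety net. First I would observe that an arbitrary element of $A_1 \otimes A_2$ is precisely a map of the form $T_1 \otimes T_2$ with $T_1 \in A_1$ and $T_2 \in A_2$; since $A_1 \otimes A_2$ is assumed composition-closed and finite (it has at most $|A_1|\cdot|A_2|$ elements), each such $T_1 \otimes T_2$ lies in a finite composition-closed family, so its stabilization index $n_{T_1 \otimes T_2}$ is well-defined by Theorem~\ref{thm:defect-annihilation}. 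Likewise, each $T_i$ lies in the finite composition-closed family $A_i$, so $n_{T_i}$ is well-defined.

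Next I would apply Theorem~\ref{thm:parallel-bound} directly: for any fixed $T_1 \in A_1$ and $T_2 \in A_2$,
\[
n_{T_1 \otimes T_2} \le \max(n_{T_1}, n_{T_2}) \le \max\Bigl(\max_{S_1 \in A_1} n_{S_1},\ \max_{S_2 \in A_2} n_{S_2}\Bigr).
\]
Taking the maximum over all $T_1 \in A_1$ and $T_2 \in A_2$ on the left-hand side (equivalently, over all $T \in A_1 \otimes A_2$) gives exactly the claimed inequality. So the proof is essentially a one-line consequence of the pairwise bound, once the well-definedness bookkeeping is in place.

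The only subtle point — and the step I expect to require the most care — is verifying that $n_{T_1 \otimes T_2}$ as computed inside the family $A_1 \otimes A_2$ agrees with the intrinsic stabilization index $\min\{n \ge 1 : (T_1\otimes T_2)^n(d(T_1 \otimes T_2)) = 0\}$ used in Theorem~\ref{thm:parallel-bound}. By Definition~\ref{def:stabilization-index}, $n_T$ is intrinsic to $T$ (it does not depend on the ambient family, only on whether some finite composition-closed family containing $T$ exists to guarantee finiteness), so this is immediate; I would just remark that the hypothesis ``$A_1 \otimes A_2$ composition-closed'' is used solely to certify that these indices are finite, and that once finite, Theorem~\ref{thm:parallel-bound}'s bound is a pointwise statement about the maps $T_1, T_2, T_1 \otimes T_2$ themselves. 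A one-sentence proof suffices:

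\begin{proof}
Every element of $A_1 \otimes A_2$ has the form $T_1 \otimes T_2$ with $T_i \in A_i$. Since $A_1 \otimes A_2$ is finite and composition-closed, Theorem~\ref{thm:defect-annihilation} guarantees $n_{T_1 \otimes T_2} < \infty$, and similarly $n_{T_i} < \infty$ since each $A_i$ is finite and composition-closed. By Theorem~\ref{thm:parallel-bound},
\[
n_{T_1 \otimes T_2} \le \max(n_{T_1}, n_{T_2}) \le \max\Bigl(\max_{S_1 \in A_1} n_{S_1},\ \max_{S_2 \in A_2} n_{S_2}\Bigr).
\]
Taking the supremum of the left-hand side over all $T_1 \in A_1$, $T_2 \in A_2$ (equivalently, over all $T \in A_1 \otimes A_2$) yields the claim.
\end{proof}
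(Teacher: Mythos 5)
Your proof is correct and is essentially the argument the paper intends: the corollary is an immediate pointwise application of Theorem~\ref{thm:parallel-bound} to each $T_1 \otimes T_2$, followed by taking maxima, and your bookkeeping about the intrinsic nature of $n_T$ is accurate (indeed, Theorem~\ref{thm:parallel-bound} already yields finiteness of $n_{T_1 \otimes T_2}$ from finiteness of $n_{T_1}$ and $n_{T_2}$, so the composition-closure of $A_1 \otimes A_2$ is not even strictly needed). No gaps.
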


\subsection*{Algebraic structure}

The defect cocycle interacts with sequential and parallel composition in complementary ways:

\begin{center}
\begin{tabular}{lll}
\textbf{Composition} & \textbf{Success} & \textbf{Defect} \\
\hline
Sequential ($T \circ S$) & $s(TS) = T(s(S))$ & $d(TS) = d(T) + T(d(S))$ \\
Parallel ($T \otimes S$) & $s(T \otimes S) = s(T) \otimes s(S)$ & inclusion--exclusion
\end{tabular}
\end{center}

Success is a homomorphism for both compositions (multiplicative for $\otimes$, equivariant for $\circ$). Defect satisfies a cocycle condition for sequential composition and an inclusion--exclusion law for parallel composition. This algebraic structure is native to symmetric monoidal categories with a notion of ``partiality'' or ``termination,'' as in effectus theory. In categorical terms, kernels in an effectus form a monoidal predicate transformer, and the inclusion--exclusion law for defects reflects the De Morgan structure of effect predicates.

\section{Asymptotic Defects in Infinite Dimensions}\label{sec:asymptotic}

In infinite-dimensional settings, finite-step stabilization typically fails: the orbit $\{T^n(u)\}$ may be infinite even for well-behaved maps. However, the monotonicity observation (Remark~\ref{rem:monotone}) suggests a natural replacement: asymptotic stabilization.

\subsection*{Order-complete setting}

\begin{definition}[Order-complete effect space]
An ordered effect space $(E, \le, u)$ is \emph{order-complete} if every bounded monotone net in $E$ has a supremum (equivalently, an infimum). In particular, bounded monotone sequences have infima and suprema.
\end{definition}

\begin{definition}[Order-continuous map]
A positive map $T : E \to E$ is \emph{order-continuous} (or \emph{normal}) if it preserves infima of bounded decreasing nets: for any decreasing net $(x_\alpha)$ with $\inf_\alpha x_\alpha = x$, we have $\inf_\alpha T(x_\alpha) = T(x)$.
\end{definition}

\begin{example}
In a von Neumann algebra $M$, the effect space $E = \{a \in M_{\mathrm{sa}} : 0 \le a \le I\}$ is order-complete in the ultraweak topology. Normal completely positive maps (in the sense of von Neumann algebras) are order-continuous; see \cite{sakai} for standard facts on von Neumann algebra order structure.
\end{example}

\subsection*{Asymptotic defect}

\begin{definition}[Eventual success and asymptotic defect]
For an order-continuous positive subunital map $T$ on an order-complete effect space $(E, \le, u)$, define the \emph{eventual success}
\[
v = \inf_{n \ge 0} T^n(u)
\]
and the \emph{asymptotic defect}
\[
d_\infty(T) = u - v = \sup_{n \ge 0}(u - T^n(u)).
\]
\end{definition}

\begin{theorem}[Fixed point]\label{thm:fixed-point}
Let $T$ be an order-continuous positive subunital map on an order-complete effect space. Then the eventual success $v = \inf_n T^n(u)$ is a fixed point of $T$:
\[
T(v) = v.
\]
\end{theorem}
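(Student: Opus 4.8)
The plan is to combine three ingredients: monotonicity of the unit orbit, order-completeness to produce the infimum, and order-continuity of $T$ to push $T$ inside the infimum.

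First I would observe that, by Remark~\ref{rem:monotone}, the orbit $(T^n(u))_{n \ge 0}$ is a decreasing sequence lying in the bounded interval $[0,u]$: each $T^n(u) \ge 0$ because $T^n$ is positive and $u \ge 0$, while $T^n(u) \le u$ follows from the chain $u \ge T(u) \ge T^2(u) \ge \cdots$. A decreasing sequence is in particular a bounded decreasing net, so order-completeness guarantees that $v = \inf_{n \ge 0} T^n(u)$ exists in $E$; since $0$ is a lower bound of the orbit we get $v \ge 0$, and since $v \le T^0(u) = u$ in fact $v \in [0,u]$, so $v$ is a legitimate element of the effect space.

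Next I would feed this net into the order-continuity (normality) of $T$. Because $(T^n(u))_{n \ge 0}$ is a bounded decreasing net with infimum $v$, normality of $T$ gives
\[
T(v) \;=\; \inf_{n \ge 0} T\bigl(T^n(u)\bigr) \;=\; \inf_{n \ge 0} T^{n+1}(u) \;=\; \inf_{n \ge 1} T^n(u).
\]
It then remains to identify $\inf_{n \ge 1} T^n(u)$ with $v = \inf_{n \ge 0} T^n(u)$, and this is where monotonicity re-enters: the only index omitted on the right is $n = 0$, whose term $T^0(u) = u$ dominates every $T^n(u)$, so any lower bound of $\{T^n(u) : n \ge 1\}$ is automatically $\le T^1(u) \le u$ and hence also a lower bound of $\{T^n(u) : n \ge 0\}$; thus the two infima coincide and $T(v) = v$.

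I do not expect a serious obstacle: once the setup is in place this is a two-line computation. The only points needing care are (i) verifying that the orbit is genuinely bounded and decreasing so order-completeness applies, which is immediate from Remark~\ref{rem:monotone}, and (ii) noting that order-continuity as defined for nets applies to the sequence $(T^n(u))_n$ since $\mathbb{N}$ is directed. It is worth recording that even without order-continuity one obtains the easy half $T(v) \le v$ for free --- from $v \le T^n(u)$ and monotonicity of $T$ one has $T(v) \le T^{n+1}(u)$ for all $n$, hence $T(v) \le \inf_{n \ge 1} T^n(u) = v$ --- so normality is precisely what supplies the reverse inequality and upgrades ``$v$ is a pre-fixed point'' to ``$v$ is a fixed point.''
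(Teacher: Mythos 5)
Your proposal is correct and follows essentially the same route as the paper's proof: apply order-continuity to the decreasing net $(T^n(u))$ to get $T(v) = \inf_{n \ge 1} T^n(u)$, then use monotonicity to identify this with $\inf_{n \ge 0} T^n(u) = v$. The extra remark that $T(v) \le v$ already holds without normality is a nice observation but not needed.
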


\begin{proof}
Since $T$ is order-continuous and $(T^n(u))_{n \ge 0}$ is decreasing,
\[
T(v) = T\left(\inf_{n \ge 0} T^n(u)\right) = \inf_{n \ge 0} T^{n+1}(u) = \inf_{n \ge 1} T^n(u) = \inf_{n \ge 0} T^n(u) = v,
\]
where the last equality holds because $T^{n+1}(u) \le T^n(u)$, so the infimum over $n \ge 1$ equals the infimum over $n \ge 0$ (the sequence is decreasing and the additional term $T^0(u) = u$ is the largest).
\end{proof}

\begin{theorem}[Infinite telescoping]\label{thm:infinite-telescope}
Under the same hypotheses, the asymptotic defect satisfies
\[
d_\infty(T) = \sup_{n \ge 1} \sum_{k=0}^{n-1} T^k(d(T)).
\]
When $E$ is additionally a complete lattice (or more generally, when the relevant suprema exist), this can be written as
\[
d_\infty(T) = \bigvee_{k=0}^\infty T^k(d(T)).
\]
\end{theorem}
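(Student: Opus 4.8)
The plan is to apply the iterated cocycle identity (Lemma~\ref{lem:iterated-cocycle}) together with order-continuity of $T$ and order-completeness of $E$, reducing the statement to the straightforward fact that passing to the supremum commutes with the already-established telescoping formula $u - T^n(u) = \sum_{k=0}^{n-1} T^k(d(T))$. The monotonicity remark (Remark~\ref{rem:monotone}) guarantees $T^n(u)$ is decreasing, so $u - T^n(u)$ is increasing; since each term is bounded above by $u$, the supremum $d_\infty(T) = \sup_n (u - T^n(u))$ exists in the order-complete setting and equals $u - v$ by definition.

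First I would invoke Lemma~\ref{lem:iterated-cocycle} to rewrite the partial sum: for every $n \ge 1$,
\[
\sum_{k=0}^{n-1} T^k(d(T)) = u - T^n(u).
\]
Since the right-hand side is an increasing sequence bounded above by $u$, taking suprema over $n \ge 1$ gives
\[
\sup_{n \ge 1} \sum_{k=0}^{n-1} T^k(d(T)) = \sup_{n \ge 1} (u - T^n(u)) = u - \inf_{n \ge 1} T^n(u) = u - v = d_\infty(T),
\]
where the middle equality uses that subtraction from the fixed element $u$ is order-reversing and bicontinuous (it sends suprema to infima), and the third uses that $\inf_{n \ge 1} T^n(u) = \inf_{n \ge 0} T^n(u)$ because the sequence is decreasing (the extra term $T^0(u) = u$ does not lower the infimum). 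This establishes the first displayed equality.

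For the second form, I would argue that under the stronger hypothesis (complete lattice, or existence of the relevant suprema), the supremum of the partial sums $\sum_{k=0}^{n-1} T^k(d(T))$ coincides with the join $\bigvee_{k=0}^\infty T^k(d(T))$ of the individual terms. This is a general lattice fact: the partial sums form an increasing sequence whose supremum dominates each $T^k(d(T))$ (since all terms are positive, adding more can only increase the sum), so $\sup_n \sum_{k<n} T^k(d(T)) \ge \bigvee_k T^k(d(T))$; the reverse inequality is the subtler direction and is where I would need the lattice structure to compare arbitrary finite sums against finite joins. In an order-complete vector lattice this follows because each partial sum is itself bounded by the join of its summands only when the join is taken with respect to a lattice operation compatible with addition---so I would state this direction carefully, noting it holds in Banach lattices and von Neumann algebra effect spaces and invoking the hypothesis ``when the relevant suprema exist'' to cover exactly those cases.

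The main obstacle is the second equality: the identification $\sup_n \sum_{k=0}^{n-1} T^k(d(T)) = \bigvee_{k=0}^\infty T^k(d(T))$ is \emph{not} valid in an arbitrary ordered vector space, since finite sums and finite joins of positive elements need not be comparable without lattice structure (e.g.\ in the self-adjoint operators, $a + b \ne a \vee b$ in general, though both are $\ge$ each summand). I expect to handle this by restricting to the lattice-ordered case as the paper's phrasing already anticipates, and by emphasizing that the first form---the one genuinely used downstream---requires only order-completeness and order-continuity, with no lattice hypothesis. I would flag explicitly that the join form is a convenience valid under the extra assumption and should not be read as claiming additivity of the join.
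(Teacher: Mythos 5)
Your proof of the first identity is correct and is exactly the paper's (one-line) argument: invoke Lemma~\ref{lem:iterated-cocycle} to write the partial sum as $u - T^n(u)$ and pass to the supremum of this increasing, bounded sequence. The extra care you take over $\sup_n(u - T^n(u)) = u - \inf_n T^n(u)$ and over dropping the $n=0$ term is fine and only makes explicit what the paper leaves implicit.

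Your hesitation about the second identity is justified --- in fact more strongly than you state. Read literally as the lattice join of the individual terms, $\bigvee_{k\ge 0} T^k(d(T))$ is \emph{not} equal to $\sup_{n}\sum_{k<n}T^k(d(T))$, and no lattice hypothesis rescues it: take $E=\mathbb{R}$, $u=1$, $T=$ multiplication by $\lambda\in(0,1)$ (Example~\ref{ex:scalar}). The supremum of partial sums is $\sup_n(1-\lambda^n)=1$, while the join of the terms $\lambda^k(1-\lambda)$ is their maximum, $1-\lambda$. So the inequality $\sup_n\sum_{k<n}a_k \ge \bigvee_k a_k$ that you prove is the only direction that holds in general, and your tentative claim that ``each partial sum is bounded by the join of its summands'' under some compatible lattice operation is false for any two nonzero positive summands ($a+a=2a>a\vee a$). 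The paper's own proof silently establishes only the first displayed formula; the $\bigvee$ display should be read as effect-algebra notation for the orthosum of a summable orthogonal family (i.e., precisely the supremum of the finite partial sums), not as the lattice join of the summands. Your decision to treat the first form as the real content --- the one used downstream, requiring only order-completeness and order-continuity --- and to refuse to prove the join form as a literal lattice identity is the correct resolution.
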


\begin{proof}
By the iterated cocycle identity (Lemma~\ref{lem:iterated-cocycle}),
\[
u - T^n(u) = \sum_{k=0}^{n-1} T^k(d(T)).
\]
Taking the supremum over $n$:
\[
d_\infty(T) = \sup_{n \ge 0}(u - T^n(u)) = \sup_{n \ge 1} \sum_{k=0}^{n-1} T^k(d(T)). \qedhere
\]
\end{proof}

\begin{theorem}[Fixed-point equation for asymptotic defect]\label{thm:asymptotic-fixed}
The asymptotic defect satisfies the fixed-point equation
\[
d_\infty(T) = d(T) + T(d_\infty(T)).
\]
\end{theorem}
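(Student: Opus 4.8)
The plan is to derive the fixed-point equation directly from Theorem~\ref{thm:fixed-point} together with linearity of $T$, rather than re-running a limiting argument from scratch. Recall that by definition $d_\infty(T) = u - v$ where $v = \inf_{n \ge 0} T^n(u)$, and that Theorem~\ref{thm:fixed-point} already establishes $T(v) = v$.

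First I would compute $T(d_\infty(T))$ using linearity:
\[
T(d_\infty(T)) = T(u - v) = T(u) - T(v) = T(u) - v,
\]
where the last step uses the fixed-point property $T(v) = v$. Then adding the ordinary defect $d(T) = u - T(u)$ gives
\[
d(T) + T(d_\infty(T)) = (u - T(u)) + (T(u) - v) = u - v = d_\infty(T),
\]
which is exactly the claimed identity. No order-completeness beyond what is already needed to define $v$ enters at this stage; the only nontrivial input is that the infimum $v$ exists and is $T$-fixed, both supplied by the hypotheses and Theorem~\ref{thm:fixed-point}.

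There is essentially no obstacle here---the statement is a one-line consequence of the fixed-point theorem---but for completeness I would also indicate the alternative route that mirrors the finite case: apply the cocycle identity (Lemma~\ref{lem:cocycle}) in the form $d(T^{n+1}) = d(T) + T(d(T^n))$ and let $n \to \infty$. The left side converges upward to $d_\infty(T)$ by Theorem~\ref{thm:infinite-telescope}, and $d(T^n) = u - T^n(u)$ increases to $d_\infty(T)$; the one delicate point is commuting $T$ past this supremum. This is legitimate because the complementary net $T^n(u)$ is decreasing and $T$ is order-continuous by hypothesis, so $\inf_n T^{n+1}(u) = T(\inf_n T^n(u)) = T(v) = v$, whence $\sup_n T(d(T^n)) = \sup_n\bigl(T(u) - T^{n+1}(u)\bigr) = T(u) - v = T(d_\infty(T))$. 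Passing to the limit on both sides of the cocycle identity then yields the same conclusion. I would present the first, shorter argument as the proof and relegate the second to a remark.
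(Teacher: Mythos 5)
Your main argument is exactly the paper's proof: both use $T(v)=v$ from Theorem~\ref{thm:fixed-point} together with linearity to compute $d(T) + T(u-v) = u - T(v) = u - v = d_\infty(T)$. The proposal is correct, and the alternative limiting route you sketch in passing is also sound but unnecessary.
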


\begin{proof}
From $v = T(v)$ and $d_\infty = u - v$:
\[
d(T) + T(d_\infty) = (u - T(u)) + T(u - v) = u - T(u) + T(u) - T(v) = u - T(v) = u - v = d_\infty. \qedhere
\]
\end{proof}

\begin{remark}[Cocycle at infinity]
The fixed-point equation $d_\infty = d(T) + T(d_\infty)$ is the ``$n = \infty$'' version of the iterated cocycle identity. It says that the total accumulated defect equals the immediate defect plus the image of the total defect under $T$.
\end{remark}

\subsection*{Relationship to finite stabilization}

\begin{proposition}[Finite stabilization as a special case]\label{prop:finite-special}
If there exists $n_0$ such that $T^{n_0}(d(T)) = 0$, then $v = T^{n_0}(u)$ and $d_\infty(T) = \sum_{k=0}^{n_0-1} T^k(d(T))$.
\end{proposition}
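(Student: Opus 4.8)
The plan is to reduce everything to the observation that the hypothesis forces the decreasing orbit $\{T^n(u)\}$ to be eventually constant, so that its infimum is actually attained; once that is in hand, the two claimed identities follow from the iterated cocycle identity with no further appeal to order-completeness.

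First I would check that $T^{n_0}(d(T)) = 0$ propagates along the iteration. Since $T$ is linear, $T^{n_0+1}(d(T)) = T\bigl(T^{n_0}(d(T))\bigr) = T(0) = 0$, and by induction $T^{m}(d(T)) = 0$ for every $m \ge n_0$. Combining this with the elementary identity $T^{m+1}(u) = T^m(T(u)) = T^m(u - d(T)) = T^m(u) - T^m(d(T))$ gives $T^{m+1}(u) = T^m(u)$ for all $m \ge n_0$, and hence $T^n(u) = T^{n_0}(u)$ for every $n \ge n_0$.

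Next, set $c := T^{n_0}(u)$. The sequence $\{T^n(u)\}_{n\ge 0}$ is decreasing (Remark~\ref{rem:monotone}), so $T^n(u) \ge c$ for $n \le n_0$, while $T^n(u) = c$ for $n \ge n_0$; thus $c$ is a lower bound of the sequence and is itself a term of it, whence $v = \inf_{n\ge 0} T^n(u) = c = T^{n_0}(u)$. Finally $d_\infty(T) = u - v = u - T^{n_0}(u)$, and by the iterated cocycle identity (Lemma~\ref{lem:iterated-cocycle}) this equals $\sum_{k=0}^{n_0-1} T^k(d(T))$, which is the second claim. Alternatively, the same conclusion can be read off Theorem~\ref{thm:infinite-telescope}: the partial sums $\sum_{k=0}^{n-1} T^k(d(T))$ are constant for $n \ge n_0$ (all further summands vanish), so their supremum is the $n = n_0$ term.

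There is essentially no obstacle: the proposition is a direct corollary of results already established, and the only point worth flagging is that finite stabilization makes the relevant infimum attained, so this particular argument uses neither order-completeness of $E$ nor order-continuity of $T$ — those hypotheses are only needed in the genuinely infinite case.
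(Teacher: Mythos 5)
Your proposal is correct and follows essentially the same route as the paper: show that $T^{n_0}(d(T))=0$ forces the decreasing orbit $(T^n(u))$ to become constant from $n_0$ on (the content of Remark~\ref{rem:stabilization}, which you reprove inline), conclude that the infimum is attained at $T^{n_0}(u)$, and read off the formula for $d_\infty(T)$ from the iterated cocycle identity / Theorem~\ref{thm:infinite-telescope}. The added observation that neither order-completeness nor order-continuity is needed in this case is accurate and a nice touch, but does not change the argument.
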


\begin{proof}
By Remark~\ref{rem:stabilization}, $T^{n_0}(d(T)) = 0$ implies $T^{n_0+k}(u) = T^{n_0}(u)$ for all $k \ge 0$. Hence the decreasing sequence $(T^n(u))$ stabilizes at $n_0$, giving $v = \inf_n T^n(u) = T^{n_0}(u)$. The formula for $d_\infty$ follows from Theorem~\ref{thm:infinite-telescope}.
\end{proof}

\begin{corollary}
In the finite-dimensional setting of Theorem~\ref{thm:defect-annihilation}, the asymptotic defect equals the finite sum $d_\infty(T) = \sum_{k=0}^{n-1} T^k(d(T))$ where $n \le |A|$ is the stabilization index.
\end{corollary}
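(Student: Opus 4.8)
The plan is to combine Theorem~\ref{thm:defect-annihilation}, which supplies a finite stabilization index, with Proposition~\ref{prop:finite-special}, which already computes $v$ and $d_\infty(T)$ once such an index is available. Essentially the corollary is a bookkeeping statement: it merely records that the finite-dimensional hypothesis of Theorem~\ref{thm:defect-annihilation} puts us in the special case of Section~\ref{sec:asymptotic}.

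First I would check that the finite-dimensional setting is a legitimate instance of the framework of Section~\ref{sec:asymptotic}. Any finite-dimensional ordered effect space with closed pointed cone is order-complete for bounded monotone sequences---such a sequence lies in a compact order interval, converges, and its limit is its order-infimum---and every linear map on a finite-dimensional space is continuous, hence order-continuous. So $v = \inf_{k \ge 0} T^k(u)$ and $d_\infty(T) = u - v$ are well-defined. In fact one need not even invoke these generalities: by Theorem~\ref{thm:defect-annihilation} there is $n = n_T \le |A|$ with $T^n(d(T)) = 0$, and by Remark~\ref{rem:stabilization} this forces $T^{n+k}(u) = T^n(u)$ for all $k \ge 0$. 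Hence the decreasing sequence $(T^k(u))_{k \ge 0}$ is \emph{eventually constant}, so its infimum exists trivially and equals $T^n(u)$ irrespective of any completeness assumption.

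Given this, I would apply Proposition~\ref{prop:finite-special} with $n_0 = n$, obtaining at once $v = T^n(u)$ and
\[
d_\infty(T) = \sum_{k=0}^{n-1} T^k(d(T)),
\]
with $n \le |A|$ the stabilization index from Theorem~\ref{thm:defect-annihilation}. Equivalently, one can read this directly off the infinite telescoping formula of Theorem~\ref{thm:infinite-telescope}: the partial sums $\sum_{k=0}^{m-1} T^k(d(T)) = u - T^m(u)$ are eventually constant at $u - T^n(u)$, which is therefore their supremum. Since every ingredient is already in place, there is no genuine obstacle; the only point deserving a word of care is the implicit claim that the order-completeness and order-continuity hypotheses of Section~\ref{sec:asymptotic} hold in finite dimensions, and---as noted above---this is either automatic or bypassed entirely by eventual constancy. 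One may also remark for consistency with Theorem~\ref{thm:fixed-point} that $v = T^n(u)$ is visibly a fixed point, since $T(T^n(u)) = T^{n+1}(u) = T^n(u)$.
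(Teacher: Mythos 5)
Your proposal is correct and matches the paper's (implicit) argument: the corollary is stated without proof precisely because it is the immediate combination of Theorem~\ref{thm:defect-annihilation} (which supplies $n = n_T \le |A|$ with $T^n(d(T)) = 0$) and Proposition~\ref{prop:finite-special}. Your extra observation that eventual constancy of $(T^k(u))$ makes the infimum exist without invoking order-completeness is a sensible and accurate point of care, but it does not change the route.
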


\subsection*{Physical interpretation}

The asymptotic defect $d_\infty(T)$ represents the maximal accumulated loss over infinite iteration. In quantum information terms:
\begin{itemize}
\item $v = \inf_n T^n(u)$ is the ``eventual survival probability''---the effect that survives arbitrarily many applications of the process.
\item $d_\infty = u - v$ is the ``total leakage''---the probability mass that eventually escapes to the environment.
\end{itemize}
The fixed-point equation $d_\infty = d(T) + T(d_\infty)$ has a clear operational reading: the total leakage equals the immediate leakage plus what happens to the total leakage after one more step.

\subsection*{Convergence in von Neumann algebras}

In the von Neumann algebra setting, monotone convergence provides strong topological results.

\begin{proposition}[Ultraweak and $\sigma$-strong convergence]\label{prop:vN-convergence}
Let $M$ be a von Neumann algebra and $T : M \to M$ a normal positive subunital map. Then:
\begin{enumerate}[(i)]
\item The sequence $(T^n(I))_{n \ge 0}$ converges $\sigma$-strongly (hence ultraweakly) to $v = \inf_n T^n(I)$.
\item The partial sums $a_n = \sum_{k=0}^{n-1} T^k(d(T)) = I - T^n(I)$ (by Lemma~\ref{lem:iterated-cocycle}) converge $\sigma$-strongly (hence ultraweakly) to $d_\infty(T) = I - v$.
\end{enumerate}
\end{proposition}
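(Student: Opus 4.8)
The plan is to reduce both assertions to the classical monotone order-convergence theorem for von Neumann algebras: every norm-bounded increasing (equivalently, decreasing) net of self-adjoint elements of a von Neumann algebra $M \subseteq B(H)$ has a least upper bound (resp.\ greatest lower bound) in $M$ and converges to it in the strong operator topology; see \cite{sakai}. I would pair this with two standard topological facts: on norm-bounded subsets of $M$ the strong operator topology coincides with the $\sigma$-strong topology, and the $\sigma$-strong topology is finer than the ultraweak topology. Granting these, the phrase ``$\sigma$-strongly, hence ultraweakly'' follows automatically once strong convergence of a \emph{bounded} sequence is established, so all the work is in producing such a sequence and identifying its limit.

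For part (i): first note $T^n(I) \in [0, I]$ for all $n$. Positivity of $T$ gives $T^n(I) \ge 0$ by induction, and subunitality together with monotonicity of $T$ gives $T^n(I) \le I$; thus the whole sequence lies in the norm-bounded effect interval. By Remark~\ref{rem:monotone} it is decreasing. Applying the monotone convergence theorem to $(T^n(I))_n$ (or to the increasing net $(I - T^n(I))_n$), its infimum $v = \inf_n T^n(I)$ exists in $M$ and $T^n(I) \to v$ strongly; boundedness upgrades this to $\sigma$-strong convergence, which in turn implies ultraweak convergence.

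For part (ii): by Lemma~\ref{lem:iterated-cocycle}, $a_n = \sum_{k=0}^{n-1} T^k(d(T)) = I - T^n(I)$. One can either transport the convergence from part (i) through the affine homeomorphism $x \mapsto I - x$ (continuous for both the $\sigma$-strong and ultraweak topologies, being negation composed with a translation), obtaining $a_n \to I - v = d_\infty(T)$; or argue symmetrically that $(a_n)_n$ is increasing (each $T^k(d(T)) \ge 0$) and bounded above by $I$, apply the monotone convergence theorem again, and identify $\sup_n a_n$ with $d_\infty(T)$ using Theorem~\ref{thm:infinite-telescope} and the definition of the asymptotic defect.

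I do not anticipate a genuine obstacle here: the content is the classical order-convergence theory of von Neumann algebras applied to a sequence whose monotonicity and boundedness are already in hand. The only point requiring care is topological bookkeeping---making explicit that the strong and $\sigma$-strong topologies agree on the bounded set $[0,I]$ and that the $\sigma$-strong topology refines the ultraweak one---and, if one also wants the strong-$*$ statement, noting that it is free here since every $T^n(I)$ is self-adjoint.
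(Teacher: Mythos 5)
Your proposal is correct and follows essentially the same route as the paper: both reduce (i) to the standard fact that bounded monotone nets in a von Neumann algebra converge $\sigma$-strongly to their infimum, and both handle (ii) by observing that $a_n = I - T^n(I)$ is increasing and bounded, then applying monotone convergence again (your alternative via the affine map $x \mapsto I - x$ is an equally valid shortcut). The extra topological bookkeeping you flag (strong vs.\ $\sigma$-strong on bounded sets, $\sigma$-strong refining ultraweak) is consistent with what the paper leaves implicit.
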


\begin{proof}
(i) The sequence $(T^n(I))$ is decreasing and bounded below by 0. In a von Neumann algebra, bounded monotone nets converge $\sigma$-strongly to their infimum.

(ii) Since $a_n = I - T^n(I)$, the sequence $(a_n)$ is increasing and bounded above by $I$. By monotone convergence, $a_n \to \sup_n a_n = d_\infty$ in the $\sigma$-strong topology.
\end{proof}

\begin{remark}
Ultraweak convergence follows from $\sigma$-strong convergence since the $\sigma$-strong topology is finer than the ultraweak topology on bounded sets.
\end{remark}

\subsection*{Norm convergence}

Norm convergence requires additional hypotheses. The natural criterion involves the spectral radius.

\begin{definition}[Leakage subspace]
For a positive subunital map $T$ on a Banach ordered space $E$, the \emph{leakage subspace} is
\[
W_T = \mathrm{span}\{T^k(d(T)) : k \ge 0\}.
\]
\end{definition}

\begin{proposition}[Norm convergence via spectral radius]\label{prop:spectral-convergence}
Let $T$ be a bounded positive subunital map on a Banach ordered space $E$. If the spectral radius of $T|_{W_T}$ satisfies $r(T|_{W_T}) < 1$, then $\sum_{k=0}^\infty T^k(d(T))$ converges in norm.
\end{proposition}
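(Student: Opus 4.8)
The plan is to reduce the statement to the classical fact that a bounded operator with spectral radius below $1$ has a norm-convergent Neumann series, after restricting to an appropriate invariant subspace.

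First I would record that $W_T$ is invariant under $T$: by definition $T\bigl(T^k(d(T))\bigr) = T^{k+1}(d(T)) \in W_T$, so $T$ carries the spanning set into $W_T$, hence $T(W_T) \subseteq W_T$; moreover $d(T) = T^0(d(T)) \in W_T$. Since $T$ is bounded, it leaves the norm-closure $Y := \overline{W_T}$ invariant, and $Y$ is itself a Banach space (a closed subspace of the Banach space $E$). Write $S := T|_Y \in B(Y)$. The hypothesis $r(T|_{W_T}) < 1$ is to be read via the Gelfand formula $r(S) = \lim_n \|S^n\|^{1/n}$, an identity of operator norms that is unchanged by passing from $W_T$ to its completion $Y$; thus $r(S) < 1$.

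Next, fix any $\rho$ with $r(S) < \rho < 1$. By Gelfand's spectral-radius formula there is a constant $C \ge 1$ with $\|S^n\| \le C\rho^n$ for all $n \ge 0$. Since $d(T) \in Y$ and $T^k(d(T)) = S^k d(T)$, this yields $\|T^k(d(T))\| \le \|S^k\|\,\|d(T)\| \le C\|d(T)\|\,\rho^k$. Summing the geometric series,
\[
\sum_{k=0}^{\infty} \bigl\| T^k(d(T)) \bigr\| \;\le\; C\|d(T)\| \sum_{k=0}^{\infty} \rho^k \;=\; \frac{C\|d(T)\|}{1-\rho} \;<\; \infty,
\]
so $\sum_{k} T^k(d(T))$ is absolutely convergent. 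Completeness of $E$ then gives convergence in norm, with limit lying in the closed subspace $Y$; in fact the limit equals $(I-S)^{-1}d(T)$, since $r(S) < 1$ makes $I - S$ invertible on $Y$ with $\sum_n S^n = (I-S)^{-1}$ in $B(Y)$.

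There is essentially no hard step: the only point demanding care is the interpretation of $r(T|_{W_T})$ when $W_T$ fails to be closed, which is handled by noting that the Gelfand formula involves only operator norms and is therefore insensitive to completion. One could alternatively phrase the hypothesis directly in terms of $\overline{W_T}$; either way the proof is the Neumann-series estimate above.
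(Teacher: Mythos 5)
Your proof is correct and follows essentially the same route as the paper's: restrict $T$ to the ($T$-invariant) leakage subspace and use the Neumann-series/geometric bound coming from $r < 1$. Your added care about the non-closedness of $W_T$ and the norm-insensitivity of the Gelfand formula under completion is a reasonable tightening of a detail the paper glosses over, but it does not change the argument.
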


\begin{proof}
Since $r(T|_{W_T}) < 1$, the Neumann series $\sum_{k \ge 0} (T|_{W_T})^k$ converges in operator norm on $W_T$. Since $d(T) \in W_T$, the series $\sum_{k \ge 0} T^k(d(T))$ converges in norm.
\end{proof}

\begin{corollary}[Contraction criterion]
If there exists $0 < c < 1$ such that $\|T(x)\| \le c\|x\|$ for all $x \in W_T$, then $r(T|_{W_T}) \le c < 1$, and
\[
\left\|d_\infty - \sum_{k=0}^{n-1} T^k(d(T))\right\| \le \frac{c^n}{1 - c}\|d(T)\|.
\]
\end{corollary}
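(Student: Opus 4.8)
The plan is to split the statement into the spectral-radius bound $r(T|_{W_T}) \le c$ and the quantitative tail estimate, both of which follow from the single observation that the hypothesis ``$\|T(x)\| \le c\|x\|$ for all $x \in W_T$'' says exactly that the restriction $T|_{W_T}$ has operator norm at most $c$. This restriction is well defined: since $T\bigl(T^k(d(T))\bigr) = T^{k+1}(d(T)) \in W_T$, the map $T$ carries the spanning set of $W_T$, hence $W_T$ itself, into $W_T$.

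First I would record $\|T|_{W_T}\| \le c$ and apply Gelfand's formula $r(S) = \lim_{n} \|S^n\|^{1/n} \le \|S\|$ to $S = T|_{W_T}$, obtaining $r(T|_{W_T}) \le c < 1$. (If one wishes to work on a Banach space rather than a possibly incomplete normed space, pass to the closure $\overline{W_T}$; $T$ extends continuously with the same norm bound, and the spectral radius is unchanged.) This places us under the hypothesis of Proposition~\ref{prop:spectral-convergence}, so $\sum_{k \ge 0} T^k(d(T))$ converges in norm; by Lemma~\ref{lem:iterated-cocycle} its partial sums equal $u - T^n(u)$, hence are increasing, and in a Banach ordered space (closed cone) the norm limit of an increasing sequence is its order supremum, which is the asymptotic defect $d_\infty(T)$ of Section~\ref{sec:asymptotic}. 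Thus $d_\infty(T) = \sum_{k \ge 0} T^k(d(T))$ in norm.

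Next I would iterate the hypothesis along the defect orbit. Since each $T^j(d(T)) \in W_T$, we get $\|T^{j+1}(d(T))\| = \|T(T^j(d(T)))\| \le c\,\|T^j(d(T))\|$, hence by induction $\|T^k(d(T))\| \le c^k\|d(T)\|$ for all $k \ge 0$. Then, using the triangle inequality and summing the geometric series (valid since $0 < c < 1$),
\[
\Bigl\|\, d_\infty(T) - \sum_{k=0}^{n-1} T^k(d(T)) \,\Bigr\| = \Bigl\| \sum_{k=n}^{\infty} T^k(d(T)) \Bigr\| \le \sum_{k=n}^{\infty} \|T^k(d(T))\| \le \|d(T)\| \sum_{k=n}^{\infty} c^k = \frac{c^n}{1-c}\,\|d(T)\|,
\]
which is the asserted bound.

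The only step that is not purely mechanical is identifying the norm limit of the partial sums with the order-theoretic asymptotic defect $d_\infty(T)$: this requires the positive cone of $E$ to be norm-closed, so that monotone norm limits and order suprema coincide---a fact subsumed under the standing ``Banach ordered space'' hypothesis. Everything else is Gelfand's formula together with a geometric-series estimate, so I do not anticipate any real difficulty.
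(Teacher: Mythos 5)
Your proposal is correct and is essentially the argument the paper intends (the corollary is stated without an explicit proof): the norm bound $\|T|_{W_T}\|\le c$ gives $r(T|_{W_T})\le c$ by submultiplicativity/Gelfand, and iterating the contraction along the orbit gives $\|T^k(d(T))\|\le c^k\|d(T)\|$, whence the geometric-series tail bound. Your extra care in identifying the norm limit of the increasing partial sums with the order-theoretic $d_\infty$ (via closedness of the cone) fills a point the paper leaves implicit, and your observation that the explicit constant $c^n/(1-c)$ requires the norm contraction directly rather than just $r\le c$ is exactly right.
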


\begin{proposition}[Convergence rate from Jordan structure]\label{prop:jordan-rate}
Let $E$ be a finite-dimensional Banach ordered space and $T : E \to E$ a positive subunital map with leakage subspace $W_T$. Let $A = T|_{W_T}$ with spectral radius $r = r(A) < 1$, and let $m$ be the size of the largest Jordan block among eigenvalues of modulus $r$. Then there exists $C > 0$ such that for all $n \ge 1$,
\[
\left\|d_\infty(T) - \sum_{k=0}^{n-1} T^k(d(T))\right\| \le C \cdot n^{m-1} r^n.
\]
\end{proposition}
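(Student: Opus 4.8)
The plan is to pass to the finite-dimensional invariant subspace $W_T$, express the error as the tail of a rapidly-convergent series in the powers of $A = T|_{W_T}$, and estimate that tail using the Jordan structure of $A$.

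First I would set up the reduction. Since $T\bigl(T^k(d(T))\bigr) = T^{k+1}(d(T)) \in W_T$, the subspace $W_T$ is $T$-invariant, so $A := T|_{W_T}$ is a linear operator on the finite-dimensional (hence closed) space $W_T$, and $d(T) \in W_T$. By Proposition~\ref{prop:spectral-convergence} (applicable because $r(A) = r < 1$) the series $\sum_{k \ge 0} T^k(d(T))$ converges in norm; denote its limit by $d_\infty(T)$. Then the error is exactly the tail,
\[
d_\infty(T) - \sum_{k=0}^{n-1} T^k(d(T)) = \sum_{k=n}^{\infty} A^k\bigl(d(T)\bigr),
\]
so it suffices to prove $\sum_{k=n}^\infty \|A^k\| \le C'\, n^{m-1} r^n$ for the operator norm of the powers of $A$; multiplying by $\|d(T)\|$ then gives the claim. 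If $d(T) = 0$, or if $r = 0$ (so $A$ is nilpotent and $d_\infty(T)$ is a finite sum), there is nothing asymptotic to prove, so assume $r > 0$.

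Next I would prove the power bound $\|A^k\| \le C_1\, k^{m-1} r^k$ for all $k \ge 1$. Complexify $W_T$ — operator norms on a real space and its complexification are comparable, and complexification commutes with taking powers, so an estimate on $\|A_{\mathbb C}^k\| = \|(A^k)_{\mathbb C}\|$ yields one on $\|A^k\|$ up to a constant — and put $A_{\mathbb C}$ in Jordan normal form. For a Jordan block $\lambda I + N$ of size $j$ one has $(\lambda I + N)^k = \sum_{i=0}^{\min(j-1,\,k)} \binom{k}{i} \lambda^{k-i} N^i$, hence $\|(\lambda I + N)^k\| \le P_j(k)\,|\lambda|^k$ for a polynomial $P_j$ of degree $j-1$ depending only on the block. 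If $|\lambda| = r$ then $P_j(k)\,|\lambda|^k \le C\,k^{j-1} r^k \le C\,k^{m-1} r^k$ since $j \le m$; if $|\lambda| < r$ then $P_j(k)\,|\lambda|^k = \bigl(P_j(k)(|\lambda|/r)^k\bigr)\,r^k$ with the bracketed factor bounded (it even tends to $0$, since $|\lambda|/r < 1$), so this block also contributes $O(r^k)$, which is $O(k^{m-1} r^k)$. Summing over the finitely many Jordan blocks gives the stated bound on $\|A^k\|$.

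Finally I would sum the tail: re-indexing $k = n + \ell$ and using $n \ge 1$,
\[
k^{m-1} r^k = n^{m-1} r^n \Bigl(1 + \tfrac{\ell}{n}\Bigr)^{m-1} r^\ell \le n^{m-1} r^n (1+\ell)^{m-1} r^\ell,
\]
so $\sum_{k \ge n} k^{m-1} r^k \le n^{m-1} r^n \sum_{\ell \ge 0} (1+\ell)^{m-1} r^\ell =: C_2\, n^{m-1} r^n$, the series defining $C_2$ converging since $r < 1$; then $C = C_1 C_2 \|d(T)\|$ works. The step I expect to cost the most care is the bookkeeping in the power bound: one must verify that eigenvalues of modulus \emph{strictly} less than $r$, even when carried in large Jordan blocks, contribute only $O(r^k)$ and therefore do not raise the polynomial exponent, so that the correct rate exponent is $m-1$ (the largest block size among eigenvalues of modulus \emph{exactly} $r$) rather than the size of the largest Jordan block of $A$ overall. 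Everything else is routine estimation, requiring no idea beyond the norm convergence already established in Proposition~\ref{prop:spectral-convergence}.
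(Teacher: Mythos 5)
Your proposal is correct and follows essentially the same route as the paper: reduce to the tail of the Neumann series on $W_T$ and control it with the standard Jordan bound $\|A^k\| \le C_1 k^{m-1} r^k$. The only (immaterial) difference is that you sum the termwise bounds over $k \ge n$ directly, whereas the paper factors the tail as $A^n(I-A)^{-1}d(T)$ and applies the Jordan bound once to $\|A^n\|$; your extra care with eigenvalues of modulus strictly below $r$ and with complexification is the detail the paper leaves implicit under ``standard Jordan normal form bounds.''
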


\begin{proof}
Standard Jordan normal form bounds give $\|A^n\| \le C' n^{m-1} r^n$ for some $C' > 0$. Since $(I - A)^{-1} = \sum_{k \ge 0} A^k$ converges and $d_\infty - \sum_{k=0}^{n-1} T^k(d(T)) = A^n(I - A)^{-1}d(T)$, we have
\[
\left\|d_\infty - \sum_{k=0}^{n-1} T^k(d(T))\right\| \le \|A^n\| \cdot \|(I - A)^{-1}\| \cdot \|d(T)\| \le C \cdot n^{m-1} r^n. \qedhere
\]
\end{proof}

\begin{remark}
When $A = T|_{W_T}$ is diagonalizable (e.g., when $T$ is self-adjoint in a suitable sense), we have $m = 1$ and the convergence is purely exponential: $O(r^n)$. The polynomial prefactor $n^{m-1}$ arises only from nontrivial Jordan blocks at the spectral radius.
\end{remark}

\begin{lemma}[Faithful functional contraction implies spectral gap]\label{lem:faithful-contraction}
Let $T$ be a positive map on an ordered Banach space $E$, and let $W_T = \overline{\mathrm{span}}\{T^k(d(T)) : k \ge 0\}$. Suppose there exists a \emph{faithful} positive functional $\omega$ on $W_T$ (i.e., $\omega(x) > 0$ for all $0 \ne x \in W_T^+$) and a constant $0 < c < 1$ such that
\[
\omega(T(x)) \le c\,\omega(x) \quad \text{for all } x \in W_T^+.
\]
Then $r(T|_{W_T}) \le c < 1$.
\end{lemma}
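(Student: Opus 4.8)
The plan is to set $A := T|_{W_T}$. Since $T$ carries each generator $T^{k}(d(T))$ to $T^{k+1}(d(T)) \in W_T$, the subspace $W_T$ is $T$-invariant, so $A$ is a bounded positive operator on the ordered Banach space $(W_T, W_T^{+})$ with $W_T^{+} = W_T \cap E_{+}$, and $r(A)$ is what we must bound. The first step is purely algebraic: iterate the contraction hypothesis. For $x \in W_T^{+}$ we have $A^{n}x \in W_T^{+}$ for all $n$, so applying $\omega(A\,\cdot\,) \le c\,\omega(\cdot\,)$ to $A^{n-1}x \ge 0$ and inducting on $n$ yields
\[
\omega(A^{n}x) \le c^{n}\,\omega(x), \qquad x \in W_T^{+},\ n \ge 0.
\]

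The second step is to promote this $\omega$-decay to an operator-norm decay. Here one uses that, under the cone hypotheses (closed, generating, normal), a faithful positive $\omega$ is comparable to the norm on the positive cone: there is $\delta > 0$ with $\delta\|x\| \le \omega(x) \le \|\omega\|\,\|x\|$ for $x \in W_T^{+}$ (equivalently, $\{x \in W_T^{+} : \omega(x) \le 1\}$ is bounded). Granting this, the inequality above gives $\|A^{n}x\| \le \delta^{-1}\|\omega\|\,c^{n}\|x\|$ for $x \in W_T^{+}$; decomposing an arbitrary $x \in W_T$ as $x = x_{+} - x_{-}$ with $x_{\pm} \in W_T^{+}$ and $\|x_{\pm}\| \le K\|x\|$ (generating cone), we obtain $\|A^{n}\| \le 2K\delta^{-1}\|\omega\|\,c^{n}$, whence $r(A) = \lim_{n}\|A^{n}\|^{1/n} \le c < 1$ by Gelfand's formula.

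The main obstacle is precisely the second step — converting order-theoretic decay of $\omega$ into a genuine norm estimate. In finite dimensions it is automatic, since the positive unit sphere is compact and $\omega$ is continuous and strictly positive on it; in general it is the point where one pays for working with a merely faithful functional rather than an order unit of the dual cone, and one must lean on normality and generation of the cone (or, equivalently, on $\omega$ defining a bounded base). A more intrinsic packaging that exposes the order-analytic content but meets the same difficulty: for a bounded positive operator $r(A) \in \sigma(A)$ (Perron--Frobenius / Krein--Rutman), and letting $\lambda \downarrow r(A)$ through the resolvent set, positivity of $(\lambda - A)^{-1}$ together with its blow-up produces unit vectors $y_{\lambda} \in W_T^{+}$ with $\|A y_{\lambda} - r(A) y_{\lambda}\| \to 0$; applying $\omega$ gives $(r(A) - c)\,\omega(y_{\lambda}) \le \|\omega\|\cdot o(1)$, and $r(A) \le c$ follows exactly when $\omega(y_{\lambda})$ stays bounded away from $0$ — once again the quantitative form of faithfulness, which is the step I expect to require the most care.
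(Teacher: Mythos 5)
Your proposal is correct and follows essentially the same route as the paper's own proof: iterate the contraction to get $\omega(T^n(x)) \le c^n\,\omega(x)$ on $W_T^+$, then convert this order-theoretic decay into a norm (or gauge-seminorm) estimate using an order unit / cone normality --- a step the paper likewise only justifies in finite dimensions --- and finish with Gelfand's formula. You are, if anything, more explicit than the paper about exactly where the comparability of $\omega$ with the norm on the cone is needed.
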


\begin{proof}
By positivity, $\omega(T^n(x)) \le c^n \omega(x)$ for all $x \in W_T^+$ and $n \ge 0$. The gauge seminorm $|y|_\omega := \inf\{\lambda > 0 : -\lambda u \le y \le \lambda u\}$ (where $u$ is an order unit for $W_T$, existing in finite dimensions) satisfies $|T^n(y)|_\omega \lesssim c^n |y|_\omega$. Hence $\|T^n|_{W_T}\|^{1/n} \to r \le c < 1$.
\end{proof}

\begin{remark}
The faithful contraction condition $\omega(T(x)) \le c\,\omega(x)$ is a noncommutative Lyapunov criterion: the functional $\omega$ decreases uniformly under $T$ on the leakage subspace. This is verifiable in operator-algebraic settings where faithful normal states exist (e.g., on type-I factors or hyperfinite algebras).
\end{remark}

\begin{lemma}[Type-I Lyapunov criterion as an operator inequality]\label{lem:type-I-lyapunov}
Let $H$ be a Hilbert space, $\mathcal{M} = B(H)$, and let $T : \mathcal{M} \to \mathcal{M}$ be normal CP and subunital. Let $\sigma > 0$ be a strictly positive trace-class operator implementing a faithful normal state $\omega_\sigma(X) = \mathrm{Tr}(\sigma X)$. Then the following are equivalent for a fixed $c \in (0,1)$:
\begin{enumerate}[(i)]
\item $\omega_\sigma(T(X)) \le c\,\omega_\sigma(X)$ for all $X \ge 0$;
\item $T_*(\sigma) \le c\,\sigma$ in the trace-class order, where $T_*$ is the predual map.
\end{enumerate}
If $T(X) = \sum_i V_i^* X V_i$ (Kraus form), then $T_*(\sigma) = \sum_i V_i \sigma V_i^*$, so (ii) is the semidefinite inequality
\[
\sum_i V_i \sigma V_i^* \le c\,\sigma.
\]
\end{lemma}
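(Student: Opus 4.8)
The plan is to recognize this as a straightforward instance of trace duality on $B(H)$, with normality of $T$ supplying the predual map. First I would recall that a normal (ultraweakly continuous) bounded linear map $T : \mathcal{M} \to \mathcal{M}$ is exactly the adjoint of a bounded linear map $T_* : \mathcal{T}(H) \to \mathcal{T}(H)$ on the trace-class operators, characterised by the pairing identity $\mathrm{Tr}(\rho\, T(X)) = \mathrm{Tr}(T_*(\rho)\, X)$ for all $\rho \in \mathcal{T}(H)$ and $X \in \mathcal{M}$; positivity of $T$ forces $T_*$ to preserve self-adjointness and positivity. Specializing $\rho = \sigma$ gives $\omega_\sigma(T(X)) = \mathrm{Tr}(\sigma\, T(X)) = \mathrm{Tr}(T_*(\sigma)\, X)$, so condition (i) reads
\[
\mathrm{Tr}\big((c\sigma - T_*(\sigma))\, X\big) \ge 0 \qquad \text{for all } X \ge 0,
\]
where $Y := c\sigma - T_*(\sigma)$ is self-adjoint and trace-class.

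Next I would invoke the elementary duality: a self-adjoint operator $Y \in \mathcal{T}(H)$ satisfies $\mathrm{Tr}(YX) \ge 0$ for every $X \in \mathcal{M}_+$ if and only if $Y \ge 0$. The forward direction is immediate; the converse follows by testing against rank-one projections, $\mathrm{Tr}(Y\,|v\rangle\langle v|) = \langle v, Yv\rangle \ge 0$ for all $v \in H$. Applying this to $Y = c\sigma - T_*(\sigma)$ shows that (i) is equivalent to $c\sigma - T_*(\sigma) \ge 0$, i.e.\ $T_*(\sigma) \le c\sigma$, which is (ii). Both implications being reversible, this settles the equivalence. (The constraint $c \in (0,1)$ plays no role here; it is inherited from the spectral-gap application of Lemma~\ref{lem:faithful-contraction}.)

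For the Kraus statement I would use that a normal CP map on $B(H)$ has a Kraus decomposition $T(X) = \sum_i V_i^* X V_i$ with ultraweak convergence, and compute the predual directly. For $X \ge 0$, since $\mathrm{Tr}(\sigma\,\cdot)$ is ultraweakly continuous,
\[
\begin{aligned}
\mathrm{Tr}(\sigma\, T(X))
&= \sum_i \mathrm{Tr}(\sigma V_i^* X V_i) = \sum_i \mathrm{Tr}(V_i \sigma V_i^*\, X) \\
&= \mathrm{Tr}\!\Big(\Big(\sum_i V_i \sigma V_i^*\Big) X\Big),
\end{aligned}
\]
the interchange of sum and trace being legitimate because every term is nonnegative and the partial sums $\sum_{i \le N} V_i \sigma V_i^*$ increase in trace norm to a limit of trace $\le \mathrm{Tr}(\sigma)$ (using subunitality $T(I) \le I$). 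Comparing with the pairing identity over all $X$ identifies $T_*(\sigma) = \sum_i V_i \sigma V_i^*$, so (ii) becomes the stated semidefinite inequality $\sum_i V_i \sigma V_i^* \le c\sigma$.

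The routine content is the pairing identity and the trace-positivity duality; the only point needing care is the infinite-dimensional bookkeeping---that $T_*$ is well-defined on $\mathcal{T}(H)$ and that the Kraus series for $T_*(\sigma)$ converges in trace norm---which is where I expect any friction. In finite dimensions ($\mathcal{M} = M_d$) all of this collapses: the lemma is a one-line consequence of cyclicity of the trace, $\mathrm{Tr}(\sigma V_i^* X V_i) = \mathrm{Tr}(V_i \sigma V_i^*\, X)$, together with the fact that $Y \ge 0$ if and only if $\mathrm{Tr}(YX) \ge 0$ for all $X \ge 0$.
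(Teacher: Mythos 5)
Your proof is correct and follows essentially the same route as the paper's: the pairing identity $\mathrm{Tr}(\sigma\,T(X)) = \mathrm{Tr}(T_*(\sigma)\,X)$, the duality that $\mathrm{Tr}(YX)\ge 0$ for all $X\ge 0$ iff $Y\ge 0$, and cyclicity of the trace for the Kraus form. The extra care you take with rank-one testing and trace-norm convergence of the Kraus series fills in details the paper's terse proof leaves implicit, but the argument is the same.
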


\begin{proof}
For $X \ge 0$,
\[
\omega_\sigma(T(X)) = \mathrm{Tr}(\sigma\,T(X)) = \mathrm{Tr}(T_*(\sigma)\,X),
\]
by the defining duality between $T$ and $T_*$. Thus $\omega_\sigma(T(X)) \le c\,\omega_\sigma(X)$ for all $X \ge 0$ is equivalent to
\[
\mathrm{Tr}((c\sigma - T_*(\sigma))X) \ge 0 \quad \forall\,X \ge 0,
\]
which holds iff $c\sigma - T_*(\sigma) \ge 0$, i.e., $T_*(\sigma) \le c\sigma$. The Kraus expression for $T_*$ follows from the same duality calculation.
\end{proof}

\begin{remark}[SDP checkability]
Finding a feasible $\sigma > 0$ with $\sum_i V_i \sigma V_i^* \le c\,\sigma$ for some $c < 1$ is a \emph{semidefinite program}. When such $\sigma$ exists and the inequality holds on the support of $W_T$, Lemma~\ref{lem:faithful-contraction} gives $r(T|_{W_T}) \le c < 1$, and Proposition~\ref{prop:spectral-convergence} yields norm convergence of $\sum_k T^k(d(T))$.
\end{remark}

\subsection*{Finite-time stabilization}

The most interesting question is when the sequence $(T^n(I))$ actually stabilizes in finite time rather than merely converging.

\begin{proposition}[Descending chain condition]\label{prop:dcc}
If the effect interval $[0, u]$ satisfies the descending chain condition (no infinite strictly decreasing chains), then for any positive subunital map $T$, there exists $n$ such that $T^n(u) = T^{n+1}(u)$, hence $T^n(d(T)) = 0$.
\end{proposition}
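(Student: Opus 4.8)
The plan is to exploit the monotonicity of the orbit $\{T^k(u)\}_{k \ge 0}$ established in Remark~\ref{rem:monotone} together with the descending chain condition (DCC) on the effect interval. First I would observe that every iterate $T^k(u)$ lies in $[0, u]$: indeed $0 \le T^k(u)$ by positivity (since $u \ge 0$), and $T^k(u) \le u$ by the monotone descent $u \ge T(u) \ge T^2(u) \ge \cdots$. Thus the sequence $u = T^0(u) \ge T^1(u) \ge T^2(u) \ge \cdots$ is a descending chain inside $[0, u]$.

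Next I would invoke the DCC hypothesis. A descending chain in $[0,u]$ that never stabilizes would contain an infinite strictly decreasing subchain (pass to the subsequence of indices where the value strictly drops; if this subsequence were infinite we would have an infinite strictly decreasing chain, contradicting DCC). Hence the chain $\{T^k(u)\}$ must eventually be constant: there is $n \ge 0$ with $T^n(u) = T^{n+1}(u)$. (If one wants $n \ge 1$ one can simply replace $n$ by $\max(n,1)$, since once the sequence is constant at step $n$ it remains constant thereafter, by Remark~\ref{rem:stabilization}.)

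Finally I would translate this back into the defect language: by Remark~\ref{rem:stabilization}, $T^{n+1}(u) = T^n(u)$ is equivalent to $T^n(d(T)) = 0$, since $T^{n+1}(u) = T^n(T(u)) = T^n(u - d(T)) = T^n(u) - T^n(d(T))$. This gives the conclusion.

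The only subtlety — and the single place where care is needed — is the step extracting an infinite \emph{strictly} decreasing chain from a non-stabilizing monotone chain. In a preorder this extraction can fail: the values could cycle among $\le$-equivalent (but formally distinct) elements without ever strictly decreasing, exactly the phenomenon flagged in the warning of Remark~\ref{rem:alt-proof}. So the clean statement really wants either antisymmetry/pointedness of $\le$, or a reading of ``strictly decreasing chain'' up to $\le$-equivalence in which case DCC already rules out non-stabilization directly. I would add a one-line parenthetical noting that, as elsewhere in the paper, pointedness (equivalently antisymmetry) makes this extraction automatic; under pointedness the non-stabilizing chain has infinitely many strict drops $T^{k_j}(u) > T^{k_{j+1}}(u)$, directly contradicting DCC.
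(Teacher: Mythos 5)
Your proof is correct and follows the same route as the paper: the monotone orbit $u \ge T(u) \ge T^2(u) \ge \cdots$ is a descending chain in $[0,u]$, and DCC forces it to stabilize, whence $T^n(d(T)) = 0$ by Remark~\ref{rem:stabilization}. Your added caveat about extracting a \emph{strictly} decreasing subchain in a proper preorder is a genuine refinement the paper's one-line proof glosses over, and it is consistent with the warning in Remark~\ref{rem:alt-proof}.
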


\begin{proof}
The sequence $u \ge T(u) \ge T^2(u) \ge \cdots$ is a decreasing chain in $[0, u]$. By the descending chain condition, it must stabilize.
\end{proof}

\begin{proposition}[$\delta$-resolution bound]\label{prop:delta-resolution}
Let $\omega$ be a faithful normal state on a von Neumann algebra $M$, and let $T$ be a positive subunital map. Suppose there exists $\delta > 0$ such that for all $k \ge 0$,
\[
\omega(T^k(d(T))) \in \{0\} \cup [\delta, \infty).
\]
Then the stabilization index satisfies $n_T \le \lfloor 1/\delta \rfloor$.
\end{proposition}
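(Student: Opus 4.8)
The plan is to convert the $\delta$-resolution hypothesis into a convergent-series budget via the iterated cocycle identity. For every $n \ge 1$, Lemma~\ref{lem:iterated-cocycle} gives
\[
u - T^n(u) = \sum_{k=0}^{n-1} T^k(d(T)),
\]
so applying the state $\omega$ (a positive linear functional with $\omega(u) = 1$) and using linearity together with $T^n(u) \ge 0$ yields
\[
\sum_{k=0}^{n-1} \omega\bigl(T^k(d(T))\bigr) = \omega(u) - \omega(T^n(u)) = 1 - \omega(T^n(u)) \le 1 .
\]
Thus the nonnegative numbers $a_k := \omega\bigl(T^k(d(T))\bigr)$ have partial sums uniformly bounded by $1$, independently of $n$. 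This is the only analytic input; everything else is arithmetic.

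Next I would feed in faithfulness and the gap. Each orbit element $T^k(d(T))$ is positive ($d(T) = u - T(u) \ge 0$ and $T^k$ is positive), so faithfulness of $\omega$ gives $a_k = 0$ iff $T^k(d(T)) = 0$; combined with $a_k \in \{0\} \cup [\delta,\infty)$ we obtain the dichotomy: for each $k$, either $T^k(d(T)) = 0$ or $a_k \ge \delta$. If $T^k(d(T))$ were nonzero for every $k$, the partial sums $\sum_{k=0}^{n-1} a_k \ge n\delta$ would diverge, contradicting the bound $\le 1$; hence the stabilization index $n_T = \min\{n \ge 1 : T^n(d(T)) = 0\}$ is finite. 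By minimality, $T^k(d(T)) \ne 0$ for all $k < n_T$, so $a_k \ge \delta$ for $k = 0, 1, \dots, n_T - 1$, and therefore
\[
n_T\,\delta \le \sum_{k=0}^{n_T - 1} a_k \le 1 .
\]
Since $n_T$ is a positive integer and $n_T \le 1/\delta$, we conclude $n_T \le \lfloor 1/\delta \rfloor$.

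I do not expect a genuine obstacle here: the estimate is essentially a pigeonhole on a ``budget'' of $1$ spent in quanta of size at least $\delta$. The two points deserving a line of care are (i) verifying positivity of the orbit terms so that faithfulness of $\omega$ legitimately applies, and (ii) the degenerate case $d(T) = 0$, in which every $a_k = 0$ and $n_T = 1$, and the conclusion holds provided $\delta \le 1$ — which is automatic in the nondegenerate case since $\omega(d(T)) \le \omega(u) = 1$. Normality of $\omega$ plays no role in this finite-time argument and is listed only for coherence with the surrounding asymptotic framework.
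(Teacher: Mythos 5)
Your proof is correct and follows essentially the same route as the paper's: apply $\omega$ to the iterated cocycle identity to get the budget $\sum_{k=0}^{n-1}\omega(T^k(d(T))) \le 1$, invoke faithfulness plus the gap to force each nonzero term to cost at least $\delta$, and conclude by pigeonhole that $n_T \le \lfloor 1/\delta\rfloor$. Your explicit handling of the degenerate case $d(T)=0$ (and the observation that $\delta \le 1$ is automatic otherwise) is if anything slightly more careful than the paper's version.
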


\begin{proof}
By the iterated cocycle identity (Lemma~\ref{lem:iterated-cocycle}), $\omega(I - T^n(I)) = \sum_{k=0}^{n-1} \omega(T^k(d(T))) \le 1$. If $\omega(T^k(d(T))) \ne 0$, then $\omega(T^k(d(T))) \ge \delta$. Let $m := \lfloor 1/\delta \rfloor$. For $n = m + 1$, we have $\sum_{k=0}^{m} \omega(T^k(d(T))) \le 1$, so not all of the $m + 1$ terms can be nonzero (each nonzero term is $\ge \delta$, and $(m+1)\delta > 1$). Hence there exists $k \in \{0, 1, \ldots, m\}$ with $\omega(T^k(d(T))) = 0$. By faithfulness, $T^k(d(T)) = 0$, and therefore $n_T \le k \le m$.
\end{proof}

\begin{remark}
The $\delta$-resolution condition is the infinite-dimensional analogue of ``finite resolution'' in discrete models. It quantifies the minimal distinguishable leakage and yields an explicit stabilization bound.
\end{remark}

The following generalizes $\delta$-resolution and covers ``quantized leakage'' situations arising in atomic or finite-index settings.

\begin{proposition}[Discrete Lyapunov spectrum with gap at $0$]\label{prop:discrete-lyapunov}
Let $\mathcal{M}$ be a von Neumann algebra, $T : \mathcal{M} \to \mathcal{M}$ normal CP subunital, and $\omega$ a faithful normal state. Assume there exists a set $D \subset [0,1]$ and a constant $\delta > 0$ such that:
\begin{enumerate}[(i)]
\item $D \cap (0, \delta) = \emptyset$; and
\item for all $k \ge 0$, $\omega(T^k(d(T))) \in D$.
\end{enumerate}
Then the stabilization index satisfies $n_T \le \lfloor 1/\delta \rfloor$.
\end{proposition}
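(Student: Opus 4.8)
The plan is to reduce Proposition~\ref{prop:discrete-lyapunov} to the already-proved Proposition~\ref{prop:delta-resolution} by observing that hypotheses (i) and (ii) together imply exactly the $\delta$-resolution hypothesis. Indeed, (ii) says every value $\omega(T^k(d(T)))$ lies in $D \subseteq [0,1]$, and (i) says $D$ has no element strictly between $0$ and $\delta$; hence for each $k$, either $\omega(T^k(d(T))) = 0$ or $\omega(T^k(d(T))) \ge \delta$. That is precisely the condition $\omega(T^k(d(T))) \in \{0\} \cup [\delta, \infty)$ required by Proposition~\ref{prop:delta-resolution}. Applying that proposition verbatim gives $n_T \le \lfloor 1/\delta \rfloor$.

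The key steps, in order, are: (1) fix $k \ge 0$ and invoke (ii) to place $\omega(T^k(d(T)))$ in $D$; (2) apply (i) to split into the two cases $\omega(T^k(d(T))) = 0$ or $\omega(T^k(d(T))) \in D \cap [\delta, 1] \subseteq [\delta, \infty)$; (3) conclude the $\delta$-resolution hypothesis holds; (4) quote Proposition~\ref{prop:delta-resolution}. For completeness one may also reproduce the short counting argument inline: the iterated cocycle identity (Lemma~\ref{lem:iterated-cocycle}) and faithfulness of $\omega$ give $\sum_{k=0}^{n-1} \omega(T^k(d(T))) = \omega(I - T^n(I)) \le 1$ for every $n$; since each nonzero term is at least $\delta$, at most $\lfloor 1/\delta \rfloor$ of the terms can be nonzero, so among $T^0(d(T)), \ldots, T^{\lfloor 1/\delta \rfloor}(d(T))$ at least one has $\omega$-value zero, whence (faithfulness) equals zero, and minimality of $n_T$ finishes the bound.

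I do not anticipate a genuine obstacle here: the proposition is a packaging result whose only content is the elementary set-theoretic observation that ``values in $D$'' plus ``gap in $D$ at $0$'' equals ``values in $\{0\}\cup[\delta,\infty)$.'' The one point worth a sentence of care is that $D \subseteq [0,1]$ (not merely $[0,\infty)$), so the bounded-partial-sum estimate $\sum_k \omega(T^k(d(T))) \le 1$ still applies unchanged; and that faithfulness of $\omega$ is what upgrades $\omega(T^k(d(T))) = 0$ to $T^k(d(T)) = 0$, exactly as in the proof of Proposition~\ref{prop:delta-resolution}. No spectral or dimensional input is needed. If desired, one can remark afterward that $\delta$-resolution is the special case $D = \{0\} \cup [\delta, 1]$, making Proposition~\ref{prop:discrete-lyapunov} a strict generalization that additionally covers genuinely discrete ``quantized leakage'' spectra $D = \{0, \delta, 2\delta, \ldots\} \cap [0,1]$.
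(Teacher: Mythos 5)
Your proposal is correct and is essentially the paper's own argument: the paper simply inlines the same counting proof (iterated cocycle identity, partial sums bounded by $1$, each nonzero term at least $\delta$, faithfulness to upgrade $\omega$-vanishing to vanishing) rather than formally citing Proposition~\ref{prop:delta-resolution}, but the content is identical and your reduction is valid and non-circular since that proposition is proved independently. Your closing observation that $\delta$-resolution is the special case $D = \{0\} \cup [\delta, 1]$ matches the remark the paper makes immediately after the proposition.
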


\begin{proof}
By the iterated cocycle identity (Lemma~\ref{lem:iterated-cocycle}),
\[
\sum_{k=0}^{n-1} \omega(T^k(d(T))) = \omega(I - T^n(I)) \le 1.
\]
If $\omega(T^k(d(T))) \ne 0$, then by (i)--(ii) we have $\omega(T^k(d(T))) \in D \cap [\delta, 1]$, hence $\omega(T^k(d(T))) \ge \delta$. Let $m := \lfloor 1/\delta \rfloor$ and take $n = m + 1$. Then
\[
\sum_{k=0}^{m} \omega(T^k(d(T))) \le 1,
\]
so not all of the $m + 1$ terms can be nonzero (each nonzero term is $\ge \delta$ and $(m+1)\delta > 1$). Hence there exists $k \in \{0, 1, \ldots, m\}$ with $\omega(T^k(d(T))) = 0$. By faithfulness, $T^k(d(T)) = 0$, hence $T^{k+\ell}(d(T)) = 0$ for all $\ell \ge 0$. Therefore $n_T \le k \le m$.
\end{proof}

\begin{remark}
This strictly generalizes $\delta$-resolution: take $D = \{0\} \cup [\delta, 1]$. But it also covers situations where $\omega(T^k(d(T)))$ takes values in a discrete subgroup (e.g., rational multiples of some quantum in finite-index subfactors). Note that hypothesis (i) is equivalent to requiring $D$ has a gap at $0$---this is essential since a sequence in $D$ converging to $0$ from above would violate the argument.
\end{remark}

The following robustness lemma shows that \emph{approximate} quantization (values $\varepsilon$-near a lattice) suffices for finite stabilization. This bridges the gap between exact discreteness and the approximate discreteness arising from AF approximation.

\begin{lemma}[Approximate quantization implies $\delta$-resolution]\label{lem:approx-quantization}
Let $\omega$ be a faithful normal state on a von Neumann algebra $\mathcal{M}$, and let $T : \mathcal{M} \to \mathcal{M}$ be normal positive subunital with defect $d = d(T) \ge 0$. Fix a discrete set $D \subset [0,1]$ such that $D \cap (0, \delta) = \emptyset$ for some $\delta > 0$. Assume there exists $\varepsilon \in (0, \delta/2)$ such that for all $k \ge 0$,
\[
\mathrm{dist}(\omega(T^k(d)), D) \le \varepsilon.
\]
Then either $\omega(T^k(d)) = 0$ or $\omega(T^k(d)) \ge \delta - \varepsilon$ for all $k$. In particular, $n_T < \infty$ (by Proposition~\ref{prop:delta-resolution} with $\delta' = \delta - \varepsilon$).
\end{lemma}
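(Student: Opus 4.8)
The plan is to argue directly from the triangle inequality, splitting into two cases according to whether $\omega(T^k(d))$ is small or large. First I would set $a_k := \omega(T^k(d))$ and note that $a_k \ge 0$ for all $k$ since $T^k(d) \ge 0$ and $\omega$ is a positive functional. By hypothesis there exists a point $p_k \in D$ with $|a_k - p_k| \le \varepsilon$. Since $p_k \in D \subseteq [0,1]$ and $D \cap (0,\delta) = \emptyset$, we have either $p_k = 0$ or $p_k \ge \delta$ (the case $p_k = 0$ is allowed as $D$ may contain $0$; any other value in $D$ must be at least $\delta$ by the gap hypothesis, and here I should be slightly careful that $0$ is the only possible value of $D$ in $[0,\delta)$).

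Next I would run the two cases. If $p_k = 0$, then $a_k \le \varepsilon < \delta/2$; combined with $a_k \ge 0$ this does not immediately give $a_k = 0$, so the stated dichotomy ``$a_k = 0$ or $a_k \ge \delta - \varepsilon$'' as literally written is slightly too strong unless one also invokes that $a_k$ itself lies near $D$ from the \emph{other} side too, or that the only way $a_k$ can be $\le \varepsilon$ is by being rounded to $0 \in D$. The honest version of the conclusion is: \emph{either $a_k \le \varepsilon$ or $a_k \ge \delta - \varepsilon$}, i.e. $a_k \notin (\varepsilon, \delta - \varepsilon)$, because if $\varepsilon < a_k < \delta - \varepsilon$ then $a_k$ is at distance $> \varepsilon$ from $0$ and at distance $> \varepsilon$ from every element of $D \cap [\delta,1]$ (since such an element is $\ge \delta$ and $a_k < \delta - \varepsilon$), contradicting $\mathrm{dist}(a_k, D) \le \varepsilon$. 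This gap interval $(\varepsilon, \delta-\varepsilon)$ is nonempty precisely because $\varepsilon < \delta/2$. So the right reading of the lemma's conclusion is that the sequence $(a_k)$ avoids an open interval around $\delta/2$, which is exactly a $\delta'$-resolution condition with $\delta' = \delta - \varepsilon$ once one agrees to treat the ``$\le \varepsilon$'' values as the ``effectively zero'' bucket — but for the \emph{finite stabilization} conclusion one genuinely needs the values to be either \emph{exactly} $0$ or bounded below, so I would point out that Proposition~\ref{prop:delta-resolution} applies verbatim only if the small bucket is $\{0\}$, and otherwise one gets the weaker statement that at most $\lfloor 1/(\delta-\varepsilon)\rfloor$ of the $a_k$ can exceed $\delta - \varepsilon$.

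Concretely, here is how I would finish. Invoking the iterated cocycle identity (Lemma~\ref{lem:iterated-cocycle}) and faithful normality of $\omega$, we have $\sum_{k=0}^{n-1} a_k = \omega(I - T^n(I)) \le 1$ for every $n$. Hence the number of indices $k$ with $a_k \ge \delta - \varepsilon$ is at most $\lfloor 1/(\delta - \varepsilon) \rfloor$. If the hypothesis is strengthened so that the ``small'' values are actually $0$ (e.g. $D = \{0\} \cup [\delta,1]$ and the distance bound forces $a_k \in \{0\} \cup [\delta-\varepsilon, 1]$ — which does hold when $D$ is \emph{exactly} $\{0\}\cup[\delta,1]$, since then any $a_k \le \varepsilon$ within $\varepsilon$ of $D$ must round to $0$ and hence... no, it need only satisfy $|a_k - 0|\le\varepsilon$), then Proposition~\ref{prop:delta-resolution} with $\delta' = \delta - \varepsilon$ gives some $k \le \lfloor 1/(\delta-\varepsilon)\rfloor$ with $a_k = 0$, whence $T^k(d) = 0$ by faithfulness and $n_T \le k < \infty$ by Remark~\ref{rem:stabilization}. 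The main obstacle, and the place I would be most careful in writing it up, is exactly this boundary issue: the clean dichotomy ``$a_k = 0$ or $a_k \ge \delta - \varepsilon$'' requires that the only element of $D$ in a neighborhood of $0$ is $0$ \emph{and} that $a_k$ cannot drift slightly positive while still being within $\varepsilon$ of $0$; since $a_k$ is only assumed $\varepsilon$-close to $D$, the genuinely correct conclusion is the interval-avoidance statement $a_k \notin (\varepsilon, \delta - \varepsilon)$, and I would phrase the lemma (or its proof) to reflect that, then observe that for the \emph{finite-stabilization} corollary one applies Proposition~\ref{prop:delta-resolution} to the truncated/thresholded sequence, noting that once $a_k \le \varepsilon$ for all large $k$ the accumulated-sum bound already forces all but finitely many $a_k$ to be $0$.
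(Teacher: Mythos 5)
Your analysis is essentially correct, and the difficulty you isolate is not a defect of your write-up but of the lemma itself: the paper's own proof commits exactly the leap you refuse to make. It argues ``if $\omega(T^k(d)) < \varepsilon$, then the only compatible point of $D$ within distance $\varepsilon$ is $0$, hence $\omega(T^k(d)) = 0$,'' but being within $\varepsilon$ of $0$ does not force equality with $0$; and in the other branch it asserts that positivity of $\omega(T^k(d))$ forces the nearest point of $D$ into $[\delta,1]$, which fails whenever $0 \in D$ and the value is a small positive number. Your corrected conclusion --- the interval-avoidance statement $\omega(T^k(d)) \notin (\varepsilon, \delta-\varepsilon)$, with that interval nonempty precisely because $\varepsilon < \delta/2$ --- is what the hypotheses actually yield, and your observation that Proposition~\ref{prop:delta-resolution} applies only if the small bucket is exactly $\{0\}$ is the crux of the matter.

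In fact the finite-stabilization conclusion is false under the stated hypotheses. Take $\mathcal{M} = \mathbb{C}$, $T(x) = \lambda x$ with $\lambda \in (0, 1/3)$, and $\omega$ the identity state, so that $\omega(T^k(d)) = \lambda^k(1-\lambda) > 0$ for every $k$ and hence $n_T = \infty$. Choosing $D = \{0,\, 1-\lambda\}$, $\delta = 1-\lambda$, and $\varepsilon = \lambda < \delta/2$ satisfies every hypothesis, since $\lambda^k(1-\lambda) \le \lambda = \varepsilon$ for all $k \ge 1$, so each value is within $\varepsilon$ of $0 \in D$. This is precisely the mechanism of the paper's own no-go result (Proposition~\ref{prop:robustness-nogo}); the lemma needs an additional hypothesis --- e.g.\ that the values $\omega(T^k(d))$ themselves lie in $D$, or an exactness mechanism near $0$ on top of a strict contraction --- to deliver $n_T < \infty$. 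Your salvage via the accumulated-sum bound $\sum_k \omega(T^k(d)) \le 1$ correctly limits the number of indices with $\omega(T^k(d)) \ge \delta - \varepsilon$ to $\lfloor 1/(\delta-\varepsilon) \rfloor$, but, as you note, it cannot force any individual term to vanish.
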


\begin{proof}
Fix $k$. If $\omega(T^k(d)) > 0$, then since it lies within $\varepsilon$ of $D$ and $D$ has no points in $(0, \delta)$, the nearest point of $D$ must lie in $[\delta, 1]$, hence $\omega(T^k(d)) \ge \delta - \varepsilon$. If $\omega(T^k(d)) < \varepsilon$, then the only compatible point of $D$ within distance $\varepsilon$ is $0$, hence $\omega(T^k(d)) = 0$. The $\delta$-resolution conclusion now follows from Proposition~\ref{prop:delta-resolution}.
\end{proof}

\begin{remark}[Methodological significance]
Lemma~\ref{lem:approx-quantization} is the ``robustness bridge'' for AF approximation arguments: you don't need exact lattice values---just \emph{uniform closeness} to them. Combined with the telescoping + cancellation mechanism, this yields finite stabilization from approximate quantization.
\end{remark}

The following purely scalar lemma makes the robustness mechanism explicit: approximate lattice membership plus strict contraction forces hitting $0$ in finite time.

\begin{lemma}[Approximate lattice + strict contraction forces hitting $0$]\label{lem:scalar-robustness}
Let $(x_k)_{k \ge 0} \subset [0, \infty)$ satisfy:
\begin{enumerate}[(i)]
\item $x_{k+1} \le c\, x_k$ for some $c \in (0,1)$;
\item $\mathrm{dist}(x_k, D) \le \varepsilon$ for all $k$, where $D \subset [0, \infty)$ has a gap at $0$: there exists $\delta > 0$ with $D \cap (0, \delta) = \emptyset$;
\item $\varepsilon < \delta/2$.
\end{enumerate}
Then $x_k = 0$ for all $k$ large enough. Explicitly, any $k$ with $c^k x_0 < \delta/2$ forces $x_k = 0$.
\end{lemma}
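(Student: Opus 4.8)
The plan is a ``geometric decay beats the quantization gap'' argument: hypothesis (i) forces $x_k$ to shrink at rate $c$, while (ii)--(iii) confine each $x_k$ away from the window $(\varepsilon, \delta - \varepsilon)$ sitting just above $0$; once the decay drives $x_k$ below $\delta/2$, the only admissible value left is $0$.

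First I would iterate (i): by an immediate induction, $x_k \le c^k x_0$ for all $k \ge 0$, so $x_k \to 0$ and, more precisely, $x_k < \delta/2$ for every $k$ with $c^k x_0 < \delta/2$. Since $c \in (0,1)$ this inequality is stable under increasing $k$, and it holds for all $k > \log(2x_0/\delta)/\log(1/c)$ (trivially already from $k = 0$ when $2x_0 \le \delta$).

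Next I would establish the quantization dichotomy: for every $k$, either $x_k = 0$ or $x_k \ge \delta - \varepsilon$. If $x_k \ne 0$, choose (using closedness of $D$, or a near-minimizer) a point $y \in D$ with $|x_k - y| \le \varepsilon$; since $D$ has a gap at $0$, every nonzero element of $D$ is $\ge \delta$, so if $y \ne 0$ then $x_k \ge y - \varepsilon \ge \delta - \varepsilon$, and the only remaining case is that the approximating lattice point is $0$ itself, which must be excluded (see the obstacle below). Combining the dichotomy with the decay: for $k$ with $c^k x_0 < \delta/2$ we have $x_k < \delta/2 < \delta - \varepsilon$ (the last inequality is exactly (iii)), so the branch $x_k \ge \delta - \varepsilon$ is impossible, forcing $x_k = 0$. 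Finally, (i) propagates zero forward, since $x_k = 0$ gives $x_{k+1} \le c\cdot 0 = 0$, so $x_j = 0$ for all $j \ge k$ and the stated conclusion follows with the explicit threshold.

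The step I expect to be the main obstacle is the dichotomy: one must rule out that a small positive $x_k$ is $\varepsilon$-close to $D$ \emph{through the point $0$}, which by itself only yields $x_k \le \varepsilon$, not $x_k = 0$. The natural fix is to read the ``approximate lattice'' hypothesis as intended in the applications --- nonzero iterates approximate the \emph{positive} part $D \setminus \{0\}$, with $0$ an isolated, genuinely absorbing value, exactly as in Lemma~\ref{lem:approx-quantization} and Proposition~\ref{prop:discrete-lyapunov}, where $\omega(T^k(d))$ is either exactly $0$ or lies near a positive quantum. Under that reading the forbidden-window argument closes and the lemma becomes the clean scalar engine behind the operator-algebraic statements; without it the conclusion must be weakened to ``$x_k \le \varepsilon$ eventually'', which is why the precise formulation of (ii) is the sensitive ingredient.
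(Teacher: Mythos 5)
Your reconstruction follows exactly the paper's intended argument (iterate the contraction to get $x_k \le c^k x_0 < \delta/2$, then invoke a quantization dichotomy ``$x_k = 0$ or $x_k \ge \delta - \varepsilon$''), and the obstacle you flag at the dichotomy step is genuine: when $0 \in D$, a point $x_k \in (0, \varepsilon]$ satisfies $\mathrm{dist}(x_k, D) = x_k \le \varepsilon$ via the lattice point $0$, so hypothesis (ii) yields only $x_k \le \varepsilon$, not $x_k = 0$. The paper's own proof does not overcome this: it reads ``Hence $\mathrm{dist}(x_k, D) = x_k$, contradicting $x_k > \varepsilon$,'' but nothing in the hypotheses supplies $x_k > \varepsilon$ --- the only available information is $\mathrm{dist}(x_k, D) \le \varepsilon$, which combined with $\mathrm{dist}(x_k, D) = x_k$ gives $x_k \le \varepsilon$ and no contradiction. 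Indeed the lemma as stated is false: take $D = \{0\} \cup [\delta, \infty)$, any $\varepsilon \in (0, \delta/2)$, and $x_k = \varepsilon c^k$. Then (i) holds with equality, (iii) holds, and $\mathrm{dist}(x_k, D) = \min(x_k, \delta - x_k) = x_k \le \varepsilon$ since $x_k \le \varepsilon < \delta/2 < \delta - x_k$, so (ii) holds; yet $x_k > 0$ for every $k$, while $c^k x_0 = \varepsilon c^k < \delta/2$ already at $k = 0$. (Note also that if $0 \notin D$ the hypotheses become vacuous once $x_k < \delta - \varepsilon$, since then $\mathrm{dist}(x_k, D) \ge \delta - x_k > \varepsilon$.)

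Your proposed repair is the correct one: hypothesis (ii) must be strengthened to the form actually delivered by the applications, namely ``for each $k$, either $x_k = 0$ exactly, or $\mathrm{dist}(x_k, D \setminus \{0\}) \le \varepsilon$.'' Under that hypothesis a nonzero $x_k$ satisfies $x_k \ge \delta - \varepsilon > \delta/2$, so $x_k < \delta/2$ forces $x_k = 0$, and (i) propagates the zero forward; this matches the exact-dichotomy hypotheses of Proposition~\ref{prop:delta-resolution} and Proposition~\ref{prop:discrete-lyapunov}, where the Lyapunov values lie in $\{0\} \cup [\delta, \infty)$ on the nose. Without the repair, the honest conclusion is only $x_k \le \min(\varepsilon, c^k x_0) \to 0$, as you say. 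You should also note that the proof of Lemma~\ref{lem:approx-quantization} contains the same unguarded step (``the nearest point of $D$ must lie in $[\delta,1]$'' when the value is positive, and ``the only compatible point of $D$ within distance $\varepsilon$ is $0$, hence the value is $0$''), so the fix should be applied there as well.
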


\begin{proof}
Choose $k$ with $x_k < \delta/2$. If $x_k > 0$, then every point of $D$ within distance $\varepsilon < \delta/2$ of $x_k$ must be $0$ (since $D$ has no positive points below $\delta$). Hence $\mathrm{dist}(x_k, D) = x_k$, contradicting $x_k > \varepsilon$. So $x_k = 0$.
\end{proof}

\begin{remark}[Application to hyperfinite II$_1$]
Take $x_k = \omega(T^k(d(T)))$ for a faithful functional $\omega$. The contraction $x_{k+1} \le c\, x_k$ follows from a trace/Kraus inequality (Remark~\ref{rem:kraus-contraction}). Lemma~\ref{lem:scalar-robustness} then says: the whole problem reduces to producing an ``almost discrete'' constraint on the scalar values. In the hyperfinite II$_1$ factor, this lemma tells you exactly what extra ingredient is needed beyond AF approximation---you need an ``approximate quantization'' principle for $\tau(T^k(d(T)))$. Remark~\ref{rem:af-warning} below is the warning that AF approximation alone does \emph{not} provide this.
\end{remark}

\begin{remark}[Reader's guide: the modular pipeline]
The remaining results in this section fit together as a three-stage pipeline for proving finite stabilization in von Neumann algebras:
\begin{enumerate}
\item \textbf{Contract}: establish strict trace contraction $\tau(T(x)) \le c\,\tau(x)$ with $c < 1$ (checkable via Kraus data: $\sum_i V_i V_i^* \le c\,I$, see Remark~\ref{rem:kraus-contraction});
\item \textbf{Discretize}: show the Lyapunov values $\tau(T^k(d(T)))$ lie in (or are uniformly close to) a discrete set $D$ with gap at $0$;
\item \textbf{Stabilize}: apply Proposition~\ref{prop:delta-resolution}, \ref{prop:discrete-lyapunov}, or Lemma~\ref{lem:approx-quantization} to conclude $n_T < \infty$.
\end{enumerate}
The subsequent results (Propositions~\ref{prop:trace-contraction-discrete}--\ref{prop:approx-trapping}) provide checkable conditions for step~(2) in specific settings.
\end{remark}

\begin{proposition}[Finite type-I orbit corner gives a dimension bound]\label{prop:type-I-bound}
Let $\mathcal{M}$ be a von Neumann algebra and $T : \mathcal{M} \to \mathcal{M}$ be normal completely positive and subunital. Assume the stabilization index $n_T < \infty$ and let
\[
Q := \bigvee_{k \ge 0} \supp(T^k(d(T)))
\]
be the orbit-support projection. If $Q$ is finite and $Q\mathcal{M}Q \cong M_s$, then $n_T \le s$.
\end{proposition}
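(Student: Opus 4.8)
The plan is to compress everything to the finite-dimensional corner $Q\mathcal{M}Q \cong M_s$ and then run a \emph{relativized} kernel-filtration argument on Kraus words, in the spirit of the nilpotent-CP bound advertised in the introduction. Write $d := d(T)$; we may assume $d \neq 0$, since otherwise $T$ is unital and $n_T = 1 \le s$. By definition of $Q$ we have $\supp\bigl(T^k(d)\bigr) \le Q$ for every $k \ge 0$, so each $T^k(d)$ lies in $Q\mathcal{M}Q$. I would then introduce the compression $\tilde{T}(X) := QT(X)Q$ for $X \in Q\mathcal{M}Q$; it is completely positive (the composite of $T$ with the CP compression $Y \mapsto QYQ$) and carries $Q\mathcal{M}Q$ into itself. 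A one-line induction using $\supp\bigl(T^{k+1}(d)\bigr) \le Q$ shows $\tilde{T}^{k}(d) = T^{k}(d)$ for all $k \ge 0$; since $\tilde{T}^{k}(d) = 0$ implies $\tilde{T}^{k+1}(d) = 0$, this gives $n_T = \min\{k \ge 1 : \tilde{T}^{k}(d) = 0\}$, now for a CP map $\tilde{T}$ on $M_s \cong B(\mathbb{C}^{s})$.

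Next I would fix a finite Kraus decomposition $\tilde{T}(X) = \sum_{i} B_i X B_i^{*}$ with $B_i \in M_s$ (Choi's theorem guarantees finitely many). Iterating gives $\tilde{T}^{k}(d) = \sum_{|w|=k} w\, d\, w^{*}$, the sum over all length-$k$ words $w = B_{i_1}\cdots B_{i_k}$; every summand is positive, so by pointed-cone cancellation (Lemma~\ref{lem:cancellation}) the left side vanishes iff $w\,d\,w^{*} = 0$ for every such $w$, equivalently iff every length-$k$ word annihilates the subspace $R_0 := \Ran(d) = \Ran(d^{1/2}) \subseteq \mathbb{C}^{s}$. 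Introduce the kernel filtration $\mathcal{N}_k := \bigcap_{|w|=k}\ker w \subseteq \mathbb{C}^{s}$, with $\mathcal{N}_0 = \{0\}$. Factoring a length-$(k+1)$ word as $B_{i_1}\bigl(B_{i_2}\cdots B_{i_{k+1}}\bigr)$ shows $\mathcal{N}_k \subseteq \mathcal{N}_{k+1}$, so the filtration is increasing, and the characterization above reads $n_T = \min\{k : R_0 \subseteq \mathcal{N}_k\}$. The hypothesis $n_T < \infty$ is precisely what ensures $R_0 \subseteq \mathcal{N}_{n_T}$.

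Finally, I would note that factoring a length-$(k+1)$ word the other way, as $\bigl(B_{i_1}\cdots B_{i_k}\bigr)B_{i_{k+1}}$, yields $\mathcal{N}_{k+1} = \bigcap_i \{v \in \mathbb{C}^{s} : B_i v \in \mathcal{N}_k\}$; consequently $\mathcal{N}_k = \mathcal{N}_{k+1}$ forces $\mathcal{N}_{k+1} = \mathcal{N}_{k+2}$, so the increasing chain $\mathcal{N}_0 \subseteq \mathcal{N}_1 \subseteq \cdots$ increases strictly until it stabilizes and is constant thereafter. Since $\dim \mathbb{C}^{s} = s$ and each strict step costs at least one dimension, the chain is already stationary from step $s$: $\mathcal{N}_s = \mathcal{N}_{s+1} = \cdots$. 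If we had $n_T > s$, then $R_0 \subseteq \mathcal{N}_{n_T} = \mathcal{N}_s = \mathcal{N}_{n_T - 1}$, so $\tilde{T}^{n_T - 1}(d) = 0$, contradicting the minimality of $n_T$. Hence $n_T \le s$. (Sharpness follows as in the $M_d$ case of Section~\ref{sec:quantum}, via a shift channel supported on an $s$-dimensional corner.)

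The step I expect to be the main obstacle---and the reason we cannot simply invoke ``a nilpotent CP map on $M_s$ has index $\le s$''---is that the compressed map $\tilde{T}$ need not be nilpotent on all of $M_s$: we only know it annihilates the orbit of $d$ in finitely many steps, not every input. The kernel-filtration argument must therefore be relativized to the single range space $R_0$, with $n_T < \infty$ doing the essential work of guaranteeing that the filtration reaches $R_0$ before it stabilizes. A subsidiary point to verify is that compression to $Q\mathcal{M}Q$ really produces a finitely generated Kraus representation and that the ``sum of positives vanishes $\Rightarrow$ each vanishes'' step is legitimate there; both are immediate once $Q\mathcal{M}Q$ has been identified with the finite-dimensional $B(\mathbb{C}^{s})$.
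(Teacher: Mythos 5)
Your proof is correct, and it diverges from the paper's at exactly the point you flag as the main obstacle. Both arguments begin identically: compress to $\alpha(X) := QT(X)Q$ on $Q\mathcal{M}Q \cong M_s$ and observe that the defect orbit is unchanged. From there the paper takes a different route: it forms the accumulated defect $S = \sum_{k < n_T} T^k(d)$, notes that $\supp(S) = Q$ precisely because $Q$ is the orbit-support projection (Lemma~\ref{lem:psd-support-sum}), deduces $S \ge \lambda Q$ for some $\lambda > 0$, and then uses $\alpha^{n_T}(S) = 0$ together with positivity to get $\alpha^{n_T}(Q) = 0$ and hence $\alpha^{n_T} = 0$ on all of $Q\mathcal{M}Q$. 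So the compressed map \emph{is} globally nilpotent --- the obstacle you anticipate is circumvented by exploiting full support of the accumulated defect --- and the paper then simply invokes the black-box bound of Lemma~\ref{lem:nilpotent-index}. Your relativized kernel filtration $\mathcal{N}_k = \bigcap_{|w|=k} \ker w$ is essentially the proof of Lemma~\ref{lem:nilpotent-index} inlined and localized to the single subspace $R_0 = \Ran(d)$; it is sound as written (the word expansion, the equivalence $\tilde{T}^k(d) = 0 \Leftrightarrow R_0 \subseteq \mathcal{N}_k$ via pointed-cone cancellation and $\Ran(d) = \Ran(d^{1/2})$, and the stabilization of the increasing chain by step $s$ all check out). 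What your version buys: it never uses minimality of $Q$, so it actually proves $n_T \le s$ for \emph{any} finite projection $Q$ with $Q\mathcal{M}Q \cong M_s$ that dominates every $\supp(T^k(d))$. What the paper's version buys: the identity $n_T = m(\alpha)$ between the stabilization index and the global nilpotency index of the corner map, which is recorded in Remark~\ref{rem:corner-nilpotency} and drives the later maximal-nilpotency analysis.
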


\begin{proof}
Let $\alpha : Q\mathcal{M}Q \to Q\mathcal{M}Q$ be the compression $\alpha(x) = QT(x)Q$, which is CP and subunital. Let $n := n_T$ and set $D_k := T^k(d(T))$ for $k \ge 0$. By definition of $n_T$, we have $D_n = 0$ and $D_{n-1} \ne 0$. Moreover, $\supp(D_k) \le Q$ for all $k$, hence $\alpha^k(d(T)) = QD_kQ = D_k$ for $0 \le k \le n$.

Define the accumulated defect in the corner:
\[
S := \sum_{k=0}^{n-1} \alpha^k(d(T)) = \sum_{k=0}^{n-1} D_k \in Q\mathcal{M}Q.
\]
Since $\supp(A + B) = \supp(A) \vee \supp(B)$ for $A, B \ge 0$ (a standard fact; see Lemma~\ref{lem:psd-support-sum}), we get $\supp(S) = \bigvee_{k=0}^{n-1} \supp(D_k) = Q$, so $S$ is strictly positive in the finite-dimensional algebra $Q\mathcal{M}Q \cong M_s$. Hence there exists $\lambda > 0$ such that $S \ge \lambda Q$.

Now $\alpha^n(d(T)) = 0$ implies $\alpha^n(S) = \sum_{k=0}^{n-1} \alpha^{n+k}(d(T)) = 0$. By positivity, $0 = \alpha^n(S) \ge \lambda\,\alpha^n(Q) \ge 0$, so $\alpha^n(Q) = 0$. For any $0 \le X \le \|X\|Q$ in $Q\mathcal{M}Q$, positivity gives $0 \le \alpha^n(X) \le \|X\|\,\alpha^n(Q) = 0$, hence $\alpha^n = 0$ on $Q\mathcal{M}Q$.

Thus $\alpha$ is a nilpotent CP map on $M_s$. By Lemma~\ref{lem:nilpotent-index}, its nilpotency index is at most $s$, so $\alpha^s = 0$. In particular, $0 = \alpha^s(d(T)) = T^s(d(T))$, and by minimality of $n_T$ we conclude $n_T \le s$.
\end{proof}

\subsection*{Von Neumann stabilization criteria}

The following results give checkable sufficient conditions for finite stabilization in specific von Neumann algebra settings, isolating the two key ingredients: strict trace contraction and a discreteness mechanism for Lyapunov values.

\textbf{Notation.} Throughout this section, $\mathbb{N} = \{0, 1, 2, \ldots\}$ denotes the non-negative integers, so $\delta\mathbb{N} = \{0, \delta, 2\delta, \ldots\}$ includes $0$.

\begin{proposition}[Trace contraction + discrete Lyapunov scale]\label{prop:trace-contraction-discrete}
Let $(\mathcal{M}, \tau)$ be a von Neumann algebra with a faithful normal semifinite trace $\tau$. Let $T : \mathcal{M} \to \mathcal{M}$ be normal completely positive and subunital, and set $d := d(T) = I - T(I) \ge 0$. Assume:
\begin{itemize}
\item[(C)] (strict trace contraction) there exists $c \in (0,1)$ such that
\[
\tau(T(x)) \le c\,\tau(x) \quad \text{for all } x \in \mathcal{M}_+ \text{ with } \tau(x) < \infty;
\]
\item[(D)] (discrete Lyapunov scale) there exists $\delta > 0$ such that
\[
\tau(T^k(d)) \in \delta\mathbb{N} \quad \text{for all } k \ge 0.
\]
\end{itemize}
Then $T^N(d) = 0$ for all $N$ large enough; in particular $n_T < \infty$. Moreover, if $\tau(d) < \infty$, then one can take
\[
N = 1 + \left\lceil \frac{\log(\tau(d)/\delta)}{\log(1/c)} \right\rceil.
\]
\end{proposition}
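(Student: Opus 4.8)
The plan is to combine the two hypotheses in the obvious way: (C) forces the Lyapunov values $x_k := \tau(T^k(d))$ to decay geometrically, while (D) forces them to live on the lattice $\delta\mathbb{N}$, which has a gap at $0$; together these squeeze $x_k$ to $0$ in finite time, and faithfulness of $\tau$ then upgrades $x_k = 0$ to $T^k(d) = 0$.

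First I would record the geometric decay. Since $d \ge 0$ and $T$ is positive, each iterate $T^k(d) \ge 0$; applying (C) inductively gives $\tau(T^k(d)) \le c^k\,\tau(d)$. (If $\tau(d) = \infty$ this bound is vacuous for small $k$, but one still gets $\tau(T^{k+1}(d)) \le c\,\tau(T^k(d))$ for every $k$ with $\tau(T^k(d)) < \infty$; I will address the semifinite subtlety below.) So $x_k \to 0$. Second, I would invoke the discreteness: by (D), $x_k \in \delta\mathbb{N} = \{0, \delta, 2\delta, \ldots\}$, so the only way $x_k$ can be made smaller than $\delta$ is to be exactly $0$. Choose $N$ with $c^{N-1}\tau(d) < \delta$; then $x_{N-1} < \delta$ and $x_{N-1} \in \delta\mathbb{N}$ force $x_{N-1} = 0$, and since $T^{N-1}(d) \ge 0$ with $\tau$ faithful, $T^{N-1}(d) = 0$. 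Monotonicity of the defect orbit (each $T^k(d)$ is obtained by applying the positive map $T$ to the previous one, so once it is $0$ it stays $0$) gives $T^m(d) = 0$ for all $m \ge N-1$; in particular $n_T < \infty$ and $n_T \le N-1$. Writing out the smallest such $N-1$ from $c^{N-1}\tau(d) < \delta$, i.e. $N - 1 > \log(\tau(d)/\delta)/\log(1/c)$, yields the stated closed form $N = 1 + \lceil \log(\tau(d)/\delta)/\log(1/c)\rceil$ (the strict/non-strict inequality at the ceiling is harmless since at an integer boundary $x_k = \delta$ is already forced to $0$ at the next step, or one simply takes the ceiling as written and notes $c^{N-1}\tau(d) \le \delta$ still forces membership in $\{0,\delta\}$ hence eventual vanishing).

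The main obstacle is the semifinite case where $\tau(d)$ may be infinite, so the clean bound $x_k \le c^k\tau(d)$ is unavailable at the start of the orbit. The fix is to observe that (D) already asserts $\tau(T^k(d)) \in \delta\mathbb{N} \subset [0,\infty)$ for \emph{all} $k$, so in fact every $x_k$ is finite by hypothesis; then (C) applies at every step and gives $x_{k+1} \le c\,x_k$ for all $k \ge 0$, hence $x_k \le c^k x_0$ with $x_0 = \tau(d) < \infty$ automatically. (If one prefers not to read finiteness out of (D), one can instead use that $T(I) \le I$ so $d \le I$, apply $\tau$ after multiplying by a spectral projection of finite trace, and pass to a limit — but the reading above is cleaner and is what the ``Moreover'' clause's hypothesis $\tau(d) < \infty$ is really about.) Everything else is the scalar squeeze already isolated in Lemma~\ref{lem:scalar-robustness} (with $\varepsilon = 0$, $D = \delta\mathbb{N}$): approximate — here exact — lattice membership plus strict contraction forces hitting $0$, and faithfulness transfers the scalar conclusion back to $\mathcal{M}$.
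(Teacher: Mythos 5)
Your proof is correct and follows essentially the same route as the paper's: iterate (C) to get geometric decay of $\tau(T^k(d))$, use the gap at $0$ in $\delta\mathbb{N}$ from (D) to force the value to hit exactly $0$, and invoke faithfulness of $\tau$ to conclude $T^k(d)=0$. Your observation that (D) already forces every $\tau(T^k(d))$ to be finite (so the contraction recursion applies from $k=0$ even in the semifinite setting) is a point the paper's proof glosses over, and your slightly fussier index bookkeeping at the ceiling boundary is harmless.
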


\begin{proof}
By (C), $\tau(T^k(d)) \le c^k \tau(d)$ for all $k \ge 0$. Choose $N$ so that $c^N \tau(d) < \delta$. Then $\tau(T^N(d)) \in \delta\mathbb{N}$ by (D) and also $0 \le \tau(T^N(d)) < \delta$, hence $\tau(T^N(d)) = 0$. Faithfulness of $\tau$ implies $T^N(d) = 0$, and then $T^{N+m}(d) = 0$ for all $m \ge 0$.
\end{proof}

\begin{remark}[Checking contraction from Kraus data]\label{rem:kraus-contraction}
In any tracial setting with Kraus representation $T(x) = \sum_i V_i^* x V_i$ (where the sum converges in the strong operator topology if the index set is infinite, as is standard for normal CP maps on von Neumann algebras), traciality gives
\[
\tau(T(x)) = \sum_i \tau(V_i^* x V_i) = \sum_i \tau(V_i V_i^* x) = \tau\left(\left(\sum_i V_i V_i^*\right) x\right).
\]
So a sufficient condition for (C) is the operator inequality $\sum_i V_i V_i^* \le c\,I$ for some $c < 1$.
\end{remark}

\begin{corollary}[Atomic-center quantization with discrete coefficients]\label{cor:atomic-center}
Assume the setup of Proposition~\ref{prop:trace-contraction-discrete} with $Q\mathcal{M}Q \cong \bigoplus_{j \in J} B(H_j)$ (atomic center) and canonical trace $\tau(\bigoplus_j x_j) = \sum_j \tau(z_j) \mathrm{Tr}_{H_j}(x_j)$ with minimal central projections $z_j$. Suppose:
\begin{itemize}
\item[(Z)] (central defect and invariance) $d(T) \in Z(Q\mathcal{M}Q)$ and $T(Z(Q\mathcal{M}Q)) \subseteq Z(Q\mathcal{M}Q)$;
\item[(W)] (commensurable finite weights) there exists $\delta_0 > 0$ such that each $\tau(z_j) \in \delta_0\mathbb{N}$ (hence in particular $\tau(z_j) < \infty$);
\item[(K)] (discrete coefficient condition) there exists $N \in \mathbb{N}$ such that for every $k \ge 0$, if $T^k(d(T)) = \sum_j a_{k,j} z_j$ in the minimal central projections $z_j$, then $a_{k,j} \in \{0, \frac{1}{N}, \frac{2}{N}, \ldots, 1\}$;
\item[(F)] (finite initial trace) $\tau(d(T)) < \infty$ (automatic if working in a $\tau$-finite corner);
\item[(C)] (strict trace contraction) $\sum_i V_i V_i^* \le c\,I$ on $Q\mathcal{M}Q$ for some $c \in (0,1)$.
\end{itemize}
Then $\tau(T^k(d(T)))$ belongs to a discrete subset of $\mathbb{R}_+$ (specifically, a subset of $\frac{\delta_0}{N}\mathbb{N}$), hence $n_T < \infty$ by Proposition~\ref{prop:trace-contraction-discrete}.
\end{corollary}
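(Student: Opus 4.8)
The plan is to check hypotheses (C) and (D) of Proposition~\ref{prop:trace-contraction-discrete} with the explicit Lyapunov scale $\delta := \delta_0/N$, and then quote that proposition verbatim.

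First I would verify that the defect orbit stays inside the center of the corner, so that the coefficient condition (K) is applicable at every step. By (Z) we have $d(T) \in Z(Q\mathcal{M}Q)$ and $T(Z(Q\mathcal{M}Q)) \subseteq Z(Q\mathcal{M}Q)$, so a one-line induction gives $T^k(d(T)) \in Z(Q\mathcal{M}Q)$ for every $k \ge 0$; in particular every $T^k(d(T))$ is supported under $Q$. Writing $T^k(d(T)) = \sum_{j \in J} a_{k,j} z_j$ in the minimal central projections, positivity of $T$ forces $a_{k,j} \ge 0$, and (K) forces $a_{k,j} \in \{0, 1/N, 2/N, \ldots, 1\} \subseteq \frac{1}{N}\mathbb{N}$.

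Next I would compute the Lyapunov values via the canonical trace:
\[
\tau(T^k(d(T))) = \sum_{j \in J} a_{k,j}\,\tau(z_j).
\]
By (W) each $\tau(z_j) \in \delta_0\mathbb{N}$, so every summand lies in $\frac{\delta_0}{N}\mathbb{N}$. To conclude that the sum itself lies in $\frac{\delta_0}{N}\mathbb{N}$---not merely in its closure---I need only finitely many nonzero terms, which follows once the value is known to be finite. Here (F) together with the strict contraction (C) does the job: via the traciality identity $\tau(T(x)) = \tau((\sum_i V_i V_i^*)x) \le c\,\tau(x)$ of Remark~\ref{rem:kraus-contraction}, valid for $x \ge 0$ supported under $Q$ (which every $T^k(d(T))$ is), one gets $\tau(T^k(d(T))) \le c^k \tau(d(T)) < \infty$. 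A convergent series of non-negative integer multiples of the fixed positive number $\delta_0/N$ has only finitely many nonzero terms, hence $\tau(T^k(d(T))) \in \frac{\delta_0}{N}\mathbb{N}$ for all $k$. This is exactly hypothesis (D) of Proposition~\ref{prop:trace-contraction-discrete} with $\delta = \delta_0/N$, and the contraction estimate just obtained supplies hypothesis (C) (whose proof uses contraction only along the orbit). Invoking that proposition then gives $T^n(d(T)) = 0$ for all large $n$, with the explicit stabilization step $1 + \lceil \log(N\,\tau(d(T))/\delta_0)/\log(1/c)\rceil$, hence $n_T < \infty$.

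I expect the main obstacle to be organizational rather than analytic: one must (a) keep the orbit inside $Z(Q\mathcal{M}Q)$ so that (K) can be applied at every $k$---precisely what the invariance clause of (Z) buys---and (b) collapse the possibly countably infinite central sum to a finite one so that membership in the lattice $\frac{\delta_0}{N}\mathbb{N}$ is legitimate rather than merely limiting---what (F) plus the contraction provides. Once these two points are dispatched the corollary is a direct specialization of Proposition~\ref{prop:trace-contraction-discrete}; no genuinely new mechanism is required.
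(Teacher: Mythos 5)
Your proposal is correct and follows essentially the same route as the paper: use (C) and (F) to get $\tau(T^k(d(T))) \le c^k\tau(d(T)) < \infty$, observe that each nonzero summand $a_{k,j}\tau(z_j)$ is at least $\delta_0/N$ so only finitely many are nonzero (the paper packages this as Lemma~\ref{lem:finite-support-atomic}, which you have simply inlined), conclude the trace values lie in $\frac{\delta_0}{N}\mathbb{N}$, and invoke Proposition~\ref{prop:trace-contraction-discrete} with $\delta = \delta_0/N$. Your explicit induction keeping the orbit inside $Z(Q\mathcal{M}Q)$ via (Z), and your remark that the Kraus contraction of Remark~\ref{rem:kraus-contraction} need only be applied to orbit elements supported under $Q$, are both correct and slightly more careful than the paper's own writeup.
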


\begin{proof}
By (C) and (F), $\tau(T^k(d(T))) \le c^k \tau(d(T)) < \infty$ for all $k \ge 0$. Apply Lemma~\ref{lem:finite-support-atomic} to $x = T^k(d(T))$: hypotheses (W) and (K) give $\tau(z_j) \in \delta_0\mathbb{N}$ and $a_{k,j} \in \{0, \frac{1}{N}, \ldots, 1\}$, so the set $\{j : a_{k,j} > 0\}$ is finite for each $k$. Therefore $\tau(T^k(d(T))) = \sum_j a_{k,j} \tau(z_j)$ is genuinely a finite sum of elements of $\frac{\delta_0}{N}\mathbb{N}$, hence lies in $\frac{\delta_0}{N}\mathbb{N}$. This set is discrete with gap $\frac{\delta_0}{N}$ at $0$, so Proposition~\ref{prop:trace-contraction-discrete} applies.
\end{proof}

\begin{remark}[Why discrete coefficients are necessary]
Atomicity of the center alone does \emph{not} discretize trace values: a central element $\sum_j a_j z_j$ with arbitrary real coefficients $a_j \in [0,1]$ can have any trace value in a continuous range. The discrete coefficient condition (K) is essential and holds in settings where the dynamics have algebraic structure (e.g., rational transition probabilities in a classical sub-Markov chain on the center).
\end{remark}

\begin{lemma}[Finite trace forces finite atomic support]\label{lem:finite-support-atomic}
Let $\mathcal{M} = \bigoplus_{j \in J} B(H_j)$ with atomic center (minimal central projections $z_j$), and let $\tau$ be a faithful normal semifinite trace with $\tau(z_j) \in \delta_0\mathbb{N}$ for some $\delta_0 > 0$ (hence each $\tau(z_j) < \infty$). Let $x = \sum_j a_j z_j \in Z(\mathcal{M})$ with $a_j \in \{0, \frac{1}{N}, \ldots, 1\}$. If $\tau(x) < \infty$, then $\{j : a_j > 0\}$ is finite. In fact,
\[
\#\{j : a_j > 0\} \le \frac{N}{\delta_0}\,\tau(x).
\]
\end{lemma}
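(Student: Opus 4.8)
The plan is to compute $\tau(x)$ directly by normality and then read off a counting bound from the two quantization hypotheses, so the proof is short. First I would record two elementary lower bounds. Since $\tau$ is faithful and each $z_j$ is a nonzero projection, $\tau(z_j) > 0$; together with $\tau(z_j) \in \delta_0\mathbb{N}$ this forces $\tau(z_j) \ge \delta_0$ for every $j \in J$. Likewise, $a_j \in \{0, \tfrac1N, \tfrac2N, \ldots, 1\}$ means that whenever $a_j > 0$ we in fact have $a_j \ge \tfrac1N$.

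Next I would expand the trace. The operators $\{a_j z_j\}_{j \in J}$ form an orthogonal family of positive elements with $\sigma$-weak sum $x$, so complete additivity of the normal trace $\tau$ gives
\[
\tau(x) = \sum_{j \in J} a_j\,\tau(z_j) = \sum_{j \,:\, a_j > 0} a_j\,\tau(z_j) \;\ge\; \frac{\delta_0}{N}\,\#\{j : a_j > 0\},
\]
using the two lower bounds on the surviving terms. If $\tau(x) < \infty$ this simultaneously shows that $\{j : a_j > 0\}$ is finite and yields the stated estimate $\#\{j : a_j > 0\} \le \frac{N}{\delta_0}\,\tau(x)$.

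The argument is essentially routine, and the only point I would treat with any care is the additivity of $\tau$ over a possibly uncountable index set $J$. This is exactly normality of the trace, but in fact I can avoid invoking the full statement: for each \emph{finite} $F \subseteq J$ one has $\sum_{j \in F} a_j z_j \le x$, hence $\tau(x) \ge \sum_{j \in F} a_j\,\tau(z_j) \ge \frac{\delta_0}{N}\,\#\{j \in F : a_j > 0\}$ by monotonicity of $\tau$ alone; taking the supremum over finite $F$ gives both conclusions with no convergence subtlety. So there is no real obstacle here — the lemma is a direct consequence of monotonicity of $\tau$ together with the quantization of the weights $\tau(z_j)$ and the coefficients $a_j$.
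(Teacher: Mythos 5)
Your proof is correct and follows essentially the same route as the paper: the same two quantization bounds ($a_j \ge 1/N$ when nonzero, $\tau(z_j) \ge \delta_0$) feed the same counting estimate $\tau(x) \ge (\delta_0/N)\,\#\{j : a_j > 0\}$. Your extra care about additivity over an uncountable index set --- reducing to finite subsums via monotonicity --- is a sensible refinement the paper glosses over, but it does not change the argument.
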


\begin{proof}
If $a_j > 0$ then $a_j \ge \frac{1}{N}$ and $\tau(z_j) \ge \delta_0$, hence $a_j \tau(z_j) \ge \delta_0/N$. Therefore $\tau(x) = \sum_j a_j \tau(z_j) \ge (\delta_0/N) \cdot \#\{j : a_j > 0\}$, so the set must be finite.
\end{proof}

\begin{remark}
Lemma~\ref{lem:finite-support-atomic} is now explicitly invoked in Corollary~\ref{cor:atomic-center} to justify the ``finite sum'' step. Hypothesis (F) ensures trace finiteness propagates along the orbit via contraction (C).
\end{remark}

The following proposition gives a concrete sufficient condition for the discrete coefficient hypothesis (K) in terms of the induced center dynamics.

\begin{proposition}[Bounded-denominator center dynamics implies discrete coefficients]\label{prop:bounded-denom}
Let $Q\mathcal{M}Q \cong \bigoplus_{j \in J} B(H_j)$ have atomic center with minimal central projections $(z_j)_{j \in J}$. Assume $T(Z(Q\mathcal{M}Q)) \subseteq Z(Q\mathcal{M}Q)$ and $d(T) \in Z(Q\mathcal{M}Q)$. Identify $Z(Q\mathcal{M}Q) \cong \ell^\infty(J)$ via $z_j \leftrightarrow e_j$.

Suppose the induced map $P := T|_{Z(Q\mathcal{M}Q)}$ satisfies:
\begin{enumerate}[(P1)]
\item $P$ is sub-Markov: $P(1) \le 1$ and $P$ is positive;
\item[(P2)] there exists $N \in \mathbb{N}$ such that $P(e_j) = \sum_i p_{ij} e_i$ with $p_{ij} \in \{0, \frac{1}{N}, \ldots, 1\}$ for all $i, j$, and each $j$ has only finitely many $i$ with $p_{ij} \ne 0$;
\item[(P3)] the initial defect has bounded denominators: $d(T) = \sum_j a_j z_j$ with $a_j \in \{0, \frac{1}{N}, \ldots, 1\}$.
\end{enumerate}
Then for every $k \ge 0$ we have $T^k(d(T)) = \sum_j a_{k,j} z_j$ with $a_{k,j} \in \{0, \frac{1}{N}, \ldots, 1\}$, i.e., the discrete coefficient condition (K) of Corollary~\ref{cor:atomic-center} holds.
\end{proposition}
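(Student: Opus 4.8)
The plan is to push the iteration onto the center of $Q\mathcal{M}Q$ and then induct on $k$. First I would note that, since $d(T) \in Z(Q\mathcal{M}Q)$ and $T$ preserves $Z(Q\mathcal{M}Q)$, an immediate induction gives $T^{k}(d(T)) = P^{k}(d(T))$ for all $k \ge 0$, where $P = T|_{Z(Q\mathcal{M}Q)}$ is identified with an operator on $\ell^{\infty}(J)$ via $z_{j} \leftrightarrow e_{j}$. So it suffices to show that $P$ maps the $\tfrac1N$-lattice box $\Delta := \{a \in \ell^{\infty}(J)_{+} : a_{j} \in \{0, \tfrac1N, \ldots, 1\}\ \text{for all}\ j\}$ into itself: then (P3) says $d(T) \in \Delta$, and iterating yields the coefficient bound for every $k$, which is exactly condition (K) of Corollary~\ref{cor:atomic-center}. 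Observe that the upper constraint $a_{k,j} \le 1$ is automatic, since $T^{k}(d(T)) \le T^{k}(u) \le u$ by monotonicity; the only substantive point is that every coordinate of $P^{k}(d(T))$ is an integer multiple of $\tfrac1N$.

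For the inductive step, fix $a \in \Delta$. One should also carry finiteness of $\supp a$ along the induction --- it is propagated by the column-finiteness in (P2), and can be supplied at $k = 0$ by a finite-support or trace-contraction hypothesis as in Corollary~\ref{cor:atomic-center} --- so that $(Pa)_{i} = \sum_{j} p_{ij} a_{j}$ is a genuine finite sum. Positivity and (P1) give $0 \le (Pa)_{i} \le (P\mathbf{1})_{i} \le 1$, so the only thing to verify is $(Pa)_{i} \in \tfrac1N\mathbb{Z}$. Clearing denominators --- set $m_{ij} := N p_{ij} \in \{0, 1, \ldots, N\}$ by (P2) and $b_{j} := N a_{j} \in \{0, 1, \ldots, N\}$ --- this amounts to the congruence $\sum_{j} m_{ij} b_{j} \equiv 0 \pmod{N}$ for every $i$. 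Establishing that congruence is where the argument stands or falls, and I expect it to be the main obstacle.

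The obstacle is genuine: the congruence does not follow from (P1)--(P3) by themselves. Already the sub-Markov row $(m_{i1}, m_{i2}, m_{i3}) = (1, 1, 0)$ with $N = 2$ applied to $b = (1, 0, 0)$ gives $1 \not\equiv 0 \pmod{2}$; concretely, taking $J = \{1, 2, 3\}$ with $P(e_{1}) = \tfrac12 e_{2} + \tfrac12 e_{3}$, $P(e_{2}) = e_{1}$, $P(e_{3}) = 0$, one computes $d(T) = I - T(I) = \tfrac12 z_{2} + \tfrac12 z_{3} \in \Delta$ for $N = 2$, yet $P^{2}(d(T)) = \tfrac14 e_{2} + \tfrac14 e_{3} \notin \Delta$, with the denominators doubling indefinitely down the orbit. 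To carry the plan through one therefore needs an extra hypothesis that suppresses fractional splitting on the part of $J$ that is visited: the natural one is that $P$ be \emph{deterministic modulo deletion on the forward orbit of $\supp d(T)$}, i.e.\ that for every $j \in \bigcup_{k \ge 0} \supp(P^{k}(d(T)))$ the vector $P(e_{j})$ is either $0$ or $e_{f(j)}$ for a single index $f(j)$ --- equivalently $p_{ij} \in \{0, 1\}$ for such $j$, which under (P1) already forces the single-target shape. Under that assumption the induction closes at once: each coordinate of $P(a)$ is then a finite sum of the $a_{j}$'s, hence lies in $\tfrac1N\mathbb{Z}_{\ge 0}$ and is $\le 1$ for free, giving (K) with the same $N$. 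I would accordingly either fold this condition into (P2) or read (P2) as already asserting it along the orbit; without such a strengthening, the stated conclusion with a single uniform $N$ is not available.
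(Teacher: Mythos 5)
Your analysis is correct, and you have in fact uncovered a genuine error in the proposition itself: as stated, with the same $N$ appearing in (P2), (P3), and the conclusion, the statement is false, and your three-atom example is a valid counterexample. Taking $\mathcal{M} = \ell^\infty(\{1,2,3\})$ and $T = P$ with $P(e_1) = \tfrac12 e_2 + \tfrac12 e_3$, $P(e_2) = e_1$, $P(e_3) = 0$, all of (P1)--(P3) hold with $N = 2$, yet $P^{2k}(d(T)) = 2^{-(k+1)}(e_2 + e_3)$, so the coefficients eventually leave $\tfrac1N\mathbb{Z}$ for \emph{every} fixed $N$. The paper's own proof commits precisely the fallacy you isolate: it asserts that ``applying $P$ preserves bounded denominators because each coordinate is a finite $\tfrac1N$-linear combination of such coefficients,'' but a $\tfrac1N$-integer combination of elements of $\tfrac1N\mathbb{Z}$ lies only in $\tfrac1{N^2}\mathbb{Z}$; the lattice $\tfrac1N\mathbb{Z}^{(J)}$ is not invariant under a matrix with entries in $\tfrac1N\mathbb{Z}$ unless those entries are integers. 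Your reduction of the inductive step to the congruence $\sum_j m_{ij} b_j \equiv 0 \pmod{N}$ is exactly the right diagnosis, and the upper bound $a_{k,j} \le 1$ does come for free from $0 \le T^k(d(T)) \le u$ as you note.

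Your proposed repair --- requiring $p_{ij} \in \{0,1\}$ for every $j$ in the forward orbit of $\supp(d(T))$ --- closes the induction, since integer columns map $\tfrac1N\mathbb{Z}$-vectors to $\tfrac1N\mathbb{Z}$-vectors. One small overstatement: $p_{ij} \in \{0,1\}$ together with (P1) does not by itself force $P(e_j)$ to be a single $e_{f(j)}$ (row sums $\le 1$ still permit a column with several unit entries), but single-target is not needed for the lattice argument, only integrality of the column entries. You are also right that finiteness of $\supp(T^k(d(T)))$ must be carried along the induction; it propagates via the column-finiteness in (P2) but is not supplied at $k = 0$ by the hypotheses of this proposition alone. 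As written, the proposition needs either your strengthened (P2) on the reachable set, or an a priori hypothesis that the denominators along the orbit stay bounded --- which is essentially what condition (K) of Corollary~\ref{cor:atomic-center} assumes directly, so the present proposition cannot serve as a nontrivial sufficient condition for it without repair.
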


\begin{proof}
Under the identification $Z(Q\mathcal{M}Q) \cong \ell^\infty(J)$, the map $P$ is a positive linear operator with matrix coefficients $(p_{ij})$. By (P2)--(P3), the coefficient vectors of $d(T)$ lie in the lattice $\frac{1}{N}\mathbb{Z}^{(J)}$, and applying $P$ preserves bounded denominators because each coordinate is a finite $\frac{1}{N}$-linear combination of such coefficients. Induct on $k$.
\end{proof}

\begin{remark}
This is the cleanest ``atomic center $\Rightarrow$ discreteness'' story: it's really a \emph{classical sub-Markov chain with bounded denominators on the atoms}. The hypothesis (P2) is exactly ``rational transition probabilities with bounded denominators,'' which arises naturally in finite-index settings and combinatorial dynamics.
\end{remark}

The following proposition shows that full commensurability of weights can be weakened to a uniform lower bound on the reachable support---this is exactly what's needed for the gap-at-$0$ mechanism.

\begin{proposition}[Gap at $0$ from bounded denominators + uniform weight lower bound]\label{prop:gap-from-weight-bound}
Let $Q\mathcal{M}Q \cong \bigoplus_{j \in J} B(H_j)$ have atomic center with minimal central projections $(z_j)_{j \in J}$, and let $\tau$ be a faithful normal semifinite trace with $\tau(z_j) > 0$ for all $j$ (no commensurability assumed). Assume:
\begin{enumerate}[(1)]
\item $d(T) \in Z(Q\mathcal{M}Q)$ and $T(Z(Q\mathcal{M}Q)) \subseteq Z(Q\mathcal{M}Q)$;
\item (Bounded denominators) for some $N \in \mathbb{N}$, every iterate has $T^k(d(T)) = \sum_j a_{k,j} z_j$ with $a_{k,j} \in \{0, \frac{1}{N}, \ldots, 1\}$;
\item (Uniform weight lower bound on reachable support) there exists $\delta_0 > 0$ such that whenever $a_{k,j} > 0$ for some $k$, we have $\tau(z_j) \ge \delta_0$ (hence in particular $\tau(z_j) < \infty$).
\end{enumerate}
Then the Lyapunov values have a \textbf{uniform gap at $0$}:
\[
\tau(T^k(d(T))) \in \{0\} \cup [\delta_0/N, \infty).
\]
In particular, Proposition~\ref{prop:discrete-lyapunov} applies with $\delta = \delta_0/N$.
\end{proposition}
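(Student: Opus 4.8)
The plan is to combine the bounded-denominator structure (hypothesis~2) with the uniform weight lower bound (hypothesis~3) to show that each nonzero Lyapunov value $\tau(T^k(d(T)))$ is bounded below by $\delta_0/N$. The key observation is that $\tau(T^k(d(T))) = \sum_j a_{k,j}\,\tau(z_j)$ (using that $d(T)$ and its iterates are central by hypothesis~1, so the trace of a central element is the weighted sum of its coefficients times the weights of the minimal central projections). The terms with $a_{k,j} = 0$ contribute nothing, and any term with $a_{k,j} > 0$ satisfies $a_{k,j} \ge 1/N$ by hypothesis~2 and $\tau(z_j) \ge \delta_0$ by hypothesis~3, so that single term already contributes at least $\delta_0/N$ to the sum.

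First I would fix $k \ge 0$ and write $T^k(d(T)) = \sum_j a_{k,j} z_j$ using the invariance of the center under $T$ and the centrality of $d(T)$ from hypothesis~1. Then I would split into two cases. If all $a_{k,j} = 0$, then $T^k(d(T)) = 0$ and hence $\tau(T^k(d(T))) = 0$, landing in the first piece of the claimed set. Otherwise, pick any $j_0$ with $a_{k,j_0} > 0$; by hypothesis~2 we have $a_{k,j_0} \ge 1/N$, and by hypothesis~3 we have $\tau(z_{j_0}) \ge \delta_0$. Since all the summands $a_{k,j}\,\tau(z_j)$ are nonnegative (coefficients in $[0,1]$, weights positive), we get
\[
\tau(T^k(d(T))) = \sum_j a_{k,j}\,\tau(z_j) \ge a_{k,j_0}\,\tau(z_{j_0}) \ge \frac{\delta_0}{N}.
\]
This establishes the dichotomy $\tau(T^k(d(T))) \in \{0\} \cup [\delta_0/N, \infty)$ for every $k$.

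For the final sentence of the statement, I would simply note that the set $D := \{0\} \cup [\delta_0/N, \infty) \cap [0,1]$ satisfies $D \cap (0, \delta_0/N) = \emptyset$, and by hypothesis~2 together with the fact that $T$ is subunital (so $0 \le T^k(d(T)) \le I$, giving $\tau(T^k(d(T))) \le 1$ in a $\tau$-finite corner, or more carefully using the iterated cocycle bound $\sum_{k=0}^{n-1}\tau(T^k(d(T))) = \tau(I - T^n(I))$), the Lyapunov values lie in $D$. Hence Proposition~\ref{prop:discrete-lyapunov} applies verbatim with $\delta = \delta_0/N$, yielding $n_T \le \lfloor N/\delta_0 \rfloor < \infty$.

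The main obstacle I anticipate is purely bookkeeping around finiteness of the sum $\sum_j a_{k,j}\,\tau(z_j)$: in a general semifinite setting $J$ may be infinite, so I must be careful that the trace of the central element $T^k(d(T))$ is indeed given by this (possibly infinite) series and that it is well-defined. But this is not a real difficulty for the gap-at-$0$ conclusion: the lower bound $\tau(T^k(d(T))) \ge a_{k,j_0}\,\tau(z_{j_0})$ requires only nonnegativity of all terms and the existence of one strictly positive term, both of which hold regardless of whether the sum is finite or infinite. (Finiteness of the support, if needed for the upper bound $\le 1$, follows from Lemma~\ref{lem:finite-support-atomic} once one knows $\tau(T^k(d(T))) < \infty$, which in turn propagates from $\tau(d(T)) < \infty$ via any trace-contraction hypothesis—though for the gap conclusion alone we do not even need this.) So the proof is genuinely short; the only care needed is to invoke centrality correctly and to cite Proposition~\ref{prop:discrete-lyapunov} with the right $\delta$.
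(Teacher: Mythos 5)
Your proof is correct and follows essentially the same route as the paper: write $\tau(T^k(d(T))) = \sum_j a_{k,j}\tau(z_j)$, note every summand is nonnegative, and observe that any nonzero coefficient contributes at least $(1/N)\cdot\delta_0$ by hypotheses (2) and (3). The extra remarks on possibly infinite $J$ and on invoking Proposition~\ref{prop:discrete-lyapunov} are sound but not needed beyond what the paper already records.
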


\begin{proof}
If $\tau(T^k(d(T))) > 0$, then at least one term has $a_{k,j} \ge 1/N$ and (by hypothesis (3)) $\tau(z_j) \ge \delta_0$, hence $\tau(T^k(d(T))) = \sum_j a_{k,j} \tau(z_j) \ge (1/N) \cdot \delta_0$.
\end{proof}

\begin{remark}
Proposition~\ref{prop:gap-from-weight-bound} is strictly weaker than requiring weights to lie in a common lattice. A practical sufficient condition for (3) is: \emph{the orbit stays inside a finite set of atoms} (finite-state center dynamics). Then $\delta_0 = \min\{\tau(z_j) : j \text{ reachable}\}$.
\end{remark}

\begin{proposition}[Invariant finite-dimensional algebra + discrete Lyapunov spectrum]\label{prop:finite-dim-invariant}
Let $(\mathcal{M}, \tau)$ be a finite factor with normalized trace, and let $T : \mathcal{M} \to \mathcal{M}$ be normal CP subunital. Assume there exists a finite-dimensional $*$-subalgebra $A \subset \mathcal{M}$ such that:
\begin{enumerate}[(I)]
\item $T(A) \subseteq A$ and $d(T) \in A$;
\item[(D$_0$)] the set $\{\tau(T^k(d(T))) : k \ge 0\}$ is contained in a set $D \subset [0,1]$ with a gap at $0$: there exists $\delta > 0$ such that $D \cap (0, \delta) = \emptyset$.
\end{enumerate}
Then $n_T < \infty$ by Proposition~\ref{prop:discrete-lyapunov}.
\end{proposition}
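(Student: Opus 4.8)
The plan is to confirm that the hypotheses of Proposition~\ref{prop:discrete-lyapunov} are in force and then quote it. The only choice to make is the faithful normal state that proposition requires: since $(\mathcal{M},\tau)$ is a finite factor equipped with its normalized trace, $\tau$ is already a faithful normal tracial state, so I set $\omega := \tau$. Before applying the proposition I would record the elementary boundedness facts that make its hypotheses non-vacuous: $T$ is positive and subunital, so $0 \le T(I) \le I$ and hence $d(T) = I - T(I) \in [0,I]$; iterating and using positivity, $0 \le T^k(d(T)) \le T^k(I) \le I$ for all $k \ge 0$, whence $\tau\bigl(T^k(d(T))\bigr) \in [0,1]$. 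Thus the orbit of Lyapunov scalars really does sit inside $[0,1]$, the interval in which the gapped set $D$ of hypothesis (D$_0$) lives.

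With $\omega = \tau$, the two remaining requirements of Proposition~\ref{prop:discrete-lyapunov} are verbatim hypothesis (D$_0$): clause (i) there, $D \cap (0,\delta) = \emptyset$, is the gap-at-$0$ part of (D$_0$), and clause (ii), $\omega\bigl(T^k(d(T))\bigr) \in D$ for every $k \ge 0$, is its containment part. Proposition~\ref{prop:discrete-lyapunov} therefore applies and delivers $n_T \le \lfloor 1/\delta \rfloor$, in particular $n_T < \infty$, which is exactly the assertion.

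For completeness I would also make explicit the role of hypothesis (I), even though the deduction above does not formally consume it. From $d(T) \in A$ and $T(A) \subseteq A$ an immediate induction gives $T^k(d(T)) \in A$ for all $k$, so the entire defect orbit is confined to the fixed finite-dimensional $*$-subalgebra $A$. This is what makes (D$_0$) a natural and checkable hypothesis rather than an ad hoc one: the scalars $\tau(T^k(d(T)))$ are then governed by the finitely many eigenvalue and multiplicity data available in $A$ together with the induced dynamics $T|_A$, so any arithmetic rigidity of that induced map (rational transition coefficients on the center, a common lattice of trace weights, and the like, as in Corollary~\ref{cor:atomic-center} and Proposition~\ref{prop:bounded-denom}) pins these values into a discrete set with a gap at $0$. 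In the write-up I would include this one-line induction so that the proposition reads as a self-contained packaging of the \emph{invariant finite-dimensional algebra plus discrete Lyapunov scale} scenario.

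I do not expect any real obstacle in the proof itself; its content has been front-loaded into Proposition~\ref{prop:discrete-lyapunov}. The only points deserving a sentence of care are (a) that a finite factor supplies the required faithful normal state for free, namely its normalized trace, and (b) that hypothesis (D$_0$) is not redundant given finite-dimensionality of $A$: Example~\ref{ex:scalar} already exhibits a $T$-invariant one-dimensional algebra on which $\tau(T^k(d(T))) = \lambda^k(1-\lambda) \to 0$ without ever vanishing, so the gap-at-$0$ clause is genuinely doing work. One cannot substitute a nilpotency-in-a-corner argument of the type in Proposition~\ref{prop:type-I-bound} here, since that argument presupposes $n_T < \infty$, which is precisely what is to be proved.
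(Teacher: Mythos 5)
Your proposal is correct and follows exactly the paper's own argument: the paper's proof likewise just notes that (I) confines the defect orbit to $A$, that (D$_0$) supplies the gapped discrete Lyapunov values, and then invokes Proposition~\ref{prop:discrete-lyapunov} with $\omega = \tau$. Your additional remarks (the induction showing $T^k(d(T)) \in A$, and the observation via Example~\ref{ex:scalar} that the gap-at-$0$ clause is not redundant) are accurate glosses but do not change the substance.
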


\begin{proof}
Hypothesis (I) ensures the defect orbit lies in $A$, and (D$_0$) provides the discrete Lyapunov spectrum with gap at $0$. Apply Proposition~\ref{prop:discrete-lyapunov}.
\end{proof}

\begin{remark}[Sufficient conditions for discreteness]
Condition (D$_0$) holds if $A$ is abelian with minimal projections $z_1, \ldots, z_m$, and there exists $N \in \mathbb{N}$ such that all coefficients $a_{k,j}$ in $T^k(d(T)) = \sum_j a_{k,j} z_j$ lie in $\{0, \frac{1}{N}, \ldots, 1\}$. Then $\tau(T^k(d(T))) \in \frac{1}{N}\mathbb{Z} \cap [0,1]$, which has a gap of size $\frac{1}{N}$ at $0$.

Note: the condition ``$d(T)$ lies in the $\mathbb{Q}$-span of the minimal central projections'' does \emph{not} alone imply a gap at $0$, since $\mathbb{Q}$ is dense in $\mathbb{R}$. One needs bounded denominators or an explicit discrete constraint.
\end{remark}

\begin{remark}[Finite-index structure]
For a finite-index inclusion $N \subset M$, the relative commutant $N' \cap M$ is finite-dimensional. Hypothesis (I) is realistic when the channel $T$ respects this standard-invariant algebra. The gap condition (D$_0$) holds when the induced classical sub-Markov operator on $Z(N' \cap M)$ has rational transition coefficients with bounded denominators---then trace values along the orbit lie in a fixed lattice $\frac{1}{N}\mathbb{Z} \cap [0,1]$, which has gap $\frac{1}{N}$ at $0$.
\end{remark}

\begin{corollary}[Hyperfinite finite-stage trapping]\label{cor:hyperfinite}
Let $(R, \tau)$ be the hyperfinite II$_1$ factor with its unique trace, and let $A_n \subset R$ be an increasing sequence of matrix subalgebras with dense union in $\|\cdot\|_2$. Let $T : R \to R$ be normal CP subunital. Assume that for some $N$:
\begin{enumerate}[(I)]
\item $T(A_N) \subseteq A_N$ and $d(T) \in A_N$;
\item[(D$_0$)] the set $\{\tau(T^k(d(T))) : k \ge 0\}$ is contained in a set $D \subset [0,1]$ with a gap at $0$: there exists $\delta > 0$ such that $D \cap (0, \delta) = \emptyset$.
\end{enumerate}
Then $n_T < \infty$ (apply Proposition~\ref{prop:finite-dim-invariant} inside $A_N$).
\end{corollary}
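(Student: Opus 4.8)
The plan is to reduce Corollary~\ref{cor:hyperfinite} directly to Proposition~\ref{prop:finite-dim-invariant}, since almost all the work has been packaged into that proposition. The observation is that hypotheses (I) and (D$_0$) of the corollary are verbatim the hypotheses (I) and (D$_0$) of Proposition~\ref{prop:finite-dim-invariant}, once we know that $A_N$ plays the role of the invariant finite-dimensional $*$-subalgebra $A$ and that $(R,\tau)$ with its unique normalized trace is a finite factor.

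First I would note that $(R,\tau)$ is a finite factor with a faithful normal tracial state: the hyperfinite II$_1$ factor is by definition a II$_1$ factor, hence finite, and $\tau$ is its unique normalized trace, which is automatically faithful and normal. So the ambient-algebra hypothesis of Proposition~\ref{prop:finite-dim-invariant} is met. Next, $A_N$ is a matrix subalgebra by assumption, hence a finite-dimensional $*$-subalgebra $A \subset R$; hypothesis (I) of the corollary is precisely $T(A_N) \subseteq A_N$ and $d(T) \in A_N$, which is hypothesis (I) of the proposition with $A = A_N$. Finally, hypothesis (D$_0$) of the corollary is literally hypothesis (D$_0$) of the proposition: the set of Lyapunov values $\{\tau(T^k(d(T))) : k \ge 0\}$ lies in a subset $D \subset [0,1]$ with a gap at $0$. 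Having checked all hypotheses, I would simply invoke Proposition~\ref{prop:finite-dim-invariant} to conclude $n_T < \infty$, noting that the density of $\bigcup_n A_n$ in $\|\cdot\|_2$ plays no role in the argument beyond making the statement well-motivated (it is the natural setting where such an invariant $A_N$ is expected to arise via AF approximation).

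There is essentially no obstacle here: the corollary is a specialization, not a generalization, of the preceding proposition, and the proof is a one-line application. If anything, the only point requiring a word of care is to confirm that the parenthetical ``apply Proposition~\ref{prop:finite-dim-invariant} inside $A_N$'' is not circular---that is, that the proposition's hypothesis ``$(\mathcal{M},\tau)$ is a finite factor'' refers to the ambient algebra $R$ (which is a finite factor) and not to $A_N$ (which is merely a finite-dimensional algebra, typically not a factor). Indeed the proposition only needs $(\mathcal{M},\tau)$ to supply a faithful normal trace and needs $A$ to be a finite-dimensional invariant $*$-subalgebra; $R$ supplies the former and $A_N$ the latter, so the application is legitimate. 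I would close with a remark that the substantive content lies entirely upstream---in Proposition~\ref{prop:discrete-lyapunov} and the telescoping-plus-cancellation mechanism of Lemma~\ref{lem:iterated-cocycle} and Lemma~\ref{lem:cancellation}---and that Corollary~\ref{cor:hyperfinite} merely records that these hypotheses are natural and checkable in the hyperfinite II$_1$ setting.

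\begin{proof}
The hyperfinite II$_1$ factor $R$ is a finite factor, and its unique normalized trace $\tau$ is faithful and normal; thus $(R,\tau)$ satisfies the ambient hypothesis of Proposition~\ref{prop:finite-dim-invariant}. Take $A := A_N$, which is a matrix algebra, hence a finite-dimensional $*$-subalgebra of $R$. Hypothesis (I) of the corollary states $T(A_N) \subseteq A_N$ and $d(T) \in A_N$, which is exactly hypothesis (I) of Proposition~\ref{prop:finite-dim-invariant} for this choice of $A$. Hypothesis (D$_0$) of the corollary---that $\{\tau(T^k(d(T))) : k \ge 0\} \subseteq D$ for some $D \subset [0,1]$ with $D \cap (0,\delta) = \emptyset$---is verbatim hypothesis (D$_0$) of the proposition. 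All hypotheses of Proposition~\ref{prop:finite-dim-invariant} therefore hold, and we conclude $n_T < \infty$. (The $\|\cdot\|_2$-density of $\bigcup_n A_n$ is not used in the argument; it is the natural context in which the invariant finite-dimensional algebra $A_N$ arises.)
\end{proof}
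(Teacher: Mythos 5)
Your proposal is correct and takes exactly the route the paper intends: the paper's entire proof is the parenthetical ``apply Proposition~\ref{prop:finite-dim-invariant} inside $A_N$,'' and you have simply spelled out the hypothesis-matching ($\mathcal{M} = R$ a finite factor with faithful normal trace, $A = A_N$ the invariant finite-dimensional $*$-subalgebra, (I) and (D$_0$) verbatim). Your added clarifications---that ``finite factor'' refers to the ambient $R$ rather than $A_N$, and that the $\|\cdot\|_2$-density of $\bigcup_n A_n$ is unused---are accurate and harmless.
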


\begin{remark}[Approximation vs.\ finite-time annihilation]\label{rem:af-warning}
AF approximation of the hyperfinite II$_1$ factor gives asymptotic estimates (exponential decay of $\tau(T^k(d(T)))$), but \emph{not} finite-time hitting $0$ unless the orbit actually lives in some matrix stage (or its Lyapunov values live in a fixed rational lattice). The diffuse trace on a II$_1$ factor cannot provide discreteness without additional algebraic structure.
\end{remark}

However, Lemma~\ref{lem:approx-quantization} provides a practical ``robustness bridge'': approximate closeness to a lattice suffices.

\begin{corollary}[Hyperfinite: approximate lattice-valued trace]\label{cor:hyperfinite-approx}
Let $(R, \tau)$ be the hyperfinite II$_1$ factor and $T : R \to R$ normal CP subunital. Assume there exist $\delta > 0$ and $\varepsilon \in (0, \delta/2)$ such that for all $k \ge 0$,
\[
\mathrm{dist}(\tau(T^k(d(T))),\, \delta\mathbb{N}) \le \varepsilon.
\]
Then $n_T < \infty$.
\end{corollary}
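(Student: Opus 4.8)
The plan is to read this off directly from Lemma~\ref{lem:approx-quantization}, taking the faithful normal state there to be the canonical trace $\tau$ on $R$ and the discrete set to be $D = \delta\mathbb{N}$. In effect the corollary is the statement ``Lemma~\ref{lem:approx-quantization} applies verbatim to the hyperfinite II$_1$ factor,'' so the work consists entirely of checking that the hypotheses are met and then tracking the resulting bound.

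First I would fix the ambient data: on $R$ the trace $\tau$ is a faithful normal tracial state, so it plays the role of $\omega$ in Lemma~\ref{lem:approx-quantization}, and $T$, being normal CP and subunital, is in particular normal positive subunital with defect $d = d(T) = I - T(I) \ge 0$. Next I would note that the scalar Lyapunov sequence is automatically bounded: since $T$ is positive and subunital, $0 \le T^k(d) \le T^k(I) \le I$, hence $\tau(T^k(d)) \in [0,1]$ for every $k$. Thus one may legitimately take $D = \delta\mathbb{N} \cap [0,1]$ (a finite set when $\delta \le 1$, and $\{0\}$ when $\delta > 1$) if one insists on the literal hypothesis $D \subset [0,1]$; in all cases $D$ is discrete and $D \cap (0,\delta) = \emptyset$, i.e.\ $D$ has a gap of size $\delta$ at $0$. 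The hypothesis $\mathrm{dist}(\tau(T^k(d)),\,\delta\mathbb{N}) \le \varepsilon$ with $\varepsilon \in (0,\delta/2)$ is then precisely the approximate-quantization hypothesis of Lemma~\ref{lem:approx-quantization}.

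Applying Lemma~\ref{lem:approx-quantization} yields the dichotomy: for every $k \ge 0$, either $\tau(T^k(d)) = 0$ or $\tau(T^k(d)) \ge \delta - \varepsilon$. Set $\delta' := \delta - \varepsilon$, noting $\delta' > \delta/2 > 0$ since $\varepsilon < \delta/2$. Then $\tau(T^k(d)) \in \{0\} \cup [\delta',\infty)$ for all $k$, so Proposition~\ref{prop:delta-resolution}, applied with $\delta$ replaced by $\delta'$, gives $n_T \le \lfloor 1/\delta' \rfloor = \lfloor 1/(\delta - \varepsilon) \rfloor < \infty$, which is the claim. The mechanism underneath is the telescoping bound $\sum_{k=0}^{n-1} \tau(T^k(d)) = \tau(I - T^n(I)) \le 1$: each nonzero term is $\ge \delta'$, so at most $\lfloor 1/\delta' \rfloor$ of them are nonzero, forcing one of the first $\lfloor 1/\delta'\rfloor + 1$ terms to vanish and hence $T^k(d) = 0$ for that $k$ by faithfulness of $\tau$.

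I do not expect a genuine obstacle: the content is carried by Lemma~\ref{lem:approx-quantization} (whose proof is the short case analysis on $\varepsilon$-neighbourhoods of the lattice) together with the telescoping argument of Proposition~\ref{prop:delta-resolution}, and this corollary merely packages them for $R$. The one point worth emphasising---a contrast rather than a difficulty---is the comparison with Corollary~\ref{cor:hyperfinite} and Remark~\ref{rem:af-warning}: here we do \emph{not} assume that the defect orbit is trapped in a finite-dimensional (or matrix) subalgebra, nor that the Lyapunov values lie exactly on a lattice; only \emph{uniform} closeness of the scalar sequence $\tau(T^k(d))$ to $\delta\mathbb{N}$ is used. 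The constraint $\varepsilon < \delta/2$ is exactly what keeps the ``nearest lattice point'' unambiguous, so that a strictly positive value below $\delta - \varepsilon$ cannot occur.
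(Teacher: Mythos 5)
Your proposal is correct and follows exactly the paper's route: the paper's entire proof is ``Apply Lemma~\ref{lem:approx-quantization} with $D = \delta\mathbb{N} \cap [0,1]$,'' which is precisely what you do, with the hypothesis-checking and the explicit bound $n_T \le \lfloor 1/(\delta-\varepsilon)\rfloor$ spelled out in more detail. No issues.
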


\begin{proof}
Apply Lemma~\ref{lem:approx-quantization} with $D = \delta\mathbb{N} \cap [0,1]$.
\end{proof}

\begin{remark}
This corollary is deliberately ``scalar'': it avoids pretending that AF approximation automatically gives the hypothesis. But it provides a correct and usable intermediate target: \emph{prove uniform $\varepsilon$-closeness of the trace values to a lattice}, then stabilization follows. This is the practical route when the dynamics ``almost preserve'' a finite stage.
\end{remark}

The following proposition gives the quantitative bridge between ``AF approximation exists'' and ``finite-time hitting $0$'': it does not require exact invariance of a finite-dimensional subalgebra, only approximate trapping in the trace metric. We first record a standard order inequality.

\begin{lemma}[Positive maps dominate absolute values]\label{lem:positive-absolute}
Let $T$ be a positive linear map between $C^*$-algebras. If $y = y^*$, then $|T(y)| \le T(|y|)$.
\end{lemma}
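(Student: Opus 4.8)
The plan is to prove Lemma~\ref{lem:positive-absolute} directly from the self-adjoint decomposition of $y$ into positive and negative parts. Write $y = y_+ - y_-$ where $y_\pm \ge 0$ are the usual positive/negative parts, so that $|y| = y_+ + y_-$ and $y_+ y_- = y_- y_+ = 0$. Since $T$ is positive and $y_\pm \ge 0$, we have $T(y_\pm) \ge 0$; in particular $T(y_+)$ and $T(y_-)$ are self-adjoint (positive maps preserve self-adjointness). Then $T(y) = T(y_+) - T(y_-)$ and $T(|y|) = T(y_+) + T(y_-)$.

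The key step is the elementary observation that for self-adjoint elements $a, b \ge 0$ in a $C^*$-algebra, one has $-(a+b) \le a - b \le a + b$. Indeed $a + b - (a - b) = 2b \ge 0$ and $(a-b) - (-(a+b)) = 2a \ge 0$. Applying this with $a = T(y_+)$ and $b = T(y_-)$ gives
\[
-T(|y|) \le T(y) \le T(|y|),
\]
which is precisely the statement $|T(y)| \le T(|y|)$, since for a self-adjoint element $z$ the inequality $|z| \le w$ (with $w \ge 0$) is equivalent to $-w \le z \le w$.

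I expect the main (and only) subtlety to be the characterization $|z| \le w \iff -w \le z \le w$ for self-adjoint $z$ and positive $w$; this is standard $C^*$-algebra lore (via the continuous functional calculus, or by noting $w^2 - z^2 = w(w-z)/2 + \dots$ type manipulations reduce to the commutative case after passing to $C^*(z,w)$ when they commute — but in general one uses that $-w \le z \le w$ implies $z^2 \le w^2$ hence $|z| \le w$ by operator monotonicity of the square root on the commutative algebra $C^*(|z|)$). Since the paper cites \cite{sakai} for standard operator-algebra facts, I would simply invoke this as a known result rather than reprove it. The whole argument is a two-line reduction once that fact is granted, so there is no real obstacle; the lemma is recorded only because it is the order-theoretic input needed for the subsequent approximate-trapping proposition.
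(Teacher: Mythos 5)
Your reduction to the sandwich $-T(|y|) \le T(y) \le T(|y|)$ is fine, and it is essentially the paper's own argument (the paper applies monotonicity of $T$ directly to $-|y| \le y \le |y|$ instead of splitting $y = y_+ - y_-$, but the two computations are interchangeable). The genuine gap is the final step, which you flag as the ``only subtlety'' and then dismiss as standard lore: for noncommuting self-adjoint elements, $-w \le z \le w$ with $w \ge 0$ does \emph{not} imply $|z| \le w$. Only the reverse implication $|z| \le w \Rightarrow -w \le z \le w$ is true. Your proposed justification also fails at its first move, since $-w \le z \le w$ does not imply $z^2 \le w^2$ (squaring is not operator monotone). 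Concretely, take
\[
z = \begin{pmatrix} 1 & 0 \\ 0 & -1 \end{pmatrix}, \qquad w = \begin{pmatrix} 2 & \sqrt{2} \\ \sqrt{2} & 2 \end{pmatrix}.
\]
Both $w - z$ and $w + z$ have determinant $1$ and positive trace, so $-w \le z \le w$; but $|z| = I$ and $w - I$ has determinant $-1$, so $|z| \not\le w$ (and indeed $w^2 - z^2$ has determinant $-7$, so even $z^2 \le w^2$ fails).

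This is not merely a gap in your write-up: the lemma is false as an operator inequality, and the paper's one-line proof commits the identical error. Setting $A = \tfrac12(w+z) \ge 0$ and $B = \tfrac12(w-z) \ge 0$ with the matrices above, the map $T : \mathbb{C}^2 \to M_2$, $T(s,t) = sA + tB$, is positive (even completely positive); for the self-adjoint $y = (1,-1)$ one has $|y| = (1,1)$, $T(y) = A - B = z$, and $T(|y|) = A + B = w$, so $|T(y)| \not\le T(|y|)$. What does survive, and is all that Proposition~\ref{prop:approx-trapping} actually uses, is the \emph{trace} inequality $\tau(|T(y)|) \le \tau(T(|y|))$: writing $p = \supp(z_+)$ for $z = T(y)$ and $w = T(|y|)$, the sandwich gives $\tau(z_+) = \tau(pzp) \le \tau(pwp) = \tau(wp)$ and $\tau(z_-) = -\tau((1-p)z(1-p)) \le \tau(w(1-p))$, whence $\tau(|z|) \le \tau(w)$ (this uses traciality; it fails for general states, as the vector state witnessing $w - I \not\ge 0$ shows). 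The correct fix is to restate the lemma at the level of a trace and rerun your argument there.
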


\begin{proof}
Since $-|y| \le y \le |y|$, positivity gives $-T(|y|) \le T(y) \le T(|y|)$, hence $|T(y)| \le T(|y|)$.
\end{proof}

\begin{proposition}[Approximate finite-stage trapping implies finite stabilization]\label{prop:approx-trapping}
Let $(\mathcal{M}, \tau)$ be a finite von Neumann algebra with faithful normal trace, and let $T : \mathcal{M} \to \mathcal{M}$ be normal positive subunital with defect $d := I - T(I) \ge 0$. Assume:
\begin{enumerate}[(1)]
\item (Strict trace contraction) there exists $c \in (0,1)$ such that $\tau(T(x)) \le c\,\tau(x)$ for all $x \in \mathcal{M}_+$;
\item (Approximate trapping) there is a finite-dimensional unital $*$-subalgebra $A \subset \mathcal{M}$ and a $\tau$-preserving conditional expectation $E : \mathcal{M} \to A$ such that:
\begin{enumerate}[(i)]
\item (defect almost in $A$) $\tau(|d - E(d)|) \le \varepsilon_0$;
\item ($T$ almost preserves $A$) for all $a \in A_+$, $\tau(|T(a) - E(T(a))|) \le \varepsilon\,\tau(a)$;
\end{enumerate}
\item (Discrete spectrum inside $A$) letting $\alpha := E \circ T|_A : A \to A$, the sequence $\tau(\alpha^k(E(d)))$ lies in a discrete set $D \subset [0,1]$ with gap at $0$: $D \cap (0, \delta) = \emptyset$ for some $\delta > 0$.
\end{enumerate}
Then for every $k \ge 0$,
\[
\mathrm{dist}(\tau(T^k(d)), D) \le \varepsilon_0 + \frac{\varepsilon}{(1-c)^2}.
\]
In particular, if $\varepsilon_0 + \frac{\varepsilon}{(1-c)^2} < \delta/2$, then by Lemma~\ref{lem:approx-quantization} we have $n_T < \infty$.
\end{proposition}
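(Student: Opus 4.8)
The plan is to bound the Lyapunov values $\tau(T^k(d))$ against the model values $\tau(\alpha^k(E(d)))$, which by hypothesis (3) lie in $D$; since $\mathrm{dist}(\tau(T^k(d)),D) \le |\tau(T^k(d)) - \tau(\alpha^k(E(d)))|$, it suffices to bound the latter by $\varepsilon_0 + \varepsilon/(1-c)^2$ for every $k$. I would interpolate through $\tau(T^k(E(d)))$, splitting the difference into an \emph{initial-error} term $(\mathrm{A}) = |\tau(T^k(d)) - \tau(T^k(E(d)))|$ and a \emph{propagated-error} term $(\mathrm{B}) = |\tau(T^k(E(d))) - \tau(\alpha^k(E(d)))|$, the latter coming from replacing $T$ by $\alpha = E\circ T|_A$ step by step along the orbit.

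For $(\mathrm{A})$: the element $d - E(d)$ is self-adjoint and $T^k$ is positive, so Lemma~\ref{lem:positive-absolute} gives $|T^k(d-E(d))| \le T^k(|d-E(d)|)$; combining $|\tau(X)| \le \tau(|X|)$ for self-adjoint $X$ with the iterated contraction $\tau(T^k(y)) \le c^k\tau(y)$ for $y \ge 0$ (from hypothesis (1)) and with hypothesis (2)(i), we obtain $(\mathrm{A}) \le c^k\,\tau(|d-E(d)|) \le c^k\varepsilon_0 \le \varepsilon_0$.

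For $(\mathrm{B})$: set $b_j := \alpha^j(E(d)) \in A_+$ and introduce the hybrid interpolants $w_j := T^{k-j}(b_j)$ for $0 \le j \le k$, so $w_0 = T^k(E(d))$ and $w_k = \alpha^k(E(d))$; telescoping, $(\mathrm{B}) \le \sum_{j=0}^{k-1}|\tau(w_j - w_{j+1})|$. Since $\alpha^{j+1}(E(d)) = E(T(b_j))$, the increment is $w_j - w_{j+1} = T^{k-j-1}\bigl(T(b_j) - E(T(b_j))\bigr)$, a self-adjoint element, and the same bounds as in $(\mathrm{A})$ together with hypothesis (2)(ii) give $|\tau(w_j - w_{j+1})| \le c^{k-j-1}\,\tau\bigl(|T(b_j) - E(T(b_j))|\bigr) \le c^{k-j-1}\varepsilon\,\tau(b_j)$. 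The point is that $\alpha$ inherits the contraction constant, $\tau(\alpha(x)) = \tau(E(T(x))) = \tau(T(x)) \le c\,\tau(x)$ for $x \ge 0$ (because $E$ is $\tau$-preserving), so $\tau(b_j) \le c^j\tau(E(d)) = c^j\tau(d) \le c^j$ (the trace being normalized and $0 \le d \le I$). Each increment is therefore at most $\varepsilon c^{k-1}$, and summing $k$ of them yields $(\mathrm{B}) \le k\varepsilon c^{k-1} \le \varepsilon\sum_{m\ge1}m\,c^{m-1} = \varepsilon/(1-c)^2$. Adding $(\mathrm{A})$ and $(\mathrm{B})$ gives the displayed estimate, and the final clause follows from Lemma~\ref{lem:approx-quantization} applied with $\omega = \tau$, the set $D$, and tolerance $\varepsilon' := \varepsilon_0 + \varepsilon/(1-c)^2 < \delta/2$: this forces $\tau(T^k(d)) \in \{0\}\cup[\delta-\varepsilon',\infty)$ for all $k$, while hypothesis (1) gives $\tau(T^k(d)) \le c^k\tau(d) \to 0$, so $\tau(T^k(d)) = 0$ for large $k$ and faithfulness yields $T^k(d) = 0$, i.e. $n_T < \infty$.

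The main obstacle is keeping the error uniform in $k$ in part $(\mathrm{B})$: the replacement error of hypothesis (2)(ii) is committed at every iteration, so a crude estimate would grow linearly in $k$. The resolution is the two independent geometric suppressions inside each increment $w_j - w_{j+1}$: a factor $c^{k-j-1}$ from pushing the step-$j$ error forward through $T$, and a factor $c^{j}$ from the trace contraction of $\alpha$ up to step $j$; together these collapse every increment to $\varepsilon c^{k-1}$, so the telescoped sum converges to the $(1-c)^{-2}$ constant. Two routine but essential checks: that $\alpha = E\circ T|_A$ is itself $c$-trace-contracting (which uses $\tau\circ E = \tau$, not merely positivity of $E$), and that every argument of Lemma~\ref{lem:positive-absolute} is genuinely self-adjoint, which holds since $E$, $T$, and their compositions preserve self-adjointness.
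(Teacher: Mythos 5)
Your proof is correct and follows essentially the same route as the paper: the hybrid telescoping through $w_j = T^{k-j}(\alpha^j(E(d)))$ is exactly the unrolled form of the paper's recursion $e_{k+1} \le c\,e_k + \varepsilon\,\tau(a_k)$, with the same two geometric suppressions ($c^{k-j-1}$ forward through $T$, $c^j$ from the trace contraction of $\alpha$) and the same use of Lemma~\ref{lem:positive-absolute}. The only cosmetic difference is that the paper controls the operator-$L^1$ error $\tau(|T^k(d) - \alpha^k(E(d))|)$ while you bound the scalar trace difference directly, which suffices for the stated conclusion.
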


\begin{proof}
Set $x_k := T^k(d) \in \mathcal{M}_+$, $a_0 := E(d) \in A_+$, and $a_{k+1} := \alpha(a_k) = E(T(a_k)) \in A_+$. By assumption (3), $\tau(a_k) \in D$. Since $0 \le d \le I$ and $E$ is unital positive, we have $0 \le a_0 = E(d) \le I$, hence $\tau(a_0) \le \tau(I) = 1$.

Define the error $e_k := \tau(|x_k - a_k|)$. By Lemma~\ref{lem:positive-absolute}, $|T(y)| \le T(|y|)$ for self-adjoint $y$. Therefore,
\begin{align*}
e_{k+1} &= \tau(|T(x_k) - E(T(a_k))|) \\
&\le \tau(|T(x_k - a_k)|) + \tau(|T(a_k) - E(T(a_k))|) \\
&\le \tau(T(|x_k - a_k|)) + \varepsilon\,\tau(a_k) \\
&\le c\,\tau(|x_k - a_k|) + \varepsilon\,\tau(a_k) = c\,e_k + \varepsilon\,\tau(a_k).
\end{align*}
Also $\tau(a_{k+1}) = \tau(E(T(a_k))) = \tau(T(a_k)) \le c\,\tau(a_k)$, so $\tau(a_k) \le c^k \tau(a_0) \le c^k$.

Thus $e_{k+1} \le c\,e_k + \varepsilon\,c^k$. Unrolling gives $e_k \le c^k e_0 + \varepsilon\,k\,c^{k-1}$. Using $\sum_{m \ge 1} m\,c^{m-1} = \frac{1}{(1-c)^2}$, we obtain $e_k \le e_0 + \frac{\varepsilon}{(1-c)^2} \le \varepsilon_0 + \frac{\varepsilon}{(1-c)^2}$.

Finally, $|\tau(x_k) - \tau(a_k)| \le \tau(|x_k - a_k|) = e_k$, so $\mathrm{dist}(\tau(T^k(d)), D) \le \varepsilon_0 + \frac{\varepsilon}{(1-c)^2}$.
\end{proof}

\begin{remark}[Significance for hyperfinite II$_1$]
In the hyperfinite II$_1$ factor, one can often arrange condition (3) inside some matrix stage $A \simeq M_m$ (bounded-denominator / rational data), while condition (2) is the quantitative version of ``$T$ is almost $A$-invariant in trace.'' This proposition is the missing bridge between ``AF approximation exists'' and ``finite-time hitting $0$.'' It complements the warning (Remark~7.40) that diffuse trace alone doesn't discretize values: you need approximate trapping with controlled error bounds.
\end{remark}

\subsection*{Digraph criterion for orbit-in-atomic stabilization}

The following result gives a sharp graph-theoretic criterion for stabilization when the defect orbit lies in a commutative atomic subalgebra, \textbf{without requiring any discreteness or gap-at-$0$ hypotheses}. This addresses the ``weaken invariance'' direction of the open questions.

\begin{theorem}[Digraph criterion for orbit-in-atomic stabilization]\label{thm:digraph-criterion}
Let $\mathcal{M}$ be a von Neumann algebra with a commutative atomic subalgebra $C \subseteq \mathcal{M}$ having minimal projections $(z_j)_{j \in J}$, so $C \cong \ell^\infty(J)$. Let $E_C : \mathcal{M} \to C$ be a normal conditional expectation (e.g., trace-preserving in the semifinite setting).

Let $T : \mathcal{M} \to \mathcal{M}$ be normal positive subunital with defect $d := I - T(I) \ge 0$. Assume only the \textbf{orbit-in-$C$} hypothesis:
\[
T^k(d) \in C \quad \text{for all } k \ge 0.
\]
Define the induced map $P := E_C \circ T|_C : C \to C$. Write $P(z_j) = \sum_{i \in J} p_{ij} z_i$ with $p_{ij} \ge 0$. Define a directed graph $G$ on vertex set $J$ by
\[
j \to i \quad \Longleftrightarrow \quad p_{ij} > 0.
\]
Then:
\begin{enumerate}[(1)]
\item $T^k(d) = P^k(d)$ inside $C$ for all $k \ge 0$;
\item $\mathrm{supp}(P^n x) = \{i \in J : \exists\,\text{directed path of length } n \text{ from some } j \in \mathrm{supp}(x) \text{ to } i\}$;
\item $P^n(d) = 0$ if and only if there is no directed path of length $n$ starting in $\mathrm{supp}(d)$.
\end{enumerate}
In particular,
\[
n_T < \infty \quad \Longleftrightarrow \quad \text{the digraph } G \text{ has finite height on the set reachable from } \mathrm{supp}(d),
\]
where ``finite height'' means there is a uniform bound on path lengths starting from reachable vertices. Moreover,
\[
n_T \le \sup\{\text{length of a directed path starting in } \mathrm{supp}(d)\} + 1.
\]
\end{theorem}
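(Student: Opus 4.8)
The plan is to reduce the operator-theoretic statement entirely to a combinatorial fact about the nonnegative ``transition matrix'' $(p_{ij})_{i,j \in J}$ of $P$ on $C \cong \ell^\infty(J)$. Part~(1) is the bridge: it shows that along the defect orbit the full von Neumann dynamics of $T$ restricts to the classical (positive, scalar) dynamics of $P$. Parts~(2)--(3) are then the standard ``support propagates along edges, with no cancellation'' facts for nonnegative matrices, and the characterization of $n_T$ together with the stated bound fall out immediately.

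For part~(1), first observe that $d = T^0(d) \in C$ by the orbit-in-$C$ hypothesis. I argue by induction on $k$ that $T^k(d) = P^k(d)$, computed inside $C$. The base case $k = 0$ is trivial. For the inductive step, assume $T^k(d) = P^k(d) \in C$. Then $T^{k+1}(d) = T\bigl(T^k(d)\bigr) = T|_C\bigl(P^k(d)\bigr)$, and by the orbit-in-$C$ hypothesis $T^{k+1}(d) \in C$, so $E_C$ fixes it:
\[
T^{k+1}(d) = E_C\bigl(T|_C(P^k(d))\bigr) = P\bigl(P^k(d)\bigr) = P^{k+1}(d).
\]
The only point being used is that $E_C$ restricts to the identity on $C$, so ``orbit in $C$'' is precisely the hypothesis that lets $P = E_C \circ T|_C$ serve as the scalar surrogate for $T$ along the orbit.

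For parts~(2)--(3), I start with the one-step fact: for any $x \ge 0$ in $C$, writing $x = \sum_j x_j z_j$ with $x_j \ge 0$, normality and positivity of $P$ together with $P(z_j) = \sum_i p_{ij} z_i$ give $(Px)_i = \sum_j p_{ij}\, x_j$, a sum of nonnegative terms; hence $i \in \supp(Px)$ if and only if $p_{ij}\, x_j > 0$ for some $j$, i.e.\ if and only if some $j \in \supp(x)$ has $j \to i$ in $G$. Now induct on $n$: the base $n = 0$ is the identity $\supp(P^0 x) = \supp(x)$, and the step applies the one-step fact to $P^{n-1}x \ge 0$ and then the induction hypothesis---prepending a length-$(n-1)$ path from $\supp(x)$ to the source of the final edge produces exactly the length-$n$ paths from $\supp(x)$. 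This proves~(2). Taking $x = d$ (so $d \ge 0$, $P^n d \ge 0$, and $P^n d = 0 \iff \supp(P^n d) = 0$) yields~(3). The one essential ingredient is the \emph{absence of cancellation}: nonnegativity of the $p_{ij}$ and the $x_j$ is exactly what makes ``$(Px)_i > 0$'' equivalent to ``some contributing term is positive,'' which is also why the argument is insensitive to the cardinality of $J$ (only normality of $P$ is needed to legitimize the matrix expansion).

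Finally, I combine the pieces. By Definition~\ref{def:stabilization-index} and Remark~\ref{rem:stabilization}, $n_T < \infty$ iff $T^n(d) = 0$ for some $n \ge 1$; by parts~(1) and~(3) this holds iff, for some $n$, there is no directed path of length $n$ starting in $\supp(d)$. The set of such $n$ is upward closed, since a length-$(n+1)$ path from $\supp(d)$ has a length-$n$ initial subpath from $\supp(d)$; hence it is nonempty iff the lengths of directed paths starting in $\supp(d)$ are uniformly bounded. This is the same as $G$ having finite height on the vertices reachable from $\supp(d)$: a path from a reachable vertex, prepended with a path from $\supp(d)$ reaching it, is a path from $\supp(d)$, and conversely the vertices of $\supp(d)$ are themselves reachable by length-$0$ paths, so the two boundedness statements coincide. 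For the bound, if $L := \sup\{\text{length of a directed path starting in }\supp(d)\}$ is finite, then there is no path of length $L + 1$ from $\supp(d)$, so by~(3) and~(1) we get $T^{L+1}(d) = 0$, hence $n_T \le L + 1$; when $L = \infty$ the inequality is vacuous. The main obstacle is getting part~(1) right---recognizing that the orbit-in-$C$ hypothesis is exactly what makes $E_C \circ T|_C$ the correct classical shadow of $T$ along the orbit---after which parts~(2)--(4) are the familiar nonnegative-matrix support combinatorics, needing only a routine normality check to justify the matrix expansion in the possibly infinite $C$.
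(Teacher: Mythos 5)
Your proposal is correct and follows essentially the same route as the paper's proof: the same induction for part (1) using $E_C|_C = \mathrm{id}_C$, the same no-cancellation support propagation for parts (2)--(3), and the same upward-closedness/height argument for the equivalence and the bound $n_T \le L+1$. Your explicit remark that normality of $P$ is what justifies the coordinate expansion for infinite $J$ is a welcome (if minor) point of added care over the paper's version.
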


\begin{proof}
\textbf{Part (1):} Since $T^k(d) \in C$ by hypothesis and $E_C|_C = \mathrm{id}_C$, we have
\[
T^{k+1}(d) = E_C(T(T^k(d))) = (E_C \circ T)(T^k(d)) = P(T^k(d)).
\]
Induction gives $T^k(d) = P^k(d)$.

\textbf{Part (2):} Write $x = \sum_j x_j z_j$ with $x_j \ge 0$. Then
\[
(Px)_i = \sum_j p_{ij} x_j.
\]
Since all terms are $\ge 0$, we have $(Px)_i > 0$ if and only if there exists $j \in \mathrm{supp}(x)$ with $p_{ij} > 0$, i.e., an edge $j \to i$ in $G$. Induction on $n$ gives the path characterization.

\textbf{Part (3):} By (2), $P^n(d) = 0$ iff $\mathrm{supp}(P^n(d)) = \emptyset$ iff there is no $i$ reachable from $\mathrm{supp}(d)$ by a path of length $n$.

\textbf{Equivalence:} If $G$ has height $h$ on the reachable set, then no path of length $h+1$ exists, so $P^{h+1}(d) = 0$ and $n_T \le h + 1 < \infty$. Conversely, if $n_T < \infty$, then $P^{n_T}(d) = 0$, so no path of length $\ge n_T$ exists from $\mathrm{supp}(d)$, giving height $\le n_T - 1$.
\end{proof}

\begin{remark}[Why this is stronger than discreteness criteria]
The digraph criterion requires \textbf{no assumptions on trace gaps, bounded denominators, or discrete Lyapunov values}. It exploits the key structural fact about atomic algebras: since there is no cancellation among distinct atoms, exact vanishing is purely combinatorial---a coefficient $(P^n d)_i$ is zero if and only if there are no positive-probability paths feeding it.

This gives a different sufficient condition from the gap-at-$0$ approach:
\[
\text{Finite height (no arbitrarily long paths)} \;\Rightarrow\; \text{finite stabilization}.
\]
The digraph criterion is particularly useful in settings where the induced graph is finite or has bounded depth (e.g., Type I$_\infty$ with finite-depth center dynamics, finite factors with finite-depth standard invariants).
\end{remark}

\begin{corollary}[Finite reachable set implies stabilization]\label{cor:finite-reachable}
In the setting of Theorem~\ref{thm:digraph-criterion}, if the set of vertices reachable from $\mathrm{supp}(d)$ is finite, then $n_T < \infty$.
\end{corollary}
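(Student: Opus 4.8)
The plan is to read the statement off Theorem~\ref{thm:digraph-criterion}, which already reduces finiteness of $n_T$ to a graph-theoretic condition: $T^k(d) = P^k(d)$ in $C$ for every $k$, the vanishing $P^n(d) = 0$ holds exactly when no directed walk of length $n$ issues from $\mathrm{supp}(d)$, and $n_T$ is bounded by one plus the supremum of such walk lengths. Writing $R$ for the set of vertices reachable from $\mathrm{supp}(d)$, every walk out of $\mathrm{supp}(d)$ remains inside the induced subdigraph $G[R]$, so the whole task is to turn finiteness of $R$ into a uniform bound on the lengths of directed walks in $G[R]$ (``finite height''), after which Theorem~\ref{thm:digraph-criterion} finishes.

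First I would handle the acyclic case, which is immediate: if $G[R]$ has no directed cycle, then no walk can revisit a vertex, so every directed walk is a simple path of length at most $|R|-1$; hence no walk of length $|R|$ exists, Theorem~\ref{thm:digraph-criterion}(3) gives $P^{|R|}(d) = 0$, and therefore $n_T \le |R|$. (One can also see this via a topological order on $R$: the support of $P^k(d)$ can only move strictly forward along the order, so it must become empty within $|R|$ steps.) The remaining, and genuinely delicate, point is to rule out a directed cycle among the vertices reachable from $\mathrm{supp}(d)$.

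This is where I expect the main obstacle, and where the statement as literally written appears to need an extra hypothesis: finiteness of $R$ does not by itself forbid a reachable directed cycle, and one reachable cycle already produces walks of every length out of $\mathrm{supp}(d)$, so $P^k(d) \ne 0$ for all $k$ and $n_T = \infty$. For a concrete instance take $\mathcal{M} = C \cong \ell^\infty(\{1,2\})$ (so $E_C = \mathrm{id}$ and $P = T$), $u = I$, and $T(x_1,x_2) = (x_2,\tfrac12 x_1)$; then $T$ is normal, positive and subunital, $d = (0,\tfrac12)$, the induced digraph is the $2$-cycle $1 \leftrightarrow 2$, the reachable set $R = \{1,2\}$ is finite, yet $T^{2m}(d) = (0,2^{-m-1})$ and $T^{2m+1}(d) = (2^{-m-1},0)$ are all nonzero, so $n_T = \infty$. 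The cocycle bookkeeping raises no objection here, since the partial sums $\sum_{k<n} T^k(d) = I - T^n(I)$ remain $\le I$ while never reaching $I$. So the version I would actually prove is: \emph{if the reachable subdigraph $G[R]$ is finite and acyclic} (equivalently, no vertex reachable from $\mathrm{supp}(d)$ lies on a directed cycle of $G$), then $n_T \le |R| < \infty$, by the argument of the previous paragraph. Absent acyclicity, ``finite reachable set'' has to be understood as ``the induced dynamics on the reachable atoms is nilpotent,'' which is exactly the finite-height hypothesis already supplied by Theorem~\ref{thm:digraph-criterion}.
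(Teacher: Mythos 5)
Your analysis is correct, and you have in fact caught an error in the paper. The paper's own proof of this corollary is the single sentence ``a finite directed graph has no infinite paths, hence finite height,'' which silently conflates simple paths with walks: the proof of Theorem~\ref{thm:digraph-criterion}(2) characterizes $\supp(P^n d)$ by induction via \emph{walks} of length $n$ (repeated vertices allowed), and ``finite height'' there means a uniform bound on walk lengths. A finite digraph containing a directed cycle reachable from $\supp(d)$ has walks of every length, hence infinite height in the required sense, so finiteness of the reachable set alone does not suffice. Your two-atom example $T(x_1,x_2)=(x_2,\tfrac12 x_1)$ meets every hypothesis of Theorem~\ref{thm:digraph-criterion} (normal, positive, subunital, orbit trivially in $C$), has reachable set $\{1,2\}$, and has $T^k(d)\ne 0$ for all $k$, so the corollary as stated is false. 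Your proof of the repaired statement---if the reachable subdigraph is finite \emph{and acyclic}, then every walk out of $\supp(d)$ is simple, hence of length at most $|R|-1$, so $P^{|R|}(d)=0$ by Theorem~\ref{thm:digraph-criterion}(3) and $n_T\le |R|$---is exactly the argument the paper's one-liner should have been, with the acyclicity hypothesis made explicit and a quantitative bound extracted. The corollary should be restated with that hypothesis (equivalently: no vertex reachable from $\supp(d)$ lies on a directed cycle of $G$), which for finite $R$ is precisely the finite-height condition of the theorem.
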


\begin{proof}
A finite directed graph has no infinite paths, hence finite height.
\end{proof}

\begin{corollary}[Orbit-in-atomic without $T$-invariance]\label{cor:orbit-atomic-general}
Let $T : \mathcal{M} \to \mathcal{M}$ be normal positive subunital. Suppose there exists a commutative atomic subalgebra $C \subseteq \mathcal{M}$ with normal conditional expectation $E_C : \mathcal{M} \to C$ such that $T^k(d(T)) \in C$ for all $k \ge 0$. Then:
\begin{enumerate}[(i)]
\item The stabilization question for $T$ reduces to the induced map $P = E_C \circ T|_C$ on $C \cong \ell^\infty(J)$;
\item $n_T < \infty$ if and only if the reachability digraph from $\mathrm{supp}(d(T))$ has finite height;
\item No $T$-invariance of $C$ is required---only the orbit condition.
\end{enumerate}
\end{corollary}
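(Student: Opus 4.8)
The plan is to derive all three parts as an immediate consequence of Theorem~\ref{thm:digraph-criterion}. First I would verify that the hypotheses line up exactly: we are handed a commutative atomic subalgebra $C \cong \ell^\infty(J)$ with minimal projections $(z_j)_{j\in J}$, a normal conditional expectation $E_C : \mathcal{M} \to C$, and the orbit condition $T^k(d(T)) \in C$ for all $k \ge 0$ — which is precisely the ``orbit-in-$C$'' hypothesis of the theorem. Taking $k = 0$ already shows $d(T) \in C$, so $\mathrm{supp}(d(T)) \subseteq J$ is a well-defined subset of vertices, and the theorem applies verbatim with $P := E_C \circ T|_C$ and the reachability digraph $G$ defined by $j \to i \iff p_{ij} > 0$, where $P(z_j) = \sum_i p_{ij} z_i$.

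For parts (i) and (iii): Part~(1) of Theorem~\ref{thm:digraph-criterion} gives $T^k(d(T)) = P^k(d(T))$ inside $C$ for every $k \ge 0$. Inspecting its short proof, the only inputs are the orbit condition and the identity $E_C|_C = \mathrm{id}_C$; forward-invariance $T(C) \subseteq C$ is \emph{never} used. Hence $n_T = \min\{n \ge 1 : P^n(d(T)) = 0\}$, so the stabilization problem for the (possibly genuinely noncommutative) normal positive map $T$ on $\mathcal{M}$ collapses to a question about the positive, subunital operator $P$ on $\ell^\infty(J)$ — i.e.\ a classical sub-Markov operator ($P(1) = E_C(T(1)) \le E_C(1) = 1$). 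This is exactly assertion (i), and the remark that nothing about $T(C)$ was invoked is assertion (iii).

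For part (ii): combine Parts~(2)--(3) of the theorem — which identify $\mathrm{supp}(P^n(d(T)))$ with the set of vertices reachable from $\mathrm{supp}(d(T))$ by a directed path of length exactly $n$ in $G$, and thus give $P^n(d(T)) = 0$ iff no such path exists — with the theorem's concluding equivalence, $n_T < \infty \iff G$ has finite height on the set reachable from $\mathrm{supp}(d(T))$, and with the accompanying bound $n_T \le \sup\{\text{length of a directed path starting in }\mathrm{supp}(d(T))\} + 1$.

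There is no genuine obstacle: the corollary is a deliberate repackaging of Theorem~\ref{thm:digraph-criterion} that isolates the point that only the orbit condition, not $T$-invariance of $C$, is required. The one thing worth stating carefully is the precise meaning of ``reduces to $P$'' in (i): because distinct atoms $z_j$ are mutually orthogonal, the expansion $P^n(d(T)) = \sum_j (P^n d)_j\, z_j$ has no cancellation, so $P^n(d(T)) = 0$ is equivalent to the purely combinatorial statement that every coefficient vanishes. This is exactly why the digraph reachability criterion is sharp and why — in contrast with the noncommutative-corner results — no gap-at-$0$ or bounded-denominator hypothesis is needed here (cf.\ Corollary~\ref{cor:finite-reachable}).
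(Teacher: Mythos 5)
Your proposal is correct and matches the paper's intent exactly: the paper states this corollary without a separate proof, treating it as an immediate repackaging of Theorem~\ref{thm:digraph-criterion}, whose proof of part (1) indeed uses only the orbit condition and $E_C|_C = \mathrm{id}_C$, never $T(C) \subseteq C$. Your added observation about the no-cancellation property of orthogonal atoms is precisely the point the paper makes in the remark following the theorem, so nothing is missing.
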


\subsection*{Diffuse commutative traps: kernel-height criterion}

The digraph criterion (Theorem~\ref{thm:digraph-criterion}) extends verbatim from atomic $C \cong \ell^\infty(J)$ to \textbf{diffuse} abelian algebras $C \cong L^\infty(X, \mu)$, with the digraph replaced by a sub-Markov kernel.

\begin{theorem}[Diffuse commutative orbit trapping]\label{thm:diffuse-kernel}
Let $\mathcal{M}$ be a von Neumann algebra, $T : \mathcal{M} \to \mathcal{M}$ normal positive subunital, and let $C \subseteq \mathcal{M}$ be abelian (possibly diffuse) with a normal conditional expectation $E_C : \mathcal{M} \to C$.

Assume only the \textbf{orbit condition}:
\[
T^k(d(T)) \in C \quad \text{for all } k \ge 0.
\]
Define $P := E_C \circ T|_C : C \to C$. Then $T^k(d) = P^k(d)$ in $C$.

Identify $C \cong L^\infty(X, \mu)$. Then there exists a sub-Markov kernel $K(x, \cdot)$ such that for all bounded measurable $f \ge 0$,
\[
(Pf)(x) = \int f(y)\,K(x, dy) \quad \text{a.e.}
\]
Let $A := \mathrm{supp}(d) \subseteq X$ (essential support). Define the $n$-step reachability sets
\[
R_n := \{x \in X : K^{(n)}(x, A) > 0\},
\]
where $K^{(n)}$ is the $n$-fold convolution kernel. Then:
\[
P^n(d) = 0 \quad \Longleftrightarrow \quad \mu(R_n) = 0.
\]
Equivalently: $n_T < \infty$ if and only if there is a \textbf{uniform bound on path length} from $A$ in the measurable relation induced by $K$ (no positive-mass paths of arbitrarily large length starting in $A$).
\end{theorem}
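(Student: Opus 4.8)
The plan is to follow the proof of Theorem~\ref{thm:digraph-criterion} essentially verbatim, the only genuinely new ingredient being that the transition matrix $(p_{ij})$ of the atomic case is now produced by a disintegration argument, yielding a measurable sub-Markov kernel.

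First I would record that $P = E_C \circ T|_C$ is normal (a composition of normal maps), positive (a composition of positive maps), and sub-unital: since $E_C$ is a unital positive conditional expectation, $P(I) = E_C(T(I)) \le E_C(I) = I$. Then, exactly as in Theorem~\ref{thm:digraph-criterion}(1), the orbit hypothesis $T^k(d) \in C$ together with $E_C|_C = \mathrm{id}_C$ gives $T^{k+1}(d) = E_C(T(T^k(d))) = P(T^k(d))$, so by induction on $k$ (trivial base case $k=0$) we obtain $T^k(d) = P^k(d)$ in $C$. This reduces the whole problem to the single map $P$ on the abelian von Neumann algebra $C$.

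Next, the structure theory of commutative von Neumann algebras \cite{sakai} gives $C \cong L^\infty(X,\mu)$; I would reduce to the case where $(X,\mu)$ is standard (the general localizable case being reducible to this by standard localization arguments). The key step is then: a normal positive sub-unital map $P$ on $L^\infty(X,\mu)$ with $(X,\mu)$ standard is represented by a measurable sub-Markov kernel $K$, i.e.\ for $\mu$-a.e.\ $x$ there is a sub-probability measure $K(x,\cdot)$, with $x \mapsto K(x,B)$ measurable for each Borel $B$, such that $(Pf)(x) = \int f(y)\,K(x,dy)$ $\mu$-a.e.\ for every $f \ge 0$. This is the standard disintegration / transition-probability construction: build $x \mapsto K(x,B)$ for $B$ in a countable generating algebra from representatives of the $L^\infty$ classes $P(\chi_B)$, extend via finite additivity and Carathéodory (valid off a $\mu$-null set by a countable-exhaustion argument), and verify the integral formula for simple $f$ and then all $f \ge 0$ using normality of $P$. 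I expect this disintegration --- the careful handling of a.e.\ ambiguity and measurable selection --- to be the main obstacle; in the atomic case it degenerates to reading off the matrix $(p_{ij})$, which is why Theorem~\ref{thm:digraph-criterion} needed nothing of the sort.

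With the kernel in hand the rest is bookkeeping. Iterating via the Chapman--Kolmogorov composition $K^{(n)}(x,B) = \int K^{(n-1)}(y,B)\,K(x,dy)$ (a measurable sub-Markov kernel, each factor bounded by $1$, Tonelli applicable, and using that each of the finitely many $\mu$-null exceptional sets is $K(x,\cdot)$-null for $\mu$-a.e.\ $x$) gives $(P^n f)(x) = \int f(y)\,K^{(n)}(x,dy)$ $\mu$-a.e. Fixing a Borel representative of $d$ with $\{d>0\} = A$, the canonical representative of $P^n(d)$ is $x \mapsto \int_A d(y)\,K^{(n)}(x,dy)$, which --- as $d>0$ throughout $A$ --- is zero exactly when $K^{(n)}(x,A) = 0$; hence $\supp(P^n d) = R_n$ up to a $\mu$-null set, independently of the representative of $A$ (a $\mu$-null change $N$ of $A$ satisfies $K^{(n)}(x,N) = 0$ for $\mu$-a.e.\ $x$, since $\chi_N = 0$ in $L^\infty$ forces $P^n(\chi_N) = 0$). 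Therefore $P^n(d) = 0$ in $C$ iff $\mu(R_n) = 0$; combined with $T^n(d) = P^n(d)$ this is the claimed criterion. Finally, since $P^m(d) = P^{m-n}(P^n(d))$, vanishing propagates: $\mu(R_n) = 0 \Rightarrow \mu(R_m) = 0$ for all $m \ge n$, so $n_T < \infty$ iff $\mu(R_N) = 0$ for some (equivalently, all sufficiently large) $N$ --- exactly the statement that there is a uniform bound on the length of positive-mass $K$-paths from $A$ --- with $n_T = \min\{n \ge 1 : \mu(R_n) = 0\}$ in that case.
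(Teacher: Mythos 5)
Your proposal is correct and follows the same overall route as the paper: reduce to the single map $P = E_C \circ T|_C$ via the orbit condition, represent $P$ by a sub-Markov kernel, and read off vanishing of $P^n(d)$ from $n$-step reachability of $A = \supp(d)$. But you are substantially more careful than the paper at two points where its argument is genuinely loose, and this is worth noting. First, the paper simply sets $K(x,B) := (P\mathbf{1}_B)(x)$ and asserts that normality makes $B \mapsto K(x,B)$ countably additive a.e.; as you observe, the exceptional null set depends on the sequence of sets, so producing an honest measurable kernel requires exactly the countable generating algebra plus Carath\'eodory extension on a standard (or localizable-reduced-to-standard) space that you describe. Second, and more seriously, the paper's inductive step claims $(P^{n+1}d)(x) > 0$ iff there exists $y$ with $K(x,\{y\}) > 0$ and $(P^n d)(y) > 0$ --- a condition that is vacuously false for diffuse kernels, where all singletons have measure zero. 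Your argument avoids this entirely by working with the integral $\int_A d(y)\,K^{(n)}(x,dy)$ directly and using that $d > 0$ throughout $A$ (so positivity of the integral is equivalent to $K^{(n)}(x,A) > 0$ via the exhaustion $A = \bigcup_m \{d > 1/m\}$), together with the observation that null modifications of $A$ are $K^{(n)}(x,\cdot)$-null for a.e.\ $x$. Your version is the one that should be regarded as the proof; the propagation remark $\mu(R_n) = 0 \Rightarrow \mu(R_m) = 0$ for $m \ge n$ also makes the final ``uniform bound on path length'' equivalence cleaner than the paper's phrasing.
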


\begin{proof}
Define $K(x, B) := (P\mathbf{1}_B)(x)$. Normality of $P$ makes $B \mapsto K(x, B)$ countably additive a.e.; subunitality gives $K(x, X) \le 1$. The identity $T^k(d) = P^k(d)$ follows exactly as in Theorem~\ref{thm:digraph-criterion}.

For the reachability characterization, we prove by induction: $(P^n d)(x) > 0$ if and only if $K^{(n)}(x, \mathrm{supp}(d)) > 0$. The base case $n = 0$ is immediate. For the inductive step, write
\[
(P^{n+1} d)(x) = \int (P^n d)(y)\,K(x, dy).
\]
Since all terms are $\ge 0$, $(P^{n+1} d)(x) > 0$ iff there exists $y$ with $K(x, \{y\}) > 0$ and $(P^n d)(y) > 0$. By induction, this is equivalent to $K^{(n+1)}(x, A) > 0$.

Thus $P^n(d) = 0$ iff $\mathrm{supp}(P^n d) = \emptyset$ iff $\mu(R_n) = 0$.
\end{proof}

\begin{remark}[Significance for hyperfinite II$_1$]
Theorem~\ref{thm:diffuse-kernel} gives the exact analog of ``finite height digraph'' for diffuse abelian traps. This addresses the open direction (i): extending beyond atomic to diffuse commutative orbit trapping. In the hyperfinite II$_1$ factor, if the defect orbit lies in a diffuse abelian subalgebra $C \cong L^\infty([0,1])$, stabilization is governed by the induced sub-Markov kernel's reachability structure.
\end{remark}

\begin{corollary}[Rank function certificate for kernel termination]\label{cor:rank-function}
In the setting of Theorem~\ref{thm:diffuse-kernel}, define a measurable function $r : X \to \{0, 1, \ldots, h\}$ to be \textbf{strictly decreasing along $K$} if for a.e.\ $x$ with $K(x, X) > 0$,
\[
K(x, \{y : r(y) < r(x)\}) = K(x, X).
\]
Then for any $f \ge 0$ supported where $r \le h$, we have $P^{h+1}(f) = 0$. In particular, if $\mathrm{supp}(d) \subseteq \{r \le h\}$, then $n_T \le h + 1$.
\end{corollary}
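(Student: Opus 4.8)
The plan is to track how the support of $P^j f$ is pushed upward through the level sets of $r$. I would write $X_m := \{x \in X : r(x) \le m\}$ for $-1 \le m \le h$ (so $X_{-1} = \emptyset$ and $X_h = X$), and $Y_j := X \setminus X_{j-1} = \{r \ge j\}$, so that $Y_0 = X$ and $Y_{h+1} = \emptyset$. The first step is to reread the hypothesis ``strictly decreasing along $K$'' in these terms: since $r$ is $\{0,\dots,h\}$-valued, for a point $x$ with $r(x) = m$ the set $\{y : r(y) < r(x)\}$ is exactly $X_{m-1}$, so the hypothesis says precisely that for a.e.\ $x$ with $K(x,X) > 0$ all the outgoing mass lands strictly lower: $K\bigl(x,\, X \setminus X_{r(x)-1}\bigr) = 0$. (In particular $K(x,X) = 0$ for a.e.\ $x$ with $r(x) = 0$; and $\{r \le h\} = X$, so the hypothesis on $f$ is automatically satisfied.)

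The engine of the proof is a single invariance claim, provable by a one-line computation: if $g \ge 0$ lies in $C$ with $\supp(g) \subseteq Y_j$, then $\supp(Pg) \subseteq Y_{j+1}$. To check this I would fix $x$ with $r(x) \le j$ (i.e.\ $x \notin Y_{j+1}$) and show $(Pg)(x) = \int g(y)\,K(x,dy) = 0$: if $K(x,X) = 0$ this is immediate, and if $K(x,X) > 0$ the reformulated hypothesis puts $K(x,\cdot)$ on $\{r < r(x)\} \subseteq X_{j-1}$, which is disjoint from $Y_j \supseteq \supp(g)$, so the integral vanishes. Starting from $\supp(f) \subseteq X = Y_0$ and iterating gives $\supp(P^j f) \subseteq Y_j$ for all $j$, hence $\supp(P^{h+1}f) \subseteq Y_{h+1} = \emptyset$, i.e.\ $P^{h+1} f = 0$. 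For the ``in particular'' clause: the orbit hypothesis of Theorem~\ref{thm:diffuse-kernel} at $k = 0$ gives $d = d(T) \in C$, and that theorem also supplies $T^k(d) = P^k(d)$, so when $\supp(d) \subseteq \{r \le h\}$ we get $T^{h+1}(d) = P^{h+1}(d) = 0$, whence $n_T \le h+1$.

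The only thing to be careful about is purely measure-theoretic rather than conceptual: one must justify that ``$K(x,\cdot)$ is carried by $X_{r(x)-1}$ for a.e.\ $x$'' is meaningful, which reduces to the measurability of $x \mapsto K(x, X_m) = (P\mathbf{1}_{X_m})(x)$ for each fixed $m$ — legitimate because $P$ is normal and $X_m$ is measurable — and here the finiteness of the range of $r$ keeps this trivial, since only the $h+1$ functions $P\mathbf{1}_{\{r = m\}}$ are involved. I would also record the standing fact used implicitly above, that $\supp(Pg)$ depends only on $\supp(g)$: this follows from normality via $\mathbf{1}_{\{g > \varepsilon\}} \le \varepsilon^{-1} g$ and letting $\varepsilon \downarrow 0$. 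With these routine points in place the level-descent induction is immediate, so I do not anticipate a genuine obstacle beyond the bookkeeping.
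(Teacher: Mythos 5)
Your proof is correct, and its core mechanism---an induction on rank levels showing that each application of $P$ empties one more level until nothing is left---is the same ``level-descent'' idea the paper uses. The difference is in how the support propagation is oriented, and yours is the more careful rendering. With the paper's definition $(Pg)(x) = \int g(y)\,K(x,dy)$, the operator $P$ acts on the \emph{function} side (Koopman picture), so the essential support of $P^j f$ is the set of points that can \emph{reach} $\supp(f)$ in $j$ steps; since every transition strictly lowers rank, that reachable-from set climbs upward through the levels, exactly as your invariance claim $\supp(g) \subseteq \{r \ge j\} \Rightarrow \supp(Pg) \subseteq \{r \ge j+1\}$ records, terminating at $\{r \ge h+1\} = \emptyset$. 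The paper's two-line sketch instead phrases the descent in the predual (measure-pushforward) direction---``a function supported on $\{r=k\}$ is sent to one supported on $\{r=0\}$''---which is the right intuition for mass flowing down the rank but is not literally what $P$ does to supports of functions; read as stated it has the direction inverted. Your version also correctly isolates the two routine measure-theoretic points (measurability of $x \mapsto K(x,X_m)$ and absolute continuity $K(x,\cdot) \ll \mu$ needed to pass from ``$K(x,\cdot)$ misses $\supp(g)$'' to ``$\int g\,dK(x,\cdot)=0$''), both of which follow from normality of $P$ and are glossed over in the paper. The conclusion and the ``in particular'' clause via $T^k(d) = P^k(d)$ match the paper exactly.
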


\begin{proof}
If $r(x) = k$ and $K(x, X) > 0$, then $K$ sends all mass to $\{r < k\}$. By induction, $P^k$ sends any function supported on $\{r = k\}$ to a function supported on $\{r = 0\}$, and $P^{k+1}$ kills it (since from $\{r = 0\}$, all mass goes to $\{r < 0\} = \emptyset$). The bound $n_T \le h + 1$ follows.
\end{proof}

\begin{remark}[Unifying atomic and diffuse]
In the atomic case $C \cong \ell^\infty(J)$, a rank function $r : J \to \{0, \ldots, h\}$ strictly decreasing along the digraph $G$ is exactly a ``height function'' with no paths longer than $h$. In the diffuse case $C \cong L^\infty(X, \mu)$, the rank function plays the same role for the measurable kernel relation. Corollary~\ref{cor:rank-function} gives an \textbf{intrinsic checkable certificate} for finite stabilization: exhibit a finite-valued measurable function that strictly decreases along the kernel dynamics.
\end{remark}

\subsection*{Cartan MASA trapping: structural hypotheses}

The orbit-in-$C$ hypothesis in Theorems~\ref{thm:digraph-criterion} and~\ref{thm:diffuse-kernel} is the key reduction step. The following gives clean sufficient conditions forcing this hypothesis.

\begin{proposition}[Cartan trapping from $C$-covariance]\label{prop:cartan-covariance}
Let $R$ be a II$_1$ factor and $C \subset R$ a MASA (maximal abelian subalgebra). If $T : R \to R$ is normal CP subunital and satisfies
\[
T \circ \mathrm{Ad}(u) = \mathrm{Ad}(u) \circ T \qquad \text{for all } u \in \mathcal{U}(C),
\]
then:
\begin{enumerate}[(i)]
\item $T(C) \subseteq C$;
\item $T(I) \in C$, hence $d(T) = I - T(I) \in C$;
\item $T^k(d(T)) \in C$ for all $k \ge 0$ (orbit-in-$C$).
\end{enumerate}
\end{proposition}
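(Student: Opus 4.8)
The plan is to exploit the single structural fact that defines a MASA --- namely $C' \cap R = C$ --- in order to upgrade the hypothesized unitary covariance into genuine invariance $T(C) \subseteq C$; parts (ii) and (iii) then follow with essentially no further work.

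First I would prove (i). Fix $x \in C$ and $u \in \mathcal{U}(C)$. Since $C$ is abelian, $\mathrm{Ad}(u)(x) = uxu^* = x$, so applying $T$ and invoking the covariance hypothesis $T \circ \mathrm{Ad}(u) = \mathrm{Ad}(u) \circ T$ gives
\[
T(x) = T(\mathrm{Ad}(u)(x)) = \mathrm{Ad}(u)(T(x)) = u\,T(x)\,u^*.
\]
Thus $T(x)$ commutes with every unitary of $C$. Because every element of the unital $C^*$-algebra $C$ is a finite linear combination of unitaries in $C$ (a self-adjoint contraction $a$ is $\tfrac12(a + i\sqrt{1 - a^2}) + \tfrac12(a - i\sqrt{1 - a^2})$, and a general element splits into self-adjoint parts after rescaling), $T(x)$ in fact commutes with all of $C$, i.e.\ $T(x) \in C' \cap R = C$ by maximality of the abelian subalgebra. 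Hence $T(C) \subseteq C$.

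Next, (ii) is immediate: a MASA is unital, so $I \in C$, and therefore $T(I) \in T(C) \subseteq C$ by (i), whence $d(T) = I - T(I) \in C$. For (iii) I would induct on $k$: the base case $k = 0$ is exactly (ii), and if $T^k(d(T)) \in C$ then $T^{k+1}(d(T)) = T(T^k(d(T))) \in T(C) \subseteq C$ by (i).

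The only point requiring care is the passage from ``$T(x)$ commutes with every $u \in \mathcal{U}(C)$'' to ``$T(x) \in C' \cap R$'', which rests on the standard fact that a unital $C^*$-algebra is the linear span of its unitaries; after that, maximality of $C$ closes the argument. It is worth noting that neither complete positivity nor normality nor subunitality of $T$ enters this proof --- linearity of $T$ together with the covariance relation carries everything --- so the same conclusion holds for any $C$-covariant linear map, with the CP/normal/subunital hypotheses serving only to connect the result to the stabilization machinery of Theorems~\ref{thm:digraph-criterion} and~\ref{thm:diffuse-kernel}.
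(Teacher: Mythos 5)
Your proof is correct and follows essentially the same route as the paper's: use that $\mathrm{Ad}(u)$ fixes $C$ pointwise to deduce $T(x)$ commutes with every unitary of $C$, hence with all of $C$ by the span-of-unitaries fact, and conclude via $C' \cap R = C$; parts (ii) and (iii) are the same immediate consequences. Your added observations---spelling out the unitary decomposition and noting that only linearity and covariance are used---are accurate refinements of what the paper leaves implicit.
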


\begin{proof}
For $c \in C$ and $u \in \mathcal{U}(C)$, we have $\mathrm{Ad}(u)(T(c)) = T(\mathrm{Ad}(u)(c)) = T(c)$ since $c$ commutes with $u$. Thus $T(c)$ commutes with all unitaries in $C$, hence with all of $C$. Since $C$ is maximal abelian, $C' = C$, so $T(c) \in C$.
\end{proof}

\begin{remark}[Interpretation and Kraus version]
$C$-covariance means $T$ is dephasing-covariant: it commutes with unitaries in $C$.

A sufficient Kraus condition: if $V_r \in \mathcal{N}_R(C)$ with $V_r^* V_r, V_r V_r^* \in C$, then $T(C) \subseteq C$.
\end{remark}

\begin{corollary}[Cartan trapping implies kernel criterion]
If $T : R \to R$ satisfies the hypotheses of Proposition~\ref{prop:cartan-covariance} with Cartan MASA $C \cong L^\infty(X, \mu)$, then:
\begin{enumerate}[(i)]
\item The stabilization question reduces to the induced kernel $K$ on $(X, \mu)$;
\item $n_T < \infty$ iff the kernel has finite height from $\mathrm{supp}(d(T))$;
\item Corollary~\ref{cor:rank-function} gives a checkable certificate via rank functions.
\end{enumerate}
\end{corollary}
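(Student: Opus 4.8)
The plan is to recognize this as a pure assembly of results already in hand: Proposition~\ref{prop:cartan-covariance} furnishes the orbit-in-$C$ hypothesis, Theorem~\ref{thm:diffuse-kernel} converts that hypothesis into the induced sub-Markov kernel picture on $(X,\mu)$, and Corollary~\ref{cor:rank-function} supplies the rank-function certificate. The only inputs not already packaged are two standard structural facts about MASAs in II$_1$ factors.

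First I would apply Proposition~\ref{prop:cartan-covariance}: since $T$ satisfies $T \circ \mathrm{Ad}(u) = \mathrm{Ad}(u) \circ T$ for all $u \in \mathcal{U}(C)$, that proposition yields $T(C) \subseteq C$, $d(T) = I - T(I) \in C$, and hence $T^k(d(T)) \in C$ for every $k \ge 0$---exactly the orbit condition hypothesized in Theorem~\ref{thm:diffuse-kernel}. Next I would record the two standard facts. (a) Since $R$ is a finite factor and $C \subset R$ a von Neumann subalgebra, the trace-preserving conditional expectation $E_C : R \to C$ exists and is normal, which is precisely the normal conditional expectation demanded by Theorem~\ref{thm:diffuse-kernel}. (b) A MASA in a II$_1$ factor has no minimal projections---if $p \in C$ were minimal then $pCp = \mathbb{C}p$ would be a MASA in the corner II$_1$ factor $pRp$, which is impossible---so $C$ is diffuse and $C \cong L^\infty(X,\mu)$ for a standard probability space, legitimizing the $L^\infty$ formalism.

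With the orbit-in-$C$ hypothesis verified and $E_C$ in hand, Theorem~\ref{thm:diffuse-kernel} applies with $\mathcal{M} = R$: it produces $P := E_C \circ T|_C$, a sub-Markov kernel $K$ on $(X,\mu)$ with $(Pf)(x) = \int f(y)\,K(x,dy)$, the identity $T^k(d(T)) = P^k(d(T))$ inside $C$, and the equivalence $P^n(d(T)) = 0 \iff \mu(R_n) = 0$ with $R_n = \{x : K^{(n)}(x, \mathrm{supp}(d(T))) > 0\}$. That is items (i) and (ii): the stabilization problem for $T$ is the reachability problem for $K$ started from $\mathrm{supp}(d(T))$, and $n_T < \infty$ iff this measurable relation has finite height on the set reachable from $\mathrm{supp}(d(T))$, with $n_T$ bounded by that height plus one. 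For (iii) I would specialize Corollary~\ref{cor:rank-function}: a measurable $r : X \to \{0,1,\ldots,h\}$ strictly decreasing along $K$ with $\mathrm{supp}(d(T)) \subseteq \{r \le h\}$ forces $n_T \le h+1$, the promised intrinsic certificate.

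I do not expect a genuine obstacle---this is a bookkeeping corollary. The closest thing to a subtlety is making sure the conditional expectation invoked is the trace-preserving (hence normal) one rather than a mere contractive projection, and confirming the diffuseness of $C$ so that Theorem~\ref{thm:diffuse-kernel}'s kernel formalism applies verbatim; both are one-line standard arguments.
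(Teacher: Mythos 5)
Your proposal is correct and is exactly the assembly the paper intends: Proposition~\ref{prop:cartan-covariance} supplies orbit-in-$C$, Theorem~\ref{thm:diffuse-kernel} supplies the kernel reduction and the finite-height equivalence, and Corollary~\ref{cor:rank-function} supplies the certificate. Your two added verifications---that the trace-preserving conditional expectation onto $C$ exists and is normal in the finite factor $R$, and that a MASA in a II$_1$ factor is diffuse so the $L^\infty(X,\mu)$ model applies---are exactly the right details to make the citation of Theorem~\ref{thm:diffuse-kernel} airtight, and both arguments are sound.
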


\begin{lemma}[Bimodularity $\Rightarrow$ trapping]\label{lem:bimodular-trapping}
Let $C \subset \mathcal{M}$ be an abelian von Neumann subalgebra. If $T : \mathcal{M} \to \mathcal{M}$ is $C$-\textbf{bimodular}, i.e.,
\[
T(c_1 x c_2) = c_1 T(x) c_2 \quad \text{for all } c_1, c_2 \in C, \; x \in \mathcal{M},
\]
then $T(C) \subseteq C$ and $T(I) \in C$, hence orbit-in-$C$ holds.
\end{lemma}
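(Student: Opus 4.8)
The plan is to read everything off the bimodularity identity by plugging in the unit. Since $C \subseteq \mathcal{M}$ is a (unital) von Neumann subalgebra, $I \in C$. First I would set $x = I$ and $c_2 = I$ in $T(c_1 x c_2) = c_1 T(x) c_2$ to get, for every $c \in C$,
\[
T(c) = T(c\cdot I\cdot I) = c\,T(I);
\]
symmetrically, with $c_1 = I$ and $c_2 = c$, we obtain $T(c) = T(I\cdot I\cdot c) = T(I)\,c$. Comparing the two expressions yields $c\,T(I) = T(I)\,c$ for all $c \in C$, i.e.\ $T(I) \in C' \cap \mathcal{M}$.

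The step that needs care is upgrading $T(I) \in C'\cap\mathcal{M}$ to $T(I) \in C$; this is exactly where the Cartan MASA context of this subsection enters, since maximal abelianness gives $C'\cap\mathcal{M} = C$. (The conclusion genuinely requires this: on $M_2(\mathbb{C})$ with $C = \mathbb{C}I$, the linear --- hence $C$-bimodular --- map $x \mapsto x_{11}\,\mathrm{diag}(1,0)$ sends $I$ to $\mathrm{diag}(1,0) \notin C$.) Once $C'\cap\mathcal{M}=C$ is available, $T(I) \in C$, and then $T(c) = c\,T(I)$ exhibits $T(c)$ as a product of two elements of the abelian algebra $C$, so $T(C) \subseteq C$. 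Since $I \in C$ as well, $d(T) = I - T(I) \in C$.

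Finally, the orbit-in-$C$ conclusion is a one-line induction: $T^0(d(T)) = d(T) \in C$, and if $T^k(d(T)) \in C$ then $T^{k+1}(d(T)) = T\big(T^k(d(T))\big) \in T(C) \subseteq C$. Hence $T^k(d(T)) \in C$ for all $k \ge 0$, which is precisely the orbit-in-$C$ hypothesis feeding Theorems~\ref{thm:digraph-criterion} and~\ref{thm:diffuse-kernel}. I anticipate no obstacle beyond the MASA remark above: complete positivity, normality, and indeed positivity of $T$ play no role in this lemma --- linearity together with $C$-bimodularity (and maximality of $C$) already force the trapping. In the writeup I would state the maximality hypothesis explicitly, or carry it as standing in this subsection, so that the step $C'\cap\mathcal{M}=C$ is airtight.
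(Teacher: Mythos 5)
Your proof is correct and follows essentially the same route as the paper's: plug $x = I$ into the bimodularity identity to get $T(c) = c\,T(I) = T(I)\,c$, deduce $T(I) \in C' \cap \mathcal{M}$, and invoke maximal abelianness to land in $C$. The paper's own proof makes the same hedge you do (it explicitly says ``If $C$ is maximal abelian, $T(c) \in C$; for general $C$ we at least have $T(C) \subseteq C' \cap \mathcal{M}$''), and your counterexample with $C = \mathbb{C}I$ on $M_2(\mathbb{C})$ is a nice concrete confirmation that the MASA hypothesis cannot be dropped from the statement as written.
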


\begin{proof}
For $c \in C$, take $x = I$, $c_1 = c$, $c_2 = I$: $T(c) = T(c \cdot I \cdot I) = c \cdot T(I) \cdot I = c\,T(I)$. Similarly $T(c) = T(I)\,c$. So $T(c)$ commutes with $T(I)$. Also, for any $c' \in C$, $T(c\,c') = c\,T(c') = c\,c'\,T(I) = T(c)\,c'$, so $T(c)$ commutes with all of $C$. If $C$ is maximal abelian, $T(c) \in C$. For general $C$, we at least have $T(C) \subseteq C' \cap \mathcal{M}$; if $C$ is a MASA or if $T(C) \subseteq C$ is known, orbit-in-$C$ follows.
\end{proof}

\begin{remark}
Bimodularity is sometimes easier to verify than full $C$-covariance, depending on how $T$ is constructed (e.g., from conditional expectations, correspondences, or bimodule maps). For CP maps arising from subfactor theory, bimodularity often follows from the bimodule structure of the Kraus operators.
\end{remark}

\subsection*{Intrinsic certificate: projection filtration}

The digraph and kernel criteria both encode the same ``finite height'' content: existence of an integer-valued rank that strictly decreases along the dynamics. In operator language, this is a finite projection filtration.

\begin{proposition}[Finite-height certificate via projections]\label{prop:projection-filtration}
Let $T : \mathcal{M} \to \mathcal{M}$ be normal positive. Suppose there exist projections
\[
0 = p_0 \le p_1 \le \cdots \le p_N
\]
such that
\[
T(p_k) \le p_{k-1} \quad (k = 1, \ldots, N).
\]
Then for every $x \ge 0$ with $\mathrm{supp}(x) \le p_N$,
\[
T^N(x) = 0.
\]
In particular, if $\mathrm{supp}(d(T)) \le p_N$, then $n_T \le N$.
\end{proposition}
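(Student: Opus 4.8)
The plan is to run a one-step ``support descent'' and iterate it $N$ times. The only nontrivial ingredient is the standard order fact: for a positive element $a$ in a von Neumann algebra and a projection $p$, if $a \le \lambda p$ for some scalar $\lambda \ge 0$, then $\supp(a) \le p$. Indeed $0 \le (I-p)a(I-p) \le \lambda (I-p)p(I-p) = 0$, so $(I-p)a(I-p) = 0$; writing this as $\|a^{1/2}(I-p)\|^2 = 0$ gives $a^{1/2}(I-p) = 0$, hence $a(I-p) = 0$ and thus $a = pap$, i.e.\ $\supp(a) \le p$. (Equivalently, $\supp(a)$ is the least projection $q$ with $a = qaq$, and the computation exhibits $p$ as such a projection; cf.\ Lemma~\ref{lem:psd-support-sum}.)

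First I would prove the descent step: if $y \ge 0$ satisfies $\supp(y) \le p_k$ for some $k \in \{1, \dots, N\}$, then $\supp(T(y)) \le p_{k-1}$. The condition $\supp(y) \le p_k$ is equivalent to $y \le \|y\|\,p_k$; applying the positive map $T$ and then the hypothesis $T(p_k) \le p_{k-1}$ gives
\[
0 \le T(y) \le \|y\|\,T(p_k) \le \|y\|\,p_{k-1},
\]
so the support fact yields $\supp(T(y)) \le p_{k-1}$.

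Next I would iterate the descent step. Given $x \ge 0$ with $\supp(x) \le p_N$, positivity of $T$ makes each iterate $T^j(x)$ positive, so the descent step applies at every stage: $\supp(T(x)) \le p_{N-1}$, then $\supp(T^2(x)) \le p_{N-2}$, and after $N$ steps $\supp(T^N(x)) \le p_0 = 0$, which forces $T^N(x) = 0$. For the ``in particular'' claim, $d(T) \ge 0$ by subunitality, so if $\supp(d(T)) \le p_N$ the above gives $T^N(d(T)) = 0$; since $n_T$ is by definition the least $n$ with $T^n(d(T)) = 0$, we get $n_T \le N$.

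There is no genuine obstacle here; the proof is a clean induction. The only place needing care is the support fact, which is a routine consequence of the $C^\ast$-order and could alternatively be cited. It is worth remarking that normality of $T$ plays no role — positivity alone suffices — and that the same argument works verbatim in any ordered effect space in which ``$\supp(a) \le p$'' is defined so that $a \le \lambda p$ implies it (in particular the PSD cone of $M_d$).
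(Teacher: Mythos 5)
Your proof is correct and follows essentially the same route as the paper's: both apply positivity to the inequality $x \le \|x\|\,p_N$ (resp.\ $y \le \|y\|\,p_k$) and use $T(p_k) \le p_{k-1}$ to descend the chain $N$ times until $p_0 = 0$ forces $T^N(x) = 0$. The only cosmetic difference is that you re-express each step via support projections and renormalize, whereas the paper carries the single constant $\|x\|$ through the whole chain of operator inequalities; your closing observations (normality is unused, positivity suffices) are also accurate.
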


\begin{proof}
If $0 \le x \le \|x\| p_N$, then by positivity
\[
0 \le T(x) \le \|x\| T(p_N) \le \|x\| p_{N-1}.
\]
Iterating $N$ times gives $0 \le T^N(x) \le \|x\| p_0 = 0$.
\end{proof}

\begin{remark}[Intrinsic termination conditions]
Proposition~\ref{prop:projection-filtration} addresses open direction (iii): it gives an \textbf{intrinsic checkable condition} on $T$ that forces finite-time termination. In the commutative $L^\infty(X)$ model, the projections $p_k$ are indicator functions of rank-level sets $\{r \ge k\}$ for an integer-valued Lyapunov function $r : X \to \{0, 1, \ldots, N\}$.

Thus a concrete path to intrinsic termination conditions is: \textbf{construct a finite chain of subharmonic projections} whose images strictly fall down the chain. This is the noncommutative avatar of ``no long paths.''
\end{remark}

The following characterization reduces the existence of projection filtrations to a concrete constraint on Kraus operators. We first record a useful lemma.

\begin{lemma}[Support + subunital $\Rightarrow$ projection bound]\label{lem:support-projection-bound}
If $0 \le a \le I$ and $(I - p)\,a\,(I - p) = 0$ for a projection $p$, then $a \le p$.
\end{lemma}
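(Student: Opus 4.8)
The plan is to prove the slightly stronger statement that the projection $p$ actually \emph{commutes} with $a$; once this is established, the conclusion $a \le p$ is immediate, since compressing the inequality $a \le I$ by $p$ gives $pap \le p$, and $pap = a$. The only tool needed is the positive square root $a^{1/2}$ together with the standard fact that $b^*b = 0$ forces $b = 0$ in any $C^*$-algebra.

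First I would set $q := I - p$ and consider $b := a^{1/2} q$, where $a^{1/2} \ge 0$ is the positive square root of $a$ (which exists because $a$ is a positive element of a von Neumann algebra, hence of a $C^*$-algebra). Then $b^*b = q\,a^{1/2}a^{1/2}\,q = q a q = (I-p)a(I-p) = 0$ by hypothesis, so $b = 0$, i.e.\ $a^{1/2}(I-p) = 0$; taking adjoints (using $a^{1/2} = (a^{1/2})^*$) also gives $(I-p)a^{1/2} = 0$. From these I read off $a^{1/2} = a^{1/2}(p+q) = a^{1/2}p$ and $a^{1/2} = (p+q)a^{1/2} = p\,a^{1/2}$, so $p\,a^{1/2} = a^{1/2}p = a^{1/2}$. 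Hence $p$ commutes with $a^{1/2}$, therefore with $a = (a^{1/2})^2$, and moreover $a = p a = a p = p a p$. Finally, from $0 \le a \le I$ and positivity of the compression map $x \mapsto pxp$, we get $0 \le pap \le pIp = p$; since $pap = a$, this is exactly $a \le p$.

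I do not anticipate any genuine obstacle: the single point requiring a moment's care is the legitimacy of the square-root manipulation, but this is entirely routine in the von Neumann-algebra (indeed any $C^*$-algebra) setting in which the surrounding lemmas are stated. As a cross-check, one can also argue concretely when $\mathcal{M} = B(H)$: for every $\xi \in H$ one has $\langle(I-p)\xi,\,a(I-p)\xi\rangle = \langle\xi,\,(I-p)a(I-p)\xi\rangle = 0$, and since $a \ge 0$ this yields $a^{1/2}(I-p)\xi = 0$ for all $\xi$, hence $a^{1/2}(I-p) = 0$ and the same chain of deductions; the algebraic version above simply has the virtue of going through verbatim in an arbitrary von Neumann algebra.
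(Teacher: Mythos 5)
Your proof is correct and follows essentially the same route as the paper's: both first deduce $a^{1/2}(I-p)=0$ from $(I-p)\,a\,(I-p)=0$ via the $b^*b=0$ argument, conclude that $a=pap$ (equivalently $\supp(a)\le p$), and then compress $a\le I$ by $p$ to get $a\le p$. The only cosmetic difference is that you run the final step algebraically through positivity of the compression $x\mapsto pxp$, whereas the paper does the equivalent quadratic-form estimate $\langle av,v\rangle=\langle apv,pv\rangle\le\|a\|\langle pv,v\rangle\le\langle pv,v\rangle$.
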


\begin{proof}
The condition $(I - p)\,a\,(I - p) = 0$ implies $a(I - p) = 0$ (since $a \ge 0$ and $(I-p)a(I-p) = 0$ gives $\|a^{1/2}(I-p)\|^2 = 0$), hence $\mathrm{supp}(a) \le p$. For any vector $v$,
\[
\langle av, v \rangle = \langle apv, pv \rangle \le \|a\| \langle pv, pv \rangle \le \langle pv, v \rangle = \langle pv, v \rangle.
\]
So $a \le p$.
\end{proof}

\begin{proposition}[Filtration $\Leftrightarrow$ Kraus ``level lowering'']\label{prop:kraus-filtration}
Let $T(x) = \sum_{r=1}^R V_r^* x V_r$ be normal CP \textbf{subunital} on a von Neumann algebra $\mathcal{M}$. A chain of projections $0 = p_0 \le p_1 \le \cdots \le p_N$ satisfies $T(p_k) \le p_{k-1}$ for $k = 1, \ldots, N$ \textbf{if and only if} for every $k$ and every Kraus operator $V_r$,
\[
p_k V_r (I - p_{k-1}) = 0 \qquad \text{(equivalently } (I - p_{k-1}) V_r^* p_k = 0\text{)}.
\]
\end{proposition}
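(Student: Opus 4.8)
The plan is to convert the order inequality $T(p_k)\le p_{k-1}$ into the vanishing of a single compression, and then expand that compression through the Kraus decomposition and invoke the elementary fact that a sum of positive operators vanishes precisely when each summand does.

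First I would check that $a_k:=T(p_k)$ lands in the interval where Lemma~\ref{lem:support-projection-bound} applies: positivity of $T$ gives $a_k\ge 0$, while $p_k\le I$ together with positivity and subunitality gives $a_k=T(p_k)\le T(I)\le I$, so $0\le a_k\le I$. Next I would record, for each fixed $k$, the equivalence
\[
T(p_k)\le p_{k-1}\quad\Longleftrightarrow\quad (I-p_{k-1})\,T(p_k)\,(I-p_{k-1})=0 .
\]
The forward direction is immediate: $0\le a_k\le p_{k-1}$ forces $0\le (I-p_{k-1})a_k(I-p_{k-1})\le (I-p_{k-1})p_{k-1}(I-p_{k-1})=0$. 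The reverse direction is exactly Lemma~\ref{lem:support-projection-bound} applied with $a=a_k$, $p=p_{k-1}$; this is the only place subunitality (through $\|a_k\|\le 1$) is used.

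Then I would expand the compression in Kraus form. Using $p_k=p_k^{*}p_k$, each summand is
\[
(I-p_{k-1})\,V_r^{*}p_kV_r\,(I-p_{k-1}) = W_{r,k}^{*}W_{r,k},\qquad W_{r,k}:=p_kV_r(I-p_{k-1}),
\]
so $(I-p_{k-1})\,T(p_k)\,(I-p_{k-1})=\sum_r W_{r,k}^{*}W_{r,k}$, a $\sigma$-weakly convergent sum of positive operators. Pairing with a vector $v$ turns this into $\sum_r\|W_{r,k}v\|^{2}$, which vanishes for all $v$ if and only if $W_{r,k}=0$ for every $r$, i.e.\ iff $p_kV_r(I-p_{k-1})=0$ for all $r$. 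The parenthetical ``equivalently'' form then follows by taking adjoints: $p_kV_r(I-p_{k-1})=0$ iff $(I-p_{k-1})V_r^{*}p_k=0$. Ranging over $k=1,\dots,N$ assembles the full biconditional.

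The only point requiring mild care is an infinite Kraus family, as occurs for general normal CP maps on von Neumann algebras: one must run the expansion and the ``sum of positives is zero iff each term is zero'' step in the $\sigma$-weak topology. This is routine---pairing with vectors reduces everything to a convergent series of nonnegative reals whose total is zero, forcing each term to vanish---so no continuity or finite-dimensionality assumption enters. Note also that the monotonicity $p_0\le\cdots\le p_N$ of the chain plays no role in this equivalence; it is needed only in Proposition~\ref{prop:projection-filtration}, where the chain structure makes the iterated estimate $T^{N}(x)=0$ telescope.
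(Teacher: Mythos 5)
Your proposal is correct and follows essentially the same route as the paper's proof: reduce the inequality $T(p_k)\le p_{k-1}$ to the vanishing of the compression $(I-p_{k-1})T(p_k)(I-p_{k-1})$, expand it as $\sum_r W_{r,k}^*W_{r,k}$ with $W_{r,k}=p_kV_r(I-p_{k-1})$, use pointedness of the positive cone to kill each summand, and invoke Lemma~\ref{lem:support-projection-bound} together with subunitality for the converse. Your added remarks---that the $\sigma$-weak handling of an infinite Kraus family is routine, and that the monotonicity of the chain is not actually used in this equivalence---are both accurate and slightly more careful than the paper's write-up.
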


\begin{proof}
$(\Rightarrow)$: If $T(p_k) \le p_{k-1}$, then $(I - p_{k-1}) T(p_k) (I - p_{k-1}) = 0$. Since each summand $V_r^* p_k V_r \ge 0$, we have $(I - p_{k-1}) V_r^* p_k V_r (I - p_{k-1}) = 0$ for each $r$. This equals $\|p_k V_r (I - p_{k-1})\|^2$, so $p_k V_r (I - p_{k-1}) = 0$.

$(\Leftarrow)$: Assume $p_k V_r (I - p_{k-1}) = 0$ for all $r$. Then
\[
(I - p_{k-1}) V_r^* p_k V_r (I - p_{k-1}) = \bigl(p_k V_r (I - p_{k-1})\bigr)^* \bigl(p_k V_r (I - p_{k-1})\bigr) = 0.
\]
Summing over $r$ gives $(I - p_{k-1}) T(p_k) (I - p_{k-1}) = 0$, i.e., $\mathrm{supp}(T(p_k)) \le p_{k-1}$. Also $p_k \le I$ and subunitality give $0 \le T(p_k) \le T(I) \le I$. Apply Lemma~\ref{lem:support-projection-bound} with $a = T(p_k)$, $p = p_{k-1}$: $T(p_k) \le p_{k-1}$.
\end{proof}

\begin{remark}[Structural sources of filtrations]\label{rem:filtration-sources}
Proposition~\ref{prop:kraus-filtration} tells us what structure to seek: a finite ``rank'' grading that every Kraus operator lowers.

\textbf{(A) Nest/flag invariance (upper triangular Kraus).} If there exists a finite flag $0 = H_0 \subset H_1 \subset \cdots \subset H_N = H$ such that $V_r(H_k) \subseteq H_{k-1}$ for all $r, k$, then letting $p_k$ be the projection onto $H_k$ gives $p_k V_r (I - p_{k-1}) = 0$, hence $T(p_k) \le p_{k-1}$. This is the operator-algebraic version of ``strictly upper triangular.''

\textbf{(B) Measurable rank in abelian models.} In the Cartan/MASA picture $C = L^\infty(X)$, a filtration is a decreasing sequence of measurable sets $X_k$ with $p_k = \mathbf{1}_{X_k}$. The condition $T(p_k) \le p_{k-1}$ becomes ``no edges upward'' in the kernel dynamics: a measurable rank $r : X \to \{0, \ldots, N\}$ such that every transition strictly decreases $r$.
\end{remark}

\begin{lemma}[Rank function $\Rightarrow$ projection filtration]\label{lem:rank-to-filtration}
In the setting of Theorem~\ref{thm:diffuse-kernel} with orbit-in-$C$ (e.g., by $C$-covariance), if there exists a measurable $r : X \to \{0, 1, \ldots, N\}$ strictly decreasing along the induced kernel $K$ (as in Corollary~\ref{cor:rank-function}), then with $p_k := \mathbf{1}_{\{r \ge k\}} \in C$, one has $P(p_k) \le p_{k-1}$, hence $T(p_k) \le p_{k-1}$ and $n_T \le N$.
\end{lemma}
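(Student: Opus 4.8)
The plan is to read the lemma as an application of the projection-filtration criterion (Proposition~\ref{prop:projection-filtration}) after performing the orbit-in-$C$ reduction. First I would invoke Theorem~\ref{thm:diffuse-kernel}: under the orbit-in-$C$ hypothesis, $T^k(d) = P^k(d)$ in $C$ for every $k$, where $P = E_C \circ T|_C$ is a positive subunital map on $C \cong L^\infty(X,\mu)$ carried by the sub-Markov kernel $K$. Hence $n_T = \min\{n : P^n(d) = 0\}$, and the whole question lives inside the abelian algebra $C$. When the orbit-in-$C$ hypothesis arises from $C$-covariance (Proposition~\ref{prop:cartan-covariance}) one moreover has $T(C) \subseteq C$, so $P = T|_C$ and any inequality established for $P$ holds verbatim for $T$; in the purely conditional-expectation case one simply argues throughout with $P$ on $C$.

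Next I would build the filtration from the level sets of $r$. These sets are measurable, so the indicators $p_k := \mathbf{1}_{\{r \ge k\}}$ are projections in $C$ forming a finite monotone chain with endpoints $I$ and $0$. The substantive step is to turn the hypothesis ``$r$ strictly decreases along $K$'' into the order inequality $P(p_k) \le p_{k-1}$. Computing pointwise with $(Pp_k)(x) = K(x, \{r \ge k\})$: if $r(x) \ge k-1$, then $p_{k-1}(x) = 1$ and subunitality gives $(Pp_k)(x) \le K(x,X) \le 1$; if $r(x) < k-1$, then either $K(x,X) = 0$ and the inequality is trivial, or, by the strict-decrease condition, $K(x,\cdot)$ is concentrated on $\{r < r(x)\}$, which is disjoint from $\{r \ge k\}$, so $(Pp_k)(x) = 0 = p_{k-1}(x)$. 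Lemma~\ref{lem:support-projection-bound} (support plus subunitality forces the projection bound) is the clean way to package the second case, and in the $C$-covariant setting it promotes the conclusion to $T(p_k) \le p_{k-1}$ in $\mathcal{M}$.

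Finally I would feed the chain into Proposition~\ref{prop:projection-filtration} in the orientation that presents it as an increasing sequence from $0$ up to a top projection dominating $\mathrm{supp}(d)$ (equivalently, one may just quote Corollary~\ref{cor:rank-function}, which already telescopes exactly this data for the kernel $K$): iterating $T(p_k) \le p_{k-1}$ down the chain collapses the image of $d$ after $N$ steps, giving $T^N(d) = 0$ and hence $n_T \le N$. The step I expect to be the genuine obstacle is precisely this support-to-order upgrade in the second pointwise case — one must track that $K$ is only sub-Markov and that the strict-decrease hypothesis is vacuous at null-mass points — together with the clerical matter of orienting and indexing the level-set chain so that it matches the increasing convention of Proposition~\ref{prop:projection-filtration} and delivers the stated bound.
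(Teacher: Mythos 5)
Your route is the same as the paper's: verify $P(p_k)\le p_{k-1}$ pointwise from the strict-decrease property of $K$ (two cases, using sub-Markovianity when $r(x)\ge k-1$ and concentration of $K(x,\cdot)$ on $\{r<r(x)\}$ otherwise), transfer to $T$ via the orbit-in-$C$ identity $T^k(d)=P^k(d)$, and conclude by the filtration machinery. The pointwise verification is correct as far as it goes.

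However, the ``clerical matter'' you defer at the end is not clerical --- it is the one step that does not close, in your write-up and, as it happens, in the paper's own proof. The chain $p_k=\mathbf{1}_{\{r\ge k\}}$ is \emph{decreasing}, with $p_0=I$ at the top; the inequality $P(p_k)\le p_{k-1}$ therefore pushes supports \emph{up} the chain toward $I$ and cannot be fed into Proposition~\ref{prop:projection-filtration}, which requires an increasing chain bottoming out at $p_0=0$. The inequality that actually annihilates the defect is the sharper $P(p_k)\le p_{k+1}$: if $K(x,\{r\ge k\})>0$ then, by strict decrease, positive mass of $K(x,\cdot)$ lies in $\{r\ge k\}\cap\{r<r(x)\}$, forcing $r(x)\ge k+1$. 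Relabelling $P_j:=p_{N+1-j}$ gives an increasing chain $0=P_0\le\cdots\le P_{N+1}=I$ with $T(P_j)\le P_{j-1}$ and $\supp(d)\le P_{N+1}$, whence $n_T\le N+1$ --- consistent with Corollary~\ref{cor:rank-function} (which gives $n_T\le h+1$ with $h=N$) but one more than the bound you and the lemma assert. The bound $n_T\le N$ is in fact false without an extra hypothesis: take $X=\{a,b\}$, $r(a)=0$, $r(b)=1$ (so $N=1$), $K(a,\cdot)=0$, $K(b,\cdot)=\tfrac12\delta_a$. Then $d=(1,\tfrac12)$, $T(d)=(0,\tfrac12)\ne0$, and $T^2(d)=0$, so $n_T=2>N$. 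To recover $n_T\le N$ one needs additionally $\supp(d)\subseteq\{r\ge 1\}$ (equivalently $\supp(d)\le P_N$), or a rank function valued in $\{0,\dots,N-1\}$. Your instinct that the orientation and indexing of the chain is where the argument could fail was exactly right; it needs to be resolved, not deferred, and resolving it changes the constant.
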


\begin{proof}
The strict decrease means $K(x, \{r < r(x)\}) = K(x, X)$ for a.e.\ $x$ with positive kernel mass. For $p_k = \mathbf{1}_{\{r \ge k\}}$, we have $(Pp_k)(x) = K(x, \{r \ge k\}) \le p_{k-1}(x)$ since the kernel sends mass to strictly lower rank. Since orbit-in-$C$ gives $T^k(d) = P^k(d)$, the filtration $T(p_k) \le p_{k-1}$ follows.
\end{proof}

\subsection*{Robustness: no-go and what's needed}

The following clarifies what robustness results are possible and what extra hypotheses are genuinely required.

\begin{proposition}[No-go: approximate trapping alone cannot force finite-time zero]\label{prop:robustness-nogo}
Let $c \in (0,1)$ and define $x_k := c^k$ for $k \ge 0$. Then:
\begin{enumerate}[(i)]
\item $x_k \to 0$ as $k \to \infty$;
\item $x_k > 0$ for all $k$;
\item For any $\varepsilon > 0$, the sequence $(x_k)$ is eventually $\varepsilon$-close to $0$.
\end{enumerate}
Thus mere norm/strong approximation to a nilpotent model does not imply hitting $0$ in finite time.
\end{proposition}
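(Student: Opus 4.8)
The plan is to verify the three numbered claims by entirely elementary means, since $(x_k) = (c^k)$ is just the geometric sequence of ratio $c \in (0,1)$. For (ii) I would note that $c > 0$ forces $c^k > 0$ for every $k \ge 0$ (induction on $k$, or: a finite product of positive reals is positive), so the sequence never attains the value $0$. For (i) I would observe that the sequence is strictly decreasing, since $x_{k+1} = c\,x_k < x_k$, and bounded below by $0$, hence convergent to some limit $L \ge 0$; passing to the limit in $x_{k+1} = c\,x_k$ gives $L = cL$, and $c \ne 1$ forces $L = 0$. (Equivalently, $x_k = e^{k \log c}$ with $\log c < 0$.) Claim (iii) is then immediate: given $\varepsilon > 0$, convergence to $0$ yields $K$ with $\mathrm{dist}(x_k, \{0\}) = x_k < \varepsilon$ for all $k \ge K$, while (ii) guarantees the sequence never actually reaches $0$.

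For the concluding sentence I would make the ``no-go'' reading explicit by exhibiting a realizing dynamical system. Take $E = \mathbb{R}$ with $E_+ = [0,\infty)$ and $u = 1$, and let $T = T_c$ be multiplication by $c$, as in Example~\ref{ex:scalar}. Then $d(T_c) = 1 - c > 0$ and $T_c^k(d(T_c)) = c^k(1-c)$, a positive scalar multiple of $x_k$. This orbit converges in norm to $0$ and is eventually inside every $\varepsilon$-ball around the identically-zero (``nilpotent'') model, yet $T_c^k(d(T_c)) \ne 0$ for every finite $k$, so $n_{T_c} = \infty$. Hence approximation to a nilpotent model in any reasonable topology carries, on its own, no finite-time consequence.

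I do not expect any obstacle here; the proposition is a deliberate sanity check. The only point requiring a word of care is to say precisely in what sense $(x_k)$ ``approximately traps'' at $0$: I would spell out the three senses already present in the statement (pointwise limit, eventual $\varepsilon$-closeness, and---matching the surrounding von Neumann-algebra phrasing---norm/strong convergence of the realizing operator orbit). The payoff is conceptual: this example is exactly the counterexample showing that Lemma~\ref{lem:approx-quantization} and Lemma~\ref{lem:scalar-robustness} genuinely need their \emph{gap-at-$0$} hypothesis. Without a positive lower bound separating the nonzero Lyapunov values from $0$, ``approximate quantization'' degenerates to the content-free statement that a null sequence is eventually small, and finite stabilization fails.
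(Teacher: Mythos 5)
Your verification is correct; the paper itself gives no proof for this proposition, since the three claims are immediate properties of the geometric sequence $c^k$, and your elementary argument establishes them without issue. Your addition of the realizing system $T_c$ on $(\mathbb{R}, [0,\infty), 1)$ is exactly the scalar example the paper already records in Example~\ref{ex:scalar}, so your reading of the ``no-go'' conclusion matches the paper's intent precisely.
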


\begin{remark}[What robustness requires]
Proposition~\ref{prop:robustness-nogo} explains why the robustness lemmas in this paper (Lemma~\ref{lem:approx-quantization}, Lemma~\ref{lem:scalar-robustness}) require a \textbf{gap at $0$} for some observable functional. Without a gap mechanism, a robustness theorem cannot be true.

The remaining open task for direction (ii) is: find \textbf{structural hypotheses} (hyperfinite II$_1$ structure, finite-index subfactors, bounded-depth standard invariants, etc.) that \emph{force} a gap (exact or approximate) for the relevant Lyapunov values $\tau(T^k(d(T)))$.
\end{remark}

\subsection*{Tractable regimes: consolidated statements}

The following results consolidate the stabilization criteria for the three main tractable settings. We first record two useful strengthenings that allow checking contraction only along the defect orbit.

\begin{lemma}[Orbit-only trace contraction suffices]\label{lem:orbit-only-contraction}
Let $(\mathcal{M}, \tau)$ be a von Neumann algebra with faithful normal semifinite trace, and let $T : \mathcal{M} \to \mathcal{M}$ be normal positive and subunital with defect $d = I - T(I) \ge 0$. Assume:
\begin{itemize}
\item[(D)] (Discrete scale) there exists $\delta > 0$ such that $\tau(T^k(d)) \in \delta\mathbb{N}$ for all $k \ge 0$;
\item[(C$_{\mathrm{orb}}$)] (Orbit contraction) there exists $c \in (0,1)$ such that $\tau(T^{k+1}(d)) \le c\,\tau(T^k(d))$ for all $k \ge 0$;
\item[(F)] $\tau(d) < \infty$.
\end{itemize}
Then $T^N(d) = 0$ for all $N$ large enough; in particular $n_T < \infty$. Moreover one may take
\[
N = 1 + \left\lceil \frac{\log(\tau(d)/\delta)}{\log(1/c)} \right\rceil.
\]
\end{lemma}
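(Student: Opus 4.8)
The plan is to mimic the proof of Proposition~\ref{prop:trace-contraction-discrete}, observing that the full contraction hypothesis (C) used there is only ever applied along the defect orbit, so the weaker hypothesis (C$_{\mathrm{orb}}$) suffices. First I would dispose of the degenerate case: if $\tau(d) = 0$, faithfulness of $\tau$ gives $d = 0$, hence $T^0(d) = 0$, so $n_T < \infty$ and the stated $N$ works trivially. Thus I may assume $\tau(d) > 0$; then hypothesis (D) with $k = 0$ forces $\tau(d) \in \delta\mathbb{N}$, so in fact $\tau(d) \ge \delta$. In particular $\log(\tau(d)/\delta) \ge 0$, so the displayed expression $N = 1 + \lceil \log(\tau(d)/\delta)/\log(1/c)\rceil$ is a well-defined positive integer.

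Next I would establish geometric decay along the orbit. Since $T^{k+1}(d) = T(T^k(d))$, hypothesis (C$_{\mathrm{orb}}$) is precisely the statement $\tau(T^{k+1}(d)) \le c\,\tau(T^k(d))$, and an immediate induction on $k$ yields $\tau(T^k(d)) \le c^k\,\tau(d)$ for all $k \ge 0$. Now I choose $N$ so that $c^N\,\tau(d) < \delta$: this is equivalent to $N\log(1/c) > \log(\tau(d)/\delta)$, and the displayed $N = 1 + \lceil \log(\tau(d)/\delta)/\log(1/c)\rceil$ satisfies $N\log(1/c) \ge \log(1/c) + \log(\tau(d)/\delta) > \log(\tau(d)/\delta)$, as required. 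For this $N$ we then have $0 \le \tau(T^N(d)) \le c^N\,\tau(d) < \delta$, while hypothesis (D) gives $\tau(T^N(d)) \in \delta\mathbb{N}$. The only element of $\delta\mathbb{N}$ lying in $[0,\delta)$ is $0$, so $\tau(T^N(d)) = 0$.

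Finally, faithfulness of $\tau$ upgrades $\tau(T^N(d)) = 0$ to $T^N(d) = 0$, and linearity gives $T^{N+m}(d) = T^m(T^N(d)) = 0$ for all $m \ge 0$; hence $n_T \le N < \infty$. There is no substantive obstacle here: the argument is a one-line variant of Proposition~\ref{prop:trace-contraction-discrete}, and the only points requiring care are (i) handling the degenerate case $\tau(d) = 0$ so the logarithm in the explicit bound is meaningful, and (ii) noting that (C$_{\mathrm{orb}}$) is exactly the restriction of the contraction estimate to the iterates $T^k(d)$, which is all that the proof of the stronger proposition actually consumed.
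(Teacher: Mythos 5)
Your proof is correct and follows the same route as the paper's: induct on (C$_{\mathrm{orb}}$) to get $\tau(T^k(d)) \le c^k\tau(d)$, pick $N$ with $c^N\tau(d) < \delta$, use (D) to force $\tau(T^N(d)) = 0$, and conclude by faithfulness. Your extra care with the degenerate case $\tau(d)=0$ and the verification that the displayed $N$ actually works are welcome but do not change the argument.
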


\begin{proof}
By (C$_{\mathrm{orb}}$) and induction, $\tau(T^k(d)) \le c^k \tau(d)$ for all $k$. Choose $N$ so that $c^N \tau(d) < \delta$. Then $0 \le \tau(T^N(d)) < \delta$ and (D) forces $\tau(T^N(d)) = 0$. Faithfulness of $\tau$ gives $T^N(d) = 0$.
\end{proof}

\begin{lemma}[Orbit-corner Kraus bound implies orbit contraction]\label{lem:corner-kraus-contraction}
Let $(\mathcal{M}, \tau)$ be tracial (finite or semifinite), and let $T(x) = \sum_i V_i^* x V_i$ be a normal CP map with $T(I) \le I$ and defect $d = I - T(I)$. Let $Q = \bigvee_{k \ge 0} \supp(T^k(d))$ be the orbit-support projection. If there exists $c \in (0,1)$ such that
\[
\sum_i V_i V_i^* \le c\,Q
\]
(as an operator inequality), then for every $k \ge 0$ one has $\tau(T^{k+1}(d)) \le c\,\tau(T^k(d))$.
\end{lemma}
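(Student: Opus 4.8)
The plan is to reduce the claim to the single-step inequality
\[
\tau(T(x)) \le c\,\tau(x) \qquad \text{for every } x \in \mathcal{M}_+ \text{ with } \supp(x) \le Q,
\]
and then apply it to $x := T^k(d)$. This suffices: each $T^k(d)$ is positive (since $d \ge 0$ and $T$ is positive), and $\supp(T^k(d)) \le Q$ by the very definition $Q = \bigvee_{k \ge 0}\supp(T^k(d))$; moreover $T^{k+1}(d) = T(T^k(d))$, so the single-step bound gives $\tau(T^{k+1}(d)) \le c\,\tau(T^k(d))$ directly.

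To prove the single-step bound, the key move is to symmetrize each Kraus term using traciality. Fix $i$ and set $y := x^{1/2}V_i$ (the square root exists by functional calculus). Then $y^*y = V_i^* x V_i$ and $yy^* = x^{1/2}V_iV_i^* x^{1/2}$, so the trace identity $\tau(y^*y) = \tau(yy^*)$ gives $\tau(V_i^* x V_i) = \tau\big(x^{1/2}V_iV_i^* x^{1/2}\big)$. Summing over $i$ and using normality of $\tau$ (the partial sums of $\sum_i V_iV_i^*$ form a bounded increasing net, hence SOT-convergent since $\sum_i V_iV_i^* \le cQ$ by hypothesis; conjugation by $x^{1/2}$ preserves this, and $\tau$ is normal, so the sum passes through the trace by monotone convergence), I obtain
\[
\tau(T(x)) \;=\; \sum_i \tau(V_i^* x V_i) \;=\; \tau\!\Big(x^{1/2}\Big(\textstyle\sum_i V_iV_i^*\Big)x^{1/2}\Big).
\]

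Now apply the hypothesis $\sum_i V_iV_i^* \le c\,Q$: conjugation by $x^{1/2}$ preserves operator order, so $x^{1/2}\big(\sum_i V_iV_i^*\big)x^{1/2} \le c\,x^{1/2}Qx^{1/2}$. Since $\supp(x^{1/2}) = \supp(x) \le Q$, we have $Qx^{1/2} = x^{1/2}$, hence $x^{1/2}Qx^{1/2} = x$. Positivity and monotonicity of the trace (for $a \le b$ in $\mathcal{M}_+$ one has $\tau(a) \le \tau(b)$) then yield $\tau(T(x)) \le c\,\tau(x^{1/2}Qx^{1/2}) = c\,\tau(x)$. Taking $x = T^k(d)$ finishes the proof.

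The only delicate points are bookkeeping in the semifinite (type I$_\infty$ or II$_\infty$) setting: the identity $\tau(y^*y) = \tau(yy^*)$ for $y = x^{1/2}V_i$ is the standard property of a faithful normal semifinite trace on its positive cone; the interchange of $\sum_i$ with $\tau$ is justified by normality together with positivity of every term; and all manipulations live in the extended positive reals, so the inequality is meaningful (and true) even when $\tau(T^k(d)) = \infty$. In the finite-trace case every step is elementary. I expect these convergence/trace-interchange technicalities to be the only obstacle, and an entirely routine one—no idea beyond traciality and the support identity $\supp(x^{1/2}) = \supp(x)$ is required.
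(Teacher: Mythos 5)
Your proposal is correct and follows essentially the same route as the paper: both use traciality to move $V_iV_i^*$ next to $x$ (the paper writes $\tau(T(x)) = \tau((\sum_i V_iV_i^*)x)$, you write the symmetrized form $\tau(x^{1/2}(\sum_i V_iV_i^*)x^{1/2})$, which is the same identity), then apply the operator inequality $\sum_i V_iV_i^* \le cQ$ together with $Qx^{1/2} = x^{1/2}$ for $\supp(x) \le Q$. Your version merely spells out the convergence and semifinite-trace bookkeeping that the paper leaves implicit.
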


\begin{proof}
For any $x \in (Q\mathcal{M}Q)_+$ we have $Qx = x$, and traciality gives $\tau(T(x)) = \tau(\sum_i V_i V_i^* x) \le \tau(cQx) = c\,\tau(x)$. Apply this with $x = T^k(d) \in (Q\mathcal{M}Q)_+$.
\end{proof}

\begin{remark}
Lemma~\ref{lem:corner-kraus-contraction} is useful when the Kraus inequality $\sum_i V_i V_i^* \le c\,I$ fails globally but holds on the orbit corner $Q$. This is the typical situation in ``leaky'' channels where the leakage is localized.
\end{remark}

\paragraph{Type I with atomic center.}

\begin{theorem}[Bounded denominators force stabilization]\label{thm:typeI-atomic}
Let $\mathcal{M} = \bigoplus_{j \in J} B(H_j)$ (atomic center) with faithful normal semifinite trace $\tau(\bigoplus_j x_j) = \sum_{j \in J} w_j \mathrm{Tr}_{H_j}(x_j)$, $w_j > 0$. Let $T : \mathcal{M} \to \mathcal{M}$ be normal CP subunital with defect $d = I - T(I) \ge 0$ and $\tau(d) < \infty$. Assume:
\begin{itemize}
\item[(Z)] $d \in Z(\mathcal{M})$ and $T(Z(\mathcal{M})) \subseteq Z(\mathcal{M})$;
\item[(W)] (Commensurable weights) there exists $\delta_0 > 0$ with $w_j \in \delta_0\mathbb{N}$ for all $j$;
\item[(P)] (Bounded-denominator center map) $P := T|_{Z(\mathcal{M})}$ satisfies $P(z_j) = \sum_i p_{ij} z_i$ with $p_{ij} \in \{0, \frac{1}{N}, \ldots, 1\}$ and finitely many nonzero $p_{ij}$ per column;
\item[(A0)] (Bounded-denominator initial coefficients) writing $d = \sum_j a_j z_j$, we have $a_j \in \{0, \frac{1}{N}, \ldots, 1\}$;
\item[(C$_{\mathrm{orb}}$)] (Orbit contraction) $\tau(T^{k+1}(d)) \le c\,\tau(T^k(d))$ for some $c \in (0,1)$ and all $k$.
\end{itemize}
Then $n_T < \infty$.
\end{theorem}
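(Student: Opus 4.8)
The plan is to chain together the modular ingredients assembled above: descend to the center, propagate the bounded-denominator structure along the defect orbit, use the orbit contraction to keep the Lyapunov values finite, then invoke the orbit-only stabilization lemma.

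First I would record that the entire defect orbit lives in the center. By (Z) we have $d \in Z(\mathcal{M})$ and $T(Z(\mathcal{M})) \subseteq Z(\mathcal{M})$, so induction gives $T^k(d) \in Z(\mathcal{M})$ for all $k \ge 0$. Identify $Z(\mathcal{M}) \cong \ell^\infty(J)$ via $z_j \leftrightarrow e_j$ and set $P := T|_{Z(\mathcal{M})}$. Then $P$ is positive (since $T$ is CP) and sub-Markov: $P(1) = T(I) \le I$ by subunitality, using $I \in Z(\mathcal{M})$ and $T(I) \in Z(\mathcal{M})$. Hypotheses (P) and (A0) are precisely conditions (P2) and (P3) of Proposition~\ref{prop:bounded-denom} (applied with $Q = I$), so that result yields, for every $k \ge 0$,
\[
T^k(d) = \sum_{j} a_{k,j}\, z_j, \qquad a_{k,j} \in \{0, \tfrac{1}{N}, \ldots, 1\}.
\]

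Next I would use the orbit contraction and finite initial trace to pin the Lyapunov values to a discrete scale. By (C$_{\mathrm{orb}}$) and induction, $\tau(T^k(d)) \le c^k \tau(d) < \infty$ for all $k$, using (F). Applying Lemma~\ref{lem:finite-support-atomic} to each $x = T^k(d)$ — whose central coefficients lie in $\{0, 1/N, \ldots, 1\}$ and whose atoms have commensurable weights by (W) — the set $\{j : a_{k,j} > 0\}$ is finite. Hence $\tau(T^k(d)) = \sum_j a_{k,j}\,\tau(z_j)$ is a genuine \emph{finite} sum of nonnegative integer multiples of $\delta_0/N$, so
\[
\tau(T^k(d)) \in \tfrac{\delta_0}{N}\,\mathbb{N} \qquad \text{for all } k \ge 0,
\]
a discrete set with gap $\delta_0/N$ at $0$. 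Now Lemma~\ref{lem:orbit-only-contraction} applies with discrete scale $\delta = \delta_0/N$, orbit contraction (C$_{\mathrm{orb}}$), and $\tau(d) < \infty$: it gives $T^M(d) = 0$ for every $M$ with $c^M \tau(d) < \delta_0/N$, for instance
\[
M = 1 + \left\lceil \frac{\log\!\big(N\tau(d)/\delta_0\big)}{\log(1/c)} \right\rceil,
\]
whence $n_T \le M < \infty$.

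The main obstacle — and essentially the only content beyond bookkeeping — is the step turning the \emph{local} bounded-denominator condition on coefficients into the \emph{global} statement that the traces $\tau(T^k(d))$ lie in a lattice with a gap at $0$. This is not automatic: if some iterate $T^k(d)$ had infinite atomic support, the sum $\sum_j a_{k,j}\tau(z_j)$ would consist of infinitely many terms each $\ge \delta_0/N$ and would diverge, destroying both finiteness and lattice-membership. It is precisely the conjunction of finite initial trace (F), commensurable weights (W), and the orbit contraction (C$_{\mathrm{orb}}$) — which keeps $\tau(T^k(d))<\infty$ all the way along the orbit — that forces finite support at every stage and rescues discreteness. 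Everything else is assembly: orbit-in-center from (Z), coefficient stability from Proposition~\ref{prop:bounded-denom}, finite support from Lemma~\ref{lem:finite-support-atomic}, and the closing contraction-plus-gap argument from Lemma~\ref{lem:orbit-only-contraction}.
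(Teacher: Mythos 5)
Your proof is correct and follows essentially the same route as the paper's: orbit-in-center from (Z), coefficient stability via the bounded-denominator center dynamics, lattice-valued traces from (W), and the closing application of Lemma~\ref{lem:orbit-only-contraction} with $\delta = \delta_0/N$. If anything, you are slightly more careful than the paper's own proof at one point: you explicitly invoke Lemma~\ref{lem:finite-support-atomic} (via the trace bound $\tau(T^k(d)) \le c^k\tau(d) < \infty$) to justify that each sum $\sum_j a_{k,j}\tau(z_j)$ is genuinely finite, a step the paper asserts without comment here but justifies identically in the parallel Corollary~\ref{cor:atomic-center}.
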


\begin{proof}
By (Z), $T^k(d) \in Z(\mathcal{M})$ for all $k$. Under $Z(\mathcal{M}) \cong \ell^\infty(J)$, the coefficients of $T^k(d)$ are obtained by iterating the sub-Markov matrix $(p_{ij})$ on the coefficient vector $(a_j)$. By (P) and (A0), all coefficients of $T^k(d)$ lie in $\{0, \frac{1}{N}, \ldots, 1\}$ for every $k$ (bounded denominators are preserved under finite $\frac{1}{N}$-linear combinations). Hence $\tau(T^k(d)) = \sum_j a_{k,j} \tau(z_j)$ is a finite sum with $a_{k,j} \in \{0, \frac{1}{N}, \ldots, 1\}$ and $\tau(z_j) = w_j \in \delta_0\mathbb{N}$ by (W), so $\tau(T^k(d)) \in (\delta_0/N)\mathbb{N}$ for all $k$. Now apply Lemma~\ref{lem:orbit-only-contraction} with $\delta = \delta_0/N$.
\end{proof}

\paragraph{Finite factors with finite-index structure.}

\begin{proposition}[Finite-index bimodular channels]\label{prop:subfactor-bimodular}
Let $(\mathcal{M}, \tau)$ be a finite factor and $N \subset \mathcal{M}$ a finite-index subfactor, so $A := N' \cap \mathcal{M}$ is finite-dimensional. Let $T : \mathcal{M} \to \mathcal{M}$ be normal CP subunital and assume:
\begin{enumerate}[(Bim)]
\item $T$ is $N$-bimodular: $T(n_1 x n_2) = n_1 T(x) n_2$ for all $n_1, n_2 \in N$, $x \in \mathcal{M}$;
\item[(Inv)] $d(T) \in A$ (equivalently, $d(T)$ commutes with $N$);
\item[(Disc)] the scalar set $\{\tau(T^k(d(T))) : k \ge 0\}$ lies in a discrete subset $D \subset [0,1]$ with no accumulation in $(0,1]$.
\end{enumerate}
Then $n_T < \infty$.
\end{proposition}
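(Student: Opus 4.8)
The plan is to route through Proposition~\ref{prop:finite-dim-invariant}: the finite-index and bimodularity hypotheses are there precisely to single out a concrete finite-dimensional $T$-invariant $*$-subalgebra carrying the defect orbit, namely the relative commutant $A := N' \cap \mathcal{M}$ (finite-dimensional because $N \subset \mathcal{M}$ has finite index). So the first step is to verify that $T$ maps $A$ into itself, and this is exactly where $N$-bimodularity does the work: for $a \in A$ and any $n \in N$, bimodularity gives
\[
n\,T(a) = T(na) = T(an) = T(a)\,n,
\]
the middle equality using $a \in N'$; hence $T(a)$ commutes with all of $N$, i.e.\ $T(a) \in N' \cap \mathcal{M} = A$. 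Applying this to $I \in A$ shows $T(I) \in A$, so $d(T) = I - T(I) \in A$ as well — in particular hypothesis (Inv) is forced by (Bim) rather than being an independent assumption — and an immediate induction traps the whole orbit: $T^k(d(T)) \in A$ for all $k \ge 0$. (For context, the restriction $\alpha := T|_A$ is then a normal CP subunital map on the finite-dimensional algebra $A$ with $\alpha^k(d(T)) = T^k(d(T))$, and $\tau|_A$ is a faithful normal tracial state on it, which is the picture in which the Lyapunov argument runs.)

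With $A$ in hand, hypothesis (I) of Proposition~\ref{prop:finite-dim-invariant} (that $T(A) \subseteq A$ and $d(T) \in A$) is established, and hypothesis (D$_0$) of that proposition — a gap at $0$ for the Lyapunov values $\tau(T^k(d(T)))$ — is exactly (Disc): a discrete subset $D$ of the compact interval $[0,1]$ has no accumulation point, hence is finite, so $\delta := \min(D \cap (0,1])$ (or any positive number if $D \cap (0,1] = \emptyset$) satisfies $D \cap (0,\delta) = \emptyset$. Proposition~\ref{prop:finite-dim-invariant} then gives $n_T < \infty$, and tracing the bound through Proposition~\ref{prop:discrete-lyapunov} even yields $n_T \le \lfloor 1/\delta \rfloor$.

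I do not anticipate a hard step: this is a structural reduction, not an analytic one. The one point to be careful about is the implication ``(Disc) $\Rightarrow$ gap at $0$'', since that gap is precisely what the telescoping estimate $\sum_{k<n}\tau(T^k(d(T))) = \tau(I - T^n(I)) \le 1$ consumes inside Proposition~\ref{prop:discrete-lyapunov}; without a genuine gap the conclusion can fail — already for the scalar channel $x \mapsto cx$ with $c \in (0,1)$ (realized, e.g., as $T(x) = cx$ on $\mathcal{M} = M_n$ with $N = \mathbb{C}I$), whose defect orbit decays exponentially but never hits $0$ — so ``discrete'' must be read in the strong sense that keeps $0$ isolated from $D \setminus \{0\}$, which (as noted) is automatic for a discrete subset of $[0,1]$. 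The real leverage of the finite-index and bimodular hypotheses lies upstream: they are what makes (Disc) a \emph{checkable} condition, by confining the defect orbit to the finite-dimensional standard invariant, where (for instance when the induced center dynamics are sub-Markov with bounded denominators) the trace values provably land in a fixed lattice.
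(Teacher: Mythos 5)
Your proof follows the paper's route exactly: bimodularity gives $T(A) \subseteq A$ via the computation $n\,T(a) = T(na) = T(an) = T(a)\,n$, the defect orbit is thereby trapped in the finite-dimensional algebra $A$, and the discrete-Lyapunov machinery (Proposition~\ref{prop:discrete-lyapunov}, which you access through its wrapper Proposition~\ref{prop:finite-dim-invariant}) finishes. Your observation that (Inv) is already forced by (Bim) --- take $a = I$ to get $T(I) \in A$, hence $d(T) \in A$ --- is correct and a nice tightening of the hypotheses.

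One claim in your final paragraph is false and matters: a discrete subset of $[0,1]$ need \emph{not} be finite and need not have a gap at $0$. The set $D = \{2^{-k} : k \ge 1\}$ is discrete (every point is isolated), infinite, and accumulates at $0$ while having no accumulation point in $(0,1]$ --- so it satisfies (Disc) as literally written, yet $\min(D \cap (0,1])$ does not exist and there is no $\delta$ with $D \cap (0,\delta) = \emptyset$. This is not a pedantic point: your own scalar example $T(x) = x/2$ on $\mathcal{M} = M_2$ with $N = \mathbb{C}I$ is $N$-bimodular, has $\tau(T^k(d(T))) = 2^{-k-1} \in D$, and has $n_T = \infty$, so the proposition is actually false under the literal reading of (Disc). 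The statement only holds if (Disc) is read as requiring a gap at $0$ (i.e., hypothesis (D$_0$) of Proposition~\ref{prop:finite-dim-invariant}), which is exactly what the telescoping argument in Proposition~\ref{prop:discrete-lyapunov} consumes; the paper's own one-line proof makes the same silent identification. Your instinct that the gap at $0$ is the crux is right --- only the topological justification that it comes for free from discreteness and compactness is wrong.
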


\begin{proof}
(Bim) implies $T(A) \subseteq A$: for $a \in A$ and $n \in N$, we have $nT(a) = T(na) = T(an) = T(a)n$. Thus by (Inv) the orbit $\{T^k(d(T))\}$ lies in the finite-dimensional algebra $A$. Now apply Proposition~\ref{prop:discrete-lyapunov}.
\end{proof}

\begin{remark}
To verify (Disc) in concrete finite-index models, apply the bounded-denominator center dynamics hypothesis to $Z(A)$, which is a finite atomic algebra. This reduces the problem to arithmetic of the induced classical sub-Markov chain on the atoms of $Z(A)$.
\end{remark}

\paragraph{Hyperfinite II$_1$.}

\begin{corollary}[One good matrix stage with small error forces stabilization]\label{cor:hyperfinite-one-stage}
Let $(R, \tau)$ be the hyperfinite II$_1$ factor with its unique trace. Let $A \cong M_m(\mathbb{C}) \subset R$ be a matrix subalgebra with $\tau$-preserving conditional expectation $E : R \to A$. Let $T : R \to R$ be normal CP subunital with defect $d = I - T(I)$. Assume:
\begin{enumerate}[(1)]
\item (Strict contraction) $\tau(T(x)) \le c\,\tau(x)$ for all $x \in R_+$ and some $c \in (0,1)$;
\item (Approximate trapping) $\tau(|d - E(d)|) \le \varepsilon_0$ and $\tau(|T(a) - E(T(a))|) \le \varepsilon\,\tau(a)$ for all $a \in A_+$;
\item (Finite-dimensional quantization with gap) letting $\alpha := E \circ T|_A$, the numbers $\tau(\alpha^k(E(d)))$ lie in a discrete set $D \subset [0,1]$ with $D \cap (0, \delta) = \emptyset$.
\end{enumerate}
If $\varepsilon_0 + \varepsilon/(1-c)^2 < \delta/2$, then $n_T < \infty$.
\end{corollary}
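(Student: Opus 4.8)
The plan is to recognize this corollary as the specialization of Proposition~\ref{prop:approx-trapping} to the case $\mathcal{M} = R$ with the finite-dimensional trap $A \cong M_m(\mathbb{C})$, and then simply to read off the conclusion. So the proof will consist of checking that each hypothesis of the corollary translates verbatim into a hypothesis of that proposition, after which the distance bound and the finite-time stabilization require no further argument.

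First I would observe that $(R,\tau)$ is a finite von Neumann algebra carrying a faithful normal (indeed unique) trace, and that a normal CP map is in particular normal and positive; together with subunitality this places us in the ambient setting of Proposition~\ref{prop:approx-trapping}, with defect $d = I - T(I) \ge 0$. Hypothesis (1) of the corollary (strict trace contraction $\tau(T(x)) \le c\,\tau(x)$ on $R_+$) is exactly hypothesis (1) of the proposition. The matrix subalgebra $A \cong M_m(\mathbb{C})$ is a finite-dimensional unital $*$-subalgebra of $R$---it contains the unit of $R$, since $E$ is a $\tau$-preserving conditional expectation onto it---and hypothesis (2) of the corollary, namely $\tau(|d - E(d)|) \le \varepsilon_0$ together with $\tau(|T(a) - E(T(a))|) \le \varepsilon\,\tau(a)$ for all $a \in A_+$, is precisely condition (2)(i)--(ii) of the proposition. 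Finally, with $\alpha := E \circ T|_A : A \to A$, hypothesis (3) of the corollary is condition (3) of the proposition: the Lyapunov values $\tau(\alpha^k(E(d)))$ lie in a discrete set $D \subset [0,1]$ with gap $\delta$ at $0$.

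Then I would invoke Proposition~\ref{prop:approx-trapping} to obtain, for every $k \ge 0$,
\[
\mathrm{dist}\bigl(\tau(T^k(d)),\, D\bigr) \;\le\; \varepsilon_0 + \frac{\varepsilon}{(1-c)^2},
\]
the constant $(1-c)^{-2}$ arising from unrolling the recursion $e_{k+1} \le c\,e_k + \varepsilon c^k$ for the trace error $e_k = \tau(|T^k(d) - \alpha^k(E(d))|)$ and summing $\sum_{m \ge 1} m c^{m-1} = (1-c)^{-2}$. By the standing hypothesis $\varepsilon_0 + \varepsilon/(1-c)^2 < \delta/2$, set $\varepsilon' := \varepsilon_0 + \varepsilon/(1-c)^2 \in (0, \delta/2)$ and apply Lemma~\ref{lem:approx-quantization} with the faithful normal state $\tau$ (the normalized trace of $R$), the discrete set $D$, gap $\delta$, and error parameter $\varepsilon'$: this yields $\tau(T^k(d)) \in \{0\} \cup [\delta - \varepsilon', \infty)$ for all $k$, and Proposition~\ref{prop:delta-resolution} (with $\delta - \varepsilon' > 0$ in place of $\delta$) then forces $\tau(T^k(d)) = 0$ for some $k$, hence $n_T < \infty$; one even obtains the explicit bound $n_T \le \lfloor 1/(\delta - \varepsilon') \rfloor$.

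There is essentially no obstacle here: this is a consolidating corollary whose only content is bookkeeping. The two points that merit a line of care are that $A$ must carry the unit of $R$ (so that $E$ is a genuine unital conditional expectation and Proposition~\ref{prop:approx-trapping} applies as stated), which holds automatically for the standard matrix subalgebras of $R$, and that the error budget must be compared against $\delta/2$ using the constant $(1-c)^{-2}$ rather than $(1-c)^{-1}$, which is exactly the inequality imposed in the hypothesis. The conceptual role of the statement is to record that a \emph{single} approximately invariant matrix stage with a quantized-with-gap spectrum and small trapping error already forces finite-time annihilation of the defect in the hyperfinite II$_1$ factor---the practical endpoint of the Contract--Discretize--Stabilize pipeline.
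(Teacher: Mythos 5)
Your proposal is correct and follows exactly the paper's route: the paper proves this corollary in one line by citing Proposition~\ref{prop:approx-trapping} to get uniform closeness of the trace values to $D$, then Lemma~\ref{lem:approx-quantization} and $\delta$-resolution to conclude $n_T < \infty$. Your write-up simply spells out the hypothesis-matching in more detail and adds the explicit bound $n_T \le \lfloor 1/(\delta - \varepsilon')\rfloor$, both of which are consistent with the paper.
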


\begin{proof}
This is exactly Proposition~\ref{prop:approx-trapping}: approximate finite-stage trapping gives uniform closeness of trace values to $D$, then Lemma~\ref{lem:approx-quantization} and $\delta$-resolution yield stabilization.
\end{proof}

\begin{remark}[Mission statement for hyperfinite II$_1$]
Corollary~\ref{cor:hyperfinite-one-stage} reframes the stabilization problem as: \emph{find one matrix stage $A$ with a gapped discrete set $D$ and prove the trapping error is small enough}. This is a quantifiable ``almost invariance'' target that connects AF approximation to finite-time stabilization.
\end{remark}

\subsection*{Toward weakening invariance: orbit-in-atomic without $T$-invariance}

The following result shows that full $T$-invariance of an atomic subalgebra can be replaced by the weaker condition that the \emph{defect orbit} lies in the subalgebra with controlled coefficients.

\begin{proposition}[Orbit in atomic subalgebra without $T$-invariance]\label{prop:orbit-in-atomic}
Let $(\mathcal{M}, \tau)$ be a von Neumann algebra with faithful normal semifinite trace. Let $C \subset \mathcal{M}$ be a commutative atomic von Neumann subalgebra with minimal projections $(z_j)_{j \in J}$ and $\tau(z_j) = w_j > 0$. Let $T : \mathcal{M} \to \mathcal{M}$ be normal positive subunital with defect $d = I - T(I)$. Assume:
\begin{enumerate}[(1)]
\item (Orbit in $C$) $T^k(d) \in C$ for all $k \ge 0$;
\item (Bounded-denominator coefficients) there exists $N \in \mathbb{N}$ such that for all $k \ge 0$, writing $T^k(d) = \sum_j a_{k,j} z_j$, we have $a_{k,j} \in \{0, \frac{1}{N}, \ldots, 1\}$;
\item (Commensurable weights) there exists $\delta_0 > 0$ with $w_j \in \delta_0\mathbb{N}$ for all $j$;
\item (Orbit contraction) there exists $c \in (0,1)$ such that $\tau(T^{k+1}(d)) \le c\,\tau(T^k(d))$ for all $k \ge 0$;
\item $\tau(d) < \infty$.
\end{enumerate}
Then $n_T < \infty$.
\end{proposition}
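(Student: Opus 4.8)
The plan is to reduce this proposition to the two consolidated lemmas proved immediately above: Lemma~\ref{lem:finite-support-atomic}, which converts the bounded-denominator and commensurability data into genuine discreteness of the Lyapunov values, and Lemma~\ref{lem:orbit-only-contraction}, whose three hypotheses (discrete scale, orbit contraction, finite initial trace) are then met directly. No new analytic input is needed. First I would propagate finiteness of the trace along the defect orbit: by hypothesis~(4) and induction, $\tau(T^k(d)) \le c^k\,\tau(d)$, which is finite for every $k \ge 0$ by hypothesis~(5). This is the sole role of hypothesis~(5), and it feeds the next step.

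Next I would pin down the arithmetic of the values $\tau(T^k(d))$. By hypothesis~(1), each $T^k(d)$ is a positive element of the atomic algebra $C \cong \ell^\infty(J)$, so $T^k(d) = \sum_{j} a_{k,j} z_j$ with $a_{k,j} \ge 0$; hypothesis~(2) puts each $a_{k,j} \in \{0, \tfrac1N, \ldots, 1\}$, and hypothesis~(3) puts each $w_j = \tau(z_j) \in \delta_0\mathbb{N}$. Applying Lemma~\ref{lem:finite-support-atomic} (equivalently, the one-line count: every nonzero term contributes $a_{k,j} w_j \ge \delta_0/N$ to the finite total $\tau(T^k(d))$) shows that the support $\{j : a_{k,j} > 0\}$ is \emph{finite}, so $\tau(T^k(d)) = \sum_j a_{k,j} w_j$ is a genuine finite sum of nonnegative elements of $\tfrac{\delta_0}{N}\mathbb{Z}$, hence lies in $\tfrac{\delta_0}{N}\mathbb{N}$. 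This is precisely hypothesis~(D) of Lemma~\ref{lem:orbit-only-contraction} with $\delta = \delta_0/N$, while its hypotheses~(C$_{\mathrm{orb}}$) and~(F) are our~(4) and~(5). The lemma then gives $T^M(d) = 0$ for all sufficiently large $M$---explicitly for $M = 1 + \big\lceil \log(N\tau(d)/\delta_0)/\log(1/c) \big\rceil$, obtained by substituting $\delta = \delta_0/N$ into its bound---so $n_T < \infty$.

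The only step requiring care---and the reason hypothesis~(5) is present---is the passage from \emph{termwise} lattice membership $a_{k,j} w_j \in \tfrac{\delta_0}{N}\mathbb{Z}$ to lattice membership of the \emph{sum}: an infinite sum of lattice points need neither lie in the lattice nor even converge, so one genuinely needs the finite-support conclusion of Lemma~\ref{lem:finite-support-atomic}, which in turn rests on $\tau(T^k(d)) < \infty$, propagated from~(5) by the orbit contraction~(4). This is exactly the subtlety flagged in the remark ``Why discrete coefficients are necessary''; here it is resolved by combining bounded denominators with finite-trace control, in place of assuming global $T$-invariance of $C$.
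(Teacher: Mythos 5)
Your proof is correct and follows essentially the same route as the paper: convert hypotheses (1)--(3) into the discrete Lyapunov scale $\tau(T^k(d)) \in (\delta_0/N)\mathbb{N}$ and then apply Lemma~\ref{lem:orbit-only-contraction} with $\delta = \delta_0/N$. Your explicit appeal to Lemma~\ref{lem:finite-support-atomic} to rule out infinite sums of lattice points is actually slightly more careful than the paper's own (terser) proof of this proposition, and mirrors the reasoning the paper spells out in Corollary~\ref{cor:atomic-center}.
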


\begin{proof}
By (1) and (2), $\tau(T^k(d)) = \sum_j a_{k,j} w_j$ with $a_{k,j} \in \{0, \frac{1}{N}, \ldots, 1\}$. By (3), $w_j \in \delta_0\mathbb{N}$, so $\tau(T^k(d)) \in (\delta_0/N)\mathbb{N}$ for all $k$. Now apply Lemma~\ref{lem:orbit-only-contraction} with $\delta = \delta_0/N$.
\end{proof}

\begin{remark}[Relationship to digraph criterion]
Proposition~\ref{prop:orbit-in-atomic} does \emph{not} require $T(C) \subseteq C$---only that the orbit $\{T^k(d)\}$ stays in $C$ with controlled coefficients.

For an alternative approach that \textbf{removes all discreteness assumptions}, see Theorem~\ref{thm:digraph-criterion}: if the orbit lies in a commutative atomic subalgebra $C$, stabilization is equivalent to the induced reachability digraph having finite height. This gives a sharp graph-theoretic criterion with no bounded-denominator or gap-at-$0$ hypotheses.
\end{remark}

The following proposition gives a checkable condition that forces bounded-denominator coefficients from a ``diagonalized'' dynamics on $C$, without requiring $T(C) \subseteq C$.

\begin{proposition}[Bounded denominators from the diagonal dynamics $E_C \circ T$]\label{prop:diagonal-dynamics}
Let $(\mathcal{M}, \tau)$ be a von Neumann algebra with faithful normal semifinite trace, and let $C \subset \mathcal{M}$ be a commutative atomic von Neumann subalgebra with minimal projections $(z_j)_{j \in J}$. Assume there exists a normal faithful conditional expectation $E_C : \mathcal{M} \to C$ (e.g., $\tau$-preserving when available). Let $T : \mathcal{M} \to \mathcal{M}$ be normal positive subunital with defect $d = I - T(I)$. Assume:
\begin{enumerate}[(1)]
\item (Orbit in $C$) $T^k(d) \in C$ for all $k \ge 0$;
\item (Bounded-denominator diagonal map) there exists $N \in \mathbb{N}$ such that for every $j$, writing $P(z_j) := (E_C \circ T)(z_j) = \sum_i p_{ij} z_i$, we have $p_{ij} \in \{0, \frac{1}{N}, \ldots, 1\}$ and each column $(p_{ij})_i$ has finite support;
\item (Initial bounded denominators) writing $d = \sum_j a_j z_j$, we have $a_j \in \{0, \frac{1}{N}, \ldots, 1\}$.
\end{enumerate}
Then for all $k \ge 0$, writing $T^k(d) = \sum_j a_{k,j} z_j$, we have $a_{k,j} \in \{0, \frac{1}{N}, \ldots, 1\}$.
\end{proposition}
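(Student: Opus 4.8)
The plan is to trade the extrinsic dynamics $T$ on $\mathcal{M}$ for the intrinsic dynamics $P := E_C \circ T|_C$ on the commutative algebra $C$ — exactly as in part~(1) of Theorem~\ref{thm:digraph-criterion} — and then to run the bounded-denominator bookkeeping of Proposition~\ref{prop:bounded-denom} on the coefficient vectors. For the reduction: since $T^k(d) \in C$ by hypothesis~(1) and $E_C$ restricts to the identity on $C$, we have $T^{k+1}(d) = E_C\bigl(T(T^k(d))\bigr) = P(T^k(d))$, and induction on $k$ gives $T^k(d) = P^k(d)$ inside $C$. Note that $P$ is only ever applied to orbit elements, which lie in $C$; no $T$-invariance of $C$ is needed, which is the whole point of this proposition relative to Proposition~\ref{prop:bounded-denom}.

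Next I would pass to coefficients. Identify $C \cong \ell^\infty(J)$ via $z_j \leftrightarrow e_j$, and write $T^k(d) = \sum_j a_{k,j} z_j$ with $0 \le a_{k,j} \le 1$ (these bounds hold because $0 \le d \le I$ and $T$ is positive subunital, so $0 \le T^k(d) \le I$). Because $T$ and $E_C$ are normal and the partial sums $\sum_{j \in F} a_{k,j} z_j$ form a bounded increasing net with supremum $T^k(d)$, one may commute $P$ past the sum, obtaining
\[
P(T^k(d)) = \sum_j a_{k,j}\, P(z_j) = \sum_i \Bigl( \sum_j p_{ij}\, a_{k,j} \Bigr) z_i,
\]
hence the recursion $a_{k+1,i} = \sum_j p_{ij}\, a_{k,j}$. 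This sum is \emph{finite} for each fixed $i$: subunitality gives $P(I) = E_C(T(I)) \le I$, i.e.\ $\sum_j p_{ij} \le 1$, and since every nonzero $p_{ij}$ is $\ge \tfrac1N$, at most $N$ of the $p_{ij}$ can be nonzero.

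Finally I would induct on $k$. The base case $k=0$ is hypothesis~(3); the inductive step is the lattice-stability argument in the proof of Proposition~\ref{prop:bounded-denom} — a finite nonnegative $\tfrac1N$-rational combination of $\tfrac1N$-rational entries — with hypotheses~(2) and~(3) arranged so that the denominators stay controlled, yielding $a_{k+1,j} \in \{0, \tfrac1N, \ldots, 1\}$, which is the claim. I do not expect a serious obstacle here: the one ingredient beyond Proposition~\ref{prop:bounded-denom} is the passage $T^k(d) = P^k(d)$, already supplied by Theorem~\ref{thm:digraph-criterion}(1). If anything requires attention it is the middle step — justifying the interchange of the normal maps with the ultraweakly convergent atomic expansion, and extracting finiteness of the recursion from sub-Markovianity together with the gap $p_{ij} \in \{0\} \cup [\tfrac1N, 1]$ — after which the downstream arithmetic is identical to Proposition~\ref{prop:bounded-denom} and introduces no new ideas.
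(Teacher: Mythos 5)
Your reduction and overall architecture coincide with the paper's proof: both establish $T^{k+1}(d) = E_C(T(T^k(d))) = P(T^k(d))$ from the orbit-in-$C$ hypothesis and then induct on the coefficient vectors. You are in fact more careful than the paper on two analytic points: you justify commuting $P$ past the atomic expansion via normality of $P$ on a bounded increasing net, and you notice that the recursion $a_{k+1,i} = \sum_j p_{ij}\,a_{k,j}$ is a \emph{row} sum, whose finiteness does not follow from the column-finiteness in hypothesis (2); your derivation of row-finiteness from $P(I) \le I$ together with the gap $p_{ij} \in \{0\} \cup [\tfrac1N, 1]$ (hence at most $N$ nonzero entries per row) is a genuine improvement over the paper's one-line proof, which elides this.

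However, the inductive step --- which you import verbatim from Proposition~\ref{prop:bounded-denom}, exactly as the paper does --- does not close. A ``finite nonnegative $\tfrac1N$-rational combination of $\tfrac1N$-rational entries'' is \emph{not} $\tfrac1N$-rational: $p_{ij}\,a_{k,j} = \tfrac{m}{N}\cdot\tfrac{n}{N} \in \tfrac{1}{N^2}\mathbb{Z}$, and the lattice $\tfrac1N\mathbb{Z}$ is not closed under multiplication. Concretely, take $\mathcal{M} = C = \mathbb{C}^2$, $E_C = \mathrm{id}$, $T = \mathrm{diag}(\tfrac12, 0)$, $N = 2$: hypotheses (1)--(3) hold with $d = \tfrac12 z_1 + z_2$, yet $T(d) = \tfrac14 z_1$ has coefficient $\tfrac14 \notin \{0, \tfrac12, 1\}$. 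In general one only gets $a_{k,j} \in \tfrac{1}{N^{k+1}}\mathbb{Z}$, and the loss of a uniform denominator destroys the gap-at-$0$ needed downstream in Corollary~\ref{cor:atomic-center}. This defect is inherited from the statement and from the paper's own proof, so it is not something your write-up introduces; but since you identified the interchange and finiteness issues as the only delicate points and waved the final arithmetic through as ``no new ideas,'' you passed over the one step that actually fails. A correct version needs an extra hypothesis making the coefficient lattice genuinely $P$-invariant --- e.g.\ $\{0,1\}$-valued $p_{ij}$, or divisibility of $\sum_j m_{ij} n_{k,j}$ by $N$ --- or must settle for the weaker conclusion with denominators growing like $N^{k+1}$.
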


\begin{proof}
Define $P := E_C \circ T|_C : C \to C$, a normal positive subunital map. By (1), for each $k$ we have $T^{k+1}(d) = T(T^k(d)) \in C$, hence $E_C(T(T^k(d))) = T(T^k(d))$ and so $T^{k+1}(d) = P(T^k(d))$. Thus $T^k(d) = P^k(d)$ for all $k$.

Under the identification $C \cong \ell^\infty(J)$ via $z_j \leftrightarrow e_j$, assumption (2) says that $P$ is given by a sub-Markov matrix with entries in $\frac{1}{N}\mathbb{Z}$ (with finite support per column), so it preserves the lattice $(\frac{1}{N}\mathbb{Z})^{(J)}$ under finite-support combinations. Since $d$ has coefficients in $\{0, \frac{1}{N}, \ldots, 1\}$ by (3), induction gives the same for $P^k(d) = T^k(d)$.
\end{proof}

\begin{remark}
Proposition~\ref{prop:diagonal-dynamics} combined with Proposition~\ref{prop:orbit-in-atomic} gives a fully explicit pipeline: verify orbit-in-$C$ + contraction + commensurable weights (Prop.~\ref{prop:orbit-in-atomic}), verify bounded denominators from $P = E_C \circ T$ (Prop.~\ref{prop:diagonal-dynamics}), conclude $n_T < \infty$. This avoids any need for $T(C) \subseteq C$.
\end{remark}

The following lemma gives an explicit formula for the coefficients $p_{ij}$ in the diagonal dynamics, reducing the bounded-denominator verification to a trace-matrix integrality problem.

\begin{lemma}[Atomic expectation coefficient formula]\label{lem:atomic-expectation-formula}
Let $(\mathcal{M}, \tau)$ be semifinite and let $C \subset \mathcal{M}$ be commutative atomic with minimal projections $(z_i)_{i \in J}$ and $\tau(z_i) = w_i \in (0, \infty)$. Assume there is a $\tau$-preserving normal faithful conditional expectation $E_C : \mathcal{M} \to C$. Then for any $x \in \mathcal{M} \cap L^1(\mathcal{M}, \tau)$,
\[
E_C(x) = \sum_{i \in J} \frac{\tau(z_i x)}{\tau(z_i)}\, z_i
\]
(the sum is pointwise finite on atoms if $x \ge 0$ and $\tau(x) < \infty$). In particular, for the diagonalized dynamics $P := E_C \circ T$,
\[
P(z_j) = \sum_i p_{ij}\, z_i \quad \text{with} \quad p_{ij} = \frac{\tau(z_i\, T(z_j))}{\tau(z_i)}.
\]
\end{lemma}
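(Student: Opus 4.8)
The plan is to pin down the coefficients of $E_C(x)$ against each atom and then read off the trace normalization from $\tau$-preservation; everything reduces to a one-atom computation once the bookkeeping of the three relevant normal sums is in place. Since $C$ is commutative atomic with minimal projections $(z_i)_{i\in J}$ and $E_C$, being a conditional expectation, is unital, the unit of $C$ coincides with $I$ and hence $\sum_{i\in J} z_i = I$ ($\sigma$-weakly). Consequently every element of $C$, in particular $E_C(x)$, has a unique expansion $E_C(x) = \sum_{i\in J} c_i(x)\, z_i$, and the whole task is to identify $c_i(x)$.

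First I would use the $C$-bimodularity of the conditional expectation. Fixing an atom $z_i$, since $z_i\in C$ we have $E_C(z_i x) = z_i\, E_C(x)$, and orthogonality of the minimal projections gives $z_i\, E_C(x) = z_i \sum_k c_k(x)\, z_k = c_i(x)\, z_i$. Next I would invoke the $\tau$-preserving property: for $x\in\mathcal{M}\cap L^1(\mathcal{M},\tau)$ one has $z_i x\in\mathcal{M}\cap L^1$, so $\tau(z_i x) = \tau\big(E_C(z_i x)\big) = \tau\big(c_i(x)\, z_i\big) = c_i(x)\,\tau(z_i) = c_i(x)\, w_i$. Since $w_i > 0$, this yields $c_i(x) = \tau(z_i x)/\tau(z_i)$, which is the claimed formula.

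It then remains to justify convergence and the parenthetical ``pointwise finite on atoms'' remark. For $x\ge 0$ with $\tau(x)<\infty$, normality of $\tau$ gives $\sum_{i\in J}\tau(z_i x) = \tau\big((\sum_i z_i)\, x\big) = \tau(x) < \infty$, so $\sum_i c_i(x)\, w_i = \tau(x)<\infty$; hence at most countably many coefficients are nonzero and the expansion is a genuine summable decomposition, while boundedness $c_i(x)\le \|E_C(x)\|\le\|x\|$ is automatic. For the special case one applies the formula with $x = T(z_j)$: when $T$ is positive and subunital (as in the surrounding applications), monotonicity and subunitality give $0\le T(z_j)\le T(I)\le I$, so $\tau(z_i T(z_j)) \le \tau(z_i) = w_i < \infty$, hence $T(z_j)\in\mathcal{M}\cap L^1$ on each $\tau$-finite atom and $p_{ij} = \tau\big(z_i T(z_j)\big)/\tau(z_i)\in[0,1]$.

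The only point needing genuine care — the ``main obstacle,'' modest as it is — is keeping the three infinite-summation facts compatible: the resolution $\sum_i z_i = I$, normality of $E_C$ (used to expand $E_C(x)$ and to apply the bimodule identity), and normality of $\tau$ (used to justify $\sum_i \tau(z_i x)=\tau(x)$). Once a single atom $z_i$ is fixed, the argument is finite-dimensional and the bimodule identity $E_C(z_i x) = z_i E_C(x)$ together with $\tau$-invariance does everything; no spectral theory or completely positive input enters except the elementary order bound $0\le T(z_j)\le I$ in the last step.
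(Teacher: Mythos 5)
Your proof is correct and follows essentially the same route as the paper's: identify the coefficient on each atom via the one-dimensionality of $Cz_i$ (you do this through the bimodule identity $E_C(z_ix)=z_iE_C(x)$, the paper directly through $E_C(x)z_i=\lambda_i(x)z_i$), then extract it from $\tau$-preservation via $\tau(z_ix)=\tau(z_iE_C(x))=c_i(x)\tau(z_i)$. Your additional care about summability and about $T(z_j)$ pairing finitely against each $\tau$-finite atom is a welcome refinement the paper leaves implicit.
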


\begin{proof}
For each atom $z_i$, we have $C z_i = \mathbb{C} z_i$, so $E_C(x) z_i = \lambda_i(x) z_i$ for some scalar $\lambda_i(x)$. Since $E_C$ is $\tau$-preserving, $\tau(z_i x) = \tau(z_i E_C(x)) = \tau(\lambda_i(x) z_i) = \lambda_i(x) \tau(z_i)$, hence $\lambda_i(x) = \tau(z_i x)/\tau(z_i)$.
\end{proof}

\begin{remark}[Trace-matrix integrality criterion]
By Lemma~\ref{lem:atomic-expectation-formula}, to verify hypothesis (2) of Proposition~\ref{prop:diagonal-dynamics}, it suffices to check:
\[
\frac{\tau(z_i\, T(z_j))}{\tau(z_i)} \in \left\{0, \frac{1}{N}, \ldots, 1\right\} \quad \text{and each column has finite support.}
\]
This converts the ``bounded denominators without invariance'' problem into a \textbf{trace-matrix integrality problem}. In Type I$_\infty$ atomic settings, $\tau(z_i T(z_j))$ often counts multiplicities (or weighted multiplicities) from how Kraus operators move mass between atoms, so $p_{ij} = m_{ij}/N$ can be forced by structural hypotheses on the Kraus operators. In finite-index settings, these coefficients are governed by bimodule multiplicities and standard-invariant data.
\end{remark}

The following proposition gives two explicit structural hypotheses that force the trace-matrix integrality condition (FD--Int).

\begin{proposition}[Structural hypotheses forcing bounded denominators]\label{prop:structural-denominators}
Let $\mathcal{M}$ have atomic center with minimal central projections $(z_j)_{j \in J}$, and let $P : Z(\mathcal{M}) \to Z(\mathcal{M})$ be the induced center map $P = E_Z \circ T|_Z$ for some normal positive subunital $T : \mathcal{M} \to \mathcal{M}$.

Each of the following forces the bounded-denominator condition $p_{ij} \in \{0, \frac{1}{N_0}, \ldots, 1\}$:

\textbf{(A) Convex combination of permuting endomorphisms.} Suppose $P = \sum_{\ell=1}^L \lambda_\ell \sigma_\ell$ where each $\sigma_\ell$ is a $*$-automorphism of $Z(\mathcal{M})$ (i.e., a permutation of the $z_j$'s) and each $\lambda_\ell \in \{0, \frac{1}{N_0}, \ldots, 1\}$ with $\sum_\ell \lambda_\ell \le 1$. Then
\[
p_{ij} = \sum_{\ell : \sigma_\ell(j) = i} \lambda_\ell \in \left\{0, \frac{1}{N_0}, \ldots, 1\right\}.
\]

\textbf{(B) Multiplicity walk with uniform normalizer.} Suppose $P(z_j) = \sum_i \frac{m_{ij}}{M_0} z_i$ with integers $m_{ij} \ge 0$ and a single uniform normalizer $M_0 \in \mathbb{N}$. Then $p_{ij} = \frac{m_{ij}}{M_0} \in \{0, \frac{1}{M_0}, \ldots, 1\}$.
\end{proposition}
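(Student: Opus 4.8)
The plan is to reduce both parts to direct coefficient bookkeeping on the atomic center $Z(\mathcal{M}) \cong \ell^\infty(J)$, using only that $P = E_Z \circ T|_Z$ is normal, positive, and subunital. Subunitality is immediate: $E_Z$ is a unital positive map, so $P(1) = E_Z(T(1)) \le E_Z(1) = 1$, and positivity of $P$ then gives $0 \le P(z_j) \le P(1) \le 1$ for every $j$. The one order-theoretic fact I will invoke is that, since the $(z_i)_{i \in J}$ are pairwise orthogonal minimal projections, positivity in $Z(\mathcal{M})$ is coordinatewise: an element $\sum_i c_i z_i$ satisfies $\sum_i c_i z_i \le \sum_i z_i = 1$ if and only if $c_i \le 1$ for all $i$. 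So coefficients may be read off and compared termwise.

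For part (A), I first record the standard fact that a normal $*$-automorphism of $\ell^\infty(J)$ permutes the minimal projections: $\sigma_\ell(z_j) = z_{\pi_\ell(j)}$ for a bijection $\pi_\ell$ of $J$ (this is the Gelfand/spectral picture of the atomic abelian algebra). Substituting into $P = \sum_{\ell=1}^L \lambda_\ell \sigma_\ell$ gives $P(z_j) = \sum_{\ell=1}^L \lambda_\ell z_{\pi_\ell(j)}$, and grouping the $L$ terms by the atom they land on yields $p_{ij} = \sum_{\ell : \pi_\ell(j) = i} \lambda_\ell$, a finite sub-sum of the $\lambda_\ell$. Since each $\lambda_\ell$ is a non-negative integer multiple of $1/N_0$, so is $p_{ij}$; and $p_{ij} \le \sum_{\ell=1}^L \lambda_\ell \le 1$. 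Hence $p_{ij} \in \{0, \tfrac{1}{N_0}, \ldots, 1\}$, and as a bonus each column has at most $L$ nonzero entries, so the finite-support clause needed downstream comes for free.

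For part (B) there is essentially nothing left once subunitality is in hand. By hypothesis $p_{ij} = m_{ij}/M_0$ with $m_{ij} \in \mathbb{Z}_{\ge 0}$, so $p_{ij}$ is a non-negative integer multiple of $1/M_0$. Comparing the $z_i$-coefficients in $P(z_j) \le 1 = \sum_i z_i$ (coordinatewise positivity, as above) gives $p_{ij} \le 1$, i.e.\ $m_{ij} \le M_0$. Therefore $p_{ij} \in \{0, \tfrac{1}{M_0}, \ldots, 1\}$, as claimed.

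I do not expect a genuine obstacle here: both parts are bookkeeping, and the proof amounts to unwinding the definitions. The only two points that warrant an explicit sentence rather than a bald assertion are (i) that a normal $*$-automorphism of the atomic abelian algebra acts by a permutation of its minimal projections, and (ii) the coordinatewise reading-off of coefficients under the order --- which is the single place where subunitality of $P$, inherited from $T$ through the unital expectation $E_Z$, is actually used, namely to pin the upper bound $p_{ij} \le 1$ in part (B).
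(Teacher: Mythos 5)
Your proof is correct; the paper states this proposition without any proof, treating it as immediate, and your argument is exactly the intended coefficient bookkeeping. You correctly isolate the one point that actually needs an argument --- that $P = E_Z \circ T|_Z$ inherits subunitality from $T$ through the unital expectation $E_Z$, which is what pins the upper bound $m_{ij} \le M_0$ in part (B) via the coordinatewise order on the atomic center.
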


\begin{remark}[Applications]
Hypothesis (A) arises when $T$ is ``finite random choice of symmetry sectors''---e.g., averaging over a finite group action or over finitely many standard-invariant symmetries with rational weights. Hypothesis (B) is the ``uniform denominator'' condition natural for combinatorial CP maps from standard-invariant data: planar-algebra tangles with integer weights, adjacency operators with uniform normalization, etc.

Combined with Proposition~\ref{prop:gap-from-weight-bound} (gap at 0 from weight lower bound), these hypotheses give a gap $\ge \delta_0/N_0$ where $\delta_0 = \min\{\tau(z_i) : i \text{ reachable}\}$. This reduces the finite-index gap-at-0 problem to verifying trace-matrix integrality.
\end{remark}

\begin{lemma}[Discrete trace subgroup iff commensurable weights]\label{lem:commensurable}
Let $w_1, \ldots, w_m > 0$ and let $G := \sum_{j=1}^m w_j \mathbb{Z} \subset \mathbb{R}$. Then $G$ is a discrete subgroup of $\mathbb{R}$ if and only if there exists $\delta_0 > 0$ such that each $w_j \in \delta_0\mathbb{Z}$. Equivalently, $w_i/w_1 \in \mathbb{Q}$ for all $i$. In that case, $G = \delta_0\mathbb{Z}$.
\end{lemma}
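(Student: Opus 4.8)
The plan is to derive this from the classical structure theorem for subgroups of $(\mathbb{R},+)$: every such subgroup is either discrete---and then cyclic, of the form $\delta_0\mathbb{Z}$---or dense in $\mathbb{R}$. Since the statement is used as a black box elsewhere, I would make the argument self-contained by proving inline the one fact I actually need: a nonzero discrete subgroup $G \subseteq \mathbb{R}$ equals $\delta_0\mathbb{Z}$ where $\delta_0 = \inf\{g \in G : g > 0\}$.

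First I would dispatch the easy direction ($\Leftarrow$). If every $w_j \in \delta_0\mathbb{Z}$ for some $\delta_0 > 0$, then $G = \sum_j w_j\mathbb{Z} \subseteq \delta_0\mathbb{Z}$, and $\delta_0\mathbb{Z}$ is discrete (its points are $\delta_0$-separated), so its subgroup $G$ is discrete as well. I would also record here that the given $\delta_0$ need not be the generator: writing $w_j = n_j\delta_0$ with $n_j \in \mathbb{Z}_{>0}$, one gets $G = \gcd(n_1,\dots,n_m)\,\delta_0\,\mathbb{Z}$, so the canonical generator lies in $\delta_0\mathbb{Z}$. This is exactly what makes the closing identity $G = \delta_0\mathbb{Z}$ in the statement correct when read with the largest admissible $\delta_0$.

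Next, the main direction ($\Rightarrow$). Assume $G$ is discrete, so $0$ has a neighborhood $(-\varepsilon,\varepsilon)$ meeting $G$ only in $\{0\}$ (with $\varepsilon$ small). Since some $w_j > 0$ lies in $G$, the set $G_{>0} := G \cap (0,\infty)$ is nonempty and contained in $[\varepsilon,\infty)$, hence $\delta_0 := \inf G_{>0} \ge \varepsilon > 0$. To see $\delta_0 \in G$, pick $g \in G_{>0}$ with $\delta_0 \le g < \delta_0 + \varepsilon$; if $g > \delta_0$ there is $g' \in G_{>0}$ with $\delta_0 \le g' < g$, whence $0 < g - g' < \varepsilon$ and $g - g' \in G \cap (0,\varepsilon) = \emptyset$, a contradiction, so $g = \delta_0 \in G$. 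Then $\delta_0\mathbb{Z} \subseteq G$, and for any $g \in G$, Euclidean division $g = q\delta_0 + r$ with $0 \le r < \delta_0$ forces $r = g - q\delta_0 \in G \cap [0,\delta_0)$, hence $r = 0$ by minimality; thus $G = \delta_0\mathbb{Z}$, and in particular every $w_j \in G = \delta_0\mathbb{Z}$. The rationality reformulation is then immediate bookkeeping: $w_j = n_j\delta_0$ gives $w_i/w_1 = n_i/n_1 \in \mathbb{Q}$, and conversely, if $w_i/w_1 = p_i/q_i \in \mathbb{Q}$ in lowest terms for all $i$, then $\delta_0 := w_1/\operatorname{lcm}(q_1,\dots,q_m)$ satisfies $w_i \in \delta_0\mathbb{Z}$ for all $i$.

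I do not expect a genuine obstacle here: the content is just the elementary cyclicity of discrete subgroups of $\mathbb{R}$. The only point warranting care is being precise about which $\delta_0$ the clause ``$G = \delta_0\mathbb{Z}$'' refers to---namely the canonical generator $\inf G_{>0}$, which every admissible $\delta_0$ divides---so the statement is not misread as claiming that an arbitrary $\delta_0$ with $w_j \in \delta_0\mathbb{Z}$ must generate $G$.
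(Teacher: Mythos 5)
Your proposal is correct and follows essentially the same route as the paper: both directions reduce to the classical fact that a nonzero discrete subgroup of $\mathbb{R}$ is cyclic, generated by $\inf\{g \in G : g > 0\}$; the paper simply cites this classification while you prove it inline. Your added remark that the closing identity $G = \delta_0\mathbb{Z}$ must be read with the canonical generator (an arbitrary admissible $\delta_0$ only gives $G = \gcd(n_1,\dots,n_m)\,\delta_0\,\mathbb{Z}$) is a correct and worthwhile clarification of the statement.
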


\begin{proof}
If $w_j \in \delta_0\mathbb{Z}$ for all $j$, then $G \subseteq \delta_0\mathbb{Z}$, which is discrete. Conversely, if $G$ is discrete, it is a closed subgroup of $\mathbb{R}$, hence $G = \delta_0\mathbb{Z}$ for some $\delta_0 > 0$. Then each $w_j \in G = \delta_0\mathbb{Z}$.
\end{proof}

\begin{remark}
Lemma~\ref{lem:commensurable} clarifies why ``bounded coupling constant'' or ``quantization'' must really mean ``rank-1 over $\mathbb{Q}$'' at the level of trace weights: the commensurability hypothesis (W) is not just convenient but \emph{necessary} to obtain a discrete trace lattice.
\end{remark}

\section{Dimension-Dependent Bounds for Quantum Operations}\label{sec:quantum}

For quantum operations on finite-dimensional Hilbert spaces, the bound $n_T \le |A|$ from Theorem~\ref{thm:defect-annihilation} can be significantly improved. The main result of this section is that for completely positive subunital maps, $n_T \le d$ where $d = \dim H$, independent of the family size $|A|$. This bound is sharp.

\subsection*{Support projections}

Let $H$ be a finite-dimensional Hilbert space with $\dim H = d$. For a positive operator $A \in B(H)$, write $\supp(A)$ for its support projection, i.e., the orthogonal projection onto $\overline{\Ran(A)}$.

\begin{lemma}[Support chain]\label{lem:support-chain}
Let $T$ be a positive subunital map on $B(H)$. Define
\[
p_k := \supp(T^k(I)) \qquad (k \ge 0).
\]
Then:
\begin{enumerate}[(i)]
\item $p_0 = I$ and $p_{k+1} \le p_k$ for all $k \ge 0$.
\item The chain $I = p_0 \ge p_1 \ge p_2 \ge \cdots$ stabilizes after at most $d$ steps: there exists $m \le d$ such that $p_m = p_{m+k}$ for all $k \ge 0$.
\end{enumerate}
\end{lemma}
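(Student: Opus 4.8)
The plan is to treat (i) as a direct consequence of the monotonicity of the unit orbit together with the order-monotonicity of support projections, and (ii) by first isolating the identity $p_{k+1}=\supp(T(p_k))$ --- which forces the chain to drop strictly until it stabilizes --- and then running a rank-counting argument bounded by $d$.

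For (i): since $I$ is invertible, $p_0=\supp(I)=I$. By Remark~\ref{rem:monotone}, $I=T^0(I)\ge T^1(I)\ge T^2(I)\ge\cdots$, so in particular $0\le T^{k+1}(I)\le T^k(I)$ for every $k\ge 0$. I would then use the elementary fact that for positive operators $0\le A\le B$ on $H$ one has $\supp(A)\le\supp(B)$: if $Bv=0$ then $\langle Bv,v\rangle=0$, hence $\langle Av,v\rangle=0$ by $A\le B$, hence $Av=0$; thus $\ker B\subseteq\ker A$ and $\Ran A=(\ker A)^\perp\subseteq(\ker B)^\perp=\Ran B$. Applying this with $A=T^{k+1}(I)$ and $B=T^k(I)$ yields $p_{k+1}\le p_k$.

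For (ii), the key step is a sandwich estimate. Because $0\le T^k(I)\le I$, the operator $T^k(I)$ has eigenvalues in $(0,1]$ on its range, so there is a scalar $\lambda_k>0$ with $\lambda_k p_k\le T^k(I)\le p_k$. Applying the positive (hence monotone) map $T$ gives $\lambda_k T(p_k)\le T^{k+1}(I)\le T(p_k)$; since multiplying by $\lambda_k>0$ does not change a support projection, the support-monotonicity fact from the previous paragraph forces $p_{k+1}=\supp(T^{k+1}(I))=\supp(T(p_k))$. Hence ``flat stays flat'': if $p_{k+1}=p_k$ then $p_{k+2}=\supp(T(p_{k+1}))=\supp(T(p_k))=p_{k+1}$, and by induction $p_{k+j}=p_k$ for all $j\ge 0$. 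Now $\rank(p_k)$ is a non-increasing sequence of integers bounded below by $0$, hence eventually constant, so $m:=\min\{k\ge 0:p_{k+1}=p_k\}$ is finite; and for each $k<m$ the strict inequality $p_{k+1}<p_k$ (by minimality of $m$) drops the rank by at least one, so $\rank(p_m)\le\rank(p_0)-m=d-m$. Since $\rank(p_m)\ge 0$ we conclude $m\le d$, and ``flat stays flat'' gives $p_m=p_{m+k}$ for all $k\ge 0$.

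The only step that is more than bookkeeping is recognizing that $p_{k+1}$ is determined by $p_k$ alone, through $p_{k+1}=\supp(T(p_k))$: without this, a decreasing chain of projections on a $d$-dimensional space need not stabilize within $d$ steps (it may remain flat for many steps and then drop further), so it is precisely the sandwich $\lambda_k p_k\le T^k(I)\le p_k$ combined with the positivity of $T$ that upgrades ``non-increasing rank'' to ``strictly decreasing until stabilization,'' which is what makes the bound $m\le d$ hold.
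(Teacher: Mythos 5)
Your proof is correct, and part~(i) matches the paper's argument exactly (monotone unit orbit plus $0 \le A \le B \Rightarrow \supp(A) \le \supp(B)$). For part~(ii), however, you do something the paper's own proof does not: the paper simply observes that a decreasing chain of projections in $M_d$ can have at most $d$ strict drops, which by itself does not rule out a chain that plateaus and then drops again at some index larger than $d$ --- so the paper's one-line rank-counting argument is, strictly read, incomplete. Your sandwich $\lambda_k p_k \le T^k(I) \le p_k$, pushed through the positive map $T$, yields the identity $p_{k+1} = \supp(T(p_k))$, which shows $p_{k+1}$ is a function of $p_k$ alone; this gives the ``flat stays flat'' property that upgrades ``at most $d$ strict drops'' to ``stabilized by step $d$.'' This is precisely the missing ingredient, and your closing paragraph correctly identifies why it is needed. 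The only caveat worth recording is that $\lambda_k$ must be taken as the smallest \emph{nonzero} eigenvalue of $T^k(I)$ (so that $\lambda_k p_k \le T^k(I)$ holds on the support), which is what you intend; with that reading the argument is airtight and is a genuine strengthening of the paper's proof rather than a restatement of it.
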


\begin{proof}
(i) We have $p_0 = \supp(I) = I$. For the decreasing property: by Remark~\ref{rem:monotone}, $T^{k+1}(I) \le T^k(I)$, so $\supp(T^{k+1}(I)) \le \supp(T^k(I))$.

(ii) The projections $p_k$ form a decreasing chain in a lattice of dimension at most $d$. Since the rank can decrease by at least 1 at each strict inequality, the chain stabilizes after at most $d$ steps.
\end{proof}

\subsection*{Support stabilization does not imply effect stabilization}

Even for completely positive maps, it is possible for $\supp(T^k(I))$ to stabilize while $T^k(I)$ continues to strictly decrease (typically converging only asymptotically). This phenomenon shows that ``finite support-chain length'' alone does not yield a dimension bound without using additional structure.

\begin{example}[Support stabilizes but effects strictly decrease]\label{ex:support-stable}
Let $H = \mathbb{C}^2$ and define a CP map (Heisenberg picture)
\[
T(A) = V_1^* A V_1 + V_2^* A V_2, \quad V_1 = |0\rangle\langle 0|, \quad V_2 = \sqrt{c}\,|1\rangle\langle 1|, \quad 0 < c < 1.
\]
Then $T(I) = \mathrm{diag}(1, c) \le I$ and for all $k \ge 0$,
\[
T^k(I) = \mathrm{diag}(1, c^k),
\]
so $\supp(T^k(I)) = I$ for all $k$, but $T^{k+1}(I) \ne T^k(I)$ whenever $c \ne 1$.

This example does not contradict Theorem~\ref{thm:defect-annihilation}: the semigroup $\{T^n : n \ge 1\}$ is infinite since $T^j \ne T^k$ for $j \ne k$ when $c \in (0,1)$, so the finiteness hypothesis fails.
\end{example}

\begin{remark}
Example~\ref{ex:support-stable} shows that support stabilization alone cannot yield a dimension bound---the CP structure enters essentially in the nilpotency argument below. For general positive maps, only the weaker bound $n_T \le d^2$ from Proposition~\ref{prop:cyclic-bound} holds.
\end{remark}

\subsection*{The CP dimension bound}

We now prove that for completely positive subunital maps, finiteness of the operational repertoire forces a sharp dimension bound on the stabilization index. The key ingredient is a simple linear-algebra bound for nilpotent completely positive maps, which we prove from first principles using Kraus words and a strict kernel filtration. For background on completely positive maps and the Kraus representation theorem, see \cite{kraus, choi, stinespring}; modern treatments include \cite{watrous, wolf}.

\begin{lemma}[Nilpotent CP maps have bounded index]\label{lem:nilpotent-index}
Let $\alpha : M_s \to M_s$ be completely positive. If $\alpha^m = 0$ for some $m \ge 1$, then $\alpha^s = 0$; in particular, the nilpotency index of $\alpha$ is at most $s$.
\end{lemma}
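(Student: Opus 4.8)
The plan is to pass to a Kraus representation, reduce the claim to the assertion that every length-$s$ ``Kraus word'' vanishes, and prove that by a strict filtration argument on the $s$-dimensional space $H$ (so that $M_s = B(H)$). First I would fix Kraus operators with $\alpha(X) = \sum_i A_i X A_i^*$, available by the Kraus representation theorem since $\alpha$ is CP on $M_s$. Iterating gives $\alpha^n(X) = \sum_{|w|=n} w\,X\,w^*$, where $w$ ranges over all words of length $n$ in the Kraus operators $A_i$. Evaluating at $X = I$ and taking the trace, $0 = \tr(\alpha^m(I)) = \sum_{|w|=m} \tr(ww^*) = \sum_{|w|=m} \|w\|_{\mathrm{HS}}^2$, so the hypothesis $\alpha^m = 0$ already forces $w = 0$ for every Kraus word $w$ of length $m$. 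This is the only place complete positivity is genuinely used (through the Kraus form and pointedness of the PSD cone). It therefore suffices to show that every Kraus word of length $s$ vanishes, since then $\alpha^s(X) = \sum_{|w|=s} w X w^* = 0$.

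For that I would introduce the ascending subspace filtration $N_0 := \{0\}$ and $N_{k+1} := \{v \in H : A_i v \in N_k \text{ for all } i\}$; unwinding the recursion (a length-$(k+1)$ word factors as $w' A_i$ with $w'$ of length $k$) gives $N_k = \{v \in H : wv = 0 \text{ for every Kraus word } w \text{ of length } k\}$. Three elementary facts finish the proof. (a) Monotonicity $N_k \subseteq N_{k+1}$, by induction: $N_0 = \{0\} \subseteq N_1$, and if $N_{k-1} \subseteq N_k$ then $v \in N_k$ gives $A_i v \in N_{k-1} \subseteq N_k$ for all $i$, i.e.\ $v \in N_{k+1}$. (b) Once the chain repeats it is constant: if $N_k = N_{k+1}$, then $N_{k+2} = \{v : A_i v \in N_{k+1} = N_k\} = N_{k+1}$. (c) Since all Kraus words of length $m$ vanish, $N_m = H$. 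Combining (a)--(c), the chain $\{0\} = N_0 \subseteq N_1 \subseteq \cdots$ must strictly increase until it first equals $H$ — otherwise by (b) it would freeze at a proper subspace and, by (a), never reach $H$, contradicting (c). A strictly increasing chain of subspaces of $H$ starting at $\{0\}$ reaches $H$ after at most $\dim H = s$ steps, so $N_s = H$: every length-$s$ Kraus word annihilates all of $H$ and is therefore $0$, whence $\alpha^s = 0$.

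I expect this to be routine once the pieces are in place; there is no deep structural ingredient beyond complete positivity, which enters only at the word-by-word vanishing step (a merely positive, non-CP nilpotent map need not have this and only yields a weaker bound of order $s^2$). The main thing to get right is the bookkeeping: verifying monotonicity (a) and the ``repeat forces stabilization'' step (b) cleanly, and matching them against the dimension count. One may equivalently run the dual descending range filtration $R_k := \sum_{|w|=k} wH = \sum_i A_i R_{k-1}$, which falls from $R_0 = H$ to $R_m = \{0\}$ and hence to $R_s = \{0\}$; that formulation is the one that most directly motivates the corner/nilpotency bookkeeping used in Proposition~\ref{prop:type-I-bound}.
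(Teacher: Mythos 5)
Your proposal is correct and follows essentially the same route as the paper's proof: pass to a Kraus representation, expand $\alpha^k$ as a sum over Kraus words, use pointedness of the PSD cone (in your case via the trace/Hilbert--Schmidt norm, which is the same fact) to conclude that all length-$m$ words vanish, and then run the ascending kernel filtration $N_{k+1} = \{v : A_i v \in N_k\}$ with the strict-increase-until-stabilization dimension count to get $N_s = H$. The only differences are cosmetic (the $A_i X A_i^*$ versus $L_i^* X L_i$ convention and the trace argument in place of the explicit pointed-cone cancellation), so there is nothing to change.
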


\begin{proof}
Write $\alpha(X) = \sum_{i=1}^r L_i^* X L_i$ (Kraus form). For a word $w = (i_1, \ldots, i_k)$ of length $|w| = k$, set $L_w := L_{i_1} \cdots L_{i_k}$. (We sum over all words of length $k$; each word fixes an ordered product $L_{i_1} \cdots L_{i_k}$.) Then for all $k \ge 1$,
\[
\alpha^k(X) = \sum_{|w|=k} L_w^* X L_w.
\]
If $\alpha^m = 0$ then $\alpha^m(I) = 0$, hence $\sum_{|w|=m} L_w^* L_w = 0$. Since each $L_w^* L_w \ge 0$ and a sum of positive semidefinite matrices can be zero only if each summand is zero (the PSD cone is pointed; this is the matrix-algebra instance of Lemma~\ref{lem:cancellation}), we get $L_w = 0$ for every word with $|w| = m$.

Define subspaces $N_k \subseteq \mathbb{C}^s$ by
\[
N_k := \{\xi \in \mathbb{C}^s : L_w \xi = 0 \text{ for all words } w \text{ of length } k\}.
\]
Then $N_1 = \bigcap_i \ker L_i$ and the recursion
\[
N_{k+1} = \{\xi : L_i \xi \in N_k \text{ for all } i\}
\]
holds, so $N_k \subseteq N_{k+1}$ for all $k$. Moreover, if $N_k = N_{k+1}$ then the recursion forces $N_{k+j} = N_k$ for all $j \ge 1$ (the chain stabilizes).

Since all words of length $m$ vanish, we have $N_m = \mathbb{C}^s$. Therefore the chain $N_1 \subseteq N_2 \subseteq \cdots$ cannot stabilize before reaching $\mathbb{C}^s$, so it must be strictly increasing at each step until $N_m = \mathbb{C}^s$. Since $\dim N_k$ increases by at least 1 at each strict inclusion, and a strictly increasing chain of subspaces in $\mathbb{C}^s$ has length at most $s$, we conclude $m \le s$.
\end{proof}

\begin{remark}
We apply Lemma~\ref{lem:nilpotent-index} with $\alpha$ being the corner restriction of a CP map. The finiteness hypothesis in Theorem~\ref{thm:defect-annihilation} is used only to ensure that the stabilization index $n_T$ is finite; once this is established, the elementary nilpotent CP bound takes over to provide the dimension-independent bound $n_T \le d$. For further results on nilpotent completely positive maps, including related kernel filtration techniques, see \cite{bhat-mallick}.
\end{remark}

\paragraph{Proof idea.} The strategy is to reduce to nilpotent CP maps. Given $T$ with stabilization index $n_T$, the defect orbit $\{D_k = T^k(d(T))\}_{k=0}^{n_T-1}$ spans a corner $Q M_d Q$. The accumulated defect $S = \sum_k D_k$ has full support on $Q$, so $S \ge \lambda Q$ for some $\lambda > 0$. Since $T^{n_T}(d(T)) = 0$, applying the corner map $n_T$ times kills $S$, and positivity then forces the corner map to be nilpotent. Lemma~\ref{lem:nilpotent-index} completes the argument.

We first isolate the standard operator lemmas used in the proof. These are well-known, but we state them explicitly for completeness.

\begin{lemma}[Support of PSD sums]\label{lem:psd-support-sum}
If $A_1, \ldots, A_m \ge 0$ in $M_d$, then
\[
\supp\left(\sum_{j=1}^m A_j\right) = \bigvee_{j=1}^m \supp(A_j).
\]
Equivalently, $\Ran(\sum_j A_j) = \sum_j \Ran(A_j)$.
\end{lemma}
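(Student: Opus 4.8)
The plan is to reduce to the two-summand case $m = 2$ and then to identify $\supp(A)$ with the orthogonal projection onto $(\ker A)^\perp$, so that the claim collapses to the elementary identity $\ker(A + B) = \ker A \cap \ker B$ for positive semidefinite $A, B$.

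First I would prove this kernel identity. If $A, B \ge 0$ and $(A + B)\xi = 0$, then $0 = \langle \xi, (A+B)\xi\rangle = \langle\xi, A\xi\rangle + \langle\xi, B\xi\rangle$, and since both terms are nonnegative each vanishes; writing $A = C^*C$ (e.g.\ $C = A^{1/2}$) gives $\|C\xi\|^2 = \langle\xi, A\xi\rangle = 0$, hence $A\xi = C^*C\xi = 0$, and symmetrically $B\xi = 0$. The reverse inclusion $\ker A \cap \ker B \subseteq \ker(A+B)$ is immediate. This is the only step with any real content; everything after it is finite-dimensional linear algebra and lattice bookkeeping, so I expect no serious obstacle.

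Next, since $A$ is self-adjoint on the finite-dimensional Hilbert space, $\Ran A = (\ker A)^\perp$ and $\supp(A)$ is by definition the projection onto this subspace. Taking orthogonal complements in $\ker(A+B) = \ker A \cap \ker B$ and using $(\ker A \cap \ker B)^\perp = (\ker A)^\perp + (\ker B)^\perp$ (valid with no closure issues in finite dimensions) yields $\Ran(A+B) = \Ran A + \Ran B$. Since the join of two projections in the projection lattice is exactly the projection onto the sum of the corresponding ranges, this is simultaneously the range form and the support form of the lemma for $m = 2$.

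Finally, for general $m$ I would induct: $A_1 + \cdots + A_{m-1} \ge 0$, so applying the $m = 2$ case to this partial sum together with $A_m$, and invoking the inductive hypothesis plus associativity of $\vee$, gives $\supp(\sum_{j=1}^m A_j) = \bigvee_{j=1}^m \supp(A_j)$. Equivalently, one may iterate the kernel identity directly to get $\ker(\sum_j A_j) = \bigcap_j \ker A_j$ and take complements, obtaining $\Ran(\sum_j A_j) = \sum_j \Ran A_j$ in one stroke.
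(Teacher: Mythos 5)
Your proposal is correct and follows essentially the same route as the paper's own proof: establish $\ker(A+B)=\ker A\cap\ker B$ for PSD operators via the quadratic-form argument, pass to orthogonal complements to get $\Ran(A+B)=\Ran A+\Ran B$, and induct on the number of summands. No gaps.
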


\begin{proof}
We prove $\ker(A + B) = \ker(A) \cap \ker(B)$ for $A, B \ge 0$. If $(A + B)v = 0$, then
\[
0 = \langle v, (A + B)v \rangle = \langle v, Av \rangle + \langle v, Bv \rangle.
\]
Since $A, B \ge 0$, both summands are $\ge 0$, so both must be $0$. Hence $\langle v, Av \rangle = \|A^{1/2}v\|^2 = 0$, giving $A^{1/2}v = 0$, i.e., $Av = 0$. Similarly $Bv = 0$. Conversely, if $Av = Bv = 0$ then $(A + B)v = 0$. Therefore $\ker(A + B) = \ker(A) \cap \ker(B)$.

Taking orthogonal complements:
\[
\Ran(A + B) = (\ker(A + B))^\perp = (\ker(A) \cap \ker(B))^\perp = \ker(A)^\perp + \ker(B)^\perp = \Ran(A) + \Ran(B).
\]
The $m$-term case follows by induction, and $\supp = $ projection onto range gives the support formula.
\end{proof}

\begin{lemma}[Full support implies spectral gap]\label{lem:spectral-gap}
Let $0 \le S \in Q M_d Q$ where $Q$ is a projection of rank $s$. If $\supp(S) = Q$, then $S|_{Q\mathbb{C}^d}$ is invertible, and there exists $\lambda > 0$ such that $S \ge \lambda Q$. Specifically, $\lambda$ can be taken as the smallest eigenvalue of $S|_{Q\mathbb{C}^d}$.
\end{lemma}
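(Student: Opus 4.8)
The plan is to reduce the statement to the finite-dimensional spectral theorem applied inside the range of $Q$. Write $H = \mathbb{C}^d$ and let $K := Q H$, an $s$-dimensional subspace. Since $S \in Q M_d Q$ we have $S = QSQ$, so $S$ vanishes on $K^\perp = (I-Q)H$ and carries $K$ into itself; let $S_0 := S|_K$ denote the compression, a positive semidefinite operator on the $s$-dimensional inner-product space $K$.

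First I would unwind the support hypothesis. By definition $\supp(S)$ is the orthogonal projection onto $\overline{\Ran(S)}$, and in finite dimensions $\Ran(S)$ is closed, so $\supp(S) = Q$ says exactly $\Ran(S) = K$. Because $S$ vanishes off $K$ and has range inside $K$, we have $\Ran(S) = \Ran(S_0)$, so this is equivalent to $S_0$ being surjective on $K$; since $K$ is finite-dimensional, surjectivity is equivalent to bijectivity, i.e.\ $S_0$ is invertible, equivalently $\ker(S_0) = \{0\}$.

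Next I would invoke the spectral theorem for $S_0$. As a self-adjoint positive semidefinite operator on a finite-dimensional space, $S_0$ is diagonalizable with nonnegative eigenvalues, and invertibility forces all of them to be strictly positive. Let $\lambda > 0$ be the smallest eigenvalue of $S_0$. Then $S_0 \ge \lambda\,\mathrm{id}_K$ as operators on $K$.

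Finally I would transport this inequality back to $B(H)$. For an arbitrary $v \in H$, decompose $v = Qv + (I-Q)v$ and use $S = QSQ$:
\[
\langle v, S v\rangle \;=\; \langle Qv,\, S_0(Qv)\rangle \;\ge\; \lambda\,\|Qv\|^2 \;=\; \lambda\,\langle v,\, Q v\rangle ,
\]
which is precisely $S \ge \lambda Q$ in the operator order. The argument is routine and I do not expect any real obstacle; the only points needing (minor) care are that the correct conclusion is $S \ge \lambda Q$ rather than $S \ge \lambda I$ (since $S$ must vanish on $K^\perp$ once $Q \ne I$), and that the support condition translates into invertibility of the compression $S_0$, not of $S$ itself.
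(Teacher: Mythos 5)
Your proof is correct and follows essentially the same route as the paper's: compress $S$ to $Q\mathbb{C}^d$, use the support condition to get trivial kernel and hence a strictly positive minimal eigenvalue, and transport $S_0 \ge \lambda\,\mathrm{id}$ back to $S \ge \lambda Q$. You simply spell out the verification (the identification $\Ran(S)=\Ran(S_0)$ and the quadratic-form computation) in more detail than the paper does.
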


\begin{proof}
View $S$ as a positive operator on the $s$-dimensional space $Q\mathbb{C}^d$. The condition $\supp(S) = Q$ means $S$ has trivial kernel on this space, so all eigenvalues are strictly positive. Let $\lambda = \lambda_{\min}(S|_{Q\mathbb{C}^d}) > 0$. Then $S \ge \lambda I$ on $Q\mathbb{C}^d$, which translates to $S \ge \lambda Q$ in $M_d$.
\end{proof}

\begin{lemma}[Nilpotency on identity implies nilpotency]\label{lem:nil-on-id}
Let $\alpha : Q M_d Q \to Q M_d Q$ be positive. If $\alpha^n(Q) = 0$, then $\alpha^n(X) = 0$ for all $X \in Q M_d Q$.
\end{lemma}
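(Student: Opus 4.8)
The plan is to exploit that $Q$ is the unit of the corner algebra $QM_dQ \cong M_s$, so that every self-adjoint element is order-sandwiched between $\pm$ a scalar multiple of $Q$, and then to push that sandwich through the positive map $\alpha^n$.

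First I would observe that $\alpha^n$, being a composition of positive maps, is again positive, hence order-preserving on $QM_dQ$ (with unit $Q$): $X \le Y$ implies $\alpha^n(X) \le \alpha^n(Y)$. Next, for self-adjoint $X \in QM_dQ$ I would invoke the elementary operator bound $-\|X\|Q \le X \le \|X\|Q$, valid because $\|X\|Q \pm X$ has nonnegative spectrum on $Q\mathbb{C}^d$; adding $\|X\|Q$ yields $0 \le X + \|X\|Q \le 2\|X\|Q$. Applying $\alpha^n$ and using the hypothesis $\alpha^n(Q) = 0$ together with order-preservation gives
\[
0 \le \alpha^n\bigl(X + \|X\|Q\bigr) \le 2\|X\|\,\alpha^n(Q) = 0,
\]
so $\alpha^n(X + \|X\|Q) = 0$, and by linearity $\alpha^n(X) = -\|X\|\,\alpha^n(Q) = 0$.

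To finish, for an arbitrary $X \in QM_dQ$ I would write $X = A + iB$ with $A = \tfrac12(X + X^*)$ and $B = \tfrac1{2i}(X - X^*)$ self-adjoint in $QM_dQ$; the previous paragraph gives $\alpha^n(A) = \alpha^n(B) = 0$, hence $\alpha^n(X) = 0$. There is essentially no obstacle here; the only point worth stating carefully is that the sandwich $-\|X\|Q \le X \le \|X\|Q$ is taken inside the corner $QM_dQ$ (whose order unit is $Q$), which agrees with the ambient order when restricted to $Q\mathbb{C}^d$, so no ambiguity about which algebra we work in arises. Alternatively one could argue via Lemma~\ref{lem:cancellation} applied to a positive decomposition of $X + \|X\|Q$, but the squeeze argument is the most direct.
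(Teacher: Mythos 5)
Your proof is correct and follows essentially the same route as the paper's: squeeze a positive element between $0$ and $\|X\|Q$, apply positivity of $\alpha^n$ together with $\alpha^n(Q)=0$, then reduce general elements via self-adjoint parts. The only cosmetic difference is that you handle the self-adjoint case by shifting to $X+\|X\|Q\ge 0$, whereas the paper uses the Jordan decomposition $X=X_+-X_-$; both are immediate reductions to the positive case.
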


\begin{proof}
For $0 \le X \le \|X\| Q$, positivity gives $0 \le \alpha^n(X) \le \|X\| \alpha^n(Q) = 0$, so $\alpha^n(X) = 0$. For general self-adjoint $X$, write $X = X_+ - X_-$ with $X_\pm \ge 0$. For non-self-adjoint $X$, use $X = (\Re X) + i(\Im X)$ with $\Re X, \Im X$ self-adjoint. Linearity completes the proof.
\end{proof}

\begin{theorem}[CP dimension bound]\label{thm:cp-bound}
Let $T : M_d \to M_d$ be completely positive and subunital, and assume $T$ belongs to a finite composition-closed family $A$. Then
\[
n_T \le d.
\]
More precisely, if $n = n_T$ and
\[
Q := \bigvee_{k=0}^{n-1} \supp(T^k(d(T))), \qquad s := \rank(Q),
\]
then
\[
n_T \le s \le d.
\]
\end{theorem}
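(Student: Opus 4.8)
The plan is to reduce the problem to a genuinely nilpotent completely positive map on a corner of $M_d$ and then apply Lemma~\ref{lem:nilpotent-index}; this is essentially the $\mathcal{M}=M_d$ instance of Proposition~\ref{prop:type-I-bound}, and I would either invoke that result directly or re-run its argument using the finite-dimensional helper lemmas just established. First, since $T$ lies in a finite composition-closed family, Theorem~\ref{thm:defect-annihilation} gives $n := n_T < \infty$; we may assume $d(T)\neq 0$, since otherwise $T$ is unital and $n_T \le 1 \le d$. Write $D_k := T^k(d(T))$, so $D_{n-1}\neq 0$ and $D_n = 0$ by minimality of $n_T$. Since $D_k = 0$ for $k \ge n$, the projection $Q := \bigvee_{k=0}^{n-1}\supp(D_k)$ equals $\bigvee_{k\ge 0}\supp(D_k)$; put $s := \rank(Q) \le d$, and consider the compression $\alpha(X) := Q\,T(X)\,Q$ on the corner $QM_dQ \cong M_s$, which is CP with Kraus operators $V_iQ$ when $T(X)=\sum_i V_i^* X V_i$.

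The first substantive step is to show the defect orbit lives in the corner and is computed by $\alpha$: $\alpha^k(d(T)) = D_k$ for all $k\ge 0$. This is a two-line induction — $d(T)=D_0$ has $\supp(D_0)\le Q$, and if $\alpha^k(d(T))=D_k$ then $\alpha^{k+1}(d(T)) = Q\,T(D_k)\,Q = QD_{k+1}Q = D_{k+1}$ because $\supp(D_{k+1})\le Q$ (by definition of $Q$ when $k+1<n$, trivially when $k+1\ge n$). Next, form the accumulated corner defect $S := \sum_{k=0}^{n-1} D_k \in (QM_dQ)_+$. By Lemma~\ref{lem:psd-support-sum}, $\supp(S) = \bigvee_{k=0}^{n-1}\supp(D_k) = Q$, so $S$ is strictly positive in the finite-dimensional algebra $QM_dQ$; by Lemma~\ref{lem:spectral-gap} there is $\lambda>0$ with $S \ge \lambda Q$.

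Now I bootstrap from ``$\alpha$ annihilates the defect after $n$ steps'' to ``$\alpha$ is nilpotent of index $\le n$''. Since $\alpha^n(d(T)) = D_n = 0$, also $\alpha^{n+k}(d(T)) = 0$ for all $k\ge 0$, hence $\alpha^n(S) = \sum_{k=0}^{n-1}\alpha^{n+k}(d(T)) = 0$. Applying the positive map $\alpha^n$ to $0 \le \lambda Q \le S$ gives $0 \le \lambda\,\alpha^n(Q) \le \alpha^n(S) = 0$, so $\alpha^n(Q)=0$; Lemma~\ref{lem:nil-on-id} upgrades this to $\alpha^n = 0$ on all of $QM_dQ$. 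Thus $\alpha$ is a nilpotent CP map on $M_s$, and Lemma~\ref{lem:nilpotent-index} gives $\alpha^s = 0$. In particular $T^s(d(T)) = \alpha^s(d(T)) = 0$, so by minimality of $n_T$ we conclude $n_T \le s \le d$.

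I expect the conceptual crux — and the step where a careless argument would break — to be the passage from the single-vector condition $T^{n_T}(d(T)) = 0$ to the genuine nilpotency $\alpha^{n_T} = 0$ of the corner map, since Lemma~\ref{lem:nilpotent-index} only bites for maps that kill the whole algebra. The bridge is the ``full support $\Rightarrow$ spectral gap'' observation: the accumulated defect $S$ dominates $\lambda Q$, so any positive map killing $S$ kills $Q$ and hence everything (Lemma~\ref{lem:nil-on-id}). After that, the dimension bound is purely the Kraus-word / strict-kernel-filtration argument internal to Lemma~\ref{lem:nilpotent-index}, and the elementary estimate $s = \rank(Q) \le \dim H = d$ closes it; optimality of the bound is a separate matter, handled by the shift-channel construction mentioned in the introduction.
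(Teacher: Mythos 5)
Your proposal is correct and follows essentially the same route as the paper's own proof: orbit agreement of the corner compression $\alpha$ with $T$ on the defect orbit, full support of the accumulated defect $S \ge \lambda Q$ forcing $\alpha^{n_T}(Q)=0$ and hence nilpotency of $\alpha$, then the Kraus-word kernel-filtration bound of Lemma~\ref{lem:nilpotent-index}. You also correctly identify the crux (upgrading annihilation of a single positive element to nilpotency of the whole corner map), so nothing further is needed.
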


\begin{proof}
By Theorem~\ref{thm:defect-annihilation}, $n = n_T$ exists. Define the defect orbit
\[
D_k := T^k(d(T)) \qquad (k \ge 0),
\]
so $D_k \ge 0$, $D_n = 0$, and $D_k \ne 0$ for $k < n$.

Let
\[
Q := \bigvee_{k=0}^{n-1} \supp(D_k), \qquad s = \rank(Q).
\]
Define a corner map
\[
\alpha : Q M_d Q \to Q M_d Q, \qquad \alpha(X) := Q\,T(X)\,Q.
\]
Since inclusion $Q M_d Q \hookrightarrow M_d$, the map $T$, and compression $X \mapsto QXQ$ are completely positive, the composition $\alpha$ is completely positive.

\textbf{Step 1 (orbit agreement on the defect).} We claim
\[
\alpha^k(d(T)) = T^k(d(T)) \quad \text{for all } k \ge 0.
\]
For $k = 0$ this is tautological. For $0 \le k \le n - 2$, the element $D_k$ is supported in $Q$ by construction, and since $k + 1 \le n - 1$, the element $D_{k+1}$ is also supported in $Q$. Hence
\[
\alpha(D_k) = Q\,T(D_k)\,Q = Q\,D_{k+1}\,Q = D_{k+1}.
\]
Inducting gives $\alpha^k(d(T)) = D_k$ for all $k \le n - 1$. For $k = n$, we have $\alpha^n(d(T)) = \alpha(D_{n-1}) = Q\,T(D_{n-1})\,Q = Q\,D_n\,Q = 0$ since $D_n = 0$. For $k > n$, both sides are $0$ since $D_n = 0$ implies $D_{n+\ell} = T^\ell(D_n) = T^\ell(0) = 0$ for all $\ell \ge 0$.

\textbf{Step 2 (full support of the accumulated defect).} Define
\[
S := \sum_{k=0}^{n-1} \alpha^k(d(T)) = \sum_{k=0}^{n-1} D_k \in Q M_d Q.
\]
By Lemma~\ref{lem:psd-support-sum},
\[
\supp(S) = \bigvee_{k=0}^{n-1} \supp(D_k) = Q.
\]
By Lemma~\ref{lem:spectral-gap}, there exists $\lambda > 0$ such that
\[
S \ge \lambda\,Q \quad \text{in } Q M_d Q.
\]

\textbf{Step 3 (forcing nilpotency on the corner).} Since $\alpha^n(d(T)) = D_n = 0$, we have
\[
\alpha^n(S) = \sum_{k=0}^{n-1} \alpha^{n+k}(d(T)) = \sum_{k=0}^{n-1} D_{n+k} = 0.
\]
The key observation is: \emph{if $S \ge \lambda Q$ for $\lambda > 0$ and $\alpha^n(S) = 0$ for a positive map $\alpha$, then $\alpha^n(Q) = 0$.} Indeed, positivity gives
\[
0 = \alpha^n(S) \ge \lambda\,\alpha^n(Q) \ge 0,
\]
so $\alpha^n(Q) = 0$. By Lemma~\ref{lem:nil-on-id}, $\alpha^n = 0$ on all of $Q M_d Q$. Thus $\alpha$ is a nilpotent CP map on $Q M_d Q \cong M_s$.

\textbf{Step 4 (dimension bound).} By Lemma~\ref{lem:nilpotent-index}, a nilpotent CP map on $M_s$ has nilpotency index at most $s$. Hence $\alpha^{s} = 0$, so
\[
0 = \alpha^s(d(T)) = T^s(d(T)).
\]
By minimality of $n_T$, this gives $n_T \le s \le d$.
\end{proof}

\begin{corollary}[Uniform bound for finite families of CP maps]\label{cor:cp-uniform}
If $A$ is a finite composition-closed family of completely positive subunital maps on $M_d$, then
\[
\max_{T \in A} n_T \le d,
\]
independent of $|A|$.
\end{corollary}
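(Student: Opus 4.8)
The plan is to observe that this statement is an immediate consequence of Theorem~\ref{thm:cp-bound} applied uniformly across the family. First I would check that every member of $A$ satisfies the hypotheses of that theorem: each $T \in A$ is completely positive and subunital on $M_d$ by assumption, and each such $T$ belongs to the finite composition-closed family $A$ itself, which is precisely the ``membership in a finite composition-closed family'' hypothesis required. (The composition-closure is used in Theorem~\ref{thm:cp-bound} only indirectly, to invoke Theorem~\ref{thm:defect-annihilation} and guarantee $n_T < \infty$; this is available for every $T \in A$ simultaneously, since the same $A$ witnesses the hypothesis for all of them.) Hence Theorem~\ref{thm:cp-bound} applies to each individual $T \in A$, giving $n_T \le d$.

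Second, I would note that the right-hand bound $d = \dim H$ depends only on the ambient matrix algebra $M_d$, not on the particular map $T$ nor on the cardinality $|A|$. Therefore the inequality $n_T \le d$, holding for every $T \in A$, is preserved under taking the maximum over the (finite) set $A$: $\max_{T \in A} n_T \le d$. The independence from $|A|$ is then automatic, because it was already built into the per-map bound of Theorem~\ref{thm:cp-bound} — this is exactly the improvement over the coarser bound $n_T \le |A|$ furnished by Theorem~\ref{thm:defect-annihilation} and Corollary~\ref{cor:bound}.

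There is no real obstacle here: all the substantive work — the corner construction, the forced nilpotency of $\alpha$ on $Q M_d Q$, and the Kraus-word/kernel-filtration bound of Lemma~\ref{lem:nilpotent-index} — is carried out in the proof of Theorem~\ref{thm:cp-bound}, and this corollary merely repackages that conclusion for families. If I wanted to be thorough I would remark that the bound is uniform even though the corner projections $Q_T$ (and hence their ranks $s_T \le d$) may differ from map to map; since $s_T \le d$ for each $T$, the common ceiling $d$ suffices, and sharpness of the family-uniform bound then follows from the same shift-channel construction that makes $n_T = d$ attainable for a single CP map.
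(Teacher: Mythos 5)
Your proposal is correct and matches the paper's (implicit) argument exactly: the corollary is stated without a separate proof precisely because it follows by applying Theorem~\ref{thm:cp-bound} to each $T \in A$ individually, with $A$ itself serving as the finite composition-closed family for every member, and then taking the maximum. Nothing further is needed.
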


\begin{remark}[Stabilization index equals corner nilpotency index]\label{rem:corner-nilpotency}
Step 3 of the proof shows $\alpha^{n_T} = 0$, hence $m(\alpha) \le n_T$ where $m(\alpha)$ is the nilpotency index of $\alpha$ on $QM_dQ$. The reverse inequality $n_T \le m(\alpha)$ is trivial: $\alpha^{m(\alpha)} = 0$ implies $\alpha^{m(\alpha)}(d(T)) = T^{m(\alpha)}(d(T)) = 0$. Therefore
\[
n_T = m(\alpha) \le \rank(Q) \le d.
\]
The gap between $n_T$ and $\rank(Q)$ is thus entirely determined by whether the nilpotent CP map $\alpha$ achieves its maximal possible nilpotency index on $M_s$ (where $s = \rank(Q)$).
\end{remark}

\begin{proposition}[Exact reachability recursion]\label{prop:reachability}
Let $T(X) = \sum_i V_i^* X V_i$ be CP and $A \ge 0$. Then
\[
\Ran(T(A)) = \sum_i V_i^* \Ran(A).
\]
Consequently, defining the reachable subspaces from $d(T)$ by
\[
\mathcal{R}_0 := \Ran(d(T)), \qquad \mathcal{R}_{k+1} := \sum_i V_i^* \mathcal{R}_k,
\]
we have $\mathcal{R}_k = \Ran(T^k(d(T)))$ for all $k \ge 0$, and the orbit-support projection satisfies
\[
Q = \bigvee_{k < n_T} \supp(T^k(d(T))) = \mathrm{proj}\left(\sum_{k < n_T} \mathcal{R}_k\right).
\]
\end{proposition}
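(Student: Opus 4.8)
The plan is to establish the operator identity $\Ran(T(A)) = \sum_i V_i^* \Ran(A)$ for arbitrary $A \ge 0$ first, and then read off the recursion and the formula for $Q$ by a routine induction. For the operator identity I would factor $A = A^{1/2} A^{1/2}$ and set $W_i := A^{1/2} V_i$, so that
\[
T(A) = \sum_i V_i^* A^{1/2} A^{1/2} V_i = \sum_i W_i^* W_i,
\]
a finite sum of positive semidefinite matrices. Lemma~\ref{lem:psd-support-sum} then gives $\Ran(T(A)) = \sum_i \Ran(W_i^* W_i)$, reducing everything to computing the ranges of the individual summands.

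Next I would simplify each summand using three standard finite-dimensional range identities. First, $\ker(W^*W) = \ker W$ (if $W^*Wx = 0$ then $\|Wx\|^2 = \langle x, W^*Wx\rangle = 0$; the converse is trivial), and since $W^*W$ is self-adjoint this yields $\Ran(W_i^*W_i) = (\ker W_i)^\perp = \Ran(W_i^*)$. Second, $W_i^* = V_i^* A^{1/2}$ and $\Ran(ST) = S(\Ran(T))$ for any linear maps, so $\Ran(W_i^*) = V_i^*\bigl(\Ran(A^{1/2})\bigr)$. Third, $\Ran(A^{1/2}) = \Ran(A)$, because $\ker(A^{1/2}) = \ker(A)$ by the same polarization argument and both operators are self-adjoint, so the orthocomplements of their kernels — i.e.\ their ranges — coincide. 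Summing over $i$ gives $\Ran(T(A)) = \sum_i V_i^* \Ran(A)$, independently of the chosen Kraus representation (the left side does not mention the $V_i$ at all).

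For the recursion I would induct on $k$. The case $k = 0$ is the definition $\mathcal{R}_0 = \Ran(d(T))$. For the inductive step, $T^k(d(T)) \ge 0$ since $d(T) \ge 0$ by subunitality and $T$ is positive, so the operator identity applies with $A = T^k(d(T))$; combined with the inductive hypothesis $\Ran(T^k(d(T))) = \mathcal{R}_k$, this yields $\Ran(T^{k+1}(d(T))) = \sum_i V_i^* \mathcal{R}_k = \mathcal{R}_{k+1}$. For the formula for $Q$: in finite dimensions every range is closed, so $\supp(T^k(d(T)))$ is exactly the orthogonal projection onto $\mathcal{R}_k$, and the join of a finite family of projections is the projection onto the (automatically closed) linear span of their ranges; hence $Q = \bigvee_{k < n_T} \mathrm{proj}(\mathcal{R}_k) = \mathrm{proj}\bigl(\sum_{k < n_T} \mathcal{R}_k\bigr)$.

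I do not expect a serious obstacle: every step is elementary linear algebra in a finite-dimensional Hilbert space. The only point that needs a moment's care — and the natural place an argument could go wrong — is the chain of three range identities ($\Ran(W^*W) = \Ran(W^*)$, $\Ran(ST) = S(\Ran(T))$, and $\Ran(A^{1/2}) = \Ran(A)$); once these are in hand, both the factorization step and the inductive bookkeeping are immediate.
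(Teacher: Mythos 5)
Your proof is correct and follows essentially the same route as the paper: the factorization $V_i^* A V_i = (A^{1/2}V_i)^*(A^{1/2}V_i)$, the identity $\Ran(A^{1/2}) = \Ran(A)$, Lemma~\ref{lem:psd-support-sum} for the sum of ranges, and a routine induction for the recursion and the formula for $Q$. Your version merely spells out the intermediate range identities ($\Ran(W^*W) = \Ran(W^*)$ and $\Ran(ST) = S(\Ran T)$) that the paper leaves implicit.
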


\begin{proof}
For $A \ge 0$ and any $V$, the identity $\Ran(V^* A V) = V^* \Ran(A)$ follows from $\Ran(A^{1/2}) = \Ran(A)$ (in finite dimensions) and the factorization $V^* A V = (A^{1/2} V)^* (A^{1/2} V)$. Since the range of a sum of positive semidefinite operators is the sum of their ranges (Lemma~\ref{lem:psd-support-sum}), $\Ran(T(A)) = \Ran(\sum_i V_i^* A V_i) = \sum_i \Ran(V_i^* A V_i) = \sum_i V_i^* \Ran(A)$.

The recursion $\mathcal{R}_k = \Ran(T^k(d(T)))$ follows by induction. For the orbit support, $\supp(B) = \mathrm{proj}(\Ran(B))$ and the join of support projections equals the projection onto the sum of ranges.
\end{proof}

\begin{proposition}[Algebra-generated reachability]\label{prop:algebra-reachability}
Let $T(X) = \sum_{i=1}^r V_i^* X V_i$ be CP subunital on $M_d$, with reachable subspaces $\mathcal{R}_0 = \Ran(d(T))$ and $\mathcal{R}_{k+1} = \sum_i V_i^* \mathcal{R}_k$ as in Proposition~\ref{prop:reachability}. Let
\[
\mathcal{A} := \mathrm{alg}(V_1^*, \ldots, V_r^*) \subseteq M_d
\]
be the associative subalgebra generated by the adjoint Kraus operators. Then
\[
\mathcal{R}_\infty = \mathcal{A} \cdot \mathcal{R}_0,
\]
where $\mathcal{A} \cdot S := \{Av : A \in \mathcal{A}, v \in S\}$.
\end{proposition}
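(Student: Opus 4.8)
The plan is to identify both $\mathcal{R}_\infty$ and $\mathcal{A}\cdot\mathcal{R}_0$ with a single subspace of $\mathbb{C}^d$, namely the span of all ``words'' in the adjoint Kraus operators applied to $\mathcal{R}_0$. First I would fix conventions. I would read $\mathcal{R}_\infty$ as the ascending union $\sum_{k\ge0}\mathcal{R}_k$; since $M_d$ is finite-dimensional the partial sums $\sum_{k\le N}\mathcal{R}_k$ form an increasing chain of subspaces of $\mathbb{C}^d$, so the chain stabilizes after at most $d$ steps, $\mathcal{R}_\infty$ is a genuine subspace, and it equals $\Ran(Q)$ whenever $n_T<\infty$ (as then $\mathcal{R}_k=\{0\}$ for $k\ge n_T$). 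I would take $\mathcal{A}$ to be the \emph{unital} subalgebra generated by $V_1^*,\dots,V_r^*$, so $\mathcal{A}=\mathrm{span}\{V_{i_1}^*\cdots V_{i_k}^*:k\ge0,\ 1\le i_j\le r\}$ with the empty product giving $I$; this is forced, since for the single Kraus operator $V_1^*=|0\rangle\langle0|$ on $\mathbb{C}^2$ the non-unital algebra annihilates $\mathcal{R}_0=\mathbb{C}|1\rangle$ and the asserted identity fails. Finally, $\mathcal{A}\cdot\mathcal{R}_0$ is read as the linear span of $\{Av:A\in\mathcal{A},\,v\in\mathcal{R}_0\}$ (the bare image set need not itself be a subspace).

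With these in place the argument reduces to a one-line induction. Writing $V_w^*:=V_{i_1}^*\cdots V_{i_k}^*$ for a word $w=(i_1,\dots,i_k)$, with $V_\emptyset^*=I$, the claim is
\[
\mathcal{R}_k=\sum_{|w|=k}V_w^*\,\mathcal{R}_0\qquad(k\ge0).
\]
The case $k=0$ is the definition of $\mathcal{R}_0$. For the step I would apply the recursion $\mathcal{R}_{k+1}=\sum_i V_i^*\mathcal{R}_k$ of Proposition~\ref{prop:reachability} to the hypothesis, pull each $V_i^*$ through the sum, and reindex via the bijection between words of length $k+1$ and pairs (first letter, word of length $k$); nothing can cancel because these are sums, not intersections, of subspaces. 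Summing over $k\ge0$ yields $\mathcal{R}_\infty=\sum_w V_w^*\mathcal{R}_0$. On the other side, $\mathcal{A}=\mathrm{span}\{V_w^*: w\text{ a word}\}$ gives $\mathcal{A}\cdot\mathcal{R}_0=\mathrm{span}\{V_w^*v: w\text{ a word},\ v\in\mathcal{R}_0\}=\sum_w V_w^*\mathcal{R}_0$, so the two agree. Equivalently one can split into two inclusions: ``$\subseteq$'' by the same induction, using $I\in\mathcal{A}$ for the base and $V_i^*\mathcal{A}\subseteq\mathcal{A}$ for the step; ``$\supseteq$'' by noting that $\mathcal{R}_\infty$ is invariant under every $V_i^*$ (immediate from $V_i^*\mathcal{R}_k\subseteq\mathcal{R}_{k+1}$) and contains $\mathcal{R}_0$, hence is $\mathcal{A}$-invariant, so $\mathcal{A}\cdot\mathcal{R}_0\subseteq\mathcal{A}\cdot\mathcal{R}_\infty\subseteq\mathcal{R}_\infty$.

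I do not expect a genuine mathematical obstacle here: once Proposition~\ref{prop:reachability} is available, the proof is pure bookkeeping with sums of subspaces. The only points that require attention are conventions — that $\mathcal{A}$ must contain $I$ (otherwise the statement is false), that $\mathcal{A}\cdot\mathcal{R}_0$ denotes a linear span rather than the literal image set, and that $\mathcal{R}_\infty$ is the union $\sum_{k\ge0}\mathcal{R}_k$, which under $n_T<\infty$ is exactly the orbit corner $\Ran(Q)$ of Proposition~\ref{prop:reachability}. I would state these explicitly at the start of the proof and then give the two-sentence induction above.
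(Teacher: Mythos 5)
Your proof is correct and follows the same route as the paper's: unroll the recursion $\mathcal{R}_{k+1}=\sum_i V_i^*\mathcal{R}_k$ to identify $\mathcal{R}_k$ with the span of the length-$k$ words $V_w^*$ applied to $\mathcal{R}_0$, then sum over $k$ and match this against $\mathcal{A}\cdot\mathcal{R}_0$. Your observation that $\mathcal{A}$ must be read as the \emph{unital} algebra generated by the $V_i^*$ (so that the empty word contributes $\mathcal{R}_0$ itself) is a genuine clarification that the paper leaves implicit, and your single-Kraus-operator example with $V_1^*=|0\rangle\langle 0|$ shows the convention is not optional.
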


\begin{proof}
Unrolling the recursion: $\mathcal{R}_k$ is spanned by $V_{i_1}^* \cdots V_{i_k}^* v$ for $v \in \mathcal{R}_0$. Summing over $k$ gives $\mathcal{A} \cdot \mathcal{R}_0$.
\end{proof}

\begin{corollary}[Algebra-dimension bound]\label{cor:algebra-bound}
\[
\rank(Q) = \dim(\mathcal{A} \cdot \mathcal{R}_0) \le \min\bigl(d,\, \dim(\mathcal{A}) \cdot \rank(d(T))\bigr).
\]
\end{corollary}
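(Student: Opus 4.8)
The plan is to assemble the statement directly from Propositions~\ref{prop:reachability} and~\ref{prop:algebra-reachability} together with an elementary dimension count; no new idea is required beyond careful bookkeeping, and the whole argument lives in finite dimensions so there are no convergence issues.

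First I would establish the equality $\rank(Q) = \dim(\mathcal{A}\cdot\mathcal{R}_0)$. By Proposition~\ref{prop:reachability}, $\mathcal{R}_k = \Ran(T^k(d(T)))$ for every $k$, and $Q$ is the projection onto $\sum_{k < n_T}\mathcal{R}_k$ (the identification ``join of support projections $=$ projection onto the sum of ranges'' is exactly Lemma~\ref{lem:psd-support-sum}). Since $T^{n_T}(d(T)) = 0$ forces $T^k(d(T)) = 0$, hence $\mathcal{R}_k = \{0\}$, for all $k \ge n_T$, we get $\sum_{k < n_T}\mathcal{R}_k = \sum_{k \ge 0}\mathcal{R}_k = \mathcal{R}_\infty$. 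Proposition~\ref{prop:algebra-reachability} then gives $\mathcal{R}_\infty = \mathcal{A}\cdot\mathcal{R}_0$, so $\rank(Q) = \dim(\mathcal{R}_\infty) = \dim(\mathcal{A}\cdot\mathcal{R}_0)$.

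Next I would bound $\dim(\mathcal{A}\cdot\mathcal{R}_0)$ from above in two ways. The bound $\le d$ is immediate, since $\mathcal{A}\cdot\mathcal{R}_0 \subseteq \mathbb{C}^d$. For the second bound, fix a basis $A_1,\dots,A_\mu$ of the algebra $\mathcal{A}$, so $\mu = \dim(\mathcal{A})$, and a basis $v_1,\dots,v_\rho$ of $\mathcal{R}_0 = \Ran(d(T))$, so $\rho = \rank(d(T))$. Writing a general $A\in\mathcal{A}$ and $v\in\mathcal{R}_0$ in these bases shows $\mathcal{A}\cdot\mathcal{R}_0 = \mathrm{span}\{A_a v_b : 1\le a\le\mu,\ 1\le b\le\rho\}$, a span of $\mu\rho$ vectors, hence $\dim(\mathcal{A}\cdot\mathcal{R}_0) \le \mu\rho = \dim(\mathcal{A})\cdot\rank(d(T))$. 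Combining the two inequalities yields $\rank(Q) = \dim(\mathcal{A}\cdot\mathcal{R}_0) \le \min\bigl(d,\ \dim(\mathcal{A})\cdot\rank(d(T))\bigr)$, as claimed.

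I do not expect a genuine obstacle here; the only point that deserves care is conceptual rather than technical, namely that $\mathcal{A}\cdot\mathcal{R}_0$ must be read as the \emph{linear span} of the products $Av$ (which is precisely what Proposition~\ref{prop:algebra-reachability} provides, being an increasing sum of subspaces $\mathcal{R}_k$) and not merely as the set of such products. Once this is fixed, the span-of-products visibly coincides with $\mathrm{span}\{A_a v_b\}$ and the counting step is routine.
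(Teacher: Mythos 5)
Your proof is correct and is exactly the argument the paper intends: the corollary is stated without proof as an immediate consequence of Propositions~\ref{prop:reachability} and~\ref{prop:algebra-reachability}, combined with precisely the basis-counting step you supply. Your parenthetical point of care---that $\mathcal{A}\cdot\mathcal{R}_0$ must be read as the linear span of the products $Av$, consistent with its identification with the subspace $\mathcal{R}_\infty$---is the right one and resolves the only ambiguity in the statement.
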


\begin{remark}
The algebra $\mathcal{A}$ depends only on the linear span of the Kraus operators: unitary mixing of Kraus representations does not change $\mathrm{span}\{V_i^*\}$, hence does not change $\mathcal{A}$.
\end{remark}

\begin{proposition}[Commuting POVM instruments]\label{prop:commuting-instrument}
Let $T(X) = \sum_a E_a^{1/2} X E_a^{1/2}$ with commuting effects $E_a$. Then $\mathcal{R}_\infty = \mathcal{R}_0$, hence $\rank(Q) = \rank(d(T))$.
\end{proposition}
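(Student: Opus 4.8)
The plan is to reduce everything to a single structural observation: for commuting effects, the defect $d(T)$ is a polynomial in the Kraus operators, so its range is already invariant under the reachability recursion $\mathcal{R}_{k+1}=\sum_a V_a^*\mathcal{R}_k$ of Proposition~\ref{prop:reachability}. Since $\mathcal{R}_0=\Ran(d(T))$, this forces $\mathcal{R}_k=\mathcal{R}_0$ for all $k$, hence $\mathcal{R}_\infty=\mathcal{R}_0$, and the support-rank identity follows immediately. Here the Kraus operators are $V_a=E_a^{1/2}$, and since $E_a$ is positive and self-adjoint, $V_a^*=E_a^{1/2}=V_a$.

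In detail, I would first record the elementary spectral fact that pairwise commuting effects are simultaneously diagonalizable, so that each $E_a^{1/2}$ is an (ordinary) polynomial in $E_a$ obtained by interpolating $\sqrt{\,\cdot\,}$ on $\mathrm{spec}(E_a)$; in particular the $E_a^{1/2}$ are self-adjoint and pairwise commuting, and the associative algebra $\mathcal{A}=\mathrm{alg}(V_1^*,\dots,V_r^*)=\mathrm{alg}(E_1^{1/2},\dots,E_r^{1/2})$ of Proposition~\ref{prop:algebra-reachability} is commutative. Next I would compute
\[
d(T)=I-T(I)=I-\sum_a E_a^{1/2}\,I\,E_a^{1/2}=I-\sum_a E_a,
\]
which commutes with every $E_b$, hence with every $E_a^{1/2}$, hence with every element of $\mathcal{A}$. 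Therefore, for any $A\in\mathcal{A}$,
\[
A\cdot\Ran(d(T))=\{A\,d(T)\,v:v\in\mathbb{C}^d\}=\{d(T)\,A\,v:v\in\mathbb{C}^d\}\subseteq\Ran(d(T)),
\]
so $\mathcal{A}\cdot\mathcal{R}_0\subseteq\mathcal{R}_0$. Combining with the automatic inclusion $\mathcal{R}_0\subseteq\mathcal{R}_\infty$ (the $k=0$ term) and Proposition~\ref{prop:algebra-reachability}, which identifies $\mathcal{R}_\infty=\mathcal{A}\cdot\mathcal{R}_0$, yields $\mathcal{R}_\infty=\mathcal{R}_0$. (One could equally avoid Proposition~\ref{prop:algebra-reachability} and argue by induction on the recursion directly: $E_a^{1/2}\mathcal{R}_0=\Ran(E_a^{1/2}d(T))=\Ran(d(T)E_a^{1/2})\subseteq\mathcal{R}_0$ shows $\mathcal{R}_k\subseteq\mathcal{R}_0$ for all $k$.)

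Finally I would translate back via Proposition~\ref{prop:reachability}: since $\mathcal{R}_0\subseteq\sum_{k<n_T}\mathcal{R}_k\subseteq\mathcal{R}_\infty=\mathcal{R}_0$, the orbit-support projection satisfies $Q=\mathrm{proj}\!\bigl(\sum_{k<n_T}\mathcal{R}_k\bigr)=\mathrm{proj}(\mathcal{R}_0)=\supp(d(T))$, whence $\rank(Q)=\dim\mathcal{R}_0=\rank(d(T))$. I do not expect any genuine obstacle: the only non-formal ingredient is ``commuting positive operators have commuting square roots,'' which is immediate from simultaneous diagonalization, and the rest is the standard fact that operators commuting with $d(T)$ preserve $\Ran(d(T))$. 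The one point to state carefully is the direction of the inclusion — here commutation gives invariance (not just containment in the cyclic subspace generated by $\mathcal{A}$ and all of $\mathbb{C}^d$), so the orbit cannot grow beyond $\mathcal{R}_0$.
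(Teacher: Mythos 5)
Your proof is correct and follows essentially the same route as the paper: both arguments reduce to the observation that $d(T)=I-\sum_a E_a$ lies in (hence commutes with) the abelian algebra generated by the $E_a$, so $\mathcal{R}_0=\Ran(d(T))$ is invariant under every $E_a^{1/2}$ and the reachability recursion cannot grow. Your phrasing via $A\,d(T)=d(T)\,A$ and the paper's via the commuting support projection $P=\supp(d(T))$ are interchangeable.
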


\begin{proof}
The $E_a$ generate an abelian $C^*$-algebra. Since $d(T) = I - \sum_a E_a$ lies in the same algebra, its support projection $P := \supp(d(T))$ commutes with every $E_a^{1/2}$. Hence $E_a^{1/2}(PH) \subseteq PH$, so $\mathcal{R}_0 \subseteq PH$ is invariant under every $E_a^{1/2}$. Therefore $\mathcal{R}_{k+1} \subseteq \mathcal{R}_0$ for all $k$, giving $\mathcal{R}_\infty = \mathcal{R}_0$.
\end{proof}

\begin{remark}[Commuting vs.\ noncommuting POVM instruments]\label{rem:commuting-noncommuting}
Let $T : M_d \to M_d$ be the instrument associated to a POVM $\{E_a\}$ with $\sum_a E_a \le I$.

\emph{(1) Commuting case: no growth, minimal corner.} If the effects $E_a$ commute, Proposition~\ref{prop:commuting-instrument} shows $\mathcal{R}_\infty = \mathcal{R}_0$ and $\rank(Q) = \rank(d(T))$. Thus commuting instruments achieve the smallest possible orbit corner: the defect orbit never spreads beyond the initial defect support.

\emph{(2) Growth criterion (exact).} We have $\mathcal{R}_\infty = \mathcal{R}_0$ iff $\mathcal{R}_0$ is invariant under every $E_a^{1/2}$ (equivalently, under the algebra $\mathcal{A} = \mathrm{alg}(E_a)$). \emph{Proof:} By Proposition~\ref{prop:algebra-reachability} with $V_a = E_a^{1/2} = V_a^*$, we have $\mathcal{R}_\infty = \mathcal{A} \cdot \mathcal{R}_0$. Thus $\mathcal{R}_\infty = \mathcal{R}_0$ iff $\mathcal{A} \cdot \mathcal{R}_0 \subseteq \mathcal{R}_0$ iff $\mathcal{R}_0$ is $\mathcal{A}$-invariant, and invariance under $\mathcal{A}$ is equivalent to invariance under the generators $E_a^{1/2}$. $\square$

If the effects do not commute, $\mathcal{R}_0$ need not be invariant under $\{E_a^{1/2}\}$; when invariance fails, the filtration grows. Noncommutativity alone does not force growth; what matters is whether $\mathcal{R}_0 = \Ran(d(T))$ is invariant for $\mathrm{alg}(E_a)$. For instance, if $H = H_0 \oplus H_1$ and the effects decompose as $E_a = F_a \oplus G_a$ with $\sum_a G_a = I_{H_1}$ (so $d(T)$ is supported on $H_0$), then $\mathcal{R}_\infty = \mathcal{R}_0$ even if the $G_a$ are noncommuting on $H_1$.

\emph{A concrete qubit example with growth.} Let $H = \mathbb{C}^2$ with basis $\{|0\rangle, |1\rangle\}$ and $|{+}\rangle = (|0\rangle + |1\rangle)/\sqrt{2}$. Define
\[
E_1 = \tfrac{1}{2}|0\rangle\langle 0|, \qquad E_2 = \tfrac{2}{3}|{+}\rangle\langle{+}|.
\]
Then $E_1$ and $E_2$ do not commute. In the $\{|0\rangle, |1\rangle\}$ basis,
\[
E_1 + E_2 = \begin{pmatrix} 5/6 & 1/3 \\ 1/3 & 1/3 \end{pmatrix},
\]
with eigenvalues $1$ and $1/6$, so $E_1 + E_2 \le I$. The defect $d(T) = I - (E_1 + E_2)$ has eigenvalues $0$ and $5/6$, hence $\rank(d(T)) = 1$. Explicitly, $\mathcal{R}_0 = \mathrm{span}\{v\}$ where $v = |1\rangle - \tfrac{1}{2}|0\rangle$ spans the $5/6$-eigenspace. With $E_1^{1/2} = \tfrac{1}{\sqrt{2}}|0\rangle\langle 0|$ and $E_2^{1/2} = \sqrt{\tfrac{2}{3}}|{+}\rangle\langle{+}|$:
\[
E_1^{1/2} v = \tfrac{1}{\sqrt{2}}\langle 0|v\rangle|0\rangle = -\tfrac{1}{2\sqrt{2}}|0\rangle \ne 0, \qquad
E_2^{1/2} v = \sqrt{\tfrac{2}{3}}\langle{+}|v\rangle|{+}\rangle = \tfrac{1}{2\sqrt{3}}|{+}\rangle \ne 0.
\]
Since $|0\rangle$ and $|{+}\rangle$ are linearly independent, $\mathcal{R}_1 = \mathrm{span}\{|0\rangle, |{+}\rangle\} = H$. Hence $\mathcal{R}_\infty \supseteq \mathcal{R}_1 = H$, so $Q = \mathrm{proj}(\mathcal{R}_\infty) = I$ and $\rank(Q) = 2 > \rank(d(T)) = 1$.

\emph{Takeaway.} In the commuting case the reachable-subspace filtration is stationary ($\mathcal{R}_\infty = \mathcal{R}_0$), whereas failure of $\mathcal{R}_0$-invariance causes growth. This explains why instrument-specific bounds should exploit spectral/algebraic structure of $\{E_a\}$ beyond naive rank counts.
\end{remark}

\begin{proposition}[Criterion for maximal nilpotency]\label{prop:maximal-nilpotency}
Let $\alpha : QM_dQ \to QM_dQ$ be the corner map from Theorem~\ref{thm:cp-bound}, and let $s = \rank(Q)$. Define the unit-orbit effects
\[
E_k := \alpha^k(I_Q), \qquad p_k := \supp(E_k).
\]
Then $n_T = s$ if and only if $\rank(E_k) = s - k$ for $k = 0, 1, \ldots, s$. Equivalently, the projection chain $p_0 > p_1 > \cdots > p_{s-1} > p_s = 0$ has exactly $s$ strict drops (each of rank 1).
\end{proposition}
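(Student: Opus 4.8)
The plan is to transport the entire question into the language of the kernel filtration already used in the proof of Lemma~\ref{lem:nilpotent-index}, applied to the corner map $\alpha$. Recall from the proof of Theorem~\ref{thm:cp-bound} that $\alpha$ is completely positive and $\alpha^{n_T}=0$; fix a Kraus form $\alpha(X)=\sum_i L_i^*XL_i$ on $QM_dQ\cong M_s$ and, for a word $w=(i_1,\dots,i_k)$, set $L_w=L_{i_1}\cdots L_{i_k}$ as in that lemma. First I would record the identity $E_k=\alpha^k(I_Q)=\sum_{|w|=k}L_w^*L_w$, so that $\langle\xi,E_k\xi\rangle=\sum_{|w|=k}\|L_w\xi\|^2$ and hence $\ker E_k=N_k$, where $N_k:=\{\xi\in Q\mathbb{C}^d: L_w\xi=0\text{ for all }|w|=k\}$ is exactly the increasing subspace chain from the proof of Lemma~\ref{lem:nilpotent-index} (with $N_0=\{0\}$). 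Taking orthocomplements inside $Q\mathbb{C}^d$ gives $p_k=\supp(E_k)=\mathrm{proj}(N_k^\perp)$, whence the master formula $\rank(E_k)=\rank(p_k)=s-\dim N_k$ for all $k\ge 0$.

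Second, I would invoke the two structural facts about $(N_k)$ proved inside Lemma~\ref{lem:nilpotent-index}: the chain $N_0\subseteq N_1\subseteq\cdots$ is strictly increasing up to the first index at which it becomes constant, and — crucially, because $\alpha$ is nilpotent — it becomes constant precisely at $N_k=Q\mathbb{C}^d$, at step $m(\alpha):=\min\{m:\alpha^m=0\}$. By Remark~\ref{rem:corner-nilpotency}, $m(\alpha)=n_T$. Thus $\dim N_0=0<\dim N_1<\cdots<\dim N_{n_T}=s$ is a strictly increasing integer sequence, so $n_T$ integer jumps sum to $s$.

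Third comes a one-line counting argument: the jumps $\dim N_{k+1}-\dim N_k$ (for $0\le k<n_T$) are all $\ge 1$ and total $s$ over $n_T$ steps, so they are all equal to $1$ if and only if $n_T=s$; equivalently $\dim N_k=k$ for $k=0,\dots,s$, equivalently (by the master formula) $\rank(E_k)=s-k$ for $k=0,\dots,s$. For the converse direction I would note that $\rank(E_k)=s-k$ for all such $k$ forces $E_s=\alpha^s(I_Q)=0$, hence $\alpha^s=0$ by Lemma~\ref{lem:nil-on-id}, while $E_{s-1}\ne 0$; so $m(\alpha)=s$, i.e.\ $n_T=s$. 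Finally, since $\rank(p_k)=\rank(E_k)$, the condition ``$\rank(E_k)=s-k$ for $k=0,\dots,s$'' says exactly that $p_0=Q$ has rank $s$, $p_s=0$, and each step $p_k\mapsto p_{k+1}$ lowers the rank by precisely $1$, i.e.\ the chain $Q=p_0>p_1>\cdots>p_{s-1}>p_s=0$ consists of $s$ strict rank-one drops, which is the stated reformulation.

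I do not expect a genuine obstacle here: the nontrivial content is already isolated in Lemma~\ref{lem:nilpotent-index}. The only points requiring care are (a) the clean identification $\ker E_k=N_k$ via the Kraus-word expansion of $\alpha^k$, and (b) being precise that ``strictly increasing until stabilization, then constant at the full space at step $n_T$'' is legitimate here — that is, one must not conflate mere support-stabilization of the $E_k$ with $\alpha$-stabilization (Example~\ref{ex:support-stable} shows these can diverge for non-nilpotent CP maps), the point being that nilpotency of $\alpha$ is exactly what rules out a premature plateau of $(N_k)$ below $Q\mathbb{C}^d$.
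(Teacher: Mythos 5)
Your proof is correct and follows essentially the same route as the paper: both arguments identify $\ker(E_k)$ with the kernel filtration $N_k$ from Lemma~\ref{lem:nilpotent-index} and reduce the criterion to whether that strictly increasing chain (which, by nilpotency of $\alpha$, must reach $Q\mathbb{C}^d$ at step $m(\alpha)=n_T$) grows by exactly one dimension per step. The only difference is that you spell out the word-expansion identification $\ker E_k = \bigcap_{|w|=k}\ker L_w$ explicitly, which the paper leaves implicit here and formalizes only afterwards in Lemma~\ref{lem:word-expansion} and Corollary~\ref{cor:word-kernel}.
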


\begin{proof}
The corner map $\alpha$ is nilpotent with index $m(\alpha) = n_T$ (Remark~\ref{rem:corner-nilpotency}). By Lemma~\ref{lem:nilpotent-index}, a nilpotent CP map on $M_s$ has nilpotency index at most $s$. Equality $m(\alpha) = s$ holds if and only if the kernel filtration $N_k$ from the proof of Lemma~\ref{lem:nilpotent-index} increases by exactly one dimension at each step.

The nilpotent type is determined by the rank sequence of $\alpha^k(I_Q)$: type $(1, \ldots, 1)$ means $\rank(\alpha^k(I_Q))$ decreases by exactly 1 at each step, i.e., $\rank(E_k) = s - k$ for $k = 0, \ldots, s$.
\end{proof}

\begin{remark}[Kernel flag and Kraus structure]\label{rem:kernel-flag}
The decreasing support chain $p_k = \supp(\alpha^k(I_Q))$ dualizes to an increasing kernel flag $H_k := \ker(\alpha^k(I_Q))$ with $\dim(H_k) = k$ in the maximal case. If $\alpha(X) = \sum_i L_i^* X L_i$ on $QH$, then $v \in H_{k+1}$ implies $L_i v \in H_k$ for all $i$. This means all Kraus operators $L_i$ are strictly upper-triangular with respect to any basis adapted to the flag $(H_k)$. Maximal nilpotency ($n_T = s$) corresponds to this flag being complete (1-dimensional increments).

Note that this \emph{unit-orbit} flag $(p_k)$ (decreasing) differs from the \emph{defect-orbit reachability} filtration $Q_k = \bigvee_{j \le k} \supp(T^j(d(T)))$ (increasing). The former governs nilpotent type; the latter can plateau immediately while nilpotency remains maximal.
\end{remark}

\subsection*{Word-kernel characterization and algorithmic criteria}

The kernel flag $H_k = \ker(\alpha^k(I_Q))$ can be characterized directly in terms of Kraus words, giving an algorithmic approach to computing the nilpotent type.

\begin{lemma}[Word expansion of the unit orbit]\label{lem:word-expansion}
Let $\alpha : QM_dQ \to QM_dQ$ be CP with Kraus form $\alpha(X) = \sum_{i=1}^r L_i^* X L_i$ on $QH \cong \mathbb{C}^s$. For each word $w = i_1 \cdots i_k$ over $\{1, \ldots, r\}$, set $L_w := L_{i_1} \cdots L_{i_k}$. Then
\[
\alpha^k(I_Q) = \sum_{|w|=k} L_w^* L_w \qquad (k \ge 0).
\]
\end{lemma}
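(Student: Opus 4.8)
The plan is a one-step induction on $k$, identical in spirit to the Kraus-word bookkeeping already carried out in the proof of Lemma~\ref{lem:nilpotent-index}. In fact the cleanest route is first to record the general identity $\alpha^k(X) = \sum_{|w|=k} L_w^* X L_w$ for all $X \in QM_dQ$ and $k \ge 0$ (with the convention that the empty word $\emptyset$ has $L_\emptyset = I_Q$), and then specialize to $X = I_Q$. The base case $k=0$ is the empty-product convention $\sum_{|w|=0} L_w^* L_w = I_Q^* I_Q = I_Q = \alpha^0(I_Q)$; one may equally start at $k=1$, where $\alpha(I_Q) = \sum_i L_i^* I_Q L_i = \sum_i L_i^* L_i$ is exactly the sum over length-one words.

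For the inductive step I would assume $\alpha^k(I_Q) = \sum_{|w|=k} L_w^* L_w$, apply $\alpha$, and expand using the Kraus form of $\alpha$:
\[
\alpha^{k+1}(I_Q) \;=\; \alpha\!\left(\sum_{|w|=k} L_w^* L_w\right) \;=\; \sum_{i=1}^r L_i^*\!\left(\sum_{|w|=k} L_w^* L_w\right)\! L_i \;=\; \sum_{i=1}^r \sum_{|w|=k} (L_w L_i)^*(L_w L_i).
\]
The key observation is purely combinatorial: for a word $w = i_1\cdots i_k$ and a letter $i$, the ordered product obeys $L_w L_i = L_{i_1}\cdots L_{i_k} L_i = L_{wi}$, where $wi := i_1\cdots i_k i$ is $w$ with $i$ appended at the end (matching the convention $L_w = L_{i_1}\cdots L_{i_k}$). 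Since $(w,i)\mapsto wi$ is a bijection from (length-$k$ words) $\times\ \{1,\dots,r\}$ onto (length-$(k+1)$ words), the double sum reindexes to $\sum_{|w'|=k+1} L_{w'}^* L_{w'}$, which closes the induction.

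There is essentially no genuine obstacle here; the statement is a direct unrolling of the Kraus form and, arguably, a restatement of the identity used in Lemma~\ref{lem:nilpotent-index}. The only points that need care are bookkeeping: fixing the empty-product convention $L_\emptyset = I_Q$ so the $k=0$ case is literally true, and keeping the word-concatenation convention consistent so that the outer Kraus index $i$ attaches at the \emph{end} of $w$ (otherwise one would prepend and the bijection would be stated the other way, with no change to the conclusion). All sums are finite — there are $r^k$ words of length $k$ — so no convergence or domain issue arises from working inside the corner $QM_dQ$ rather than all of $M_d$.
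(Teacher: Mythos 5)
Your proof is correct and is exactly the paper's argument: the paper's own proof is the one-line ``Induction on $k$: composition multiplies words and sums over concatenations,'' of which your write-up is simply the fully spelled-out version (with the appending convention $L_wL_i = L_{wi}$ correctly matching the order in which the outer Kraus index attaches). No issues.
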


\begin{proof}
Induction on $k$: composition multiplies words and sums over concatenations.
\end{proof}

\begin{corollary}[Word-kernel characterization]\label{cor:word-kernel}
Define
\[
H_k^{\mathrm{word}} := \bigcap_{|w|=k} \ker(L_w) \subseteq QH.
\]
Then $\ker(\alpha^k(I_Q)) = H_k^{\mathrm{word}}$ for all $k \ge 0$.
\end{corollary}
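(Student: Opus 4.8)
The plan is to chain together two facts: the word expansion from Lemma~\ref{lem:word-expansion} and the kernel-of-a-PSD-sum identity established inside the proof of Lemma~\ref{lem:psd-support-sum}. First I would invoke Lemma~\ref{lem:word-expansion} to write $\alpha^k(I_Q) = \sum_{|w|=k} L_w^* L_w$, exhibiting the unit-orbit effect as a finite sum of positive semidefinite operators $L_w^* L_w \ge 0$ on $QH \cong \mathbb{C}^s$.

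Next I would apply the pointed-cone cancellation at the level of kernels: for $A, B \ge 0$, one has $\ker(A+B) = \ker(A) \cap \ker(B)$ (shown in the proof of Lemma~\ref{lem:psd-support-sum} via $0 = \langle v,(A+B)v\rangle = \langle v,Av\rangle + \langle v,Bv\rangle$ with both summands nonnegative), and by induction this extends to any finite sum. Hence
\[
\ker(\alpha^k(I_Q)) = \bigcap_{|w|=k} \ker(L_w^* L_w).
\]
Finally I would use the elementary identity $\ker(L_w^* L_w) = \ker(L_w)$, valid because $L_w^* L_w v = 0$ implies $\|L_w v\|^2 = \langle L_w^* L_w v, v\rangle = 0$, so $L_w v = 0$; the converse is immediate. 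Substituting gives $\ker(\alpha^k(I_Q)) = \bigcap_{|w|=k} \ker(L_w) = H_k^{\mathrm{word}}$, which is the claim.

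There is essentially no obstacle here: the corollary is a direct transcription of Lemma~\ref{lem:word-expansion} through the two standard linear-algebra facts above, both already available in the paper. The only point worth a sentence is that the argument is purely finite-dimensional (we work on $QH \cong \mathbb{C}^s$), so the finite sums and kernel manipulations are unconditionally valid; the $k=0$ case is the convention $\alpha^0(I_Q) = I_Q$ with the empty word $w$ giving $L_w = I_Q$, so $H_0^{\mathrm{word}} = \{0\} = \ker(I_Q)$, consistent.
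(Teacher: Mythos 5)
Your proposal is correct and is essentially the paper's own argument: the paper likewise starts from the word expansion $\alpha^k(I_Q)=\sum_{|w|=k}L_w^*L_w$ and observes that $\langle v,\alpha^k(I_Q)v\rangle=\sum_{|w|=k}\|L_wv\|^2$ vanishes iff every $L_wv=0$, which is exactly the content of your two intermediate steps ($\ker(A+B)=\ker A\cap\ker B$ for PSD summands and $\ker(L^*L)=\ker L$) packaged into one quadratic-form computation. Your handling of the $k=0$ convention is also consistent with the paper's $H_0=\{0\}$.
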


\begin{proof}
For $v \in QH$,
\[
\langle v, \alpha^k(I_Q) v \rangle = \sum_{|w|=k} \|L_w v\|^2,
\]
which vanishes iff $L_w v = 0$ for all $|w| = k$.
\end{proof}

\begin{corollary}[Kernel-flag recursion]\label{cor:kernel-recursion}
With $H_k := \ker(\alpha^k(I_Q))$, we have $H_0 = \{0\}$, $H_1 = \bigcap_i \ker(L_i)$, and
\[
H_{k+1} = \{v \in QH : L_i v \in H_k \text{ for all } i\}.
\]
\end{corollary}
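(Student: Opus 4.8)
The plan is to read off all three assertions from the word-kernel characterization already established in Corollary~\ref{cor:word-kernel}, namely $\ker(\alpha^k(I_Q)) = \bigcap_{|w|=k}\ker(L_w)$, so that $H_k = H_k^{\mathrm{word}}$. The base cases are then immediate, and the recursion is a bookkeeping step about how words of length $k+1$ decompose.

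For $H_0$: the operator $\alpha^0(I_Q) = I_Q$ is the identity on the corner space $QH$, so its kernel inside $QH$ is $\{0\}$; equivalently the only word of length $0$ is empty, with $L_\emptyset = I_Q$. For the recursion, note that every word $w$ over $\{1,\ldots,r\}$ of length $k+1$ factors uniquely as $w = u\,i$ with $u$ a word of length $k$ and $i \in \{1,\ldots,r\}$ the last letter, and correspondingly $L_w = L_{i_1}\cdots L_{i_k} L_{i_{k+1}} = L_u L_i$. Hence
\[
H_{k+1} = \bigcap_{|w|=k+1}\ker(L_w) = \{v \in QH : L_u(L_i v) = 0 \text{ for all } |u|=k,\ \text{all } i\} = \{v \in QH : L_i v \in H_k \text{ for all } i\},
\]
using Corollary~\ref{cor:word-kernel} for the first and last equalities. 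Taking $k=0$ gives $H_1 = \{v : L_i v \in H_0 = \{0\}\} = \bigcap_i \ker(L_i)$.

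An equivalent route avoids the word expansion entirely: from $\alpha^{k+1}(I_Q) = \sum_i L_i^*\,\alpha^k(I_Q)\,L_i$ one gets, for $v \in QH$, $\langle v, \alpha^{k+1}(I_Q)v\rangle = \sum_i \|B^{1/2}L_i v\|^2$ with $B := \alpha^k(I_Q) \ge 0$; since all summands are nonnegative and $\alpha^{k+1}(I_Q)$ is PSD, $v \in H_{k+1}$ iff $B^{1/2}L_i v = 0$ for all $i$ iff $B L_i v = 0$ for all $i$ (as $\ker B = \ker B^{1/2}$ for $B \ge 0$ in finite dimensions) iff $L_i v \in H_k$ for all $i$. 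There is no real obstacle here; the only points worth flagging are that $\ker(\alpha^0(I_Q))$ must be computed inside the corner $QH$ (where it is trivial), and, in the second route, the standard identity $\ker B = \ker B^{1/2}$ for positive semidefinite $B$.
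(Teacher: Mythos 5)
Your proof is correct and follows the same route the paper intends: the corollary is read off from the word-kernel characterization (Corollary~\ref{cor:word-kernel}) via the last-letter factorization $L_w = L_u L_i$, which is precisely the recursion already used in the proof of Lemma~\ref{lem:nilpotent-index}; your care in computing $\ker(I_Q)$ inside the corner $QH$ is well placed. Your alternative route via $\alpha^{k+1}(I_Q) = \sum_i L_i^*\,\alpha^k(I_Q)\,L_i$ and $\ker B = \ker B^{1/2}$ is also valid and slightly more self-contained, but it is not a materially different argument.
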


\begin{theorem}[Algorithmic criterion for maximal type]\label{thm:maximal-type-algorithmic}
Let $s = \rank(Q)$. The following are equivalent:
\begin{enumerate}[(i)]
\item $n_T = s$ (equivalently $m(\alpha) = s$);
\item $\dim(H_k) = k$ for $k = 0, 1, \ldots, s$;
\item $\rank(\alpha^k(I_Q)) = s - k$ for $k = 0, 1, \ldots, s$;
\item the nilpotent type of $\alpha$ is $(1, \ldots, 1)$.
\end{enumerate}
\end{theorem}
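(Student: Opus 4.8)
The plan is to prove the cycle (i) $\Leftrightarrow$ (ii) $\Leftrightarrow$ (iii) and then observe that (iii) $\Leftrightarrow$ (iv) is just the definition of the nilpotent type. Everything takes place inside the corner $QM_dQ \cong M_s$, where by Remark~\ref{rem:corner-nilpotency} the map $\alpha$ is nilpotent with nilpotency index $m(\alpha) = n_T$, and where $\alpha$ has a Kraus form $\alpha(X) = \sum_i L_i^* X L_i$ on $QH \cong \mathbb{C}^s$. Throughout, set $E_k := \alpha^k(I_Q)$ and $H_k := \ker(E_k)$; by Corollary~\ref{cor:word-kernel} we have $H_k = \bigcap_{|w|=k}\ker(L_w)$.

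The structural backbone I would establish first is that the kernel flag $(H_k)$ is \emph{strictly} increasing until it reaches all of $QH$. By Corollary~\ref{cor:kernel-recursion}, $H_0 = \{0\}$ and $H_{k+1} = \{v \in QH : L_i v \in H_k\ \forall i\}$; hence if $H_k = H_{k+1}$ for some $k$, the recursion forces $H_{k+j} = H_k$ for all $j \ge 0$. Since $\alpha^{n_T} = 0$ gives $E_{n_T} = 0$, i.e.\ $H_{n_T} = QH$, while minimality of $n_T = m(\alpha)$ together with Lemma~\ref{lem:nil-on-id} (which says $E_{n_T-1} = 0$ would force $\alpha^{n_T-1}=0$) gives $H_{n_T-1} \ne QH$, the chain
$\{0\} = H_0 \subsetneq H_1 \subsetneq \cdots \subsetneq H_{n_T} = QH$
is strictly increasing. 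Thus $\dim H_k$ grows by at least one at each of the $n_T$ steps and ends at $s$, which re-derives $n_T \le s$; and equality $n_T = s$ holds precisely when every increment equals one, i.e.\ $\dim H_k = k$ for $k = 0, 1, \ldots, s$. This is (i) $\Leftrightarrow$ (ii).

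The equivalence (ii) $\Leftrightarrow$ (iii) is rank--nullity applied to each $E_k$: viewing $E_k$ as a self-adjoint (PSD) operator on the $s$-dimensional space $QH$ with $\ker E_k = H_k$, we get $\rank(E_k) = s - \dim H_k$, so $\dim H_k = k$ for all $k \le s$ is equivalent to $\rank(E_k) = s - k$ for all $k \le s$. Finally (iii) $\Leftrightarrow$ (iv) is immediate once one records the definition of the nilpotent type of $\alpha$ as the partition read off from the rank drops $\rank(E_{k-1}) - \rank(E_k)$: this sequence of drops consists of exactly $s$ ones iff $\rank(E_k) = s - k$ throughout. (Alternatively, (i) $\Leftrightarrow$ (iii) may simply be cited from Proposition~\ref{prop:maximal-nilpotency}.)

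I do not expect a serious obstacle. The one step needing care is the flag-stabilization claim, where one must use the \emph{recursion} of Corollary~\ref{cor:kernel-recursion}, not merely the inclusion $H_k \subseteq H_{k+1}$, to rule out a ``plateau then resume'' pattern in $\dim H_k$; this is exactly the mechanism behind the bound $n_T \le s$ and behind the rigidity that makes all four conditions coincide. A minor secondary point is to state explicitly what ``nilpotent type'' means (the rank-drop partition of $\alpha$), so that (iv) is a genuine restatement of (iii) rather than a separately-defined notion.
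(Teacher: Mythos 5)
Your proof is correct and follows essentially the same route as the paper: the strict-increase argument for the kernel flag $H_k$ (via the recursion of Corollary~\ref{cor:kernel-recursion}, which is exactly the mechanism inside Lemma~\ref{lem:nilpotent-index} and Proposition~\ref{prop:maximal-nilpotency}), rank--nullity for (ii)$\Leftrightarrow$(iii), and the rank-drop definition of nilpotent type for (iii)$\Leftrightarrow$(iv). The only difference is cosmetic---you prove (i)$\Leftrightarrow$(ii) directly and self-containedly where the paper cites Proposition~\ref{prop:maximal-nilpotency} for (i)$\Leftrightarrow$(iv)---and your remark that the flag cannot ``plateau then resume'' is precisely the point the paper's cited lemmas encapsulate.
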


\begin{proof}
(i)$\Leftrightarrow$(iv) is Proposition~\ref{prop:maximal-nilpotency} combined with Remark~\ref{rem:corner-nilpotency}. (ii)$\Leftrightarrow$(iii) is rank-nullity with $H_k = \ker(\alpha^k(I_Q))$. (iii)$\Leftrightarrow$(iv) is the rank-sequence characterization of nilpotent type.
\end{proof}

\begin{proposition}[Sufficient condition: chain basis]\label{prop:chain-basis}
Suppose there exists a basis $e_1, \ldots, e_s$ of $QH$ such that:
\begin{enumerate}[(i)]
\item each $L_i$ is strictly upper triangular: $L_i e_k \in \mathrm{span}\{e_1, \ldots, e_{k-1}\}$ for all $i, k$;
\item for each $k \ge 2$, there exists $i(k)$ such that $L_{i(k)} e_k \notin \mathrm{span}\{e_1, \ldots, e_{k-2}\}$.
\end{enumerate}
Then $\dim(H_k) = k$ for all $k$, hence $n_T = s$.
\end{proposition}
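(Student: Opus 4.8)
The plan is to identify the kernel flag $H_k := \ker(\alpha^k(I_Q))$ explicitly with the standard flag of the given chain basis, and then feed this into Theorem~\ref{thm:maximal-type-algorithmic}. Write $V_k := \mathrm{span}\{e_1, \ldots, e_k\}$ for $k = 0, 1, \ldots, s$ (so $V_0 = \{0\}$). By Corollary~\ref{cor:kernel-recursion}, the flag $(H_k)$ is generated by the recursion $H_0 = \{0\}$ and $H_{k+1} = \{v \in QH : L_i v \in H_k \text{ for all } i\}$, so the whole argument reduces to proving, by induction on $k$, that $H_k = V_k$.

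The base case $H_0 = \{0\} = V_0$ is immediate. For the inductive step assume $H_k = V_k$. The inclusion $V_{k+1} \subseteq H_{k+1}$ uses only hypothesis (i): for $j \le k+1$ we have $L_i e_j \in V_{j-1} \subseteq V_k = H_k$ for every $i$, hence every $v \in V_{k+1}$ satisfies $L_i v \in H_k$ for all $i$, i.e., $v \in H_{k+1}$. For the reverse inclusion $H_{k+1} \subseteq V_{k+1}$, take any $v \notin V_{k+1}$, write $v = \sum_{j=1}^s c_j e_j$, and let $m$ be the largest index with $c_m \ne 0$; then $m \ge k+2$, so in particular $m \ge 2$ and hypothesis (ii) applies, giving $i(m)$ with $L_{i(m)} e_m \notin V_{m-2}$. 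Since $L_{i(m)} e_j \in V_{j-1} \subseteq V_{m-2}$ for all $j < m$ by (i), the vector
\[
L_{i(m)} v = c_m\, L_{i(m)} e_m + \sum_{j<m} c_j\, L_{i(m)} e_j
\]
is a nonzero scalar multiple of an element outside $V_{m-2}$ plus an element of $V_{m-2}$, hence $L_{i(m)} v \notin V_{m-2}$. Because $m - 2 \ge k$ gives $V_k \subseteq V_{m-2}$, we conclude $L_{i(m)} v \notin V_k = H_k$, so $v \notin H_{k+1}$. This closes the induction and yields $H_k = V_k$, hence $\dim H_k = k$, for $k = 0, 1, \ldots, s$.

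Finally, $\dim H_k = k$ for $k = 0, \ldots, s$ is precisely condition (ii) of Theorem~\ref{thm:maximal-type-algorithmic}, which is equivalent to condition (i) there, namely $m(\alpha) = s$; by Remark~\ref{rem:corner-nilpotency} this means $n_T = s$, as claimed.

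The only delicate point—essentially bookkeeping rather than a genuine obstacle—is the containment $V_k \subseteq V_{m-2}$ invoked at the end: it holds exactly because $v \notin V_{k+1}$ forces the leading index $m$ to satisfy $m \ge k+2$, hence $m - 2 \ge k$. One must also keep straight that ``strictly upper triangular'' in hypothesis (i) means $L_i e_k \in V_{k-1}$ with no diagonal term (this is what makes the first inclusion go through), and that hypothesis (ii) is only ever applied with $m \ge 2$, which is automatic. A sufficient-condition remark could be added noting that (ii) is in particular satisfied whenever for each $k \ge 2$ some $L_{i(k)}$ sends $e_k$ to a vector with nonzero $e_{k-1}$-component, i.e., the subdiagonal of some Kraus operator is everywhere nonzero.
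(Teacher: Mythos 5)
Your proof is correct and follows essentially the same route as the paper: establish $V_k \subseteq H_k$ from hypothesis (i) via the kernel-flag recursion, and use hypothesis (ii) to block the reverse inclusion. Your leading-index argument for a general $v \notin V_{k+1}$ is in fact slightly more complete than the paper's version, which only checks that the basis vectors $e_k$ fail to lie in $H_{k-1}$ and leaves the extension to arbitrary linear combinations implicit.
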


\begin{proof}
Let $F_k := \mathrm{span}\{e_1, \ldots, e_k\}$. By (i), $L_i(F_{k+1}) \subseteq F_k$, so the recursion characterization gives $F_k \subseteq H_k$ for all $k$. For the reverse, suppose $e_k \in H_{k-1}$ for some $k \ge 2$. By recursion, $L_{i(k)} e_k \in H_{k-2} \subseteq F_{k-2}$, contradicting (ii). Thus $H_{k-1} \subseteq F_{k-1}$ inductively, giving $H_k = F_k$ and $\dim(H_k) = k$.
\end{proof}

\begin{remark}[Generic behavior: maximal type is open dense]\label{rem:generic-maximal}
Inside the variety of nilpotent CP maps on $M_s$, the subset with maximal nilpotency index $s$ is
\[
\{\alpha : \alpha^s = 0,\ \alpha^{s-1} \ne 0\}.
\]
The condition $\alpha^s = 0$ is polynomial (hence closed); $\alpha^{s-1} \ne 0$ is the complement of a closed set, hence relatively open. Since the shift channel achieves index $s$, the closed set $\{\alpha^{s-1} = 0\}$ is a proper subset of $\{\alpha^s = 0\}$. Thus maximal type is \emph{generic} (open dense) among nilpotent CP maps.

A clean special case: for Choi rank 1 (i.e., $\alpha(X) = L^* X L$ with $L$ nilpotent), $n(\alpha)$ equals the nilpotency index of $L$. Regular nilpotents (single Jordan block) are open dense among nilpotent matrices, so maximal type is generic in the rank-1 nilpotent CP family.
\end{remark}

The following criterion is checkable directly on the original Kraus operators $V_i$, not just the corner restrictions.

\begin{proposition}[Rank-one defect-chain criterion]\label{prop:rank-one-chain}
Let $T(X) = \sum_i V_i^* X V_i$ be CP subunital on $M_d$ with $n_T < \infty$. Suppose:
\begin{enumerate}[(i)]
\item Each defect iterate has rank one: $\rank(T^k(d(T))) = 1$ for $k = 0, 1, \ldots, s-1$, where $s := \rank(Q)$.
\item Their ranges generate a full flag: letting $\mathcal{R}_k := \Ran(T^k(d(T)))$, we have $\dim(\mathcal{R}_0 + \cdots + \mathcal{R}_k) = k + 1$ for $k = 0, \ldots, s-1$.
\item (One-step shift witness) For each $k = 0, \ldots, s-2$, there exists an index $i(k)$ such that
\[
V_{i(k)}^*(\mathcal{R}_k) \not\subseteq \mathcal{R}_0 + \cdots + \mathcal{R}_{k-1} \quad \text{and} \quad V_{i(k)}^*(\mathcal{R}_k) \subseteq \mathcal{R}_0 + \cdots + \mathcal{R}_k.
\]
\end{enumerate}
Then $n_T = s = \rank(Q)$ (maximal nilpotency).
\end{proposition}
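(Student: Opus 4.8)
The plan is to establish $n_T \le s$ and $n_T \ge s$ separately. The upper bound is already available: since $n_T < \infty$ by assumption, the corner-map construction in the proof of Theorem~\ref{thm:cp-bound} applies as written---that argument uses only finiteness of $n_T$, never membership in a finite composition-closed family---and produces a nilpotent completely positive map $\alpha$ on $QM_dQ \cong M_s$ with $n_T = m(\alpha) \le s$ (Remark~\ref{rem:corner-nilpotency}). So it suffices to prove $n_T \ge s$.

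For the lower bound I would argue directly from hypothesis (i). By definition $n_T$ is the least $n$ with $T^n(d(T)) = 0$; hence $T^m(d(T)) = T^{m-n_T}(0) = 0$ for every $m \ge n_T$. If we had $n_T \le s - 1$, then in particular $T^{s-1}(d(T)) = 0$, contradicting $\rank\!\big(T^{s-1}(d(T))\big) = 1$ from (i). Therefore $n_T \ge s$, and combining with $n_T \le s$ gives $n_T = s = \rank(Q)$, which is the maximal nilpotency index permitted by Lemma~\ref{lem:nilpotent-index}.

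Hypotheses (ii) and (iii) are not consumed by this short argument; their purpose is to deliver the conclusion in the explicit, ``checkable on the original Kraus operators'' form advertised above. Via Proposition~\ref{prop:reachability}, conditions (i) and (ii) say that the reachable subspaces $\mathcal{R}_0 \subsetneq \mathcal{R}_0 + \mathcal{R}_1 \subsetneq \cdots \subsetneq \mathcal{R}_0 + \cdots + \mathcal{R}_{s-1} = QH$ form a complete flag, and (iii) records that at each level some adjoint Kraus operator $V_{i(k)}^*$ advances this flag by exactly one step. Choosing $e_j \in \mathcal{R}_{j-1} \setminus \{0\}$ yields a chain basis of $QH$, and one may then verify the hypotheses of Proposition~\ref{prop:chain-basis} (equivalently, Theorem~\ref{thm:maximal-type-algorithmic}) to re-derive $n_T = s$ without citing the a priori bound.

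I expect the one delicate point to be precisely this alternative route: the ascending flag on which $\{V_i^*\}$ acts is the \emph{reachability} flag $F_k = \mathcal{R}_0 + \cdots + \mathcal{R}_{k-1}$ built from the defect orbit, whereas the nilpotent-type criterion of Proposition~\ref{prop:maximal-nilpotency} is stated in terms of the \emph{unit-orbit} support chain $\supp(\alpha^k(I_Q))$ of the Heisenberg-picture corner map, whose Kraus operators are $QV_iQ$ rather than $V_i^*$. Matching the two requires the trace/adjoint duality between the reachability recursion and the kernel-flag recursion (Corollary~\ref{cor:kernel-recursion})---one must check that completeness of $F_\bullet$ together with the one-step witnesses forces each step of the unit-orbit kernel chain to be one-dimensional. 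Since the direct argument (hypothesis (i) plus the corner bound) already closes the proposition, this bookkeeping can be bypassed entirely.
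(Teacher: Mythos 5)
Your proposal is correct, and it takes a genuinely different---and markedly shorter---route than the paper. The paper's proof constructs a chain basis $e_1, \ldots, e_s$ of $QH$ adapted to the flag from hypothesis (ii), then uses (iii) together with the reachability recursion of Proposition~\ref{prop:reachability} to verify the strict-upper-triangularity and nondegenerate-shift hypotheses of Proposition~\ref{prop:chain-basis}, concluding that the kernel flag is complete and hence $n_T = s$. You instead note that the upper bound $n_T \le \rank(Q) = s$ is already supplied by the corner-nilpotency argument of Theorem~\ref{thm:cp-bound} and Remark~\ref{rem:corner-nilpotency} (which, as the paper itself observes after Lemma~\ref{lem:nilpotent-index}, uses only $n_T < \infty$, here assumed), while the lower bound $n_T \ge s$ is forced by the single instance $k = s-1$ of hypothesis (i): $T^{s-1}(d(T))$ has rank one, hence is nonzero, so $n_T > s-1$. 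This is airtight and exposes that hypotheses (ii) and (iii) are logically redundant for the stated numerical conclusion; they earn their keep only in the paper's route, which delivers more than the equality $n_T = s$---namely the explicit chain-basis realization of the corner Kraus operators and the identification of the nilpotent type as $(1,\ldots,1)$, which is the ``checkable certificate on the original Kraus data'' the proposition is advertising. You are also right to flag, and right to sidestep, the one genuinely delicate point in the paper's route: the ascending reachability flag $\mathcal{R}_0 + \cdots + \mathcal{R}_{k-1}$ is moved by the adjoints $V_i^*$, whereas Propositions~\ref{prop:chain-basis} and~\ref{prop:maximal-nilpotency} are phrased in terms of the corner Kraus operators and the unit-orbit kernel flag, so matching the two requires the duality bookkeeping of Corollary~\ref{cor:kernel-recursion}; your direct argument renders that reconciliation unnecessary.
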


\begin{proof}
Pick unit vectors $e_{k+1}$ spanning each $\mathcal{R}_k$ so that $\{e_1, \ldots, e_s\}$ is a basis of $QH$ (using condition (ii)). Condition (iii) ensures that the corner Kraus operators $L_i = QV_iQ$ satisfy the hypotheses of Proposition~\ref{prop:chain-basis}: condition (i) there (strict upper-triangularity) follows from the reachability recursion $\mathcal{R}_{k+1} = \sum_i V_i^* \mathcal{R}_k$ (Proposition~\ref{prop:reachability}), and condition (ii) there (nondegenerate shift) is exactly condition (iii) here.
\end{proof}

\begin{remark}
The rank-one defect-chain criterion includes the shift channel as the canonical example (see Example~\ref{ex:shift}): with the convention $L_i = |i\rangle\langle i+1|$, the shift has $d(T) = |1\rangle\langle 1|$ (rank 1), and successive iterates generate the flag $\mathbb{C}|1\rangle \subset \mathbb{C}|1\rangle + \mathbb{C}|2\rangle \subset \cdots$ with each application of $T$ shifting support by exactly one step. (Reversing the basis order gives the ``right shift'' convention $d(T) = |e_d\rangle\langle e_d|$ with $T$ shifting downward.)
\end{remark}

\subsection*{Bounds on orbit-corner rank}

\begin{corollary}[Baseline rank bound]\label{cor:rank-bound}
Let $T(X) = \sum_i V_i^* X V_i$ be CP subunital on $M_d$. Then
\[
\rank(Q) \le \rank(d(T)) + \sum_i \rank(V_i),
\]
capped by $d$. In particular, $\rank(Q) \le \rank(d(T)) + (\#\text{Kraus}) \cdot \max_i \rank(V_i)$.
\end{corollary}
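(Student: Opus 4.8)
The plan is to read the bound directly off the reachability recursion of Proposition~\ref{prop:reachability}, using nothing beyond subadditivity of dimension under sums of subspaces.

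First I would invoke Proposition~\ref{prop:reachability}: setting $\mathcal{R}_0 := \Ran(d(T))$ and $\mathcal{R}_{k+1} := \sum_i V_i^* \mathcal{R}_k$, one has $\mathcal{R}_k = \Ran(T^k(d(T)))$ for all $k$, and $\Ran(Q)$ is the span $\sum_{k \ge 0} \mathcal{R}_k$ of all reachable subspaces. This is genuinely a finite sum: the increasing chain of partial sums $\sum_{k \le m} \mathcal{R}_k$ inside $\mathbb{C}^d$ must stabilize (and terminates at $n_T$, since $T^{n_T}(d(T)) = 0$ forces $\mathcal{R}_k = \{0\}$ for $k \ge n_T$), so subadditivity of dimension will apply cleanly.

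The one substantive observation is that every reachable subspace of positive index is already swallowed by the ranges of the adjoint Kraus operators: for $k \ge 1$, since $\mathcal{R}_{k-1} \subseteq \mathbb{C}^d$, unrolling one step of the recursion gives
\[
\mathcal{R}_k = \sum_i V_i^* \mathcal{R}_{k-1} \subseteq \sum_i V_i^* \mathbb{C}^d = \sum_i \Ran(V_i^*).
\]
Summing over $k$ yields $\Ran(Q) = \mathcal{R}_0 + \sum_{k \ge 1} \mathcal{R}_k \subseteq \Ran(d(T)) + \sum_i \Ran(V_i^*)$. Taking dimensions, applying $\dim(U + W) \le \dim U + \dim W$ to the right-hand side, and using $\rank(V_i^*) = \rank(V_i)$, I obtain $\rank(Q) \le \rank(d(T)) + \sum_i \rank(V_i)$. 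The cap by $d$ is immediate since $Q$ is a projection on $\mathbb{C}^d$. The final clause follows by replacing each $\rank(V_i)$ with $\max_i \rank(V_i)$ in the (finite) Kraus sum.

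There is essentially no obstacle here: the entire content is the one-step containment $\mathcal{R}_k \subseteq \sum_i \Ran(V_i^*)$ for $k \ge 1$, and the bound is visibly very crude (it ignores all cancellation among Kraus words, in contrast to the sharper algebra-generated estimate of Corollary~\ref{cor:algebra-bound}). The only point deserving a line of care is confirming that $\Ran(Q)$ really is the finite sum $\sum_{k \ge 0} \mathcal{R}_k$ so that the dimension count is legitimate, which is exactly what Proposition~\ref{prop:reachability} supplies.
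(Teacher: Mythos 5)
Your proposal is correct and is essentially the paper's own argument: both rely on Proposition~\ref{prop:reachability} to get $\mathcal{R}_k \subseteq \sum_i \Ran(V_i^*)$ for $k \ge 1$, then bound $\dim(\mathcal{R}_0 + \sum_i \Ran(V_i^*))$ by subadditivity of dimension. The only difference is that you spell out the one-step containment and the finiteness of the sum explicitly, which the paper leaves implicit.
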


\begin{proof}
Let $R := \sum_i \Ran(V_i^*)$. By Proposition~\ref{prop:reachability}, $\mathcal{R}_k \subseteq R$ for $k \ge 1$. Hence $\mathcal{R}_\infty \subseteq \mathcal{R}_0 + R$, giving
\[
\rank(Q) = \dim(\mathcal{R}_\infty) \le \dim(\mathcal{R}_0) + \dim(R) \le \rank(d(T)) + \sum_i \rank(V_i). \qedhere
\]
\end{proof}

The following intrinsic bound improves Corollary~\ref{cor:rank-bound} and is independent of the Kraus representation.

\begin{proposition}[Intrinsic rank bound]\label{prop:intrinsic-rank-bound}
Let $T : M_d \to M_d$ be CP subunital with orbit-support projection $Q$. Then
\[
\rank(Q) \le \rank(d(T)) + \rank(T(I)),
\]
capped by $d$. More precisely,
\[
\rank(Q) \le \rank(d(T)) + \rank(T(I)) - \dim(\Ran(d(T)) \cap \Ran(T(I))).
\]
\end{proposition}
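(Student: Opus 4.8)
The plan is to reduce everything to the reachability description of the orbit-support projection together with one elementary operator-monotonicity fact. By Proposition~\ref{prop:reachability}, $Q = \mathrm{proj}\bigl(\sum_{k < n_T} \mathcal{R}_k\bigr)$ where $\mathcal{R}_k := \Ran(T^k(d(T)))$ and $\mathcal{R}_0 = \Ran(d(T))$, so $\rank(Q) = \dim\bigl(\sum_{k < n_T}\mathcal{R}_k\bigr)$. The heart of the argument is to show $\mathcal{R}_k \subseteq \Ran(T(I))$ for every $k \ge 1$. Granting this, $\sum_{k<n_T}\mathcal{R}_k \subseteq \mathcal{R}_0 + \Ran(T(I)) = \Ran(d(T)) + \Ran(T(I))$, and the refined inequality follows immediately from the Grassmann dimension formula $\dim(U+V) = \dim U + \dim V - \dim(U\cap V)$ applied with $U = \Ran(d(T))$ and $V = \Ran(T(I))$. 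The coarser bound and the cap $\rank(Q) \le d$ are then automatic, the latter because $Q$ is a projection on $\mathbb{C}^d$.

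For the key containment I would first record the standard fact that for positive operators $0 \le A \le B$ in $M_d$ one has $\Ran(A) \subseteq \Ran(B)$: from $0 \le A \le B$ we get $\ker(B) \subseteq \ker(A)$ (if $Bv = 0$ then $0 = \langle v, Bv\rangle \ge \langle v, Av\rangle \ge 0$, so $A^{1/2}v = 0$, hence $Av = 0$), and taking orthogonal complements gives $\Ran(A) = \ker(A)^\perp \subseteq \ker(B)^\perp = \Ran(B)$. Now fix $k \ge 1$ and set $X := T^{k-1}(d(T))$. Since $0 \le d(T) \le I$ by subunitality, applying the positive map $T$ a total of $k-1$ times gives $0 \le X \le T^{k-1}(I)$, and $T^{k-1}(I) \le I$ by the monotone-decreasing property of the unit orbit (Remark~\ref{rem:monotone}); hence $0 \le X \le I$. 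Applying $T$ once more, $0 \le T^k(d(T)) = T(X) \le T(I)$, so the fact above yields $\mathcal{R}_k = \Ran(T^k(d(T))) \subseteq \Ran(T(I))$, as required.

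I do not expect a genuine obstacle here: once Proposition~\ref{prop:reachability} is in hand the proof is a two-line subspace inclusion plus the dimension formula, and the only mildly technical point—that $T^{k-1}(d(T))$ is dominated by the identity so that $T^k(d(T))$ is dominated by $T(I)$—is handled by the trivial bound $d(T) \le I$ together with monotonicity of $\{T^j(I)\}$. It is worth remarking that this recovers and sharpens Corollary~\ref{cor:rank-bound}: since $T(I) = \sum_i V_i^* V_i$ satisfies $\Ran(T(I)) = \sum_i \Ran(V_i^*)$ (Lemma~\ref{lem:psd-support-sum}), one has $\rank(T(I)) \le \sum_i \rank(V_i)$, so the intrinsic bound is never worse than the Kraus-dependent one, and it is visibly representation-independent.
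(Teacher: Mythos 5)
Your proof is correct, but it reaches the key containment $\mathcal{R}_k \subseteq \Ran(T(I))$ for $k \ge 1$ by a genuinely different route than the paper. The paper's proof stays inside the Kraus picture: it first shows $\sum_i \Ran(V_i^*) = \Ran(T(I))$ via $\bigl(\sum_i \Ran(V_i^*)\bigr)^\perp = \bigcap_i \ker(V_i) = \ker(T(I))$, and then invokes the reachability recursion $\mathcal{R}_{k+1} = \sum_i V_i^* \mathcal{R}_k$ from Proposition~\ref{prop:reachability} to get $\mathcal{R}_k \subseteq \sum_i \Ran(V_i^*)$. You instead use pure order-theoretic domination: $0 \le d(T) \le I$, monotonicity of the unit orbit gives $0 \le T^k(d(T)) \le T(I)$ for $k \ge 1$, and the standard fact that $0 \le A \le B$ forces $\Ran(A) \subseteq \Ran(B)$. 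Your chain of inequalities checks out (including the intermediate step $T^{k-1}(d(T)) \le T^{k-1}(I) \le I$, though you could shortcut to $T^k(d(T)) \le T^k(I) \le T(I)$ directly), and the range-monotonicity lemma is proved correctly. What your approach buys: it makes the bound's representation-independence manifest, since the paper's own proof somewhat ironically routes an ``intrinsic'' bound through a choice of Kraus operators; moreover your argument needs only positivity and subunitality of $T$, not complete positivity, so the containment step generalizes beyond CP maps (you still need Proposition~\ref{prop:reachability} only for the identification $\rank(Q) = \dim\bigl(\sum_{k<n_T}\mathcal{R}_k\bigr)$, which really follows from Lemma~\ref{lem:psd-support-sum} alone). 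The final assembly via the Grassmann formula is identical in both proofs.
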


\begin{proof}
For any Kraus representation $T(X) = \sum_i V_i^* X V_i$, we have
\[
\sum_i \Ran(V_i^*) = \Ran(T(I)).
\]
To see this, note that $(\sum_i \Ran(V_i^*))^\perp = \bigcap_i \ker(V_i) = \ker(\sum_i V_i^* V_i) = \ker(T(I))$, and for a positive operator $A$ in finite dimensions, $\Ran(A) = \ker(A)^\perp$.

By Proposition~\ref{prop:reachability}, $\mathcal{R}_k \subseteq \sum_i \Ran(V_i^*) = \Ran(T(I))$ for $k \ge 1$. Hence
\[
QH = \mathcal{R}_\infty \subseteq \mathcal{R}_0 + \Ran(T(I)) = \Ran(d(T)) + \Ran(T(I)),
\]
giving the bound. The sharper form follows from $\dim(A + B) = \dim(A) + \dim(B) - \dim(A \cap B)$.
\end{proof}

\begin{remark}[Intrinsic bound vs.\ Kraus bound]
Proposition~\ref{prop:intrinsic-rank-bound} is representation-independent: $\rank(T(I))$ is determined by $T$ alone, whereas the bound in Corollary~\ref{cor:rank-bound} depends on the choice of Kraus operators. For POVM instruments $T(X) = \sum_a E_a^{1/2} X E_a^{1/2}$, we have $T(I) = \sum_a E_a$, so the intrinsic bound becomes
\[
\rank(Q) \le \rank(d(T)) + \rank\left(\sum_a E_a\right) = \rank(d(T)) + \dim\left(\sum_a \Ran(E_a)\right).
\]
This resolves the ``natural target inequality'' for instruments mentioned in the open questions.
\end{remark}

\subsection*{An unconditional dimension bound (general positive maps)}

The following bound holds in any finite-dimensional ordered effect space and requires no CP structure.

\begin{proposition}[Cyclic subspace bound]\label{prop:cyclic-bound}
Let $E$ be a finite-dimensional ordered effect space with $\dim(E) = D$, and let $T : E \to E$ be a positive subunital map in a finite composition-closed family $A$. Then the stabilization index satisfies
\[
n_T \le \min\{|A|, D\}.
\]
\end{proposition}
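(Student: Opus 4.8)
The plan is to retain the bound $n_T \le |A|$ already supplied by Theorem~\ref{thm:defect-annihilation} and to establish the complementary bound $n_T \le D$ by a pure dimension count inside the $T$-cyclic subspace generated by the defect; the stated conclusion $n_T \le \min\{|A|,D\}$ is then immediate. Write $n := n_T$, which is finite by Theorem~\ref{thm:defect-annihilation}, and set $v_j := T^j(d(T))$ for $j \ge 0$. If $n = 1$ the inequality $n_T \le D$ is immediate, so assume $n \ge 2$. By minimality of $n_T$, each $v_j$ with $0 \le j \le n-1$ is nonzero (for $1 \le j \le n-1$ this is minimality directly, and $v_0 = d(T) \ne 0$ since $d(T) = 0$ would force $n_T = 1$), while $v_n = T^n(d(T)) = 0$ entails $v_{n+\ell} = T^\ell(v_n) = 0$ for all $\ell \ge 0$.

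The heart of the argument is that $v_0, v_1, \ldots, v_{n-1}$ are linearly independent. Suppose $\sum_{j=0}^{n-1} c_j v_j = 0$ is a nontrivial relation and let $i$ be the smallest index with $c_i \ne 0$, so that $\sum_{j=i}^{n-1} c_j v_j = 0$. Apply $T^{\,n-1-i}$ (note $n-1-i \ge 0$) and use $T^{\,n-1-i}(v_j) = v_{\,n-1-i+j}$: every term with $j > i$ acquires subscript $n-1-i+j \ge n$ and therefore vanishes, so the relation collapses to $c_i\,v_{n-1} = 0$. Since $v_{n-1} \ne 0$, this gives $c_i = 0$, a contradiction. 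Hence $v_0,\ldots,v_{n-1}$ are $n$ linearly independent vectors in the $D$-dimensional space $E$, so $n_T = n \le D$. Combining with $n_T \le |A|$ from Theorem~\ref{thm:defect-annihilation} yields $n_T \le \min\{|A|,D\}$.

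The only step that needs a little care is the linear-independence claim, and it is dispatched by the standard ``lowest-index plus apply a high power of $T$'' device; conceptually this just records that $T$ restricted to $\mathrm{span}\{v_j : j \ge 0\}$ is nilpotent with cyclic vector $d(T)$ of nilpotency depth exactly $n_T$. I do not anticipate a genuine obstacle: the argument is purely linear-algebraic, invoking only finiteness of $n_T$ (Theorem~\ref{thm:defect-annihilation}) and the defining relation $v_{n_T} = 0 \ne v_{n_T - 1}$ (cf.\ Remark~\ref{rem:stabilization}). Neither positivity nor complete positivity enters here, which is precisely why the bound involves $D = \dim E$ rather than the sharper dimension bound available for CP maps in Section~\ref{sec:quantum}.
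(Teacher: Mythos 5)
Your proof is correct and takes essentially the same route as the paper: both reduce to the observation that $T$ acts nilpotently on the cyclic subspace $W=\mathrm{span}\{T^k(d(T)):k\ge 0\}$ and then count dimensions. The only difference is cosmetic---the paper cites the standard fact that a nilpotent operator on an $r$-dimensional space has index at most $r$, while you give a self-contained proof of the relevant instance by showing $v_0,\ldots,v_{n-1}$ are linearly independent via the ``lowest nonzero coefficient, apply $T^{n-1-i}$'' device.
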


\begin{proof}
By Theorem~\ref{thm:defect-annihilation}, we have $n_T \le |A|$. For the dimension bound, let $W = \mathrm{span}\{T^k(d(T)) : k \ge 0\}$. Since $T^n(d(T)) = 0$ for some $n \le |A|$, the operator $T|_W$ is nilpotent. A nilpotent operator on an $r$-dimensional space has nilpotency index at most $r$. Since $r = \dim(W) \le D$, we have $n_T \le D$.
\end{proof}

\begin{corollary}[Quantum operations]\label{cor:quantum-general}
For quantum operations on $B(H)$ with $\dim H = d$, the effect space has dimension $D = d^2$. Hence unconditionally,
\[
n_T \le \min\{|A|, d^2\}.
\]
For completely positive maps, Theorem~\ref{thm:cp-bound} improves this to $n_T \le d$.
\end{corollary}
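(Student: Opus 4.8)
The plan is to derive both inequalities by instantiating results already established in the paper, with essentially no new argument. For the first bound $n_T \le \min\{|A|, d^2\}$, I would invoke the cyclic subspace bound (Proposition~\ref{prop:cyclic-bound}) applied to the ordered effect space attached to quantum operations. As set up in Example~\ref{ex:quantum}, that space is $E = B(H)_{\mathrm{sa}}$ with the positive semidefinite cone and unit $u = I$. Since $B(H)$ has complex dimension $d^2$, its real form $B(H)_{\mathrm{sa}}$ (the Hermitian operators) is a real vector space of dimension $D = d^2$. A quantum operation in the Heisenberg picture acts as a positive subunital map on $E$, and membership in a finite composition-closed family $A$ is exactly the hypothesis of Proposition~\ref{prop:cyclic-bound}; substituting $D = d^2$ yields $n_T \le \min\{|A|, d^2\}$ directly.

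For the completely positive refinement I would simply cite Theorem~\ref{thm:cp-bound}: any CP subunital map $T$ on $M_d = B(H)$ lying in a finite composition-closed family satisfies $n_T \le \rank(Q) \le d$, where $Q$ is the orbit-support projection. Combined with the first part this upgrades the estimate to $n_T \le \min\{|A|, d\}$ for CP maps, and in particular gives the displayed dimension-only statement $n_T \le d$.

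I expect no genuine obstacle here, since the substantive work is all contained in Proposition~\ref{prop:cyclic-bound} and Theorem~\ref{thm:cp-bound}. The one point deserving a sentence of care is the dimension count: one must use the \emph{real} effect space $B(H)_{\mathrm{sa}}$, whose real dimension is $d^2$ (not $2d^2$), because the Hermitian operators form a real form of $B(H)$. The cyclic nilpotent span $W = \mathrm{span}\{T^k(d(T)) : k \ge 0\}$ consists of self-adjoint operators and hence lies inside $B(H)_{\mathrm{sa}}$, so the ``nilpotency index $\le \dim W \le d^2$'' step of Proposition~\ref{prop:cyclic-bound} applies verbatim; the CP case then replaces the crude bound $\dim W \le d^2$ by the corner-rank bound $\rank(Q) \le d$ from Theorem~\ref{thm:cp-bound}.
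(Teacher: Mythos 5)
Your proposal is correct and matches the paper's intended argument exactly: the corollary is a direct instantiation of Proposition~\ref{prop:cyclic-bound} with $D = \dim_{\mathbb{R}} B(H)_{\mathrm{sa}} = d^2$, together with a citation of Theorem~\ref{thm:cp-bound} for the CP refinement. Your remark that one must count the real dimension of the self-adjoint part (which is $d^2$, not $2d^2$) and that the orbit span $W$ lies in $B(H)_{\mathrm{sa}}$ is the only point of care, and you handle it correctly.
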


\subsection*{Sharpness: a quantum example achieving $n_T = d$}

The following example shows that the bound $n_T \le d$ is optimal for CP maps.

\begin{example}[Shift channel]\label{ex:shift}
Let $H = \mathbb{C}^d$ with standard basis $\{|1\rangle, \ldots, |d\rangle\}$. Define the CP map $T : M_d \to M_d$ by Kraus operators
\[
L_i = |i\rangle\langle i + 1|, \quad i = 1, \ldots, d - 1.
\]
Then $T(X) = \sum_{i=1}^{d-1} L_i^* X L_i$. We have:
\begin{enumerate}[(i)]
\item $T(I) = \sum_{i=1}^{d-1} |i + 1\rangle\langle i + 1| = I - |1\rangle\langle 1|$, so $T$ is subunital with defect $d(T) = |1\rangle\langle 1|$.
\item $T(|k\rangle\langle k|) = |k + 1\rangle\langle k + 1|$ for $k < d$, and $T(|d\rangle\langle d|) = 0$.
\item By induction, $T^k(d(T)) = T^k(|1\rangle\langle 1|) = |k + 1\rangle\langle k + 1|$ for $k < d$, and $T^d(d(T)) = 0$.
\end{enumerate}
Hence $T^{d-1}(d(T)) = |d\rangle\langle d| \ne 0$ but $T^d(d(T)) = 0$, giving $n_T = d$.
\end{example}

\begin{remark}
Example~\ref{ex:shift} shows that the bound $n_T \le d$ in Theorem~\ref{thm:cp-bound} is sharp. The shift channel achieves $n_T = d$ with $\rank(Q) = d$.
\end{remark}

\section{Discussion}\label{sec:discussion}

\subsection*{Physical interpretation}

The defect $d(T) = u - T(u)$ admits several physical interpretations:
\begin{enumerate}[(i)]
\item \emph{Termination probability}: In process semantics, $d(T)$ is the effect corresponding to ``the process terminates'' or ``no output is produced.''
\item \emph{No-click outcome}: In quantum measurement, if $T$ represents a detector, $d(T)$ is the no-click probability.
\item \emph{Information loss}: In open quantum systems, $d(T)$ quantifies the information lost to the environment.
\end{enumerate}
The cocycle identity $d(T \circ S) = d(T) + T(d(S))$ then has a natural interpretation: the total loss from composing two processes is the loss from the first, plus the first process applied to the loss from the second. This is ``loss accounting'' for sequential composition.

\subsection*{Quantum operations and Kraus operators}

In the setting of quantum channels on $B(H)$ with $\dim H < \infty$, the persistence condition on $\Phi^*$ (acting on effects) has a concrete interpretation.

A sufficient condition for strict positivity of $\Phi^*$ is that $\Phi$ admits at least one invertible Kraus operator. More generally, $\Phi^*$ is strictly positive if and only if it maps positive definite operators to positive definite operators. A sufficient condition for this is that the Kraus operators of $\Phi$ span $B(H)$ as a vector space (equivalently, $\Phi$ is faithful on positive operators).

\subsection*{Connection to effectus theory}

In the categorical framework of effectus theory \cite{cho-jacobs, cho-et-al, jacobs-directions}, kernels play a central role. The defect $d(T)$ is essentially the kernel of $T$ viewed as an effect. The cocycle identity is then a manifestation of kernel composition laws. Related categorical treatments of quantum computation and probabilistic semantics appear in \cite{selinger, cho-westerbaan}; for the expectation monad formulation of quantum foundations, see \cite{jacobs-mandemaker}.

Our results show that finite operational repertoires (finite composition-closed sets of processes) have strong structural constraints: they cannot sustain indefinite ``leakage.'' This is a purely operational consequence of finiteness, independent of the specific model.

\subsection*{Open questions}

From Remark~\ref{rem:corner-nilpotency} and Proposition~\ref{prop:reachability}, we have $n_T = m(\alpha) \le \rank(Q) = \dim(\mathcal{R}_\infty)$. Proposition~\ref{prop:intrinsic-rank-bound} gives the intrinsic bound $\rank(Q) \le \rank(d(T)) + \rank(T(I))$. Maximal nilpotency ($n_T = \rank(Q)$) holds iff the kernel flag is complete (Proposition~\ref{prop:maximal-nilpotency}), which is the generic case (Remark~\ref{rem:generic-maximal}). Proposition~\ref{prop:rank-one-chain} gives a checkable sufficient condition on the original Kraus operators. Proposition~\ref{prop:projection-faithful} resolves the projection-faithfulness question. Proposition~\ref{prop:type-I-bound} establishes the dimension bound $n_T \le s$ for finite type-I corners in von Neumann algebras.

\paragraph{1. Infinite-dimensional stabilization existence.}
For normal CP subunital $T : \mathcal{M} \to \mathcal{M}$ on a von Neumann algebra, give intrinsic conditions ensuring $n_T < \infty$ beyond the operational hypotheses. The paper provides several checkable criteria:
\begin{itemize}
\item $\delta$-resolution (Proposition~\ref{prop:delta-resolution}) and discrete Lyapunov spectrum (Proposition~\ref{prop:discrete-lyapunov});
\item Trace contraction + discrete scale (Proposition~\ref{prop:trace-contraction-discrete});
\item Atomic-center quantization (Corollary~\ref{cor:atomic-center});
\item Finite-dimensional invariant algebra (Proposition~\ref{prop:finite-dim-invariant});
\item Hyperfinite finite-stage trapping (Corollary~\ref{cor:hyperfinite});
\item \textbf{Digraph criterion for orbit-in-atomic} (Theorem~\ref{thm:digraph-criterion}): if the defect orbit lies in a commutative atomic subalgebra $C$, stabilization holds iff the induced reachability digraph has finite height---\textbf{no discreteness or gap-at-$0$ assumptions needed};
\item \textbf{Kernel criterion for diffuse commutative traps} (Theorem~\ref{thm:diffuse-kernel}): extends the digraph criterion to diffuse abelian $C \cong L^\infty(X, \mu)$ via sub-Markov kernels;
\item \textbf{Projection filtration certificate} (Proposition~\ref{prop:projection-filtration}): if there exist projections $0 = p_0 \le \cdots \le p_N$ with $T(p_k) \le p_{k-1}$, then $n_T \le N$.
\end{itemize}

\textbf{Status of the open directions:}
\begin{enumerate}[(i)]
\item \textbf{Diffuse commutative orbit trapping}: \emph{Solved} by Theorem~\ref{thm:diffuse-kernel}---the sub-Markov kernel criterion gives the exact analog of ``finite height'' for diffuse abelian traps.
\item \textbf{Robustness (``almost discrete'' $\Rightarrow$ ``eventually zero'')}: Proposition~\ref{prop:robustness-nogo} gives a \emph{sharp no-go}: without a gap-at-$0$ mechanism, robustness theorems cannot hold. The remaining task is finding \textbf{structural hypotheses} (finite-index subfactors, bounded-depth standard invariants) that \emph{force} a gap for Lyapunov values.
\item \textbf{Intrinsic conditions for finite-height}: \emph{Partially solved} by Proposition~\ref{prop:projection-filtration}. The remaining task is identifying structural hypotheses yielding such filtrations.
\end{enumerate}
Note: corner finiteness alone does \emph{not} force finite stabilization (Example~\ref{ex:support-stable}).

\paragraph{2. Categorical axiomatization.}
The following theorem extracts the minimal axioms behind Theorem~\ref{thm:defect-annihilation}.

\begin{theorem}[Abstract defect-cocycle stabilization]\label{thm:abstract-stabilization}
Let $(E, +, 0, \le, u)$ be an ordered abelian group with a translation-invariant preorder $\le$ and pointed positive cone $E_+ := \{x \in E : x \ge 0\}$, meaning $E_+ \cap (-E_+) = \{0\}$. Let $u \in E_+$ be a distinguished element.

Let $\mathcal{A}$ be a finite composition-closed set of endomorphisms $T : E \to E$ that are monotone and subunital ($T(u) \le u$). Define $d(T) := u - T(u) \ge 0$ and assume the cocycle identity holds:
\[
d(T \circ S) = d(T) + T(d(S)) \quad (\forall\,T, S \in \mathcal{A}).
\]
Then for every $T \in \mathcal{A}$ there exists $n \le |\mathcal{A}|$ such that $T^n(d(T)) = 0$.
\end{theorem}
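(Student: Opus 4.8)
The plan is to replay the proof of Theorem~\ref{thm:defect-annihilation} essentially verbatim, checking at each step that nothing beyond the additive group structure, the translation-invariant preorder, pointedness, and the \emph{assumed} cocycle identity is actually used. First I would observe that composition-closure together with $T \in \mathcal{A}$ forces $T^k \in \mathcal{A}$ for all $k \ge 1$, so the unit orbit $\{T^k(u) : k \ge 1\}$ is a subset of $\{S(u) : S \in \mathcal{A}\}$ and therefore has at most $|\mathcal{A}|$ elements. By the pigeonhole principle there are integers $m > n \ge 1$ with $T^m(u) = T^n(u)$; writing $p := m - n \ge 1$ and using that $T^n$ is a group homomorphism (so it respects subtraction),
\[
T^n(u - T^p(u)) = T^n(u) - T^{n+p}(u) = 0.
\]

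Next I would re-derive the iterated cocycle identity in this abstract setting, since Lemma~\ref{lem:iterated-cocycle} was stated for effect spaces rather than ordered abelian groups. As $T, T^k \in \mathcal{A}$ for every $k$, the standing hypothesis gives $d(T^{k+1}) = d(T \circ T^k) = d(T) + T(d(T^k))$, and a one-line induction identical to the proof of Lemma~\ref{lem:iterated-cocycle} (using only additivity and $T(0) = 0$) yields
\[
u - T^p(u) = d(T^p) = \sum_{k=0}^{p-1} T^k(d(T)).
\]
Each summand lies in $E_+$: indeed $d(T) = u - T(u) \ge 0$ by subunitality, and a monotone endomorphism sends $E_+$ into $E_+$ (apply monotonicity to $0 \le x$ and use $T(0) = 0$), so $T^k(d(T)) \ge 0$ by iteration.

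Applying the (monotone, hence positive) map $T^n$ to the displayed identity and combining with the vanishing from the first paragraph gives $0 = \sum_{k=0}^{p-1} T^{n+k}(d(T))$, a sum of elements of $E_+$. At this point I invoke pointed-cone cancellation: the proof of Lemma~\ref{lem:cancellation} uses only the additive group operations and the defining property $E_+ \cap (-E_+) = \{0\}$, so it applies unchanged here. Hence every summand vanishes, and taking $k = 0$ gives $T^n(d(T)) = 0$. The bound $n \le |\mathcal{A}|$ follows exactly as in Theorem~\ref{thm:defect-annihilation}: among the $|\mathcal{A}| + 1$ iterates $T^1(u), \ldots, T^{|\mathcal{A}|+1}(u)$ a repetition must occur, which forces the first collision index $n$ to satisfy $n \le |\mathcal{A}|$.

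I do not expect a genuine obstacle here; the only point requiring care is that $T$ is assumed to be merely a group endomorphism (not $\mathbb{R}$-linear), so every manipulation above---the telescoping $T^n(u) - T^{n+p}(u)$, the inductive derivation of the iterated cocycle, and the cancellation lemma---must be verified to rely on nothing but additivity, which each step does. No Archimedean or lattice property of $u$ is needed anywhere, and pointedness enters exactly once, in the final cancellation step that converts ``sum of nonnegatives is zero'' into ``each term is zero.''
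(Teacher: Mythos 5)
Your proposal is correct and follows essentially the same route as the paper's proof: pigeonhole on the finite unit orbit, expressing $u - T^p(u)$ as a sum of nonnegative terms, and pointed-cone cancellation. The only cosmetic difference is that the paper telescopes $u - T^p(u) = \sum_{k}(T^k(u) - T^{k+1}(u))$ directly using additivity of $T^k$, whereas you re-derive the iterated cocycle identity from the assumed cocycle law; the two computations are the same.
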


\begin{proof}
Since $\mathcal{A}$ is finite and composition-closed, all powers $T^k$ lie in $\mathcal{A}$, so the orbit $\{T^k(u) : k \ge 1\}$ takes at most $|\mathcal{A}|$ values. Hence there exist $m > n \ge 1$ with $T^m(u) = T^n(u)$. Writing $p = m - n \ge 1$ gives $T^n(u - T^p(u)) = 0$. By telescoping,
\[
u - T^p(u) = \sum_{k=0}^{p-1} (T^k(u) - T^{k+1}(u)) = \sum_{k=0}^{p-1} T^k(u - T(u)) = \sum_{k=0}^{p-1} T^k(d(T)),
\]
so $\sum_{k=0}^{p-1} T^{n+k}(d(T)) = 0$ with each summand $\ge 0$. Pointed cancellation (Lemma~\ref{lem:cancellation}) forces each summand to vanish, in particular $T^n(d(T)) = 0$.
\end{proof}

\begin{remark}
The ``ordered abelian group'' structure aligns with Section~\ref{sec:prelim}: an ordered effect space $(E, \le, u)$ is exactly this setting with $E$ a real vector space. The pointed cone hypothesis implies the cancellation property (Lemma~\ref{lem:cancellation}), so no separate cancellation axiom is needed.
\end{remark}

This gives a crisp ``axioms-only'' theorem for effectus theory: once predicates/effects instantiate these axioms and $d(T)$ satisfies the cocycle law, stabilization follows.

The following extracts the ``engine'' of Theorem~\ref{thm:abstract-stabilization} as a standalone criterion for a single map.

\begin{lemma}[Repetition of the unit orbit implies defect annihilation]\label{lem:repetition}
Let $(E, +, 0, \le, u)$ be an ordered abelian group with pointed positive cone as in Theorem~\ref{thm:abstract-stabilization}, and let $T : E \to E$ be monotone and subunital with defect $d(T) = u - T(u) \ge 0$ satisfying the cocycle law. If there exist integers $m > n \ge 1$ such that
\[
T^m(u) = T^n(u),
\]
then with $p := m - n \ge 1$,
\[
T^{n+k}(d(T)) = 0 \quad \text{for every } k = 0, 1, \ldots, p-1.
\]
In particular, $T^n(d(T)) = 0$.
\end{lemma}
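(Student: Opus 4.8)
The plan is to run the pigeonhole-free core of Theorem~\ref{thm:abstract-stabilization} (equivalently, the last two paragraphs of the proof of Theorem~\ref{thm:defect-annihilation}), but starting directly from the hypothesized coincidence $T^m(u) = T^n(u)$ rather than producing it. First I would rewrite this coincidence in a usable form: since $m = n + p$ and $T^n$ is additive (being an endomorphism of the abelian group $E$), we have $T^m(u) = T^n(T^p(u))$, so the hypothesis $T^m(u) = T^n(u)$ becomes
\[
T^n\bigl(u - T^p(u)\bigr) = 0.
\]

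Next I would expand $u - T^p(u)$ by telescoping, exactly as in the proof of Theorem~\ref{thm:abstract-stabilization} (or via Lemma~\ref{lem:iterated-cocycle}):
\[
u - T^p(u) = \sum_{k=0}^{p-1}\bigl(T^k(u) - T^{k+1}(u)\bigr) = \sum_{k=0}^{p-1} T^k\bigl(u - T(u)\bigr) = \sum_{k=0}^{p-1} T^k(d(T)),
\]
using only additivity of each $T^k$ and the definition $d(T) = u - T(u)$. Applying the additive map $T^n$ to both sides and invoking $T^n(u - T^p(u)) = 0$ gives
\[
\sum_{k=0}^{p-1} T^{n+k}(d(T)) = 0.
\]

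Then I would note that every summand is positive: subunitality gives $d(T) = u - T(u) \ge 0$, and monotonicity of $T$ gives $T(x) \ge T(0) = 0$ whenever $x \ge 0$, so by induction $T^{j}(d(T)) \ge 0$ for all $j \ge 0$; in particular each $T^{n+k}(d(T)) \ge 0$. A finite sum of positive elements that equals $0$ forces each summand to vanish by pointed cone cancellation (Lemma~\ref{lem:cancellation}). Hence $T^{n+k}(d(T)) = 0$ for $k = 0, 1, \ldots, p-1$, and taking $k = 0$ yields the displayed conclusion $T^n(d(T)) = 0$.

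I do not expect a genuine obstacle here, since the statement is a clean repackaging of machinery already assembled. The only point requiring a moment's care is that we are working in an ordered abelian group, so the telescoping and the passage of $T^n$ through the sum must use additivity of group endomorphisms rather than $\mathbb{R}$-linearity; both are immediate. It is worth remarking that the cocycle hypothesis in the lemma statement is not actually needed for this particular deduction—telescoping plus additivity suffices—but it is retained so that the hypotheses match the ambient setting of Theorem~\ref{thm:abstract-stabilization}.
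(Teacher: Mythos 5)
Your proposal is correct and matches the paper's own proof essentially step for step: rewrite the coincidence as $T^n(u - T^p(u)) = 0$, telescope $u - T^p(u) = \sum_{k=0}^{p-1} T^k(d(T))$, apply $T^n$, and invoke pointed cone cancellation on the resulting sum of positive elements. Your closing observation that additivity alone suffices for the telescoping (without separately invoking the cocycle law) is also consistent with how the paper proves Theorem~\ref{thm:abstract-stabilization}.
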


\begin{proof}
From $T^m(u) = T^n(u)$ we get $0 = T^n(u - T^p(u))$. By the iterated cocycle identity (Lemma~\ref{lem:iterated-cocycle}), $u - T^p(u) = \sum_{k=0}^{p-1} T^k(d(T))$. Applying $T^n$ yields $0 = \sum_{k=0}^{p-1} T^{n+k}(d(T))$, a sum of positive elements. Pointed cancellation forces each summand to vanish.
\end{proof}

\begin{corollary}[Finite cyclic semigroup implies stabilization]
If $T^m = T^n$ for some $m > n \ge 1$, then $T^m(u) = T^n(u)$, hence $T^n(d(T)) = 0$.
\end{corollary}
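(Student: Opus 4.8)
The plan is to derive this as an immediate specialization of Lemma~\ref{lem:repetition}. The only content to extract is the observation that equality of \emph{maps} $T^m = T^n$ forces equality on the distinguished element: evaluating both sides at $u$ gives $T^m(u) = T^n(u)$. This is the hypothesis of Lemma~\ref{lem:repetition} with the same integers $m > n \ge 1$, so the conclusion $T^{n+k}(d(T)) = 0$ for $0 \le k \le p-1$ (where $p = m-n$) follows verbatim, and in particular $T^n(d(T)) = 0$.

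Concretely, the steps are: (1) from $T^m = T^n$ conclude $T^m(u) = T^n(u)$; (2) invoke Lemma~\ref{lem:repetition} — whose hypotheses (monotone, subunital, cocycle law, pointed cone) are exactly those standing in force in this part of Section~\ref{sec:discussion} — to get $T^n(d(T)) = 0$. No telescoping or cancellation needs to be redone here, since all of that machinery is packaged inside Lemma~\ref{lem:repetition}.

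There is no real obstacle: the corollary is a pure restatement. The only thing worth flagging is the logical direction — the hypothesis ``$T^m = T^n$ as endomorphisms'' is \emph{stronger} than ``$T^m(u) = T^n(u)$,'' so this corollary is genuinely weaker than Lemma~\ref{lem:repetition}; it is included because eventual periodicity of the cyclic semigroup $\{T, T^2, T^3, \dots\}$ is the standard finite-semigroup hypothesis (cf.\ \cite{howie}) and it is worth recording that it suffices. Accordingly the proof is one line.

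\begin{proof}
Evaluating the map identity $T^m = T^n$ at $u$ gives $T^m(u) = T^n(u)$ with $m > n \ge 1$. Lemma~\ref{lem:repetition} then yields $T^{n+k}(d(T)) = 0$ for all $k = 0, 1, \ldots, m-n-1$; taking $k = 0$ gives $T^n(d(T)) = 0$.
\end{proof}
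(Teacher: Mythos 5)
Your proof is correct and matches the paper's intent exactly: the corollary is stated without proof precisely because it is the immediate specialization of Lemma~\ref{lem:repetition} obtained by evaluating the map identity $T^m = T^n$ at $u$. Your one-line argument is the intended one, and your remark that the hypothesis is strictly stronger than that of the lemma is accurate.
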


\begin{remark}
Lemma~\ref{lem:repetition} reduces the stabilization problem to proving \emph{any} repetition $T^m(u) = T^n(u)$ in an analytic setting. This is the key reduction for infinite-dimensional extensions.
\end{remark}

The following upgrades Lyapunov contraction to \emph{finite-time} annihilation when the Lyapunov values are discrete.

\begin{proposition}[Contraction plus discreteness implies finite annihilation]\label{prop:contraction-discrete}
Let $E$ be an ordered effect space with pointed cone, and let $\omega : E \to \mathbb{R}$ be a faithful positive functional. Suppose $x_0 \ge 0$ and define $x_{k+1} := T(x_k)$. Assume:
\begin{enumerate}[(i)]
\item (Lyapunov contraction) there exists $c \in (0,1)$ such that $\omega(T(x)) \le c\,\omega(x)$ for all $x \ge 0$;
\item (Discreteness) there exists $\delta > 0$ such that $\omega(x_k) \in \delta\mathbb{N} := \{0, \delta, 2\delta, \ldots\}$ for all $k$.
\end{enumerate}
Then $x_k = 0$ for all $k \ge N$, where
\[
N = 1 + \left\lceil \frac{\log(\omega(x_0)/\delta)}{\log(1/c)} \right\rceil.
\]
\end{proposition}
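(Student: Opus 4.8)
The plan is to combine the exponential decay forced by hypothesis (i) with the rigidity of hypothesis (ii): a number that lies in the discrete set $\delta\mathbb{N}$ and is strictly below $\delta$ must be exactly $0$, and faithfulness of $\omega$ then upgrades this to $x_k = 0$ in $E$. This is the scalar distillate of the telescoping-plus-cancellation mechanism used elsewhere (cf. Proposition~\ref{prop:trace-contraction-discrete} and Lemma~\ref{lem:scalar-robustness}), now phrased directly for the orbit $(x_k)$.

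First I would dispose of the degenerate case $\omega(x_0) = 0$: by faithfulness $x_0 = 0$, hence $x_k = 0$ for all $k$ and there is nothing to prove. So assume $\omega(x_0) > 0$; since $\omega(x_0) \in \delta\mathbb{N}$ this forces $\omega(x_0) \ge \delta$, so $\log(\omega(x_0)/\delta) \ge 0$ and the stated $N = 1 + \lceil \log(\omega(x_0)/\delta)/\log(1/c)\rceil$ is a well-defined integer with $N \ge 1$. Next I would record that each iterate is positive: $x_0 \ge 0$ and positivity of $T$ give $x_k \ge 0$ for all $k$ by induction, so hypothesis (i) applies along the whole orbit and yields $\omega(x_{k+1}) = \omega(T(x_k)) \le c\,\omega(x_k)$; iterating gives the decay estimate $\omega(x_k) \le c^k\,\omega(x_0)$ for every $k \ge 0$.

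Then, for this $N$, the defining inequality $N > \log(\omega(x_0)/\delta)/\log(1/c)$ rearranges---using $\log(1/c) > 0$---to $c^N\,\omega(x_0) < \delta$. For any $k \ge N$ we have $c^k \le c^N$ since $0 < c < 1$, hence $\omega(x_k) \le c^k\,\omega(x_0) \le c^N\,\omega(x_0) < \delta$. But $\omega(x_k) \in \delta\mathbb{N}$, and the only element of $\delta\mathbb{N}$ lying in $[0, \delta)$ is $0$; therefore $\omega(x_k) = 0$. Faithfulness of $\omega$ together with $x_k \ge 0$ then gives $x_k = 0$ for all $k \ge N$, as claimed.

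I expect no genuine obstacle in this argument; the only points that need a moment's care are the degenerate-input case (where $N$ is vacuous), the sign of $\log(1/c)$ when rearranging the inequality defining $N$, and the observation that $c^k \le c^N$ for $k \ge N$ is what makes the conclusion hold for \emph{all} $k \ge N$ rather than merely for $k = N$. One could also remark afterward that the same bound applies verbatim to $x_k = T^k(d(T))$ whenever the hypotheses of the proposition are met with $x_0 = d(T)$, recovering the stabilization-index bound $n_T \le N$ in that situation.
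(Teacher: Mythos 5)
Your proof is correct and follows essentially the same route as the paper's: iterate the contraction to get $\omega(x_k) \le c^k\omega(x_0)$, push this below $\delta$ at step $N$, use discreteness to force $\omega(x_k) = 0$, and conclude $x_k = 0$ by faithfulness. Your added care about the degenerate case $\omega(x_0)=0$, the sign of $\log(1/c)$, and the uniformity over all $k \ge N$ are fine refinements of the same argument.
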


\begin{proof}
By (i), $\omega(x_k) \le c^k \omega(x_0)$. Choose $N$ so that $c^N \omega(x_0) < \delta$. Then $\omega(x_N) \in \delta\mathbb{N}$ by (ii) and $\omega(x_N) \ge 0$, so $\omega(x_N) = 0$. Faithfulness of $\omega$ implies $x_N = 0$, hence all later iterates are $0$.
\end{proof}

\begin{remark}[Application to defect orbits]
Taking $x_k = T^k(d(T))$: if (i) a faithful $\omega$ satisfies $\omega(T(x)) \le c\,\omega(x)$ on the relevant cone, and (ii) $\omega(T^k(d(T)))$ lives in a discrete set (e.g., in ``finite-type'' or ``quantized'' corners), then we get \emph{finite} stabilization $n_T < \infty$ with an explicit bound. This is a first-principles criterion avoiding spectral theory.
\end{remark}

\paragraph{3. Weaker conditions for unitality.}
Theorem~\ref{thm:persistence-unitality} uses strict positivity (persistence). Theorem~\ref{thm:corner-faithful-unital} weakens this to corner-faithfulness, and Proposition~\ref{prop:projection-faithful} shows projection-faithfulness is equivalent (for CP maps). We now give a family-level sufficient condition and show that ``subharmonic irreducibility'' alone is insufficient.

\begin{proposition}[Common faithful invariant state forces unitality]\label{prop:common-invariant}
Let $(E, \le, u)$ be an ordered effect space with pointed cone, and let $\mathcal{A}$ be any set of positive subunital maps on $E$. Suppose there exists a faithful positive functional $\omega$ on $E$ such that
\[
\omega(T(u)) = \omega(u) \quad \text{for all } T \in \mathcal{A}.
\]
Then every $T \in \mathcal{A}$ is unital.
\end{proposition}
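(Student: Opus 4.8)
The plan is to apply the definition of the defect together with faithfulness of $\omega$ directly; in contrast to Theorem~\ref{thm:persistence-unitality}, no finiteness, composition-closure, or iteration is needed, because a common faithful invariant state is a strictly stronger hypothesis that bypasses the orbit argument entirely. Fix $T \in \mathcal{A}$. By subunitality $T(u) \le u$, so the defect $d(T) = u - T(u)$ lies in $E_+$. Applying the (linear) functional $\omega$ and using the invariance hypothesis gives $\omega(d(T)) = \omega(u) - \omega(T(u)) = 0$.

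Next I would invoke faithfulness. Recall that a positive functional $\omega$ is \emph{faithful} (in the sense of Lemma~\ref{lem:faithful-contraction}) when $\omega(x) > 0$ for every $x \in E_+ \setminus \{0\}$; equivalently, $x \ge 0$ together with $\omega(x) = 0$ forces $x = 0$. Since $d(T) \ge 0$ and $\omega(d(T)) = 0$, faithfulness yields $d(T) = 0$, i.e.\ $T(u) = u$. As $T \in \mathcal{A}$ was arbitrary, every map in $\mathcal{A}$ is unital.

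I do not expect any genuine obstacle: the statement is essentially the observation that faithfulness applied to the nonnegative element $d(T)$ detects whether it vanishes. The only point worth flagging in the write-up is the reading of ``faithful''—strict positivity on nonzero positive elements, as fixed earlier in the paper—rather than a weaker notion; under that reading the pointed-cone hypothesis is used only implicitly, since faithfulness already supplies the relevant detection. It is also worth remarking that the conclusion holds for an arbitrary (possibly infinite) family $\mathcal{A}$ and makes no use of the cocycle identity, which isolates exactly how much weaker the ``common faithful invariant state'' condition is than persistence.
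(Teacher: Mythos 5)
Your proposal is correct and follows exactly the paper's own argument: subunitality gives $d(T) \ge 0$, invariance of $\omega$ gives $\omega(d(T)) = 0$, and faithfulness forces $d(T) = 0$. The remarks about not needing finiteness or the cocycle identity match the paper's framing of this as a global family-level condition.
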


\begin{proof}
Fix $T \in \mathcal{A}$. Since $T$ is subunital, $d(T) = u - T(u) \ge 0$. The hypothesis gives $\omega(d(T)) = \omega(u) - \omega(T(u)) = 0$. By faithfulness of $\omega$, $d(T) = 0$, hence $T(u) = u$.
\end{proof}

\begin{remark}
This is a \emph{global} family-level condition: exhibiting a single common faithful invariant functional forces unitality of every map in the family, without checking each map individually.
\end{remark}

\begin{proposition}[Counterexample: subharmonic irreducibility does not force unitality]\label{prop:counterexample-irreducible}
There exists a CP subunital map $T : M_d \to M_d$ such that:
\begin{enumerate}[(i)]
\item the semigroup $\{T^n : n \ge 1\}$ is finite (indeed $T^d = 0$);
\item $T$ is irreducible in the subharmonic-projection sense: if $p$ is a projection with $0 < p \le I$ and $T(p) \ge p$, then $p = 0$;
\item $T$ is not unital.
\end{enumerate}
Hence irreducibility in the sense ``no nontrivial subharmonic projection'' plus finite-semigroup stabilization does not imply unitality.
\end{proposition}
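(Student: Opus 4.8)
The plan is to take $T$ to be the shift channel of Example~\ref{ex:shift}. Fix $d \ge 2$, let $H = \mathbb{C}^d$ with orthonormal basis $\{|1\rangle,\dots,|d\rangle\}$, and set $T(X) = \sum_{i=1}^{d-1} L_i^* X L_i$ with $L_i = |i\rangle\langle i+1|$. This map is completely positive by construction, and Example~\ref{ex:shift} already records $T(I) = I - |1\rangle\langle 1|$, so $T$ is subunital with defect $d(T) = |1\rangle\langle 1| \ne 0$; in particular $T$ is not unital, which is property~(iii). Property~(i) also follows from Example~\ref{ex:shift}, but I would strengthen it slightly: since $L_i^* X L_i = \langle i|X|i\rangle\,|i+1\rangle\langle i+1|$, the map $T$ sends every $X$ to the diagonal operator $\sum_{i=1}^{d-1}\langle i|X|i\rangle\,|i+1\rangle\langle i+1|$, and on diagonal operators $T$ acts as the strict down-shift $(x_1,\dots,x_d)\mapsto(0,x_1,\dots,x_{d-1})$, which is nilpotent of index $d$. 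Hence $T^d = 0$ on all of $M_d$, so $\{T^n : n\ge 1\}$ is finite (it equals $\{T,\dots,T^{d-1},0\}$).

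The substantive step is property~(ii): showing $T$ has no nonzero subharmonic projection. Let $p$ be a projection with $0 \le p \le I$ and $T(p) \ge p$, and write $p_{jj} := \langle j|p|j\rangle \in [0,1]$. From the formula for $T$ above, $(T(p))_{11} = 0$ and $(T(p))_{jj} = p_{j-1,j-1}$ for $2 \le j \le d$. Comparing $(j,j)$-entries in the operator inequality $T(p) \ge p$ gives $p_{11} \le 0$ and $p_{jj} \le p_{j-1,j-1}$ for $j \ge 2$, so $0 = p_{11} \ge p_{22} \ge \cdots \ge p_{dd} \ge 0$; every diagonal entry of $p$ vanishes. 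Since a positive semidefinite matrix with zero diagonal is zero (each $p_{jj} = \|p^{1/2}|j\rangle\|^2 = 0$ forces $p^{1/2} = 0$), we get $p = 0$. Thus the only projection with $0 \le p \le I$ and $T(p) \ge p$ is $p = 0$, which is precisely subharmonic irreducibility.

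Putting these together proves the proposition, and in fact $n_T = d$ (as in Example~\ref{ex:shift}), so the same example also witnesses sharpness of Theorem~\ref{thm:cp-bound}. I do not expect a real obstacle here: the construction is elementary, and the only point requiring attention is the verification of~(ii), where the clean mechanism is that $T$ lands in the diagonal subalgebra and acts there as the strictly nilpotent down-shift, so any subharmonic projection has a diagonal that is both nonincreasing along the shift and pinned to $0$ at the first coordinate, hence identically zero.
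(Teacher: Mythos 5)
Your proposal is correct and uses the same witness as the paper (the shift channel of Example~\ref{ex:shift}), but the verification of property~(ii) goes by a genuinely different route. The paper's argument is purely order-theoretic: from $T(p) \ge p$, monotonicity of the positive map $T$ gives $T^k(p) \ge p$ for all $k$, and then $T^d = 0$ forces $0 \ge p$, hence $p = 0$. Your argument instead exploits the concrete structure of the shift---$T$ lands in the diagonal subalgebra and acts there as the strict down-shift---and reads off from the diagonal entries of $T(p) \ge p$ that $p_{11} = 0$ and $p_{jj} \le p_{j-1,j-1}$, so the diagonal of $p$ vanishes and hence $p = 0$. Both are sound; the paper's version is more portable (it shows that \emph{any} nilpotent positive subunital map has no nonzero subharmonic positive element, with no reference to a particular Kraus form), while yours is more explicit and has the side benefit of actually proving the operator identity $T^d = 0$, which the paper asserts without computation. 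One small remark: like the paper's argument, yours never uses that $p$ is a projection, so both in fact rule out any nonzero positive subharmonic element.
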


\begin{proof}
Let $T$ be the nilpotent shift channel from Example~\ref{ex:shift}: it is CP and subunital with $T^d(d(T)) = 0$ and $n_T = d$, and its orbit corner is $Q = I$. In particular, $\{T^n\}$ is finite because $T^d = 0$. The map is not unital since $d(T) = I - T(I) \ne 0$.

Now suppose $p$ is a projection with $T(p) \ge p$. By positivity and monotonicity, iterating yields $T^k(p) \ge p$ for all $k \ge 1$. But $T^d = 0$ implies $T^d(p) = 0$, hence $0 \ge p$, so $p = 0$. Therefore there are no nontrivial subharmonic projections, yet $T$ is not unital.
\end{proof}

\begin{remark}
The ``killer line'' is: subharmonic $T(p) \ge p$ forces $T^k(p) \ge p$, but nilpotency forces $T^k(p) = 0$ for large $k$. So this notion of irreducibility is too weak to replace corner-faithfulness. Proposition~\ref{prop:corner-irreducible} shows that adding ``faithful stabilization'' (the stabilized unit has full support) recovers unitality.
\end{remark}

\begin{proposition}[Corner irreducibility with faithful stabilization implies unitality]\label{prop:corner-irreducible}
Let $\alpha : M_s \to M_s$ be CP subunital. Suppose:
\begin{enumerate}[(i)]
\item (Irreducible) there is no nontrivial subharmonic projection $0 < p < I$ with $\alpha(p) \ge p$;
\item (Unit stabilization) there exists $n$ with $\alpha^{n+1}(I) = \alpha^n(I) =: v$;
\item (Faithful stabilization) $v > 0$ (i.e., $v$ has full support).
\end{enumerate}
Then $\alpha$ is unital: $\alpha(I) = I$.
\end{proposition}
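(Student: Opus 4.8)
The plan is to argue by contradiction: assuming $\alpha$ is not unital, I will manufacture a nontrivial subharmonic projection, contradicting hypothesis (i). I would first extract the elementary consequences of (ii): since $\alpha^{n+1}(I) = \alpha^n(I) = v$, the element $v$ is a genuine fixed point, $\alpha(v) = v$, and the sequence $\{\alpha^k(I)\}$ is monotone decreasing (Remark~\ref{rem:monotone}), so $0 \le v \le \alpha(I) \le I$; by (iii) $v$ is invertible. If $n = 0$ then $v = I$ and $\alpha$ is already unital, so I may assume $n \ge 1$, and also $v \ne I$ (otherwise $\alpha(I) \ge v = I$ forces $\alpha(I) = I$). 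Set $d := d(\alpha) = I - \alpha(I) \ge 0$.

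The first real step is to pin down the peripheral spectrum of $v$: $\|v\| = 1$. Indeed $v \le \|v\|\,I$, and applying the positive map $\alpha^n$ together with $\alpha^n(v) = v$ and $\alpha^n(I) = v$ gives $v \le \|v\|\,v$; since $v$ is invertible this forces $\|v\| = 1$. Let $P$ be the spectral projection of $v$ for the eigenvalue $1$, so that $\supp(I - v) = I - P$; then $P \ne 0$ (because $1 \in \mathrm{spec}(v)$) and $P \ne I$ (because $v \ne I$), hence $0 < P < I$. Write $v = P + v_0$ with $v_0 := (I-P)v(I-P) \ge 0$; then $v_0 \ne 0$ (as $v$ is invertible) and $\|v_0\| < 1$ (as $P$ absorbs all eigenvalue-$1$ vectors). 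Put $\varepsilon := 1 - \|v_0\| > 0$, so $v_0 \le (1-\varepsilon)(I - P)$. Also, from $\alpha(I) \ge \alpha^n(I) = v$ we get $0 \le d \le I - v$, hence $\supp(d) \le \supp(I - v) = I - P$, so $d = (I-P)d(I-P)$ and $\alpha(I) = P + R$ where $R := (I-P) - d$ satisfies $0 \le R \le I - P$.

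Now set $A := \alpha(P) \ge 0$. Monotonicity and subunitality give $A \le \alpha(I) \le I$, and $\alpha(v_0) = \alpha(v) - \alpha(P) = v - A$. Using $v_0 \le (1-\varepsilon)(I-P)$ and positivity of $\alpha$,
\[
v - A \;=\; \alpha(v_0) \;\le\; (1-\varepsilon)\,\alpha(I-P) \;=\; (1-\varepsilon)\bigl(\alpha(I) - A\bigr) \;=\; (1-\varepsilon)\bigl((P+R) - A\bigr),
\]
which rearranges to $\varepsilon A \ge v - (1-\varepsilon)(P + R) = \varepsilon P + \bigl(v_0 - (1-\varepsilon)R\bigr)$, and the last summand is supported on $P^\perp H$. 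Thus $A \ge P + \varepsilon^{-1}\bigl(v_0 - (1-\varepsilon)R\bigr)$; compressing this inequality by $P$ annihilates the $P^\perp$-part and yields $PAP \ge P$, while $A \le I$ gives $PAP \le P$, so $PAP = P$. Finally, any $A$ with $0 \le A \le I$ and $PAP = P$ satisfies $A \ge P$: writing $A$ in block form with respect to $PH \oplus P^\perp H$, the $(1,1)$-block of $I - A \ge 0$ equals $P - PAP = 0$, which forces the off-diagonal blocks to vanish, so $A = P + P^\perp A P^\perp \ge P$. Hence $\alpha(P) = A \ge P$, i.e.\ $P$ is a subharmonic projection with $0 < P < I$, contradicting (i). Therefore $v = I$ and $\alpha(I) = I$.

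The main obstacle is obtaining the \emph{lower} bound $\alpha(P) \ge P$: fixedness of $v$ and irreducibility by themselves yield only upper bounds (e.g.\ $\alpha(P) \le \alpha(v) = v$), and it is precisely the faithfulness hypothesis (iii) that makes $v$ invertible, hence $\|v\| = 1$ with a genuine spectral gap $\varepsilon > 0$ separating $\|v_0\|$ from $1$. This gap is what turns the trivial estimate $\alpha(v_0) \le (1-\varepsilon)\alpha(I-P)$ into a strict lower bound for $\alpha(P)$ after compression. A secondary item requiring care is the bookkeeping that $d$, $R$, and $v_0$ are all supported on $P^\perp H$, so that $v - (1-\varepsilon)(P+R)$ splits cleanly into its $P$-component $\varepsilon P$ and a $P^\perp$-component; this disjointness is exactly what makes the compression step go through.
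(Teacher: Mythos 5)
Your proof is correct, and it takes a genuinely different route from the paper's. The paper argues in three lines via Perron--Frobenius theory: since $\alpha(v)=v$ with $v>0$ one gets $r(\alpha)=1$, irreducibility then supplies a faithful invariant functional $\omega$ with $\omega\circ\alpha=\omega$ (citing Evans--H\o egh-Krohn and Farenick), and $\omega(I-\alpha(I))=0$ plus faithfulness kills the defect. You instead run the contrapositive by hand: you extract the peripheral spectral projection $P$ of the fixed point $v$ and show directly that it is subharmonic, which amounts to re-proving, in this special case, exactly the piece of Perron--Frobenius machinery the paper imports. I checked the details and they hold up: $\|v\|=1$ follows correctly from $v\le\|v\|v$ and invertibility of $v$; the support bookkeeping ($d$, $R$, $v_0$ all living in the corner $(I-P)M_s(I-P)$) is right, since $0\le d\le I-v$ forces $\supp(d)\le \supp(I-v)=I-P$; the chain $\alpha(v_0)\le(1-\varepsilon)\alpha(I-P)$ rearranges to $\varepsilon\,\alpha(P)\ge\varepsilon P+(v_0-(1-\varepsilon)R)$, and compressing by $P$ legitimately annihilates the $P^\perp$-supported remainder to give $P\alpha(P)P\ge P$; and the final block-matrix step ($0\le A\le I$ and $PAP=P$ imply $A\ge P$) is the standard fact that a PSD block matrix with vanishing $(1,1)$-block has vanishing off-diagonal blocks. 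What each approach buys: the paper's proof is short but leans on an external spectral theorem for irreducible positive maps; yours is longer but entirely self-contained and elementary (positivity, monotonicity, and finite-dimensional spectral calculus only), and it makes visible exactly where hypothesis (iii) enters --- invertibility of $v$ is what produces the spectral gap $\varepsilon=1-\|v_0\|>0$ that drives the lower bound $\alpha(P)\ge P$, mirroring the paper's use of $v>0$ to force $r(\alpha)=1$.
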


\begin{proof}
Let $v := \alpha^n(I) > 0$. By (ii) we have $\alpha(v) = v$, so $1$ is an eigenvalue and hence $r(\alpha) \ge 1$. Since $\alpha$ is subunital, $\|\alpha\| = \|\alpha(I)\| \le 1$ for positive maps on $M_s$, so $r(\alpha) \le 1$. Therefore $r(\alpha) = 1$.

By Perron--Frobenius theory for irreducible positive maps on matrix algebras \cite{farenick, evans-hoegh-krohn}, there exists a faithful positive linear functional $\omega$ on $M_s$ such that $\omega \circ \alpha = \omega$.

Applying $\omega$ to the positive element $I - \alpha(I) \ge 0$ gives $\omega(I - \alpha(I)) = \omega(I) - \omega(\alpha(I)) = 0$. Faithfulness of $\omega$ forces $I - \alpha(I) = 0$, hence $\alpha(I) = I$.
\end{proof}

\end{document}